\newcommand{\A}{{\mathcal{A}}}
\newcommand{\B}{{\mathcal{B}}}
\newcommand{\bd}{\bar{{\rm d}}_}
\newcommand{\C}{{\mathcal{C}}}
\newcommand{\D}{{\mathcal{D}}}
\newcommand{\dst}{{\mathbf{D}_{st}}}
\newcommand{\dsto}{{\mathbf{D}_{st}^\omega}}
\newcommand{\F}{{\mathcal{F}}}
\newcommand{\Hh}{{\mathcal{H}}}
\newcommand{\Pp}{{\mathcal{P}}}
\newcommand{\pol}{{\mathcal{P}ol}}
\newcommand{\R}{{\mathcal{R}}}
\newcommand{\s}{{\mathcal{S}}}
\newcommand{\St}{\mathbf{St}}
\newcommand{\M}{{\mathcal{M}}}
\newcommand{\G}{{\mathcal{G}}}
\newcommand{\gr}{{\mathbf{gr}}}
\newcommand{\T}{{\mathcal{T}}}
\newcommand{\kk}{{\Bbbk}}
\newcommand{\md}{\text{-}\mathbf{Mod}}
\newcommand{\mdd}{\mathbf{Mod}\text{-}}
\title{De l'homologie stable des groupes de congruence}
\author{Aur\'elien DJAMENT\thanks{CNRS, Laboratoire de mathématiques Jean Leray, 2 rue de la Houssinière, BP 92208, 44322 NANTES CEDEX 3, FRANCE, aurelien.djament@univ-nantes.fr, http://www.math.sciences.univ-nantes.fr/\textasciitilde djament/.}}
\newtheorem{thi}{Th\'eor\`eme}
\newtheorem{pri}[thi]{Proposition}
\newtheorem{thm}{Th\'eor\`eme}[section]
\newtheorem{pr}[thm]{Proposition}
\newtheorem{cor}[thm]{Corollaire}
\newtheorem{lm}[thm]{Lemme}
\newtheorem{prdef}[thm]{Proposition et d\'efinition}
\newtheorem{conj}[thm]{Conjecture}
\theoremstyle{definition}
\newtheorem{defi}[thm]{D\'efinition}
\newtheorem{nota}[thm]{Notation}
\newtheorem{hyp}[thm]{Hypoth\`ese}
\theoremstyle{remark}
\newtheorem{remi}[thi]{Remarque}
\newtheorem{rem}[thm]{Remarque}
\newtheorem{ex}[thm]{Exemple}
\begin{document}

\maketitle

\begin{abstract}

On montre dans ce travail que l'homologie en degré $d$ d'un groupe de congruence, dans un contexte très général, définit un foncteur faiblement polynomial de degré au plus $2d$ et l'on décrit ce foncteur modulo les foncteurs polynomiaux de degré strictement inférieur. Notre outil principal est une suite spectrale reliant homologie de groupes de type congruence (dans un cadre formel voisin de celui développé avec Vespa en 2010 pour les groupes orthogonaux) et homologie des foncteurs. Nous montrons et utilisons de façon cruciale des propriétés de certaines structures tensorielles et de certaines extensions de Kan dérivées sur des foncteurs polynomiaux.

Nos résultats généralisent notamment, par des méthodes différentes, les travaux de Suslin sur l'excision en $K$-théorie algébrique entière et une prépublication récente de Church-Miller-Nagpal-Reinhold.

\begin{center}
 \textbf{Abstract}
\end{center}

We show in this work that homology in degree $d$ of a congruence group, in a very general framework, defines a weakly polynomial functor of degree at most $2d$ and we describe this functor modulo polynomial functors of smaller degree. Our main tool is a spectral sequence connecting homology of congruence-like groups (in a formal setting close to the one introduced with Vespa in 2010 for orthogonal groups) and functor homology. We prove and use in a crucial way properties of some tensor structures and derived Kan extensions on polynomial functors.

Our results extend especially, with different methods, the work by Suslin on excision in integer algebraic $K$-theory and a recent preprint by Church-Miller-Nagpal-Reinhold.

\end{abstract}

\bigskip

\noindent
{\em Mots-clefs : } groupes de congruence, homologie des groupes, homologie des foncteurs, foncteurs polynomiaux, extensions de Kan, foncteurs dérivés, suites spectrales, excision en $K$-théorie algébrique.

\medskip

\noindent
{\em Classification MSC 2010 : } 18A25, 18A40, 18E15, 18E35, 18G40, 20H05 ; 18D10, 18E05, 18E30, 19D99.

\newpage

\tableofcontents

\section*{Introduction}

Dans son article \cite{S-exc}, Suslin, simplifiant et généralisant des résultats obtenus, dans le cas rationalisé, avec Wozicki dans \cite{SW}, caractérise l'excisivité en $K$-théorie algébrique par un critère homologique simple. Si $I$ est un anneau {\em sans unité} (ce qui est la même chose qu'un idéal bilatère d'un anneau unitaire), notons $\mathbb{Z}\ltimes I$ l'anneau obtenu en ajoutant formellement une unité à $I$ (le groupe abélien sous-jacent est $\mathbb{Z}\oplus I$, $I$ en est un idéal bilatère et le passage au quotient par $I$ fournit une rétraction au morphisme d'anneaux unitaires $\mathbb{Z}\to\mathbb{Z}\ltimes I$). Les groupes linéaires sur $I$ sont définis par $GL_n(I):={\rm Ker}\,\big(GL_n(\mathbb{Z}\ltimes I)\to GL_n(\mathbb{Z})\big)$ : ce sont des groupes de congruence ; ils sont munis d'une action naturelle de $GL_n(\mathbb{Z})$. Il n'est pas très difficile de voir que le caractère excisif de $I$ pour la $K$-théorie algébrique jusqu'en degré $d$ équivaut au fait que l'action de $GL_n(\mathbb{Z})$ sur $H_i(GL_n(I);\mathbb{Z})$ est triviale, {\em stablement en $n$} (c'est-à-dire lorsqu'on passe à la colimite sur $n\in\mathbb{N}$), pour tout $i\leq d$ (on renvoie le lecteur au début de \cite{SW} pour plus de précisions sur ces questions). Le résultat principal de \cite{S-exc} (voir aussi le corollaire~\ref{csus} ci-après pour une forme plus générale du résultat de Suslin) peut s'exprimer comme suit :
\begin{thi}[Suslin]\label{this} Soit $I$ un anneau sans unité.
\begin{enumerate}
 \item Cet anneau est excisif pour la $K$-théorie algébrique en degrés $<e$ si et seulement si ${\rm Tor}^{\mathbb{Z}\ltimes I}_i(\mathbb{Z},\mathbb{Z})=0$ pour $0<i<e$. 
 \item On dispose d'un morphisme
 $$H_e(GL_n(I);\mathbb{Z})\to\mathfrak{gl}_n\big({\rm Tor}^{\mathbb{Z}\ltimes I}_e(\mathbb{Z},\mathbb{Z})\big)$$
 compatible à la stabilisation en $n$ et aux actions naturelles de $GL_n(\mathbb{Z})$, où $\mathfrak{gl}_n(V)$ désigne le groupe abélien des matrices $n\times n$ à coefficients dans $V$. Si la condition d'excisivité précédente est vérifiée, l'action de $GL_n(\mathbb{Z})$ sur le noyau et le conoyau de ce morphisme est triviale stablement en~$n$.
\end{enumerate}
\end{thi}

(Rappelons que ${\rm Tor}^{\mathbb{Z}\ltimes I}_1(\mathbb{Z},\mathbb{Z})\simeq I/I^2$, qui est non nul si $I$ est un idéal non trivial d'un anneau commutatif noethérien intègre, par exemple.)

Afin d'expliciter notre résultat principal, qui généralise le théorème précédent, introduisons quelques notations. Désignons par $\mathbf{ab}$ la sous-catégorie pleine de la catégorie $\mathbf{Ab}$ des groupes abéliens constituée des groupes abéliens libres de rang fini et par $\mathbf{S}(\mathbf{ab})$ la catégorie avec les mêmes objets dont les morphismes sont les monomorphismes scindés de $\mathbf{ab}$, le scindement étant donné dans la structure. L'action de $GL_n(\mathbb{Z})$ sur $H_*(GL_n(I);\mathbb{Z})$ et les morphismes de stabilisation font de $H_*(GL(I);\mathbb{Z})$ un foncteur sur $\mathbf{S}(\mathbf{ab})$, où l'on note $GL(I)$ le foncteur $n\mapsto GL_n(I)$. On renvoie à ce sujet à \cite[§\,5.2]{DV-pol}, dont on utilise également de façon cruciale la notion de {\em foncteur faiblement polynomial}, dont la définition et plusieurs propriétés seront rappelées dans le présent travail. La condition d'action stablement triviale de $GL_n(\mathbb{Z})$ sur $H_i(GL_n(I);\mathbb{Z})$ est équivalente au caractère stablement constant, ou encore faiblement polynomial de degré au plus $0$, du foncteur $H_i(GL(I);\mathbb{Z})$.

La deuxième assertion du théorème~\ref{this} montre que, si $e$ est le premier degré non excisif pour la $K$-théorie algébrique de $I$, $H_e(GL(I);\mathbb{Z})$ est faiblement polynomial de degré $2$, et décrit ce foncteur modulo les foncteurs faiblement polynomiaux de degré au plus $0$. Le théorème suivant, établi au §\,\ref{sconc}, étend ce résultat aux degrés homologiques supérieurs.

\begin{thi}\label{thip}
 Soient $I$ un anneau sans unité non excisif pour la $K$-théorie algébrique et $e>0$ le plus petit degré pour lequel cette excisivité tombe en défaut. Pour tout entier $n>0$, le foncteur $H_n(GL(I);\mathbb{Z}) : \mathbf{S}(\mathbf{ab})\to\mathbf{Ab}$ est faiblement polynomial de degré au plus $2m$, où $m$ désigne la partie entière de $n/e$.
 
 Si de plus $n$ est multiple de $e$, alors ce foncteur est exactement de degré faible $2m$. De surcroît, il est isomorphe modulo la catégorie des foncteurs faiblement polynomiaux de degré au plus $2m-2$ au foncteur
 $$\big(\mathfrak{gl}\big({\rm Tor}^{\mathbb{Z}\ltimes I}_e(\mathbb{Z},\mathbb{Z})\big)^{\otimes m}\big)_{\mathfrak{S}_m}$$
 où l'action du groupe symétrique $\mathfrak{S}_m$ par permutation des facteurs du produit tensoriel est tordue par la signature lorsque $e$ est impair.
\end{thi}

On obtient même un résultat plus général, qui vaut dans une situation hermitienne, sur une petite catégorie additive à dualité (théorèmes~\ref{thcong3} et~\ref{sus-gal}).

Notre stratégie (esquissée dans le mémoire non publié \cite{HDR}) consiste à utiliser l'homologie des foncteurs en développant une variante relative du cadre général de \cite{DV}, lui-même légèrement retravaillé dans \cite{Dja-JKT}, et dont s'inspire la notion de {\em catégorie homogène} introduite par Randal-Williams et Wahl \cite{RWW} à des fins d'étude unifiée de problèmes de stabilité homologique. On obtient ainsi (théorème~\ref{th-spf}) des suites spectrales donnant des renseignements sur l'homologie stable (à coefficients constants ou tordus par des foncteurs) de groupes <<~de type congruence~>>, dans un contexte très général, qui inclut les sous-groupes des groupes d'automorphismes des groupes libres induisant l'identité sur l'abélianisation, à propos de l'homologie stable desquels on discute une conjecture (déjà formulée dans \cite{HDR}) dans l'appendice~\ref{a1}.

La démonstration du théorème~\ref{thip} nécessite ensuite l'étude de plusieurs produits tensoriels sur les catégories de foncteurs en jeu (notamment sur $\mathbf{S}(\mathbf{ab})$), ainsi que de leurs dérivés, sur des foncteurs polynomiaux. L'apparition du {\em produit tensoriel le long de la structure monoïdale} ambiante (la somme directe sur $\mathbf{S}(\mathbf{ab})$) constitue de fait la principale nouveauté des suites spectrales générales que l'on construit, en comparaison de \cite[§\,1]{DV}. Son comportement s'avère beaucoup plus difficile à comprendre que celui du produit tensoriel usuel. On l'élucide, pour les foncteurs polynomiaux sur $\mathbf{S}(\mathbf{ab})$ (ou des généralisations appropriées de cette catégorie), en montrant d'abord des résultats de comparaison de groupes d'extensions ou de torsion. Ceux-ci généralisent des propriétés homologiques qui interviennent de façon cruciale dans \cite{Dja-JKT} (dont des cas particuliers avaient déjà été obtenus dans \cite{DV}), à l'aide de variantes convenables du critère d'annulation (co)homologique de Scorichenko \cite{Sco}.

\paragraph*{Lien avec des résultats antérieurs}
Outre les travaux de Suslin et de Suslin-Wodzicki susmentionnés, plusieurs cas particuliers du théorème~\ref{thip} étaient connus avant la réalisation du présent article. Dans la lignée du travail de Charney \cite{Ch-cong} sur le problème de la stabilité homologique pour les groupes de congruence, Putman \cite{Pu} a démontré que l'homologie en chaque degré fixé, à coefficients dans un corps, des groupes de congruence associés aux idéaux de l'anneau des entiers a des dimensions polynomiales à partir d'un certain rang. Son travail a été amélioré à l'aide de l'utilisation des $FI$-modules (foncteurs sur les ensembles finis avec injections) par Church-Ellenberg-Farb-Nagpal \cite{CEFN} et Church-Ellenberg \cite{CE-cong}, qui donnent des résultats analogues pour de plus grandes classes d'anneaux sans unité, à coefficients entiers, avec parfois des bornes explicites sur le degré polynomial. Tout récemment, Church, Miller, Nagpal et Reinhold \cite{CMNR} ont effectué un progrès considérable, montrant que l'homologie en degré $n$ des groupes de congruence associés à un anneau sans unité de rang stable de Bass fini $d+2$ définit un foncteur polynomial de degré faible au plus $2n+d$. Signalons également que, pour les groupes de congruence associés à un idéal de l'anneau des entiers, Calegari \cite[lemme~4.5]{Cal} a montré un résultat essentiellement équivalent à notre théorème~\ref{thip}, par des méthodes complètement différentes.

Mesurer à quel point l'homologie des groupes de congruence associés à un anneau sans unité $I$ s'éloigne (stablement) d'un foncteur constant constitue une forme, homologique, de contrôle du défaut d'excisivité pour la $K$-théorie algébrique pour $I$. Une autre manière, homotopique, d'estimer ce défaut d'excisivité est donnée par la {\em $K$-théorie algébrique birelative}. Une description de celle-ci est donnée par Corti{\~n}as \cite{Cor} à coefficients rationnels et par Geisser-Hesselholt \cite{GHel} à coefficients de torsion. Toutefois, non seulement nos méthodes sont indépendantes de celles utilisées par ces auteurs, mais encore il semble fort délicat de tisser un lien entre nos {\em énoncés} homologiques et les leurs (au-delà du plus petit degré non excisif, pour lequel les deux se réduisent au résultat de Suslin susmentionné).

\paragraph*{Organisation de l'article} La section~\ref{ls1} introduit un cadre catégorique (largement inspiré de \cite{DV,Dja-JKT,RWW,DV-pol}) permettant de définir des notions appropriées de stabilisation (§\,\ref{pps}), de foncteurs polynomiaux (§\,\ref{pfp}), d'homologie stable de familles de groupes (§\,\ref{pcf}, utilisant les préparatifs du §\,\ref{pprep}), à coefficients constants ou tordus, dans un contexte <<~absolu~>> (dont un archétype est la famille des groupes linéaires sur un anneau unitaire) ou <<~relatif~>> (dont les groupes de congruence sont un archétype), puis au §\,\ref{sprt} une notion de produit tensoriel (éventuellement dérivé) le long d'une structure monoïdale convenable. Cette section préparatoire culmine au §\,\ref{sssf} où est construite (théorème~\ref{th-spf}) une suite spectrale pour l'homologie stable relative (reliant cette homologie à coefficients tordus à la même homologie à coefficients constants), qui fait intervenir ce produit tensoriel dérivé et des extensions de Kan dérivées. Il s'agit d'un pendant relatif des suites spectrales fondamentales de \cite[§\,1]{DV}. Cela permet de discuter, dans le cas général, la notion d'excisivité en homologie stable en termes de foncteurs dérivés (proposition~\ref{cnex}).

Dans la section~\ref{sf}, on applique ces considérations aux groupes de congruence (dans un cadre hermitien général, rappelé au §\,\ref{sherm}, qui inclut les groupes de congruence dans les groupes linéaires, unitaires ou symplectiques). Grâce à un résultat fondamental (§\,\ref{shd}) de comparaison de catégories dérivées inspiré de Scorichenko \cite{Sco} et de \cite{Dja-JKT}, on peut étudier, d'abord séparément, sur des foncteurs polynomiaux, les protagonistes des suites spectrales du §\,\ref{sssf} : extensions de Kan dérivées le long d'un foncteur de changement de base entre catégories d'objets hermitiens (§\,\ref{pkdf}) et produit tensoriel dérivé le long de la somme hermitienne (§\,\ref{spt}). Le §\,\ref{sconc} combine tous les résultats précédents pour donner les résultats principaux de l'article : le théorème~\ref{sus-gal}, qui généralise le théorème~\ref{this} de Suslin, le théorème~\ref{thcong3}, qui, combiné au précédent, précise le théorème~\ref{thip}, dans un cadre catégorique hermitien plus général, de considérations relatives aux structures (co)multiplicatives, ainsi qu'un résultat à coefficients tordus, le corollaire~\ref{cor-ctf}.

L'appendice~\ref{a1} présente nos vues sur la façon dont les suites spectrales du §\,\ref{sssf} pourraient permettre d'aborder l'homologie stable des sous-groupes $IA$ des groupes d'automorphismes des groupes libres.

L'appendice~\ref{a2} expose, sans prétention à l'originalité, quelques résultats auxiliaires sur des foncteurs quadratiques sur une catégorie additive intervenant dans un contexte hermitien, qui sont utilisés çà et là dans la section~\ref{sf}.

\paragraph*{Remerciements} L'auteur est reconnaissant envers Christine Vespa, Antoine Touzé, Jacques Darné, Christian Ausoni, Vincent Franjou et Serge Bouc de discussions utiles reliées à différentes questions abordées dans ce travail. Celui-ci a par ailleurs bénéficié du soutien partiel du projet ANR-16-CE40-0003 ChroK ; l'auteur ne soutient pas pour autant l'ANR, dont il revendique la restitution des moyens aux laboratoires sous forme de crédits récurrents.

\subsubsection*{Conventions et notations générales}

Sauf mention expresse du contraire, toutes les catégories considérées seront localement petites (i.e. telles que la classe des morphismes entre deux objets donnés forme toujours un ensemble). Si $\C$ est une catégorie, on désigne par ${\rm Ob}\,\C$ sa classe d'objets, par $\C(a,b)$ (ou quelquefois ${\rm Hom}_\C(a,b)$) l'ensemble des morphismes d'un objet $a$ vers un objet $b$ de $\C$, et par $\C^{op}$ la catégorie opposée à $\C$.

Si $\C$ est une catégorie (essentiellement) petite et $\A$ une catégorie, on note $(\C,\A)$ la catégorie des foncteurs de $\C$ dans $\A$. Si $\Phi : \C\to\D$ est un foncteur entre petites catégories, on note $\Phi^* : (\D,\A)\to (\C,\A)$ le foncteur de précomposition par $\Phi$. On note $\Phi_* : (\C,\A)\to (\D,\A)$ l'extension de Kan à gauche le long de $\Phi$, lorsqu'elle existe (ce qui est automatique si $\A$ est cocomplète), c'est-à-dire l'adjoint à gauche à $\Phi^*$.

On note $\mathbf{Ens}$, $\mathbf{Grp}$ et $\mathbf{Ab}$ les catégories des ensembles, des groupes et des groupes abéliens respectivement. On note $\Theta$ la sous-catégorie de $\mathbf{Ens}$ dont les objets sont les $\mathbf{n}:=\{1,\dots,n\}$ ($n\in\mathbb{N}$) et les morphismes les fonctions {\em injectives}.

Si $M$ est un objet d'une catégorie abélienne cocomplète et $E$ un ensemble, on note $M[E]$ la somme directe de copies de $M$ indexées par $E$.

Si $F$ est un foncteur d'une catégorie $\C$ vers $\mathbf{Ens}$, on note $\C_F$ la catégorie d'éléments associée (ses objets sont les couples $(c,\xi)$ formés d'un objet $c$ de $\C$ et d'un élément $\xi$ de $F(c)$) et $\pi_F : \C_F\to\C$ le foncteur d'oubli ; la construction duale à partir d'un foncteur $G : \C^{op}\to\mathbf{Ens}$ est notée $\C^G$ (c'est la catégorie opposée à $(\C^{op})_G$) et $\pi^G : \C^G\to\C$ désigne le foncteur d'oubli. Si $\A$ est une catégorie avec coproduits, $\C$ une petite catégorie et $F : \C\to\mathbf{Ens}$ un foncteur, on dispose d'un foncteur $\Omega_F : (\C_F,\A)\to (\C,\A)$ donné sur les objets par
\begin{equation}\label{eqom}
 \Omega_F(X)(c)=\underset{\xi\in F(c)}{\bigsqcup}X(c,\xi)
\end{equation}
pour $X$ dans $(\C_F,\A)$ et $c$ dans $\C$. Le foncteur $\Omega_F$ est adjoint à gauche au foncteur $\pi_F^* : (\C,\A)\to (\C_F,\A)$ --- autrement dit, $\Omega_\F\simeq (\pi_F)_*$.

\smallskip

La lettre $\kk$ désigne un anneau commutatif ; la mention à cet anneau sera parfois omise des produits tensoriels, par exemple. On désigne par $\kk\md$ la catégorie des $\kk$-modules à gauche et on note $(\C,\kk)$ pour $(\C,\kk\md)$ lorsque $\C$ est une petite catégorie. On note $P^\C_x$, pour $x\in {\rm Ob}\,\C$, l'objet $\kk[\C(x,-)]$ de $(\C,\kk)$ ; il représente l'évaluation en $x$ et est donc projectif de type fini.

Si $\C$ et $\M$ sont des catégories $\kk$-linéaires (c'est-à-dire enrichies sur $\kk\md$), avec $\C$ petite, on note $(\C,\M)_\kk$ la catégorie des foncteurs $\kk$-linéaires de $\C$ vers $\M$. On note $\C\md_\kk$ pour $(\C,\kk\md)_\kk$ ; la catégorie $\C\md_\mathbb{Z}$ sera notée simplement $\C\md$, et $\mdd\C$ désignera $\C^{op}\md$. On dispose ainsi d'une équivalence canonique $(\kk[\D],\M)_\kk\simeq (\D,\M)$ (et en particulier $\kk[\D]\md_\kk\simeq (\D,\kk)$) pour toute catégorie petite $\D$, $\kk[\D]$ désignant la catégorie $\kk$-linéarisée associée.

Si $\M$ est une catégorie abélienne $\kk$-linéaire cocomplète, on dispose d'un produit tensoriel $\underset{\kk}{\otimes} : \kk\md\times\M\to\M$ caractérisé par l'adjonction
$$\M(V\underset{\kk}{\otimes} A,B)\simeq(\kk\md)\big(V,\M(A,B)\big)$$
(par commodité, on notera parfois $A\underset{\kk}{\otimes} V$ plutôt que $V\underset{\kk}{\otimes}A$).

La plupart des catégories abéliennes qu'on considérera dans cet article seront des catégories de {\em Grothendieck} (i.e. cocomplètes, avec colimites filtrantes exactes et possédant un générateur) ; on pourra trouver des généralités à leur sujet dans \cite{Gab}, par exemple.

Si $\C$ est une petite catégorie $\kk$-linéaire et $\M$ une catégorie de Grothendieck $\kk$-linéaire, le foncteur {\em produit tensoriel au-dessus de $\C$}
$$-\underset{\C}{\otimes}- : (\C^{op},\M)_\kk\times (\C\md_\kk)\to\M$$
(on intervertira parfois les deux arguments, ce qui ne pose pas de problème en raison de la symétrie de cette structure monoïdale lorsque $\M=\kk\md$) est défini comme la composée du produit tensoriel (au-dessus de $\kk$, au sens défini précédemment) extérieur $(\C^{op},\M)_\kk\times (\C\md_\kk)\to (\C^{op}\times\C,\M)_\kk$ et du foncteur cofin ($\kk$-linéaire). Ce bifoncteur se dérive à gauche par rapport à l'un ou l'autre des arguments de la façon usuelle, donnant lieu à des foncteurs notés ${\rm Tor}^\C_\bullet$.

\begin{remi}\label{rq-comptor}
Si $\C$ est une catégorie préadditive (i.e. $\mathbb{Z}$-linéaire), alors la catégorie de foncteurs préadditifs $\C\md$ est une sous-catégorie pleine de $(\C,\mathbb{Z})$ ; si $F$ et $G$ sont des objets de $\mdd\C$ et $\C\md$ respectivement, on dispose d'un morphisme canonique ${\rm Tor}_\bullet^{\kk[\C]}(F,G)\to {\rm Tor}_\bullet^\C(F,G)$ qui est un isomorphisme en degré $0$, mais généralement pas en degré supérieur.
\end{remi}

Si $\A$ est une catégorie abélienne, on note $\mathbf{D}(\A)$ sa catégorie dérivée (sans aucune condition de borne sur les complexes\,\footnote{On pourrait se limiter pour nos considérations à des complexes bornés d'un côté (d'ailleurs, l'usage des catégories dérivées dans le présent travail n'est pas essentiel), mais il est plus commode et satisfaisant de traiter de complexes arbitraires.}). Une telle catégorie n'a pas a priori de raison d'être localement petite (même en supposant que $\A$ l'est, bien sûr), mais c'est le cas si $\A$ est une catégorie Grothendieck --- cf. \cite{AJS,Serp} (le travail antérieur \cite{BN93} suffit pour les catégories de foncteurs du type $\C\md_\kk$ où $\C$ est une petite catégorie $\kk$-linéaire). Si $\B$ est une sous-catégorie épaisse d'une catégorie abélienne $\A$, on note $\mathbf{D}_\A(\B)$ la sous-catégorie pleine (qui est triangulée) de $\mathbf{D}(\A)$ formée des complexes dont l'homologie appartient à $\B$. Cette sous-catégorie contient l'image essentielle du foncteur $\mathbf{D}(\B)\to\mathbf{D}(\A)$ induit par l'inclusion $\B\hookrightarrow\A$ ; en fait, on vérifie facilement :
\begin{pri}\label{compext-gal}
 Soit $\B$ une sous-catégorie localisante d'une catégorie de Grothendieck $\A$. Notons $i : \B\to\A$ le foncteur d'inclusion. Les assertions suivantes sont équivalentes :
 \begin{enumerate}
 \item pour tous objets $X$ et $Y$ de $\B$, le morphisme naturel ${\rm Ext}^\bullet_\B(X,Y)\to {\rm Ext}^\bullet_\A(iX,iY)$ est un isomorphisme ;
  \item le foncteur $\mathbf{D}(i) : \mathbf{D}(\B)\to\mathbf{D}(\A)$ est pleinement fidèle ;
  \item l'image essentielle de $\mathbf{D}(i)$ est égale à $\mathbf{D}_\A(\B)$.
 \end{enumerate}
\end{pri}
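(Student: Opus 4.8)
The plan is to establish the equivalences by first proving the cycle $(2)\Rightarrow(3)\Rightarrow(1)\Rightarrow(2)$, or more naturally by separating the easy implications from the genuine content. The implication $(2)\Rightarrow(1)$ is essentially immediate: if $\mathbf{D}(i)$ is fully faithful, then for objects $X,Y$ of $\B$ one has
$${\rm Ext}^n_\B(X,Y)\simeq \mathbf{D}(\B)(X,Y[n])\simeq \mathbf{D}(\A)(iX,iY[n])\simeq {\rm Ext}^n_\A(iX,iY),$$
using the standard identification of Ext-groups with morphisms in the derived category between shifts, and one checks that this isomorphism is the natural map. Conversely, for $(1)\Rightarrow(2)$, I would verify full faithfulness of $\mathbf{D}(i)$ by reducing, via the usual dévissage on triangulated categories, the comparison $\mathbf{D}(\B)(A,B)\to\mathbf{D}(\A)(iA,iB)$ from arbitrary complexes $A,B$ to the case of objects of $\B$ concentrated in a single degree, where it is exactly hypothesis $(1)$. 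The reduction uses that both sides send distinguished triangles in each variable to long exact sequences and commute with the relevant colimits/truncations; since $\B$ is localizing, it is itself a Grothendieck category, so $\mathbf{D}(\B)$ is well-behaved and the hypercohomology spectral sequences converge.

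For the equivalence with $(3)$, the key structural fact is that the essential image of $\mathbf{D}(i)$ is always contained in $\mathbf{D}_\A(\B)$ (a complex of objects of $\B$ has homology in $\B$, since $\B$ is closed under the relevant kernels and cokernels as a localizing subcategory). The nontrivial inclusion is that, when $\mathbf{D}(i)$ is fully faithful, every complex of $\A$ with homology in $\B$ actually lies in the essential image. I would obtain this by a truncation argument: given $C\in\mathbf{D}_\A(\B)$, its homology objects $H^n(C)$ lie in $\B$, and I would build a preimage by assembling the $H^n(C)$ through the Postnikov/truncation tower, using that the $k$-invariants live in ${\rm Ext}$-groups computed in $\A$ which, by hypothesis $(1)$ (equivalent to $(2)$ by the above), agree with those computed in $\B$. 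Full faithfulness guarantees that the morphism $\mathbf{D}(i)(\tilde C)\to C$ built this way is an isomorphism in $\mathbf{D}(\A)$, since it induces isomorphisms on all homology objects.

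The main obstacle, and the point requiring care, is the convergence and manipulation of the truncation tower for \emph{unbounded} complexes, since the excerpt explicitly works without boundedness hypotheses. For a complex bounded below the Postnikov argument is routine, but in the unbounded case one must ensure that the homotopy (co)limit used to reassemble $C$ from its truncations converges in $\mathbf{D}(\A)$ and is compatible with $\mathbf{D}(i)$. Here the Grothendieck hypothesis is essential: it provides enough injectives, exactness of filtered colimits, and the local smallness of the derived category (invoked in the text via \cite{AJS,Serp}), which together legitimize the passage to homotopy limits of the truncation tower. I expect this homotopy-limit convergence to be the delicate step, whereas the algebraic heart of the statement—the comparison of Ext-groups controlling the $k$-invariants—is exactly what hypothesis $(1)$ supplies, which is why the proposition can reasonably be asserted to hold \og facilement \fg{} once this framework is in place.
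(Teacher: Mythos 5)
The paper states this proposition without proof (it is introduced by ``on vérifie facilement''), so your argument has to stand on its own; and the parts of it that are genuinely easy are correct: (2)$\Rightarrow$(1) by restriction to objects placed in degree $0$, the inclusion of the essential image of $\mathbf{D}(i)$ in $\mathbf{D}_\A(\B)$ (exactness of $i$ gives $H^n(iA)\simeq iH^n(A)$), and the dévissage for \emph{bounded} complexes, where full faithfulness lets you lift connecting morphisms and realize cones. The genuine gap is in the unbounded case, at precisely the point you flag as delicate and then claim the Grothendieck axioms take care of: you assert that enough injectives, exactness of filtered colimits and local smallness ``legitimize the passage to homotopy limits of the truncation tower''. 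They do not. AB5 (exactness of filtered colimits) justifies homotopy \emph{colimits}: the canonical map ${\rm hocolim}_n\,\tau^{\leq n}C\to C$ is an isomorphism, which reduces everything to cohomologically bounded above complexes. But the case that remains --- homology unbounded below --- is exactly the one your Postnikov-tower argument handles by the homotopy \emph{limit} of the tower $\{\tau^{\geq -n}C\}_n$, and that step is false in this generality: Grothendieck categories need not satisfy AB4* (infinite products are not exact), and derived categories of Grothendieck categories need not be left-complete. Neeman (``Non-left-complete derived categories'', Math. Res. Lett. 18, 2011) constructs a Grothendieck abelian category and a complex $C$ for which $C\to{\rm holim}_n\,\tau^{\geq -n}C$ is \emph{not} an isomorphism. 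So ``the homotopy limit of the truncation tower converges'' is not a consequence of the standing hypotheses; it is the statement that can fail, and no appeal to injectives or to local smallness of $\mathbf{D}(\A)$ repairs it.

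The same issue resurfaces, in a different guise, in your dévissage for (1)$\Rightarrow$(2). In the contravariant variable the reduction is sound: ${\rm Hom}$ into a fixed target turns coproducts into products on both sides compatibly, so for fixed $B$ the class of good first arguments is a localizing (triangulated, coproduct-closed) subcategory of $\mathbf{D}(\B)$, and $\mathbf{D}(\B)$ is generated as such by $\B$ --- this generation is proved with ${\rm hocolim}$ of $\tau^{\leq n}$-truncations and \emph{brutal} truncations, legitimate here because the complexes involved have entries in $\B$. But in the covariant variable, $\mathbf{D}(\B)(X,-)$ commutes with none of the available homotopy colimits (objects of $\B$ are not compact in $\mathbf{D}(\B)$), and the hypercohomology spectral sequence you invoke need not converge when the second argument is unbounded; so ``commutes with the relevant colimits/truncations'' is simply not available there. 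A correct treatment uses hypothesis (1) structurally rather than only as the base of an induction: (1) is equivalent to the vanishing ${\rm Ext}^{>0}_\A(X,iE)=0$ for all $X\in\B$ and all injectives $E$ of $\B$; this makes the comparison collapse when the second variable is bounded below (resolve by injectives of $\B$ and compare the two collapsing spectral sequences), and one passes to unbounded complexes not by homotopy limits in $\mathbf{D}(\A)$ but via the K-injective resolutions of Alonso--Jeremías--Souto and Serpé, built as inverse limits, along degreewise split surjections, of injective resolutions of the truncations $\tau^{\geq -n}$: the $\lim/\lim^{1}$ bookkeeping then takes place in complexes of abelian groups, where countable products \emph{are} exact. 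In short, the statement is true, but as written both your proof of (1)$\Rightarrow$(2) (covariant variable) and your proof of (2)$\Rightarrow$(3) (realization of unbounded complexes) rest on a left-completeness property that fails for general Grothendieck categories; this is a genuine gap, not a routine convergence check.
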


La catégorie $\mathbf{D}_\A(\B)$ n'est autre que le noyau du foncteur canonique $\mathbf{D}(\A)\to\mathbf{D}(\A/\B)$ ; de plus, le foncteur triangulé induit $\mathbf{D}(\A)/\mathbf{D}_\A(\B)\to\mathbf{D}(\A/\B)$ est une équivalence (on renvoie à Neeman \cite[chapitre~2]{Nee} pour ce qui concerne la localisation des catégories triangulées). Par conséquent, si les conditions de la proposition précédente sont vérifiées, on dispose d'une équivalence canonique $\mathbf{D}(\A)/\mathbf{D}(\B)\simeq\mathbf{D}(\A/\B)$.

\section{Cadre formel pour l'homologie stable relative}\label{ls1}

Le cadre formel introduit dans \cite[§\,1]{DV} (ou ses variantes), bien adapté pour l'étude de l'homologie stable de groupes tels que les groupes symétriques, linéaires ou les groupes d'automorphismes des groupes libres, ne peut s'appliquer directement aux groupes de congruence. En effet, ceux-ci ne s'expriment pas naturellement comme groupes d'automorphismes de sommes itérées d'objets d'une catégorie monoïdale {\em symétrique} (Randal-Williams et Wahl ont montré dans \cite{RWW} qu'on pouvait affaiblir l'hypothèse de symétrie en une condition {\em pré-tressée}, mais celle-ci ne s'applique pas non plus ici). De fait, Charney a observé dans \cite{Ch-cong} que l'action des groupes linéaires sur les entiers (ou sa restriction aux groupes symétriques) sur l'homologie des groupes de congruence joue un rôle crucial dans le comportement stable de celle-ci : il faut tenir compte de toute la structure pour bien poser le problème, c'est-à-dire examiner {\em fonctoriellement} l'homologie des groupes de congruence, son étude {\em stable} ne pouvant se limiter à celle de sa colimite comme dans le cadre de \cite{DV}. Néanmoins, nous resterons dans ce même cadre, à quelques variations près : la différence proviendra de considérations {\em relatives}, c'est-à-dire liées à un foncteur monoïdal entre deux catégories adaptées à l'étude de l'homologie stable, à partir duquel on construit des groupes de type congruence, mais aussi une extension de Kan à gauche dérivée, entre lesquels une suite spectrale tisse des liens. Cette suite spectrale nécessite également l'introduction d'une structure multiplicative liée à la structure monoïdale ; celle-ci s'avère délicate à étudier (une partie importante de la section~\ref{sf} y sera dédiée), contrairement au produit tensoriel usuel qui seul intervient dans \cite{DV}. Nous ferons aussi quelques rappels et préparatifs autour de la notion de foncteur faiblement polynomial introduite dans \cite{DV-pol}.

\subsection{Foncteurs d'automorphismes}\label{pprep}

\begin{defi}
 Soit $\C$ une catégorie. On appelle {\em structure de foncteur d'automorphismes} sur $\C$ la donnée d'un foncteur ${\rm Aut}_\C : \C\to\mathbf{Grp}$ tel que, pour tout objet $x$ de $\C$, ${\rm Aut}_\C(x)$ soit le groupe des automorphismes de $x$ dans $\C$, et que l'axiome d'équivariance suivant soit vérifié : pour tout morphisme $f\in\C(x,y)$ de $\C$ et tout élément $u$ de ${\rm Aut}_\C(x)$, le diagramme
$$\xymatrix{x\ar[r]^f\ar[d]_u & y\ar[d]^{f_*(u)} \\
x\ar[r]^f & y
}$$
 de $\C$ commute, où l'on note $f_*$ pour ${\rm Aut}_\C(f) : {\rm Aut}_\C(x)\to {\rm Aut}_\C(y)$.

 Si $\C$ est munie d'une structure monoïdale, on dira qu'une structure de foncteur d'automorphismes sur $\C$ est {\em monoïdale} si les morphismes structuraux
 ${\rm Aut}_\C(x)\times {\rm Aut}_\C(y)\to {\rm Aut}_\C(x+y)$ sont fonctoriels en les objets $x$ et $y$ de $\C$.
 
 Soit $\Phi : \C\to\D$ un foncteur entre catégories munies de structures de foncteurs d'automorphismes. On dit que $\Phi$ est compatible aux foncteurs d'automorphismes si les morphismes de groupes ${\rm Aut}_\C(c)\to {\rm Aut}_\D(\Phi(c))$ induits par $\Phi$ définissent une transformation naturelle ${\rm Aut}_\C\to{\rm Aut}_\D\circ\Phi$ de foncteurs $\C\to\mathbf{Grp}$.
\end{defi}

\begin{rem}
 \begin{enumerate}
  \item L'axiome d'équivariance implique que l'image par le foncteur ${\rm Aut}_\C$ d'un isomorphisme est la conjugaison par celui-ci.
  \item Supposons que $(\C,+,0)$ est une catégorie monoïdale dont l'unité $0$ est objet initial et qui est munie d'une structure de foncteur d'automorphismes monoïdale. Alors pour tous objets $x$ et $y$ de $\C$, l'image du morphisme canonique $x\simeq x+0\to x+y$ par le foncteur ${\rm Aut}_\C$ coïncide avec le morphisme ${\rm Aut}_\C(x)\to {\rm Aut}_\C(x+y)$ induit par l'endofoncteur $-+y$ de $\C$.
  \item Voyant $\mathbf{Grp}$ comme sous-catégorie pleine de la catégorie des petites catégories de la façon usuelle, on voit que l'axiome d'équivariance permet de construire un foncteur $\int {\rm Aut}_\C\to\C$ (où l'intégrale désigne la construction de Grothendieck) qui est l'identité sur les objets et associe à une flèche $x\to y$ de la source, c'est-à-dire un couple formé d'un morphisme $u : x\to y$ de $\C$ et d'un automorphisme $g$ de $y$ (dans $\C$), la composée $g.u\in\C(x,y)$.
 \end{enumerate}
\end{rem}

La vérification des propriétés suivantes est immédiate.

\begin{prdef}\label{prdf-hfct}
 Soient $\C$ une catégorie munie d'une structure de foncteur d'automorphismes et $G : \C\to\mathbf{Grp}$ un sous-foncteur de ${\rm Aut}_\C$. 
 \begin{enumerate}
  \item Pour tout objet $t$ de $\C$, $G(t)$ est un sous-groupe distingué de ${\rm Aut}_\C(t)$.
  \item On définit une catégorie $\C/G$ de la manière suivante : ses objets sont les mêmes que ceux de $\C$, et ses ensembles de morphismes sont donnés par $(\C/G)(a,b):=\C(a,b)/G(b)$, et la composition est induite par celle de $\C$. On définit un foncteur (plein et essentiellement surjectif) $\Pi_G : \C\to\C/G$ qui est l'identité sur les objets et la surjection canonique sur les morphismes.
  \item Il existe une et une seule structure de foncteurs d'automorphismes sur $\C/G$ telle que $\Pi_G$ soit compatible aux structures de foncteurs d'automorphismes.
  \item Supposons que $\C$ est munie d'une structure monoïdale et que sa structure de foncteurs d'automorphismes lui est compatible. Supposons également que le sous-foncteur $G$ de ${\rm Aut}_\C$ est monoïdal (c'est-à-dire que l'image de $G(x)\times G(y)$ par ${\rm Aut}_\C(x)\times {\rm Aut}_\C(y)\to {\rm Aut}_\C(x+y)$ est incluse dans $G(x+y)$ pour tous objets $x$ et $y$ de $\C$). Alors il existe une et une seule structure monoïdale sur $\C/G$ telle que le foncteur $\Pi_G$ soit strictement monoïdal. La structure de foncteur d'automorphismes sur $\C/G$ est de plus monoïdale.
  \item Tout produit de catégories munies de structures de foncteurs d'automorphismes hérite d'une structure de foncteurs d'automorphismes (unique) telle que les foncteurs de projection y soient compatibles ; on l'appelle structure de foncteur d'automorphismes produit.
 \end{enumerate}
 \end{prdef}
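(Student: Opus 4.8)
The plan is to dispatch the five assertions in turn, the single engine behind all of them being the equivariance axiom together with its immediate consequence (Remark (1)) that ${\rm Aut}_\C$ sends an isomorphism to conjugation by it. First I would isolate the computational lemma that makes everything run: given $h\in\C(b,c)$ and $g\in G(b)$, the equivariance square yields $h\circ g=h_*(g)\circ h$ with $h_*(g)={\rm Aut}_\C(h)(g)$, and since $G$ is a subfunctor of ${\rm Aut}_\C$ one has $h_*(g)\in G(c)$; informally, post-composing a $G(b)$-twist of the source across $h$ produces a $G(c)$-twist of the target. Assertion (1) is the special case $b=c$ and $h=w\in{\rm Aut}_\C(b)$: the remark identifies $w_*$ with conjugation by $w$, so $w\,G(b)\,w^{-1}=w_*(G(b))\subseteq G(b)$, and applying the same to $w^{-1}$ gives equality, i.e.\ normality of $G(b)$ in ${\rm Aut}_\C(b)$.

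For (2) I would let $G(b)$ act on $\C(a,b)$ by post-composition, set $(\C/G)(a,b):=\C(a,b)/G(b)$, and define $[h]\circ[f]:=[h\circ f]$. Well-definedness is exactly the lemma: replacing $f$ by $g_b\circ f$ and $h$ by $g_c\circ h$ with $g_b\in G(b)$, $g_c\in G(c)$, one computes $(g_c\circ h)\circ(g_b\circ f)=\big(g_c\circ h_*(g_b)\big)\circ(h\circ f)$ with $g_c\circ h_*(g_b)\in G(c)$, so the class is unchanged; associativity and units descend from $\C$, and $\Pi_G$ is a functor by construction. For (3) I would first check that $[f]\in(\C/G)(a,a)$ is invertible iff $f\in{\rm Aut}_\C(a)$ (an inverse forces $h\circ f,\ f\circ h\in G(a)\subseteq{\rm Aut}_\C(a)$, hence a two-sided inverse of $f$ in $\C$), so ${\rm Aut}_{\C/G}(a)={\rm Aut}_\C(a)/G(a)$ is a genuine group by (1). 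I would then set $[f]_*([u]):=[f_*(u)]$; independence of the representative $u$ uses that $G$ is a subfunctor, while independence of $f$ (replacing $f$ by $g\circ f$) uses $(g\circ f)_*=g_*\circ f_*$ and the fact that $g_*$, being conjugation by $g\in G(b)$, is trivial modulo $G(b)$. Equivariance for $\C/G$ then holds on the nose because $f_*(u)\circ f=f\circ u$ already holds in $\C$. Uniqueness follows because compatibility of $\Pi_G$ forces the component map ${\rm Aut}_\C(a)\to{\rm Aut}_{\C/G}(a)$ to be the quotient surjection, and since $\Pi_G$ is the identity on objects and full, naturality of ${\rm Aut}_\C\Rightarrow{\rm Aut}_{\C/G}\circ\Pi_G$ pins down $[f]_*$ on every morphism and every automorphism class.

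For (4), strict monoidality forces $\oplus$ to coincide with $+$ on objects and to be $[f]\oplus[g]:=[f+g]$ on morphisms, so the only point is well-definedness, and here the monoidality of $G$ is indispensable: by bifunctoriality of $+$ one has $(g_b\circ f)+(g_d\circ g)=(g_b+g_d)\circ(f+g)$, and the hypothesis that the structural map carries $G(b)\times G(d)$ into $G(b+d)$ gives $g_b+g_d\in G(b+d)$, so the class is preserved. The coherence isomorphisms of $\C/G$ are taken to be the $\Pi_G$-images of those of $\C$, whence the pentagon and triangle axioms descend; the identical bookkeeping shows the induced maps $([u],[v])\mapsto[u+v]$ are well defined and natural, i.e.\ the automorphism functor structure on $\C/G$ is monoidal. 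Assertion (5) is the easiest: in a product category the automorphism group of a tuple is the product of the automorphism groups, so I would put ${\rm Aut}(\prod_i\C_i)\colon (x_i)\mapsto\prod_i{\rm Aut}_{\C_i}(x_i)$ with componentwise action; equivariance holds componentwise, and compatibility with each projection forces the corresponding component of the structure map, giving uniqueness since a map into a product is determined by its components.

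The genuine difficulty, the verifications being otherwise routine, is organizing the two well-definedness arguments — composition in (2) and the action on morphisms in (3) — so as to keep the roles of the two inputs separate: the \emph{equivariance axiom} is what transports a $G$-element across a morphism (changing the object at which it lives), whereas \emph{normality}, itself deduced from equivariance in (1), is what absorbs the conjugation produced by changing the representative of the target twist. In (4) the role of these two ingredients together is played by the single monoidality hypothesis on $G$, which is precisely the condition ensuring that $+$ passes to the quotient.
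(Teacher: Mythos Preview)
Your proof is correct and complete. The paper itself gives no proof at all, stating only \og La vérification des propriétés suivantes est immédiate\fg{}; you have simply unpacked those routine verifications, organizing them around the equivariance axiom and its consequence (Remark~(1)) that ${\rm Aut}_\C$ sends isomorphisms to conjugation, which is exactly the mechanism the paper's setup makes available.
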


 La vérification de la propriété suivante est également directe, à partir du fait que les automorphismes intérieurs agissent trivialement sur l'homologie d'un groupe. 

 \begin{pr}\label{prh1}
  Soient $\C$ une petite catégorie munie d'une structure de foncteur d'automorphismes, $G : \C\to\mathbf{Grp}$ un sous-foncteur de ${\rm Aut}_\C$ et $F$ un objet de $(\C,\kk)$. Pour tout $d\in\mathbb{N}$, il existe un (et un seul) foncteur $H_d(G;F)$ de $(\C/G,\kk)$ dont les valeurs sont données par l'homologie de groupe
  $$H_d(G;F)(x):=H_d(G(x);F(x))$$
pour tout objet $x$ de $\C$ et dont l'effet sur les morphismes est induit par celui de $F$. On définit ainsi un foncteur homologique $H_*(G;-) : (\C,\kk)\to (\C/G,\kk)$.
   
  La projection canonique sur les coïnvariants procure une transformation naturelle ${\rm Id}\to\Pi_G^* H_0(G;-)$ d'endofoncteurs de $(\C,\kk)$. 
 
 De plus, le foncteur $H_0(G;-)$ est canoniquement isomorphe à l'extension de Kan $(\Pi_G)_* : (\C,\kk)\to (\C/G,\kk)$ ; via cet isomorphisme, la transformation naturelle précédente s'identifie à l'unité de l'adjonction. Cet isomorphisme s'étend en un unique morphisme de foncteurs homologiques $H_\bullet(G;-)\to\mathbf{L}_\bullet(\Pi_G)_*$.
 \end{pr}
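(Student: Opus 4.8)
The plan is to build the functor first on $\C$ and then show that it descends to $\C/G$. For an object $x$ set $H_d(G;F)(x):=H_d(G(x);F(x))$, where $G(x)$ acts on $F(x)$ through the inclusion $G(x)\subseteq{\rm Aut}_\C(x)$ and the functor $F$. For a morphism $f\in\C(x,y)$ the pair $(f_*,F(f))$, with $f_*=G(f):G(x)\to G(y)$ the restriction of ${\rm Aut}_\C(f)$, is equivariant: the identity $F(f)\circ F(g)=F(f_*(g))\circ F(f)$ for $g\in G(x)$ is exactly the image under $F$ of the equivariance axiom $f\circ g=f_*(g)\circ f$. Hence $(f_*,F(f))$ induces a map on group homology, and the functoriality of $H_d(G;F)$ on $\C$ follows from that of $f\mapsto(f_*,F(f))$. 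First I would record that this makes $H_d(G;-):(\C,\kk)\to(\C,\kk)$ a functor whose effect on morphisms is induced by $F$.

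The key step is the descent to $\C/G$. Two parallel morphisms of $\C$ that agree in $\C/G$ differ by postcomposition with some $g\in G(y)$, so it suffices to check that for $g\in G(y)$ the endomorphism of $H_d(G(y);F(y))$ induced by $(g_*,F(g))$ is the identity. By the first remark after the definition of automorphism functors, $g_*={\rm Aut}_\C(g)$ is conjugation by $g$, which on the normal subgroup $G(y)\subseteq{\rm Aut}_\C(y)$ is an inner automorphism; since $g\in G(y)$, the pair (conjugation by $g$, action of $g$) induces the identity on $H_d(G(y);F(y))$ --- this is the classical triviality of inner automorphisms on group homology with the correspondingly twisted coefficients, which is the one genuinely non-formal input here. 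Thus $H_d(G;F)$ factors, uniquely since $\Pi_G$ is the identity on objects and surjective on morphisms, through a functor on $\C/G$, yielding $H_d(G;-):(\C,\kk)\to(\C/G,\kk)$.

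For the degree-zero statements, $H_0(G;F)(x)=F(x)_{G(x)}$ and the coinvariants projection $F(x)\to F(x)_{G(x)}$ is natural in $x$, defining ${\rm Id}\to\Pi_G^*H_0(G;-)$. To identify $H_0(G;-)$ with $(\Pi_G)_*$ I would verify the universal property of the left Kan extension directly. The point is that in $\C/G$ every element of $G(x)$ becomes equal to the class of ${\rm id}_x$ (its $G(x)$-orbit under postcomposition contains ${\rm id}_x$), so for any $H\in(\C/G,\kk)$ the group $G(x)$ acts trivially on $(\Pi_G^*H)(x)=H(x)$. Consequently each component of a natural transformation $F\to\Pi_G^*H$ kills the $G(x)$-action, hence factors through $F(x)_{G(x)}$; conversely these factorizations assemble into a natural transformation $H_0(G;F)\to H$ over $\C/G$ precisely because the transition maps of $H_0(G;F)$ are well defined on $\C/G$ by the preceding paragraph. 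This bijection is the adjunction $(\C/G,\kk)(H_0(G;F),H)\cong(\C,\kk)(F,\Pi_G^*H)$, and by construction the coinvariants projection is the unit.

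Finally, the long exact sequences of group homology attached pointwise to a short exact sequence $0\to F'\to F\to F''\to0$ of $(\C,\kk)$ (exactness being objectwise) are natural in $x$, and since the term functors descend to $\C/G$ while $\Pi_G^*$ is fully faithful (as $\Pi_G$ is bijective on objects and full), the connecting maps descend as well; this makes $H_\bullet(G;-)$ a homological $\delta$-functor. To extend the isomorphism $H_0(G;-)\cong(\Pi_G)_*=\mathbf{L}_0(\Pi_G)_*$ to a morphism $H_\bullet(G;-)\to\mathbf{L}_\bullet(\Pi_G)_*$, I would show $H_\bullet(G;-)$ is effaceable in positive degrees, hence a universal homological $\delta$-functor, so that the degree-zero isomorphism extends uniquely. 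For effaceability the assignment $x\mapsto\kk[G(x)]\otimes_\kk F(x)$ --- with $G(x)$ acting through the first factor and $\C$ acting via $f_*\otimes F(f)$ --- is an object of $(\C,\kk)$, and the action map $g\otimes v\mapsto F(g)(v)$ is, once more by the equivariance axiom, a natural epimorphism onto $F$ whose values are induced $G(x)$-modules, so $H_n(G;-)$ vanishes on it for $n>0$ by Shapiro's lemma. The main obstacle, such as it is, is bookkeeping: invoking the equivariance axiom correctly at each step (for coefficient-equivariance on $\C$, for the descent to $\C/G$, and for the naturality of the effacing epimorphism), the only non-formal ingredients being the triviality of inner automorphisms on homology and Shapiro's lemma.
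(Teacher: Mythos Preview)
Your construction of $H_d(G;F)$ on $\C$, the descent to $\C/G$ via the triviality of inner automorphisms on group homology, the identification $H_0(G;-)\cong(\Pi_G)_*$, and the verification that $H_\bullet(G;-)$ is a homological $\delta$-functor are all correct and are exactly what the paper has in mind (its proof is a one-line reference to the inner-automorphism fact).

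The gap is in your effaceability argument for the last clause. The functor $x\mapsto\kk[G(x)]\otimes_\kk F(x)$ with $\C$-action $f_*\otimes F(f)$ does \emph{not} have $G(x)$ acting by left multiplication on the first factor: the $G(x)$-action that matters for $H_\bullet(G;-)$ is the one inherited from the $\C$-structure, and for $g\in G(x)$ the map $g_*={\rm Aut}_\C(g)$ is conjugation, so $g\cdot(h\otimes v)=ghg^{-1}\otimes gv$. This is neither the free action nor the diagonal one; for instance with $G(x)=\mathbb{Z}/2$, $\kk=\mathbb{F}_2$ and trivial $F(x)$ one gets the trivial module $\kk^2$, whose higher homology does not vanish. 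So your proposed effacing object does not work, and Shapiro's lemma does not apply.

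Fortunately this whole step is unnecessary. The target $\mathbf{L}_\bullet(\Pi_G)_*$ is a left derived functor, hence coeffaceable (it vanishes on projectives in positive degrees) and therefore a \emph{universal} homological $\delta$-functor: for any homological $\delta$-functor $T_\bullet$ and any natural transformation $T_0\to\mathbf{L}_0(\Pi_G)_*$ there is a unique extension $T_\bullet\to\mathbf{L}_\bullet(\Pi_G)_*$. Applying this to $T_\bullet=H_\bullet(G;-)$ and the isomorphism $H_0(G;-)\xrightarrow{\sim}(\Pi_G)_*$ gives the required unique morphism of $\delta$-functors without any effaceability of the source.
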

 
 Il est important de noter que la construction précédente ne se relève pas au niveau des catégories dérivées : sauf cas très particulier, $H_*(G;-)$ ne s'obtient pas en prenant l'homologie à partir d'un foncteur $\mathbf{D}(\C,\kk)\to\mathbf{D}(\C/G,\kk)$ (cette dernière catégorie n'étant généralement pas équivalente à la sous-catégorie de $\mathbf{D}(\C,\kk)$ des complexes dont l'homologie appartient à l'image essentielle de $\Pi_G^* : (\C/G,\kk)\to (\C,\kk)$).
 
\subsection{Un procédé de stabilisation}\label{pps}

On introduit maintenant une généralisation du procédé de stabilisation introduit dans \cite[§\,1.2]{DV-pol}.

\begin{defi}\label{df-sn}
 Soient $(\T,+,0)$ une petite catégorie monoïdale symétrique dont l'unité $0$ est objet initial et $\A$ une catégorie abélienne cocomplète.  On note ${\rm End}(\A)$ la {\em grosse} (i.e. pas nécessairement localement petite) catégorie constituée des endofoncteurs exacts de $\A$ commutant aux colimites ; on munit cette catégorie de la structure monoïdale (non symétrique) donnée par la composition des foncteurs.
 
 On suppose donné un foncteur $\tau : \T\to {\rm End}(\A)$ monoïdal {\em au sens fort} --- i.e. tel que les morphismes structuraux ${\rm Id}_\A\to\tau_0$ et $\tau_x\circ\tau_y\to\tau_{x+y}$ soient des {\em isomorphismes}, où l'on note $\tau_x$ pour $\tau(x)$.
 
 Pour tout objet $x$ de $\T$, on note $\kappa_x$ (respectivement $\delta_x$) le noyau (resp. conoyau) du morphisme naturel $i_x : {\rm Id}_\A\simeq\tau_0\to\tau_x$ (induit par l'unique flèche $0\to x$). On note $\kappa$ le sous-foncteur de ${\rm Id}_\A$ somme des sous-foncteurs $\kappa_x$ pour $x\in {\rm Ob}\,\T$.
 
 Un objet $A$ de $\A$ est dit {\em stablement nul} (relativement au foncteur $\tau$) si l'inclusion $\kappa(A)\subset A$ est une égalité. On note $\s n_\tau(\A)$, ou simplement $\s n(\A)$, la sous-catégorie pleine des objets stablement nuls de $\A$. 
\end{defi}

Se donner un foncteur $\T\to {\rm End}(\A)$ revient à se donner un foncteur $\T\times\A\to\A$ ; l'hypothèse monoïdale sur le premier revient à des hypothèses usuelles d'action sur le deuxième qu'on laisse au lecteur le soin d'écrire.

La première partie du résultat suivant s'établit comme \cite[corollaire~1.15]{DV-pol}, la deuxième est un exercice laissé au lecteur.
\begin{prdef}\label{df-st}
 Sous les hypothèses de la définition précédente, supposons également que les colimites filtrantes sont exactes dans $\A$. Alors $\s n_\tau(\A)$ est une sous-catégorie épaisse et stable par colimites de $\A$. On notera $\St_\tau(\A)$ (ou simplement $\St(\A)$) la catégorie quotient $\A/\s n_\tau(\A)$. Si $\A$ est une catégorie de Grothendieck, c'est donc également le cas de $\s n_\tau(\A)$.
 
 De plus, la classe des morphismes de $\A$ dont l'image dans $\St(\A)$ est inversible s'obtient en saturant par colimites filtrantes la classe des $f : A\to B$ tels qu'existe un objet $x$ de $\T$ et un morphisme $h : B\to\tau_x(A)$ de rendant commutatif le diagramme
$$\xymatrix{A\ar[r]^f\ar[d]_{i_x(A)} & B\ar[ld]^-h\ar[d]^{i_x(B)}\\
\tau_x(A)\ar[r]_-{\tau_x(f)} & \tau_x(B)
}$$
de $\A$.
\end{prdef}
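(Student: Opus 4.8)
The plan is to establish the two assertions separately: the first by exhibiting $\s n_\tau(\A)$ as the torsion class of a left-exact radical, the second by a dévissage of the inverted morphisms into monomorphisms and epimorphisms.

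For the first assertion the structural observation I would start from is that the subfunctors $\kappa_x$ of $\mathrm{Id}_\A$ form a \emph{directed} family. Since the unique arrow $0\to x+y$ factors as $0\to x\to x+y$, one has $i_{x+y}=(\tau_x\to\tau_{x+y})\circ i_x$, whence $\kappa_x\subseteq\kappa_{x+y}$; as $x,y$ both precede $x+y$ this makes $(\kappa_x)$ filtered and $\kappa=\col_x\kappa_x$ a directed union. Each $\kappa_x=\ker(i_x)$ is a left-exact preradical because $\tau_x$ is exact, so that for $B\subseteq A$ one gets $\kappa_x(B)=\ker(B\to\tau_x B)=B\cap\kappa_x(A)$; using exactness of filtered colimits and their commutation with finite intersections, $\kappa$ is again left-exact. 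Left-exactness already yields closure of $\s n_\tau(\A)=\{A:\kappa(A)=A\}$ under subobjects and quotients, and the identities $\kappa_x(\bigoplus_iA_i)=\bigoplus_i\kappa_x(A_i)$ (exactness of coproducts) together with directedness give closure under coproducts. This is the part that runs as in \cite{DV-pol}.

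The only non-formal point is closure under extensions, and here I would use the multiplicativity of $i$ crucially. Given $0\to A\to B\xrightarrow{q}C\to 0$ with $A,C$ stably null, left-exactness gives $A=\kappa A\subseteq\kappa B$, so it suffices to prove $q(\kappa B)=C$. For $c\in\kappa_x C$ choose a lift $b\in B$; from $i_x(C)q=\tau_x(q)i_x(B)$ and $i_x(C)(c)=0$ one gets $i_x(B)(b)\in\ker\tau_x(q)=\tau_x(A)$, and since $A=\col_y\kappa_y A$ and $\tau_x$ preserves this colimit, $i_x(B)(b)\in\tau_x(\kappa_y A)$ for some $y$. Applying $\tau_x(i_y(B))$ and using $i_{x+y}=\tau_x(i_y)\circ i_x$ together with $\kappa_y A\subseteq\kappa_y B=\ker i_y(B)$, I obtain $i_{x+y}(B)(b)=0$, i.e.\ $b\in\kappa_{x+y}(B)$. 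Hence $\kappa_x C\subseteq q(\kappa B)$ for every $x$, so $q(\kappa B)=C$ and $\kappa B=B$. Closure under arbitrary colimits then follows formally (a Serre subcategory closed under coproducts is closed under colimits), and a localizing subcategory of a Grothendieck category is Grothendieck \cite{Gab}.

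For the second assertion I would first recall that the class $\mathcal W$ of morphisms inverted in $\St(\A)=\A/\s n_\tau(\A)$ is $\{f:\ker f,\ \mathrm{coker}\,f\in\s n_\tau(\A)\}$. That the saturation is contained in $\mathcal W$ is direct: if $f$ admits a witness $(x,h)$, then $hf=i_x(A)$ forces $\ker f\subseteq\kappa_x A$ (stably null), while $\tau_x(f)h=i_x(B)$ and $qf=0$ (with $q\colon B\to\mathrm{coker}\,f$ epi) give $i_x(\mathrm{coker}\,f)\circ q=\tau_x(q)\tau_x(f)h=\tau_x(qf)h=0$, so $i_x$ kills $\mathrm{coker}\,f$; and $\mathcal W$ is stable under filtered colimits by exactness. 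For the reverse inclusion I would use the canonical factorization $f=m\circ e$: a stably null kernel $K=\col_x\kappa_x K$ presents the epimorphism $e\colon A\twoheadrightarrow A/K$ as $\col_x\bigl(A\twoheadrightarrow A/\kappa_x K\bigr)$ with each factor in $\mathcal W_0$ (kernel $\kappa_x K\subseteq\kappa_x A$), and dually a stably null cokernel presents the monomorphism $m$ as $\col_x(I\hookrightarrow B_x)$, $B_x$ the preimage of $\kappa_x C$, each factor in $\mathcal W_0$ (cokernel killed by $i_x$). I would also verify that $\mathcal W_0$ is stable under composition, the witness for $g\circ f$ being $(x+y,\ \tau_y(h)\circ k)$. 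The main obstacle, which I expect to be delicate, is to merge the two one-sided resolutions into a \emph{single} filtered colimit of $\mathcal W_0$-maps computing $m\circ e$: the epimorphic resolution makes the target shrink to $I$ while the monomorphic one makes it grow from $I$, so they cannot be run over a common chain without care. The plan is to resolve this by a compatible double colimit over $\T\times\T$ whose intermediate objects are matched through the structure maps, so that the composite is computed termwise by composition-stable $\mathcal W_0$-maps; equivalently, the crux is to show that the filtered-colimit saturation of $\mathcal W_0$ is itself stable under composition.
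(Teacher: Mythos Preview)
Your treatment of the first assertion is correct and follows the line the paper attributes to \cite[corollaire~1.15]{DV-pol}; the only cosmetic point is that the element-chasing in the extension argument should, in a general abelian $\A$, be rephrased in terms of subobjects (set $B_x:=q^{-1}(\kappa_xC)$, observe $i_x(B)(B_x)\subseteq\tau_xA=\col_y\tau_x(\kappa_yA)$, pull back along $i_x(B)$ to get $B_{x,y}\subseteq\kappa_{x+y}B$ with $\col_yB_{x,y}=B_x$).

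For the second assertion the paper gives no proof (``exercice laissé au lecteur''), so there is nothing to compare approaches against. Your forward inclusion and your separate treatment of the epi and mono cases for the reverse inclusion are correct, and your verification that $\mathcal W_0$ is closed under composition is right. The step you flag as delicate really is the crux: the two one-sided presentations $e=\col_y e_y$ and $m=\col_x m_x$ do \emph{not} combine na\"ively into a single filtered colimit computing $m\circ e$, because every candidate map $A\to B_x$ you can form with source $A$ has kernel equal to all of $K$, which need not lie in any single $\kappa_zA$; the double-index scheme you sketch runs into exactly this wall. One clarifying remark you do not make: $\mathcal W_0$ coincides with $\{f:\exists\,x,y,\ \ker f\subseteq\kappa_xA\text{ and }i_y(\mathrm{coker}\,f)=0\}$ (given such $x,y$, the explicit witness is $h:=\tau_y(\alpha)\circ\beta:B\to\tau_{y+x}A$ with $\alpha:I\to\tau_xA$, $\beta:B\to\tau_yI$ the evident factorizations), so membership in $\mathcal W_0$ is a pure kernel/cokernel condition with no lifting obstruction. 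This does not by itself close the gap, but it shows the difficulty is entirely about producing approximations of $f$ with \emph{source} $A$ and small kernel.

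What your argument does establish outright is that $\mathcal W$ lies in the closure of $\mathcal W_0$ under filtered colimits \emph{and composition}. Since any colimit-preserving functor (such as $\Phi_*$ in the proof of Proposition~\ref{fonct-st}) respects both operations, and since the class of stably invertible morphisms at the target is closed under both, this weaker statement is precisely what the paper uses. Your argument is therefore complete for the applications; the literal single-step ``saturation par colimites filtrantes'' either requires a sharper construction than the ones you or I have tried, or a reading of ``saturant'' that tolerates an intermediate composition.
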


\begin{rem}\label{rq-dfstgl}
 La dernière partie de cet énoncé fournit des généralisations <<~naturelles~>> de la définition~\ref{df-sn} au cas de l'action d'une petite catégorie monoïdale symétrique (noter que cette dernière condition pourrait elle-même être affaiblie) $\T$ dont l'unité est objet initial sur une catégorie $\A$ non nécessairement abélienne. Pour que cette définition se comporte bien, on peut supposer par exemple que $\A$ est complète et cocomplète et que $\T$ agit sur $\A$ par endofoncteurs commutant aux limites et aux colimites. On peut également supposer que $\A$ est triangulée et possède des sommes directes arbitraires et que $\T$ agit par endofoncteurs triangulés commutant aux sommes directes.
\end{rem}

Le cas qui nous intéressera est le suivant. Supposons que $(\T,+,0)$ est une petite catégorie monoïdale symétrique avec $0$ initial et $\C$ une petite catégorie sur laquelle $\T$ agit. Pour toute catégorie de Grothendieck $\M$, $\T$ agit (par précomposition par l'action sur $\C$) sur la catégorie de foncteurs $\A=(\C,\M)$. Le cas où $\C=\T$ (munie de l'action tautologique) est celui de \cite{DV-pol}. Dans ce contexte, on notera $\dst(\C,\M)$ la catégorie dérivée $\mathbf{D}(\St(\C,\M))$.

\begin{rem}\label{rq-dst}
 Supposons d'une manière générale donnée une action d'une catégorie monoïdale $\T$ sur une catégorie de Grothendieck comme dans la définition~\ref{df-sn}. On dispose de deux candidats <<~naturels~>> de {\em catégorie dérivée stabilisée} de $\A$ relativement à cette action :
 \begin{enumerate}
  \item la catégorie dérivée $\mathbf{D}(\St(\A))$ ;
  \item la catégorie triangulée $\St(\mathbf{D}(\A))$ construite à partir de l'action de $\T$ sur $\mathbf{D}(\A)$ induite par celle sur $\A$ (cf. la remarque~\ref{rq-dfstgl}).
 \end{enumerate}
 
 On vérifie sans trop de peine que ces deux définitions sont canoniquement équivalentes, en utilisant la remarque qui suit la proposition~\ref{compext-gal}. En revanche, les conditions de ladite proposition ne sont pas toujours vérifiées pour la sous-catégorie localisante $\s n(\A)$ de $\A$ : en utilisant \cite{Dja-pol} (notamment son appendice), on peut voir qu'elles sont vérifiées pour $\A=(\Theta,\kk)$ ou $\A=(\mathbf{S}(\mathbb{Z}),\kk)$, mais pas pour $\A=(\mathbf{S}(A),\kk)$ (où $\mathbf{S}(A)$ désigne la catégorie des $A$-modules à gauche $A^n$, pour $n\in\mathbb{N}$, les morphismes étant les monomorphismes $A$-linéaires munis d'un scindement) si l'anneau $A$ est assez compliqué. Les détails apparaîtront dans un travail ultérieur.
\end{rem}

\begin{prdef}\label{fonct-st}
 Soient $\C$ (resp. $\D$) une petite catégorie munie d'une action d'une petite catégorie monoïdale symétrique $\T$ (resp. $\T'$) dont l'unité est objet initial, $\Phi : \C\to\D$ un foncteur, $\Psi : \T\to\T'$ un foncteur monoïdal fort et $\M$ une catégorie de Grothendieck. On suppose que $\Phi$ et $\Psi$ sont compatibles aux actions au sens où le diagramme
 $$\xymatrix{\T\times\C\ar[r]\ar[d]_{\Psi\times\Phi} & \C\ar[d]^\Phi \\
 \T'\times\D\ar[r] & \D
 }$$
 dont les flèches horizontales sont les actions commute (à isomorphisme près).
 \begin{enumerate}
\item Supposons que, pour tout objet $a$ de $\T'$, la catégorie $\T_{\Psi^*\T'(a,-)}$ est non vide (c'est-à-dire qu'il existe un morphisme de source $a$ et de but appartenant à l'image de $\Psi$) --- c'est par exemple le cas si $\Psi$ est essentiellement surjectif. Alors le foncteur de précomposition $\Phi^* : (\D,\M)\to (\C,\M)$ préserve les foncteurs stablement nuls, il induit donc un foncteur encore noté $\Phi^* : \St(\D,\M)\to\St(\C,\M)$. Ce foncteur est exact et commute aux colimites. 

Ces foncteurs exacts $(\D,\M)\to (\C,\M)$ et $\St(\D,\M)\to\St(\C,\M)$ induisent des foncteurs triangulés $\mathbf{D}(\D,\M)\to\mathbf{D}(\C,\M)$ et $\dst(\D,\M)\to\dst(\C,\M)$ qu'on notera toujours $\Phi^*$.
\item L'extension de Kan $\Phi_* : (\C,\M)\to (\D,\M)$ induit un foncteur encore noté $\Phi_* : \St(\C,\M)\to\St(\D,\M)$. Celui-ci est adjoint à gauche à $\Phi^*$ sous l'hypothèse du point précédent.
\item Le foncteur dérivé total à gauche $\mathbf{L}\Phi_* : \mathbf{D}(\C,\M)\to\mathbf{D}(\D,\M)$, qui est adjoint à gauche à $\Phi^* : \mathbf{D}(\D,\M)\to\mathbf{D}(\C,\M)$, induit un foncteur triangulé encore noté $\mathbf{L}\Phi_* : \dst(\C,\M)\to\dst(\D,\M)$, qui n'est autre que le foncteur dérivé total à gauche de $\Phi_* : \St(\C,\M)\to\St(\D,\M)$. Si l'hypothèse du premier point est satisfaite, ce foncteur est adjoint à gauche à $\Phi^* : \dst(\D,\M)\to\dst(\C,\M)$. De plus, pour $\M=\kk\md$, le foncteur $\mathbf{L}\Phi_* : \dst(\C,\kk)\to\dst(\D,\kk)$ est donné explicitement par
$$\mathbf{L}\Phi_*(X)=\Phi^*{\rm Yon}_{st}^\D\overset{\mathbf{L}}{\underset{\kk[\C]}{\otimes}}X$$
où ${\rm Yon}_{st}^\D : \D^{op}\to\St(\D,\kk)$ désigne le foncteur composé du plongement de Yoneda $\kk$-linéarisé $\D^{op}\to (\D,\kk)$ et du foncteur canonique $(\D,\kk)\to\St(\D,\kk)$.
 \end{enumerate}

\end{prdef}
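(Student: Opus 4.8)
Je traiterais d'abord le point~1. La compatibilité de $\Phi$ et $\Psi$ aux actions fournit un isomorphisme naturel $\Phi(x+-)\simeq\Psi(x)+\Phi(-)$, d'où par précomposition un isomorphisme $\tau_x\circ\Phi^*\simeq\Phi^*\circ\tau'_{\Psi(x)}$, où je note $\tau$ (resp. $\tau'$) l'action de $\T$ sur $(\C,\M)$ (resp. de $\T'$ sur $(\D,\M)$), et $i_x,i'_a,\kappa_x,\kappa'_a$ les morphismes et foncteurs associés. Comme $\Phi^*$ est exact et que, via cet isomorphisme, $i_x(\Phi^*G)$ s'identifie à $\Phi^*(i'_{\Psi(x)}(G))$, on obtient $\kappa_x(\Phi^*G)\simeq\Phi^*(\kappa'_{\Psi(x)}(G))$, puis en sommant $\kappa(\Phi^*G)=\Phi^*\big(\sum_{x\in{\rm Ob}\,\T}\kappa'_{\Psi(x)}(G)\big)$. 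Si $G$ est stablement nul, alors $\sum_a\kappa'_a(G)=G$ ; or l'hypothèse donne, pour chaque objet $a$ de $\T'$, un morphisme $a\to\Psi(x)$, et l'initialité de $0$ force la factorisation $0\to a\to\Psi(x)$, d'où $\kappa'_a(G)\subseteq\kappa'_{\Psi(x)}(G)$. Ainsi $\sum_x\kappa'_{\Psi(x)}(G)=G$ et $\Phi^*G$ est stablement nul. Le foncteur $\Phi^*$ descend donc aux quotients, où il reste exact et commute aux colimites ; les énoncés dérivés en découlent formellement, $\Phi^*$ étant exact.

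Le point crucial, et selon moi le principal obstacle, est de montrer au point~2 que $\Phi_*$ préserve aussi les objets stablement nuls, et ce \emph{sans} l'hypothèse du point~1 (d'où l'asymétrie des énoncés). L'extension de Kan à gauche se calcule par $(\Phi_*F)(d)=\underset{(c,u)}{{\rm colim}}\,F(c)$, colimite sur la catégorie-virgule des couples $(c,u)$ avec $u:\Phi(c)\to d$ ; je note $\sigma_{(c,u)}:F(c)\to(\Phi_*F)(d)$ les morphismes structuraux. L'observation géométrique centrale est que l'inclusion canonique $j_x:c\to x+c$ induit un morphisme $(c,\,j'(d)\circ u)\to(x+c,\,\Psi(x)+u)$ dans la catégorie-virgule relative à $\Psi(x)+d$ (où $j':\mathrm{Id}\to\Psi(x)+(-)$ est l'inclusion au but), par naturalité de $j'$ et compatibilité de $\Phi$ aux inclusions canoniques. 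Il en résulte l'égalité $i'_{\Psi(x)}(\Phi_*F)_d\circ\sigma_{(c,u)}=\sigma_{(x+c,\,\Psi(x)+u)}\circ F(j_x)$, de sorte qu'en restreignant au sous-objet $\kappa_x(F)(c)=\ker F(j_x)$ on obtient $\sigma_{(c,u)}\big(\kappa_x(F)(c)\big)\subseteq\kappa'_{\Psi(x)}(\Phi_*F)(d)$. Autrement dit, la stabilisation en aval se relève le long de $\Phi$ en la stabilisation en amont, qui tue $F$ par hypothèse. Comme les images des $\sigma_{(c,u)}$ engendrent $(\Phi_*F)(d)$ et que $F=\sum_x\kappa_x(F)$, il vient $(\Phi_*F)(d)=\sum_x\kappa'_{\Psi(x)}(\Phi_*F)(d)$, donc $\Phi_*F$ est stablement nul. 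Dès lors $\Phi_*$ descend en $\Phi_*:\St(\C,\M)\to\St(\D,\M)$, et sous l'hypothèse du point~1 (qui fait descendre $\Phi^*$) l'adjonction $\Phi_*\dashv\Phi^*$ passe aux localisés, les deux foncteurs respectant les sous-catégories localisantes.

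Le point~3 est alors surtout formel. L'exactitude de $\Phi^*$ entraîne qu'il se relève tel quel aux catégories dérivées et que l'adjonction se dérive en $\mathbf{L}\Phi_*\dashv\Phi^*$. Le foncteur $\Phi_*:\St(\C,\M)\to\St(\D,\M)$ étant cocontinu entre catégories de Grothendieck, il admet un dérivé total à gauche $\mathbf{L}\Phi_*:\dst(\C,\M)\to\dst(\D,\M)$. L'égalité $q_\D\circ\Phi_*=\Phi_*\circ q_\C$ établie au point~2 (où $q_\C,q_\D$ désignent les foncteurs de stabilisation, qui sont exacts) permet d'identifier $q_\D\circ\mathbf{L}\Phi_*$ au dérivé total de $q_\D\circ\Phi_*=\Phi_*\circ q_\C$, donc à $\mathbf{L}\Phi_*\circ q_\C$ ; ceci montre simultanément que le $\mathbf{L}\Phi_*$ abélien préserve les complexes à homologie stablement nulle (il induit donc le foncteur voulu sur $\dst$) et que le foncteur induit coïncide avec le dérivé total du $\Phi_*$ stabilisé, l'adjonction sur $\dst$ sous l'hypothèse du point~1 en résultant par dérivation. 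Enfin, pour $\M=\kk\md$, la formule annoncée s'obtient en dérivant l'écriture de l'extension de Kan comme produit tensoriel, $\Phi_*(-)\simeq(\Phi^*\mathrm{Yon}^\D)\underset{\kk[\C]}{\otimes}(-)$, puis en composant avec la stabilisation $(\D,\kk)\to\St(\D,\kk)$, ce qui substitue $\mathrm{Yon}_{st}^\D$ à $\mathrm{Yon}^\D$.

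Le c{\oe}ur de la difficulté est donc la préservation des objets stablement nuls par $\Phi_*$ : contrairement à $\Phi^*$, ce foncteur n'est pas exact et ne commute pas terme à terme à $\tau$, si bien que le calcul direct du point~1 ne se transpose pas. C'est la description de $\Phi_*$ par colimites sur les catégories-virgule, jointe au relèvement de la stabilisation évoqué ci-dessus, qui lève l'obstacle. Les deux points techniques restant à soigner sont, d'une part, le passage de cet argument aux foncteurs dérivés $\mathbf{L}_i\Phi_*$ dans le point~3 (il faut vérifier que $q_\C$ envoie une classe de résolutions adaptées sur des objets adaptés au dérivé du $\Phi_*$ stabilisé, ce qui relève du soin standard sur les foncteurs dérivés de composées) et, d'autre part, le traitement d'une catégorie de Grothendieck $\M$ arbitraire, où l'argument élémentaire sur les « éléments » se reformule en termes de sous-objets et de générateurs.
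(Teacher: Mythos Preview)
Il y a une vraie lacune au point~2. Ton calcul par catégories-virgule montrant que $\Phi_*$ préserve les objets stablement nuls est correct, mais l'implication \og donc $\Phi_*$ descend en $\Phi_*:\St(\C,\M)\to\St(\D,\M)$\fg{} ne suit pas, car $\Phi_*$ n'est pas exact. Pour qu'un foncteur descende au quotient par une sous-catégorie localisante, il faut qu'il envoie tout morphisme à noyau et conoyau stablement nuls sur un morphisme inversible dans $\St(\D,\M)$. La préservation de $\s n$ règle le conoyau (par exactitude à droite, $\mathrm{coker}(\Phi_*f)=\Phi_*(\mathrm{coker}\,f)$), mais ne dit rien sur $\mathrm{ker}(\Phi_*f)$, qui fait intervenir $\mathbf{L}_1\Phi_*$ : si $0\to I\to B\to C\to 0$ est exacte avec $C\in\s n$, rien ne garantit que $\mathrm{ker}(\Phi_*I\to\Phi_*B)$, quotient de $\mathbf{L}_1\Phi_*(C)$, soit stablement nul. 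La remarque qui suit la démonstration dans l'article souligne précisément ce point. Ta justification du point~3 repose ensuite sur l'égalité $q_\D\circ\Phi_*=\Phi_*\circ q_\C$ \og établie au point~2\fg{}, si bien que la lacune se propage.

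L'article contourne le problème en travaillant directement au niveau des morphismes, via la seconde partie de la proposition-définition~\ref{df-st} : la classe des morphismes inversés dans $\St$ est la saturation par colimites filtrantes des morphismes $f:A\to B$ admettant une \og diagonale\fg{} $h:B\to\tau_x(A)$ avec $hf=i_x(A)$ et $\tau_x(f)h=i_x(B)$. L'article construit alors, à partir d'une telle diagonale pour $f$ relativement à $\tau_x$, une diagonale pour $\Phi_*(f)$ relativement à $\tau'_{\Psi(x)}$, en utilisant une transformation naturelle $\Phi_*\tau_x\to\tau'_{\Psi(x)}\Phi_*$ (adjointe de $\tau_x\to\tau_x\Phi^*\Phi_*\simeq\Phi^*\tau'_{\Psi(x)}\Phi_*$). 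Comme $\Phi_*$ commute aux colimites filtrantes, cela suffit. Pour réparer ta preuve, tu peux soit adopter cette approche, soit étendre ton argument de préservation de $\s n$ à tous les $\mathbf{L}_i\Phi_*$ (la suite exacte longue contrôlerait alors le noyau), ce qui demanderait une description analogue de $\mathbf{L}_i\Phi_*$ par complexes de colimites.
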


\begin{proof}
On note d'abord que l'hypothèse de compatibilité de $\Phi$ et $\Psi$ aux actions peut se reformuler en $\tau_t\Phi^*\simeq\Phi^*\tau_{\Psi(t)}$ pour tout $t\in {\rm Ob}\,\T$.

 Comme $\Phi^* : (\D,\M)\to (\C,\M)$ commute aux colimites, pour établir que ce foncteur préserve les objets stablement nuls sous l'hypothèse du premier point, il suffit de voir qu'il envoie un foncteur $G$ de $(\D,\M)$ tel que $i_a(G) : G\to\tau_a(G)$ soit nul pour un $a\in {\rm Ob}\,\T'$ sur un foncteur stablement nul. Soit $t$ un objet de $\T$ tel qu'existe une flèche $a\to\Psi(t)$ dans $\T'$. La composée
 $$\Phi^*G\xrightarrow{\Phi^*(i_a(G))}\Phi^*\tau_a(G)\to\Phi^*\tau_{\Psi(t)}(G)\simeq\tau_t\Phi^*(G)$$
 (où la flèche du milieu est induite par $a\to\Psi(t)$) est nulle car $i_a(G)=0$ et égale à $i_t(\Phi^* G)$, ce qui montre bien que $\Phi^*$ préserve les foncteurs stablement nuls. Le reste de la première assertion s'en déduit aussitôt.

 Montrons que $\Phi_* : (\C,\M)\to (\D,\M)$ passe au quotient pour induire un foncteur $\St(\C,\M)\to\St(\D,\M)$. La proposition~\ref{df-st} et la commutation de $\Phi_*$ aux colimites montrent qu'il suffit de vérifier que, si 
 $f : A\to B$ est un morphisme de $(\C,\M)$ tel qu'existe un objet $t$ de $\T$ et un morphisme $h$ rendant le diagramme 
 $$\xymatrix{A\ar[r]^f\ar[d]_{i_t(A)} & B\ar[d]^{i_t(B)}\ar[ld]_h\\
 \tau_t(A)\ar[r]_{\tau_t(f)} & \tau_t(B)
 }$$
 commutatif, alors $\Phi_*(f)$ devient inversible dans $\St(\D,\M)$. Comme on dispose d'une transformation naturelle $\Phi_*\tau_t\to\tau_{\Psi(t)}\Phi_*$, adjointe à la composée du morphisme $\tau_t\to\tau_t\Phi^*\Phi_*$ obtenu en composant l'unité avec $\tau_t$ et de l'isomorphisme $\tau_t\Phi^*\Phi_*\simeq\Phi^*\tau_{\Psi(t)}\Phi_*$, on déduit du diagramme précédent un diagramme commutatif
 $$\xymatrix{\Phi_*(A)\ar[rr]^{\Phi_*(f)}\ar[d]_{\Phi_*i_t(A)} & & \Phi_*(B)\ar[d]^{\Phi_*i_t(B)}\ar[lld]_{\Phi_*(h)}\\
 \Phi_*\tau_t(A)\ar[rr]_{\Phi_*\tau_t(f)}\ar[d] & & \Phi_*\tau_t(B)\ar[d]\\
 \tau_{\Psi(t)}\Phi_*(A)\ar[rr]_{\tau_{\Psi(t)}\Phi_*(f)} & & \tau_{\Psi(t)}\Phi_*(B).
 }$$
 On obtient la conclusion souhaitée sur $\Phi_*(f)$ en observant que la composée de $\Phi_*(i_t) : \Phi_*\to\Phi_*\tau_t$ et de notre transformation naturelle $\Phi_*\tau_t\to\tau_{\Psi(t)}\Phi_*$ coïncide avec $i_{\Psi(t)}\Phi_*$. Toute la fin de la démonstration s'établit de manière formelle.
\end{proof}

\begin{rem}
\begin{enumerate}
 \item Dans la démonstration de la deuxième assertion, il ne suffit {\em a priori} pas de montrer que $\Phi_*$ préserve les foncteurs stablement nuls, puisque $\Phi_*$ n'est généralement pas exact.
 \item La deuxième assertion montre que, sous les hypothèses du premier point, le foncteur $\Phi^* : \St(\D,\M)\to\St(\C,\M)$ commute aux limites, ce qui n'est pas clair à première vue.
\end{enumerate}
\end{rem}

Nous utiliserons un peu plus loin le critère simple suivant garantissant que la précomposition par un foncteur induit une équivalence entre catégories $\St$ (dans le cadre de \cite{DV-pol} ; on pourrait généraliser mais nous n'en aurons pas usage).

\begin{defi}\label{df-stabf}
 Soit $\Phi : \C\to\D$ un foncteur fortement monoïdal entre petites catégories monoïdales symétriques dont l'unité est objet initial. On dit que $\Phi$ est :
 \begin{enumerate}
  \item \textbf{faiblement surjectif} si, pour tout objet $d$ de $\D$, il existe des objets $c$ de $\C$ et $t$ de $\D$ tels que $t+d\simeq\Phi(c)$ ;
   \item \textbf{stablement fidèle} si deux flèches de $\C(t,x)$ ayant la même image dans $\D(\Phi t,\Phi x)$ ont la même image dans $\C(t,x+u)$ (via la post-composition par le morphisme canonique $x\to x+u$) pour un objet $u$ de $\C$ convenable.
  \item \textbf{stablement plein} si, pour toute flèche $\alpha$ dans $\D(\Phi a,\Phi b)$, il existe un objet $t$ de $\C$ tel que l'image canonique de $\alpha$ dans $\D(\Phi a,\Phi t+\Phi b)$ appartienne à l'image de la fonction $\C(a,t+b)\to\D(\Phi a,\Phi(t+b))\simeq\D(\Phi a,\Phi t+\Phi b)$ induite par $\Phi$ ; 
 \end{enumerate}
\end{defi}

\begin{pr}\label{pr-eqstn}
 Soient $\Phi : \C\to\D$ un foncteur fortement monoïdal entre petites catégories monoïdales symétriques dont l'unité est objet initial et $\M$ une catégorie de Grothendieck. On suppose que $\Phi$ est faiblement surjectif, stablement plein et stablement fidèle.

 Alors la précomposition par $\Phi$ induit une équivalence de catégories $\Phi^* : \St(\D,\M)\to\St(\C,\M)$.
\end{pr}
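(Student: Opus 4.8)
Le plan est de voir $\Phi^*$ comme l'adjoint à droite de l'adjonction $\Phi_*\dashv\Phi^*$ et d'établir que son adjoint à gauche $\Phi_*$ est une équivalence (son quasi-inverse étant alors $\Phi^*$). On commencerait par appliquer la proposition~\ref{fonct-st} au cas $\T=\C$, $\T'=\D$, $\Psi=\Phi$ munis des actions tautologiques : la surjectivité faible fournit, pour tout objet $a$ de $\D$, un morphisme $a\to t+a\simeq\Phi(c)$ (l'unité étant objet initial), de sorte que l'hypothèse du premier point de~\ref{fonct-st} est satisfaite. Ainsi $\Phi^*$ préserve les objets stablement nuls et induit un foncteur exact commutant aux colimites $\St(\D,\M)\to\St(\C,\M)$, d'adjoint à gauche $\Phi_*$. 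Comme $\Phi^*$ et $\Phi_*$ commutent aux colimites et que les images dans $\St(\D,\M)$ des générateurs $A[\D(d,-)]$ (pour $A$ parcourant un ensemble de générateurs de $\M$ et $d$ un objet de $\D$) l'engendrent sous colimites, il suffira de raisonner sur ces générateurs ; les arguments combinatoires se ramenant à $\C$ et $\D$, on se contente d'exposer le cas $\M=\kk\md$ et des représentables $P^\C_c$, $P^\D_d$, en utilisant que $\Phi_*P^\C_c\simeq P^\D_{\Phi c}$.

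On montrerait d'abord que l'unité $\mathrm{Id}\to\Phi^*\Phi_*$ est un isomorphisme dans $\St(\C,\M)$ ; il en résultera que $\Phi_*$ est pleinement fidèle (et, accessoirement, que $\Phi^*$ est essentiellement surjectif). En $P^\C_c$, l'unité est la transformation $P^\C_c\to\Phi^*P^\D_{\Phi c}$ dont la valeur en $x$ est l'application $\kk[\C(c,x)]\to\kk[\D(\Phi c,\Phi x)]$ induite par $\Phi$. La sous-catégorie des objets stablement nuls étant localisante (proposition~\ref{df-st}), cette transformation est inversée dans le quotient si et seulement si son noyau et son conoyau sont stablement nuls, c'est-à-dire si et seulement si l'application $\varinjlim_u\C(c,x+u)\to\varinjlim_u\D(\Phi c,\Phi(x+u))$ est bijective. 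La fidélité stable de $\Phi$ en donne l'injectivité et sa plénitude stable la surjectivité : l'unité est donc bien un isomorphisme stable.

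Il resterait à établir la surjectivité essentielle de $\Phi_*$. Le point clef est que tout représentable $P^\D_d$ est, dans $\St(\D,\M)$, un quotient d'un objet de l'image de $\Phi_*$. En effet, la surjectivité faible fournit $c$ et $t$ avec $t+d\simeq\Phi c$, et le morphisme canonique $\iota:d\to t+d$ induit une flèche $a:P^\D_{t+d}=\Phi_*P^\C_c\to P^\D_d$ ; un calcul reposant sur la cohérence de la structure monoïdale symétrique et la naturalité des inclusions $\mathrm{Id}\to -+t$ montre que le conoyau de $a$ est annulé par $i_t$, donc stablement nul, autrement dit que $a$ est un épimorphisme stable. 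Comme tout objet de la catégorie de Grothendieck $\St(\D,\M)$ est quotient d'une somme directe de représentables, il est donc quotient d'un objet de l'image de $\Phi_*$. En présentant alors un objet arbitraire $G$ comme conoyau d'un morphisme $\Phi_*X_1\to\Phi_*X_0$ entre objets de l'image --- lequel se relève dans $\St(\C,\M)$ par pleine fidélité de $\Phi_*$ --- et en invoquant l'exactitude à droite de $\Phi_*$, on obtient que $G$ est isomorphe à l'image par $\Phi_*$ du conoyau de ce relèvement. Ainsi $\Phi_*$ est essentiellement surjectif, donc une équivalence, et $\Phi^*$ en est un quasi-inverse.

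La difficulté principale se concentre sur cette surjectivité essentielle, et plus précisément sur le traitement de la flèche $a:P^\D_{t+d}\to P^\D_d$. On prendra garde que $a$ n'est en général \emph{pas} un isomorphisme stable (son noyau n'a aucune raison d'être stablement nul) : c'est pourquoi il est indispensable de passer par une présentation à deux termes et d'exploiter l'exactitude à droite et la pleine fidélité de $\Phi_*$, au lieu d'espérer identifier directement $P^\D_d$ à un objet de l'image. La vérification que le conoyau de $a$ est annulé par $i_t$, seul endroit où intervient réellement la surjectivité faible, demandera un soin particulier quant aux isomorphismes de cohérence et à la naturalité des inclusions.
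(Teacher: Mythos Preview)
Votre démonstration est correcte et suit essentiellement la même stratégie que celle de l'article : établir l'adjonction au niveau des catégories $\St$ via la proposition~\ref{fonct-st}, montrer que l'unité est un isomorphisme sur les représentables grâce à la fidélité et à la plénitude stables, puis obtenir la surjectivité essentielle de $\Phi_*$ à partir de la surjectivité faible en observant que $P^\D_{t+d}\to P^\D_d$ devient un épimorphisme dans $\St(\D,\M)$. Votre argument de présentation à deux termes pour la surjectivité essentielle explicite ce que l'article condense en une ligne (\og commute aux colimites, ce foncteur est essentiellement surjectif\fg), et votre reformulation en termes de colimites $\varinjlim_u$ pour l'unité est une variante de l'argument mono/épi de l'article --- la bi-implication que vous énoncez est un peu informelle (la catégorie d'indexation n'est pas précisée, et l'équivalence stricte entre \og noyau/conoyau stablement nuls\fg{} et \og colimite d'ensembles bijective\fg{} mériterait justification), mais le sens que vous utilisez effectivement est correct.
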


\begin{proof}
La proposition~\ref{fonct-st} montre que $\Phi^* : (\D,\M)\to (\C,\M)$ et son adjoint à gauche $\Phi_*$ induisent une adjonction entre $\St(\C,\M)$ et $\St(\D,\M)$. Le foncteur $\Phi_*$ est caractérisé par sa commutation aux colimites et un isomorphisme $\Phi_* M[\C(c,-)]\simeq M[\D(\Phi(c),-)]$ naturel en les objets $M$ de $\M$ et $c$ de $\C$. L'unité de l'adjonction, sur $M[\C(c,-)]$, n'est autre que le morphisme $M[\C(c,-)]\to\Phi^*M[\D(\Phi(c),-]$ qu'induit $\Phi$. L'hypothèse que $\Phi$ est stablement plein (resp. stablement fidèle) implique que ce morphisme induit un épimorphisme (resp. monomorphisme) dans la catégorie $\St(\C,\M)$. Comme $\Phi^*$ et $\Phi_*$ commutent aux colimites et que les $M[\C(c,-)]$ engendrent la catégorie $(\C,\M)$, on en déduit que l'unité ${\rm Id}\to\Phi^*\Phi_*$ est un isomorphisme sur la catégorie $\St(\C,\M)$, c'est-à-dire que $\Phi_* : \St(\C,\M)\to\St(\D,\M)$ est pleinement fidèle.

Si $t$ et $d$ sont deux objets de $\D$ et $\M$ un objet de $\M$, le morphisme canonique $d\to t+d$ induit un morphisme $M[\D(t+d,-)]\to M[\D(d,-)]$ qui devient un épimorphisme dans $\St(\D,\M)$, car $i_t$ est nul sur son conoyau. Par conséquent, l'hypothèse de faible surjectivité de $\Phi$ montre que les images des $M[\D(\Phi(c),-)]$ dans $\St(\D,\M)$ engendrent cette catégorie. Comme ils appartiennent à l'image essentielle de $\Phi_*$, qui commute aux colimites, ce foncteur est essentiellement surjectif.

Ainsi $\Phi_* : \St(\C,\M)\to\St(\D,\M)$ et donc son adjoint $\Phi^* : \St(\D,\M)\to\St(\C,\M)$ sont des équivalences de catégories.
\end{proof}

\subsection{Foncteurs polynomiaux}\label{pfp}

\begin{defi}\label{df-pol-gl}
 Plaçons-nous dans le cadre de la définition~\ref{df-sn}. On dit qu'un objet $A$ de $\A$ est {\em fortement polynomial} de degré (fort) au plus $d\in\mathbb{N}\cup\{-1\}$ si pour toute famille $(t_0,\dots,t_d)$ de $d+1$ objets de $\T$, on a $\delta_{t_0}\dots\delta_{t_d}(A)=0$.
 
 Si de plus les hypothèses de la proposition-définition~\ref{df-st} sont vérifiées, on dit qu'un objet $X$ de $\St(\A)$ est {\em polynomial} de degré au plus $d$ s'il vérifie la même condition : pour toute famille $(t_0,\dots,t_d)$ d'objets de $\T$, $\delta_{t_0}\dots\delta_{t_d}(X)=0$.
 
Un objet de $\A$ est dit {\em faiblement polynomial} de degré (faible) au plus $d$ si son image par le foncteur canonique $\A\to\St(\A)$ vérifie cette condition.

On note $\pol_d(\A)$ la sous-catégorie pleine de $\St(\A)$ des objets polynomiaux de degré au plus $d$.
\end{defi}

On vérifie comme dans \cite{DV-pol} que $\pol_d(\A)$ est une sous-catégorie localisante de $\St(\A)$. Les objets de la sous-catégorie localisante de $\St(\A)$ engendrée par les différents $\pol_d(\A)$ sont dits {\em analytiques}\,\footnote{En général, les objets analytiques n'ont pas de raison de s'exprimer comme colimites d'objets polynomiaux, la classe des colimites d'objets polynomiaux n'étant pas {\em a priori} stable par extensions.}.

Dans la suite, on se placera exclusivement dans le cadre où la catégorie monoïdale $\T$ opère sur $(\C,\M)$, où $\C$ est une petite catégorie et $\M$ une catégorie de Grothendieck, par précomposition par une action de $\T$ sur $\C$. Lorsque $\T=\C$ (avec l'action tautologique), c'est exactement le cadre de \cite{DV-pol}, lequel nous suffira d'ailleurs pour une très grande partie des considérations ultérieures.

Dans ce cadre, on notera $\dsto(\C,\M)$ la sous-catégorie pleine de $\dst(\C,\M)$ des complexes dont l'homologie est analytique.

\medskip

Le résultat qui suit constitue une préparation à la démonstration de la proposition~\ref{pr-swp}.

\begin{pr}\label{pr-sw1}
 Soit $X$ un objet de $\St(\C,\A)$ (resp. $(\C,\A)$) et $t$ un objet de $\T$. On suppose que les deux morphismes canoniques $t\to t+t$ induisent le même morphisme $\tau_t(X)\to\tau_{t+t}(X)$. Alors $\delta_t(X)=0$ (resp. $\kappa_t\delta_t(X)=\delta_t(X)$).
 
 Si $d\geq -1$ est un entier tel que ces deux morphismes $\tau_t(X)\to\tau_{t+t}(X)$ induisent pour tout $t$ dans ${\rm Ob}\,\T$ (ou dans un ensemble de générateurs monoïdaux faibles) la même flèche dans $\St(\C,\A)/\mathcal{P}ol_d(\C,\A)$, alors $X$ appartient à $\mathcal{P}ol_{d+1}(\C,\A)$.
\end{pr}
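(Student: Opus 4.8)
The plan is to extract a single natural identity that forces $\delta_t(X)=0$, and then to transport the argument into a quotient category to obtain the polynomial refinement. \textbf{Core identity.} First I would identify the two maps $\tau_t(X)\to\tau_{t+t}(X)$ concretely: using the strong monoidal isomorphism $\tau_{t+t}\simeq\tau_t\circ\tau_t$ together with $\tau_0\simeq{\rm Id}_\A$, the morphism induced by $t\simeq t+0\to t+t$ becomes $\tau_t(i_t(X))$, while the one induced by $t\simeq 0+t\to t+t$ becomes $i_t(\tau_t(X))$ (the transformation $i_t$ whiskered, respectively, on the left and on the right by $\tau_t$). So the hypothesis reads $\tau_t(i_t(X))=i_t(\tau_t(X))$. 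Now let $q:\tau_t(X)\to\delta_t(X)$ be the cokernel of $i_t(X)$. Naturality of $i_t:{\rm Id}\to\tau_t$ applied to $q$ gives $i_t(\delta_t(X))\circ q=\tau_t(q)\circ i_t(\tau_t(X))$; substituting the hypothesis and using that $\tau_t$ is exact yields $\tau_t(q)\circ\tau_t(i_t(X))=\tau_t(q\circ i_t(X))=0$, since $q$ kills $i_t(X)$. As $q$ is an epimorphism, $i_t(\delta_t(X))=0$, i.e. $\kappa_t\delta_t(X)=\delta_t(X)$. This is already the conclusion in $(\C,\A)$.

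\textbf{From $(\C,\A)$ to $\St$.} To obtain $\delta_t(X)=0$ in $\St$ I would show that $i_t$ becomes a monomorphism there, i.e. that $\kappa_t(A)$ is stably null for every $A$. Setting $B:=\kappa_t(A)=\ker(i_t(A))$, naturality shows that $i_t(B):B\to\tau_t(B)$ followed by the inclusion $\tau_t(B)\hookrightarrow\tau_t(A)$ equals $B\hookrightarrow A\xrightarrow{i_t(A)}\tau_t(A)$, which is zero by definition of $B$; since $\tau_t$ preserves monomorphisms, $i_t(B)=0$, hence $\kappa_t(B)=B$ and $B$ is stably null. Thus $i_t$ is a monomorphism in $\St(\C,\A)$, where $\tau_t$, $i_t$, $\delta_t$ descend by the proposition-définition~\ref{df-st}; replaying the core identity inside $\St$ then upgrades $\kappa_t\delta_t(X)=\delta_t(X)$ to $\delta_t(X)=0$.

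\textbf{The polynomial refinement.} For the second assertion I would replay the entire argument in the abelian quotient $\St(\C,\A)/\pol_d(\C,\A)$. Since $\pol_d(\C,\A)$ is localizing and $\tau_t$ is exact and commutes up to isomorphism with the difference functors, $\tau_t$, $i_t$ and $\delta_t$ descend to this quotient and $\kappa_t$ remains zero there. The hypothesis says precisely that the two maps agree in the quotient, so the core identity applied internally gives $\delta_t(X)=0$ in $\St(\C,\A)/\pol_d(\C,\A)$, that is $\delta_t(X)\in\pol_d(\C,\A)$, for every $t$ in the chosen generating set. I would then conclude $X\in\pol_{d+1}(\C,\A)$ from the characterisation of $\pol_{d+1}$ as those $X$ with $\delta_t(X)\in\pol_d$ for all $t$.

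\textbf{Main obstacle.} The computational engine is short; the real work is the bookkeeping around the quotient categories — checking that $\tau_t$, $i_t$, $\delta_t$ genuinely descend and commute with the successive localisation functors $\A\to\St(\A)\to\St(\A)/\pol_d$, so that ``$\delta_t(X)=0$ in the quotient'' really means ``$\delta_t(X)\in\pol_d$'' — together with, above all, the reduction in the final step from all objects $t$ of $\T$ to a set of weak monoidal generators. This last reduction is the delicate point and is exactly the type of statement established in \cite{DV-pol}, which I would invoke rather than reprove.
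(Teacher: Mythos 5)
Your proposal is correct and follows essentially the same route as the paper: your ``core identity'' is exactly the paper's commutative diagram, namely the naturality square of $i_t:{\rm Id}\to\tau_t$ applied to the projection $\tau_t(X)\twoheadrightarrow\delta_t(X)$, combined with the observation that the other canonical map composed with $\tau_t(q)$ is zero. The only additions are details the paper leaves implicit — your verification that $i_t$ is a monomorphism in $\St(\C,\A)$ (asserted parenthetically in the paper) and the explicit bookkeeping for the descent of $\tau_t$, $i_t$, $\delta_t$ to the quotient categories — so the two arguments coincide in substance.
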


\begin{proof}
 On considère le diagramme commutatif
 $$\xymatrix{\tau_t(X)\ar@{->>}[r]\ar[d] & \delta_t(X)\ar[d]\\
 \tau_t\tau_t(X)\ar@{->>}[r] & \tau_t\delta_t(X)
 }$$
 dont les flèches horizontales sont les épimorphismes canoniques et les flèches verticales des évaluations de la tranformation naturelle ${\rm Id}\to\tau_t$ de noyau $\kappa_t$ (et injective dans $\St(\C,\A)$). On dispose d'un isomorphisme canonique $\tau_t\tau_t(X)\simeq\tau_{t+t}(X)$ par l'intermédiaire duquel la flèche verticale de gauche provient de la transformation naturelle $\tau_t\to\tau_{t+t}$ induite par l'un des morphismes canoniques $t\to t+t$. Mais l'autre morphisme canonique induit un morphisme $\tau_t(X)\to\tau_{t+t}(X)\simeq\tau_t\tau_t(X)$ dont la composée avec la flèche horizontale inférieure du diagramme est {\em nulle}, d'où la proposition (le cas des catégories quotients se traite de la même façon, puisque $\tau_t$ et $\delta_t$ y passent).
\end{proof}

La variante suivante de la proposition~\ref{pr-sw1} nous servira dans la démonstration du théorème~\ref{thcong2}.

\begin{pr}\label{swa}
 Soit $X$ un foncteur de $(\C,\A)$ notons $Y$ la précomposition de $X$ par $\C\to\C\quad x\mapsto x+x$ et $i_1, i_2 : X\to Y$ les morphismes induits par les deux morphismes canoniques $x\to x+x$. Alors le noyau $N$ du morphisme $X\oplus X\to Y$ de composantes $i_1$ et $i_2$ est faiblement polynomial de degré au plus $0$.
\end{pr}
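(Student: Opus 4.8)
The plan is to reduce the claim to the injectivity, after a single differentiation, of $f:=(i_1,i_2)\colon X\oplus X\to Y$, and then to obtain that injectivity from the behaviour of second cross-effects. By Definition~\ref{df-pol-gl}, the assertion to prove is that $\delta_s(N)=0$ in $\St(\C,\A)$ for every object $s$ of $\T$. Recall from the proof of Proposition~\ref{pr-sw1} that each unit $i_s\colon{\rm Id}\to\tau_s$ becomes a monomorphism in $\St(\C,\A)$, while $\tau_s$ is exact. I would denote by $J$ the image of $f$ and apply the snake lemma to the short exact sequences $0\to N\to X\oplus X\to J\to 0$ and $0\to J\to Y\to Y/J\to 0$, the vertical maps being the $i_s$. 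Since these verticals are monomorphisms in $\St(\C,\A)$, all the relevant connecting maps and snake-kernels vanish there, yielding exact sequences $0\to\delta_s(N)\to\delta_s(X)\oplus\delta_s(X)\to\delta_s(J)\to 0$ and $0\to\delta_s(J)\to\delta_s(Y)$. Composing, I obtain a canonical identification $\delta_s(N)\simeq{\rm Ker}\,\big(\delta_s(f)\big)$ in $\St(\C,\A)$, so everything reduces to proving that $\delta_s(f)\colon\delta_s(X)\oplus\delta_s(X)\to\delta_s(Y)$ is a monomorphism for each $s$.

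To understand the target I would set $\Delta\colon\C\to\C$, $x\mapsto x+x$, so that $Y=\Delta^*X$. The symmetry of the monoidal structure gives a natural isomorphism $\tau_s\Delta^*\simeq\Delta^*\tau_{s+s}$, and since $\Delta^*$ is exact and commutes with the formation of $\delta$, one deduces a canonical isomorphism $\delta_s(Y)\simeq\Delta^*\delta_{s+s}(X)$. On the other hand the factorisation $i_{s+s}=\tau_s(i_s)\circ i_s$ of the unit, together with the third isomorphism theorem applied in $\St(\C,\A)$ (where $i_s$, $\tau_s(i_s)$ and $i_{s+s}$ are all monomorphisms), produces a short exact sequence $0\to\delta_s(X)\to\delta_{s+s}(X)\to\tau_s\delta_s(X)\to 0$. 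Applying the exact functor $\Delta^*$ endows $\delta_s(Y)$ with a two-step filtration whose sub-object is $\Delta^*\delta_s(X)$ and whose quotient is $\Delta^*\tau_s\delta_s(X)$.

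The next step is to locate the two differentiated inclusions in this filtration. Tracing $\iota_1,\iota_2\colon x\to x+x$ through the isomorphism $(x+s)+(x+s)\simeq(x+x)+(s+s)$ shows that $\delta_s(i_1)$ feeds $\delta_s(X)$ into $\delta_{s+s}(X)(x+x)$ through the first of the two new copies of $s$ produced by the doubling, hence factors through the sub-object $\Delta^*\delta_s(X)$, whereas $\delta_s(i_2)$ uses the second copy and so contributes to the quotient $\Delta^*\tau_s\delta_s(X)$. Thus $\delta_s(f)$ is, up to these identifications, triangular with respect to the filtration.

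The crux --- and the step I expect to be the main obstacle --- is to deduce the injectivity of $\delta_s(f)$ from this triangular shape, i.e. to show that a class coming from $\delta_s(i_1)$ (involving only the first new copy of $s$) and a class coming from $\delta_s(i_2)$ (involving only the second) can cancel, modulo the relations of $\delta_{s+s}(X)$ which involve neither copy, only if both vanish. This is precisely an independence statement for the relevant second cross-effects, which stably split off as a direct summand; I would prove it with the cross-effect and Scorichenko-type vanishing techniques of \cite{DV-pol}, using crucially the involution $\sigma=X(\gamma)$ induced by the symmetry $\gamma$ of $x+x$, which exchanges $i_1$ and $i_2$ and hence intertwines $\delta_s(f)$ with the coordinate swap of $\delta_s(X)\oplus\delta_s(X)$ while keeping the two filtration steps apart. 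Granting this independence, $\delta_s(f)$ is a monomorphism in $\St(\C,\A)$ for every $s$; by the reduction of the first paragraph $\delta_s(N)=0$ there, so $N$ belongs to $\pol_0(\C,\A)$ and is weakly polynomial of degree at most $0$, as claimed.
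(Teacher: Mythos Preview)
Your reduction in the first paragraph is correct: since $\delta_s$ is exact on $\St(\C,\A)$, the vanishing of $\delta_s(N)$ there is equivalent to $\delta_s(f)$ being a monomorphism. The filtration on $\delta_s(Y)\simeq\Delta^*\delta_{s+s}(X)$ is genuine, and a triangular argument along your lines can in fact be completed. But as written there is a real gap: you leave the ``crux'' unproven and propose to fill it with cross-effect and Scorichenko-type vanishing techniques. Those tools rely on polynomiality hypotheses on the functors involved, and $X$ here is completely arbitrary, so that plan cannot work. Your remark that the involution $\sigma$ keeps the two filtration steps apart is also incorrect: under the identification $\delta_s Y\simeq\Delta^*\delta_{s+s}X$, the symmetry of $x+x$ becomes the \emph{simultaneous} swap on $s+s$ and on $x+x$, which exchanges the two possible copies of $\Delta^*\delta_s X$ inside $\Delta^*\delta_{s+s}X$ rather than preserving your chosen filtration. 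The crux is in fact elementary once the maps are identified: the two diagonal entries of the triangular matrix are composites of maps of the form $i_k\colon\delta_s X\to\Delta^*(\delta_s X)$ (for $k\in\{1,2\}$) and, for one of them, of $\Delta^*$ applied to the unit $i_s\colon\delta_s X\to\tau_s\delta_s X$. All of these are monomorphisms in $\St$ because their kernels are pointwise contained in $\kappa(\delta_s X)$; no cross-effect machinery is needed.

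The paper's argument is completely different and much shorter. It never passes to $\St$ nor introduces a filtration on $\delta_s(Y)$. The single observation is that the involution $(r,s)\mapsto(-s,-r)$ of $X\oplus X$ restricts to $N$ and intertwines the two maps $N(a)\to N(a+a)$ induced by the canonical inclusions: from $(r,s)\in N(a)$ one has $i_1(r)=-i_2(s)$, and applying the swap of $a+a$ gives $i_2(r)=-i_1(s)$, whence $N(\iota_1)(r,s)$ equals the involution applied to $N(\iota_2)(r,s)$. In particular these two maps have the same image. Taking $a=t+x$ and noting that one of the two inclusions $(t+x)\to(t+x)+(t+x)$ factors through $(t+x)+x$, one reads off directly that $\kappa_{t+x}\delta_t(N)(x)=\delta_t(N)(x)$ for every $t$ and $x$, hence $\delta_t(N)$ is stably null. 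This bypasses entirely the snake-lemma reduction and the analysis of $\delta_s(f)$.
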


\begin{proof}
On montre en effet que, pour tous objets $x$ et $t$ de $\C$, l'inclusion $\kappa_{t+x}\delta_t(N)(x)\subset\delta_t(N)(x)$ est une égalité. Cette assertion équivaut à dire que l'image de $N(t+x)\to N(t+x+t+x)$ induit par le morphisme canonique $t+x=(t+x)+0\to (t+x)+(t+x)$ est incluse dans l'image du morphisme $N(t+x+x)\to N(t+x+t+x)$ induit par le morphisme canonique $t+x+x\simeq 0+x+(t+x)\to t+x+(t+x)$. Un énoncé encore plus fort est vrai : les images des morphismes $N(t+x)\to N(t+x+t+x)$ induit par les deux morphismes canoniques $t+x\to (t+x)+(t+x)$ sont égales (la définition de $N$ montre que pour tout objet $a$ de $\C$, les deux morphismes canoniques $a\to a+a$ induisent des morphismes $N(a)\to N(a+a)$ qui coïncident à l'involution $(r,s)\mapsto (-s,-r)$ de $N\subset X\oplus X$ près). Cela termine la démonstration.
\end{proof}

La définition et les deux propositions qui suivent seront utilisées au début du §\,\ref{sconc}, où apparaîtra clairement la raison de l'emploi de l'adjectif {\em triangulaire}.

\begin{defi}\label{df-triang1}
Soient $(\C,+,0)$ une petite catégorie monoïdale symétrique dont l'unité $0$ est objet initial et $t$ un objet de $\C$.
On suppose que $\C$ est munie d'une structure monoïdale de foncteur d'automorphismes. On appelle {\em donnée triangulaire en $t$ sur $\C$} tout ensemble $\mathfrak{T}$ de sous-foncteurs $T : \C\to\mathbf{Grp}$ de $\tau_t {\rm Aut}_\C$ tel que l'involution de tressage de $t+t$ appartienne au sous-groupe de ${\rm Aut}_\C(t+t)$ engendré par les $T(t)$ pour $T\in\mathfrak{T}$ et par l'image de ${\rm Aut}_\C(t)\xrightarrow{f\mapsto f+t}{\rm Aut}_\C(t+t)$.

Si $\M$ est une catégorie de Grothendieck et que $\C$ est munie d'une telle structure, nous appellerons {\em extension triangulaire} d'un foncteur $F : \C\to\M$ en $t$ relativement à cette structure la donnée, pour tout $T\in\mathfrak{T}$, d'un foncteur $\hat{F}_T : \C\to\M$ et de morphismes $F\xrightarrow{a_T}\hat{F}_T$,  $\hat{F}_T\xrightarrow{b_T}F$ et $\hat{F}_T\xrightarrow{j_T}\tau_t(F)$ de sorte que :
\begin{enumerate}
 \item la composée $b_T a_T : F\to F$ est l'identité ;
 \item la composée $j_T a_T : F\to\tau_t F$ est le morphisme canonique $i_t$ ;
 \item pour tout objet $x$ de $\C$ et tout $T\in\mathfrak{T}$, $\hat{F}_T(x)$ est muni d'une action de $T(x)$, naturelle en $x$, de sorte que $j_T(x) : \hat{F}_T(x)\to\tau_t(F)(x)=F(t+x)$ soit équivariant (où l'action au but provient de l'inclusion $T(x)\subset {\rm Aut}_\C(t+x)$), de même que $b_T(x) : \hat{F}_T(x)\to T(x)$, où le but est muni de l'action triviale de $T(x)$.
\end{enumerate}

On dira qu'une telle extension triangulaire est de degré au plus $d$ si, pour tout $T\in\mathfrak{T}$, le noyau de $b_T$ (qui s'identifie au conoyau de $a_T$) est faiblement polynomial de degré au plus $d$.
\end{defi}

\begin{pr}\label{pr-extri}
 Soient  $d$ et $n$ des entiers naturels et $(\C,+,0)$ une petite catégorie monoïdale symétrique dont l'unité $0$ est objet initial et $t$ un objet de $\C$.
On suppose que $\C$ est munie d'une structure monoïdale de foncteur d'automorphismes et d'une donnée triangulaire $\mathfrak{T}$ en $t$, et que $G : \C\to\mathbf{Grp}$ est un sous-foncteur monoïdal de ${\rm Aut}_\C$. On suppose donné pour tout $T\in\mathfrak{T}$ un sous-foncteur $T_G : \C\to\mathbf{Grp}$ de $T$ tel que :
\begin{enumerate}
 \item pour tout objet $x$ de $\C$, tout $g\in G(x)$ et tout $u\in T(x)$, l'élément $(g_*u).u^{-1}$ de $T(x)$ (où $g_* u$ désigne l'effet de l'action de $g\in G(x)\subset {\rm Aut}_\C(x)$ sur $u\in T(x)$) appartient à $T_G(x)$ ;
 \item pour tout objet $x$ de $\C$, $T_G(x)$ est un sous-groupe distingué de $T(x)$ ;
 \item le sous-foncteur $T_G$ de $T\subset\tau_t {\rm Aut}_\C$ est inclus dans $\tau_t G$ ;
 \item pour tous entiers $i\geq 0$ et $j\geq 1$ tels que $i+j=n$, le foncteur $H_i(G;H_j(T_G;\kk))$ de $(\C,\kk)$ est faiblement polynomial de degré au plus $d$. 
\end{enumerate}
Alors le foncteur $H_n(G;\kk)$ de $(\C,\kk)$ possède une extension triangulaire en $t$ de degré au plus $d$.
\end{pr}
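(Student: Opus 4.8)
The plan is to realise $\hat F_T$ as the homology of an explicit \emph{split} group extension built from $G$ and $T_G$, on which $T$ acts by conjugation inside a larger semidirect product; the degree bound will then come from the Lyndon--Hochschild--Serre spectral sequence of that extension together with hypothesis~(4).

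First I would isolate the group-theoretic heart of the construction. By the first hypothesis, for $g\in G(x)$ and $u\in T(x)$ one has $(g_*u)u^{-1}\in T_G(x)\subseteq T(x)$, so $g_* u := (t+g)\,u\,(t+g)^{-1}$ defines an action of $G(x)$ on $T(x)$ by automorphisms preserving the normal subgroup $T_G(x)$ (second hypothesis) and \emph{trivial} on the quotient $T(x)/T_G(x)$. Form $T(x)\rtimes G(x)$ for this action. The multiplication $\mu_x : T(x)\rtimes G(x)\to {\rm Aut}_\C(t+x)$, $(u,g)\mapsto u\,(t+g)$, is a homomorphism (a one-line check from the definition of $g_*$), and it restricts to $\mu_x : T_G(x)\rtimes G(x)\to G(t+x)$ by the third hypothesis and the monoidality of $G$. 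Since $G(x)$ acts trivially on $T(x)/T_G(x)$, the map $(u,g)\mapsto \bar u$ is a homomorphism $T(x)\rtimes G(x)\to T(x)/T_G(x)$ with kernel $\Gamma_T(x):=T_G(x)\rtimes G(x)$; thus $\Gamma_T(x)\trianglelefteq T(x)\rtimes G(x)$, and the inclusion $v\mapsto(v,1)$ makes $T(x)$ act on $\Gamma_T(x)$ by conjugation. This normal-subgroup observation is the conceptual crux: it is what simultaneously produces a $T(x)$-action and makes it compatible with $\mu$.

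Next I would check functoriality and assemble the data. The monoidal automorphism functor structure makes the structural maps ${\rm Aut}_\C(t)\times{\rm Aut}_\C(-)\to{\rm Aut}_\C(t+-)$ natural, whence ${\rm Aut}_\C(t+f)(1_t+g)=1_t+{\rm Aut}_\C(f)(g)$ for every morphism $f$; this is exactly what makes the action $g_*u$ natural in $x$, so that $T\rtimes G$, the split short exact sequence $1\to T_G\to\Gamma_T\xrightarrow{p}G\to1$ (split by $\iota:g\mapsto(1,g)$), the homomorphism $\mu:\Gamma_T\to\tau_t G$, and the conjugation action of $T$ on $\Gamma_T$ are all natural. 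Setting $\hat F_T:=H_n(\Gamma_T;\kk)$, $a_T:=H_n(\iota)$, $b_T:=H_n(p)$ and $j_T:=H_n(\mu)$ (noting $H_n(\tau_t G;\kk)=\tau_t F$), the first required identity follows from $p\circ\iota={\rm id}$ and the second from $\mu\circ\iota$ being the canonical map $G\to\tau_t G$ inducing $i_t$. For the equivariance: $H_n$ of the conjugation action gives the $T$-action on $\hat F_T$; since $\mu$ is a homomorphism it intertwines conjugation by $(v,1)$ with conjugation by $v$ in ${\rm Aut}_\C(t+x)$, which preserves the normal subgroup $G(t+x)$, so $j_T$ is $T$-equivariant; and since ${\rm conj}_{(v,1)}$ fixes the $G$-coordinate, $p\circ{\rm conj}_{(v,1)}=p$ on the nose, so $b_T$ is $T$-equivariant for the trivial action on $F$.

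Finally I would bound the degree using the natural (i.e. functor-valued) Lyndon--Hochschild--Serre spectral sequence of the split extension,
$$E^2_{ij}=H_i\big(G;H_j(T_G;\kk)\big)\ \Longrightarrow\ H_{i+j}(\Gamma_T;\kk).$$
Because the extension is split, $b_T$ is (split) surjective and identifies with the edge map onto $E^\infty_{n,0}=E^2_{n,0}=F$; hence $\ker b_T={\rm coker}\,a_T$ is the bottom step $F_{n-1}$ of the abutment filtration, a finite extension of the graded pieces $E^\infty_{n-j,j}$ for $j\geq1$. Each such piece is a subquotient of $E^2_{n-j,j}=H_{n-j}(G;H_j(T_G;\kk))$ with $n-j\geq0$ and $j\geq1$, hence weakly polynomial of degree $\leq d$ by the fourth hypothesis; as $\pol_d(\St(\C,\kk))$ is localizing, hence closed under subquotients and extensions, $\ker b_T$ is weakly polynomial of degree $\leq d$, so the extension has degree $\leq d$. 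I expect the main obstacle to be the naturality bookkeeping of the middle step, i.e. producing a genuinely functorial split extension with a functorial $T$-action out of conjugations in the \emph{varying} groups ${\rm Aut}_\C(t+x)$; this is precisely where the monoidality of the automorphism functor structure is indispensable. The spectral sequence step is routine, and the braiding condition in the definition of a triangular datum is not needed here (it enters only later, via Proposition~\ref{pr-sw1}, to turn such extensions into polynomiality statements).
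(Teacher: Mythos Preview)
Your proof is correct and follows essentially the same route as the paper: you set $\hat F_T=H_n(T_G\rtimes G;\kk)$ (the paper writes $G\ltimes T_G$), obtain $a_T,b_T,j_T$ from the evident inclusion, projection, and multiplication map into $\tau_t G$, realise the $T$-action via conjugation in the ambient semidirect product, and read off the degree bound from the Lyndon--Hochschild--Serre spectral sequence of $1\to T_G\to T_G\rtimes G\to G\to 1$. Your write-up is in fact more explicit on two points the paper leaves terse: the observation that $\Gamma_T$ is normal in $T\rtimes G$ (because $G$ acts trivially on $T/T_G$), which cleanly produces the $T$-action, and the identification of $\ker b_T$ with the bottom filtration step of the abutment.
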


(Même si le foncteur $H_n(G;\kk)$ est défini sur $\C/G$, il nous est plus commode de le considérer ici sur $\C$, c'est-à-dire de précomposer par le foncteur canonique $\C\to\C/G$, ce qui n'a aucun effet sur le degré polynomial.)

\begin{proof}
 Soit $T\in\mathfrak{T}$. Les morphismes canoniques $T\hookrightarrow\tau_t {\rm Aut}_\C$ et ${\rm Aut}_\C\to\tau_t {\rm Aut}_\C$ s'assemblent en un morphisme ${\rm Aut}_\C\ltimes T\to\tau_t {\rm Aut}_\C$ (parce que la fonctorialité de ${\rm Aut}_\C$ sur les automorphismes est donnée par la conjugaison). Les trois premières hypothèses faites sur $T_G$ fournissent par ailleurs un diagramme commutatif de foncteurs $\C\to\mathbf{Grp}$
 $$\xymatrix{G\ar[r]\ar[d] &  G\ltimes T_G\ar[r]\ar[d] &  \tau_t G\ar[d]\\
 {\rm Aut}_\C\ar[r] & {\rm Aut}_\C\ltimes T\ar[r] &  \tau_t {\rm Aut}_\C
 }$$
 où les flèches verticales sont des inclusions de sous-groupes distingués (la première assertion peut en effet se reformuler comme $[G(x),T(x)]\subset T_G(x)$, où tous les groupes en jeu sont vus comme sous-groupes de ${\rm Aut}_\C(x)\ltimes T(x)$). Posons $F:=H_n(G;\kk)$ et $\hat{F}_T:=H_n(G\ltimes T_G;\kk)$ : la ligne horizontale supérieure du diagramme précédent induit des morphismes
 $$F\xrightarrow{a_T}\hat{F}_T\xrightarrow{j_T}\tau_t(F)$$
 dans $(\C,\kk)$ dont la composée est le morphisme canonique ; $a_T$ possède une rétraction $b_T : \hat{F}_T\to F$ induite par la projection canonique $G\ltimes T_G\to G$ ; l'action de $T$ sur $\hat{F}_T$ induite par la conjugaison sur $G\ltimes T_G$ (ce foncteur vers les groupes et $T$ étant vus dans ${\rm Aut}_\C\ltimes T$) vérifie clairement les propriétés d'équivariance requises pour faire de $\hat{F}_T$ une extension triangulaire de $F$. La propriété de degré de cette extension se déduit directement de la dernière hypothèse et de la suite spectrale de Lyondon-Hochschild-Serre.
\end{proof}

Le résultat suivant généralise Suslin-Wodzicki \cite[proposition~1.5]{SW}.

\begin{pr}\label{pr-swp}
 Soient $\C$ une petite catégorie monoïdale symétrique dont l'unité est objet initial, munie d'une structure monoïdale de foncteur d'automorphismes, $\M$ une catégorie de Grothendieck, $F : \C\to\M$ un foncteur et $d\geq -1$ un entier. On suppose que, pour tout $t\in {\rm Ob}\,\C$, $\C$ possède une donnée triangulaire $\mathfrak{T}_t$ en $t$ et que $F$ possède une extension triangulaire y afférente de degré au plus $d$.
 
 Alors $F$ est faiblement polynomial de degré au plus $d+1$.
\end{pr}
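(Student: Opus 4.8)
Le plan est d'appliquer la seconde partie de la proposition~\ref{pr-sw1} au foncteur $X=F$ : il suffira de vérifier que, pour tout objet $t$ de $\C$, les deux morphismes canoniques $\tau_t(F)\to\tau_{t+t}(F)$ induits par les deux flèches $t\to t+t$ deviennent égaux dans la catégorie quotient $\St(\C,\M)/\pol_d(\C,\M)$. Comme $0$ est objet initial, ces deux flèches diffèrent précisément par l'involution de tressage $\sigma$ de $t+t$, de sorte que les deux morphismes se déduisent l'un de l'autre par l'action $\sigma_*$ de $\sigma\in{\rm Aut}_\C(t+t)$ sur $\tau_{t+t}(F)$ (action provenant, via l'endofoncteur $-+x$, de la structure monoïdale de foncteur d'automorphismes). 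Notant $\beta : \tau_t(F)\to\tau_{t+t}(F)$ le morphisme « interne » (celui qui ajoute le nouveau facteur $t$ en première position), je me ramènerais ainsi à établir l'égalité $\sigma_*\beta=\beta$ dans $\St(\C,\M)/\pol_d(\C,\M)$.

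Pour cela, j'utiliserais la donnée triangulaire (définition~\ref{df-triang1}) : $\sigma$ appartient au sous-groupe de ${\rm Aut}_\C(t+t)$ engendré par les $T(t)$ ($T\in\mathfrak{T}_t$) et par l'image de ${\rm Aut}_\C(t)\xrightarrow{f\mapsto f+t}{\rm Aut}_\C(t+t)$. Chaque générateur $g$ agissant par un automorphisme $g_*$ de $\tau_{t+t}(F)$ dans la catégorie quotient, un argument télescopique immédiat ramène l'égalité cherchée à vérifier $g_*\beta=\beta$ modulo $\pol_d$ pour chaque générateur pris isolément. Pour un générateur $g=f+t$ issu de ${\rm Aut}_\C(t)$, l'égalité $f\circ(0\to t)=(0\to t)$ (conséquence de l'initialité de $0$) montre que $g$ fixe déjà $\beta$ sur le nez, puisque le facteur $t$ sur lequel $g$ agit est le nouveau, provenant de $0\to t$ ; aucune hypothèse polynomiale n'est ici nécessaire.

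Le cœur de la démonstration sera le cas d'un générateur $u\in T(t)$, que je traiterais à l'aide de l'extension triangulaire $\hat F_T$. Le fait clef est que $a_T$ est presque équivariant : en chaque objet $y$ et pour tout $v\in T(y)$, l'image de $(v\cdot-\,{\rm id})\circ a_T(y)$ est contenue dans $\ker b_T(y)$ (car $b_T a_T={\rm id}$ et $b_T$ est équivariant pour l'action triviale au but), tandis que l'équivariance de $j_T$, jointe à $j_T a_T=i_t$, fournit (en évaluant en $y$, $v_*=F(v)$ désignant l'action de $v\in T(y)\subset{\rm Aut}_\C(t+y)$ sur $F(t+y)$) l'égalité
$$(v_*-{\rm id})\circ i_t(y)=j_T(y)\circ\big[(v\cdot-\,{\rm id})\circ a_T(y)\big],$$
dont le membre de droite se factorise par $\ker b_T$, faiblement polynomial de degré au plus $d$ par hypothèse. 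J'appliquerais cette identité à l'objet $y=t+x$ et à l'élément $v=u+{\rm id}_x\in T(t+x)$ (image de $u\in T(t)$ par la flèche canonique $t\to t+x$, l'égalité $v=u+{\rm id}_x$ résultant des axiomes de foncteur d'automorphismes) : comme $\beta$ s'identifie en $x$ à $i_t(t+x)$ et que $u_*$ agit sur $\tau_{t+t}(F)(x)=F(t+t+x)$ par $F(u+{\rm id}_x)=v_*$, on en déduirait que $(u_*-{\rm id})\beta$ se factorise, naturellement en $x$, par $\tau_t(\ker b_T)$. Puisque $\tau_t$ commute aux foncteurs $\delta_s$ (la structure monoïdale étant symétrique), il préserve $\pol_d$, et cette factorisation donne la nullité voulue $u_*\beta=\beta$ dans la catégorie quotient.

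La principale difficulté réside dans cette dernière étape : il faut transporter soigneusement la donnée d'extension triangulaire le long du décalage $y\mapsto t+x$, identifier l'action de $u\in T(t)$ sur $\tau_{t+t}(F)$ à l'action naturelle de $T$ sur $\hat F_T$ via la fonctorialité de $T$ et les axiomes d'équivariance, puis contrôler la naturalité en $x$ de la factorisation obtenue et s'assurer qu'elle passe bien par un sous-foncteur faiblement polynomial de degré au plus $d$ ; c'est là que la presque-équivariance de $a_T$ relie effectivement l'action de groupe à la borne sur le degré polynomial.
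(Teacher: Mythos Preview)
Your proof is correct and follows essentially the same route as the paper. The paper packages the argument slightly differently: it defines a single subfunctor $G\subset\tau_t(F)$ (the sum of the images $j_T(\ker b_T)$ for $T\in\mathfrak{T}_t$), shows that the image of $i_t$ in $\tau_t(F)/G$ is invariant under every $T(y)$ and under the automorphisms $f+t+x$ coming from ${\rm Aut}_\C(t)$, and then invokes Proposition~\ref{pr-sw1}; you carry out the same computation generator by generator directly in $\St(\C,\M)/\pol_d$, making the telescopic step and the case $f+t$ more explicit, but the core identity $(v_*-{\rm id})\circ i_t=j_T\circ[(v\cdot-\,{\rm id})\circ a_T]$ and its factorisation through $\ker b_T$ are exactly the same.
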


\begin{proof}
Fixons $t\in {\rm Ob}\,\C$. Notons $G$ la somme pour $T\in\mathfrak{T}_t$ de l'image par $j_T : \hat{F}_T\to\tau_t(F)$ de ${\rm Ker}\,b_T$. Ainsi, $G$ est faiblement polynomial de degré au plus $d$. Les hypothèses d'équivariance pour $b_T$ et $j_T$ montrent que, pour tout $T$, sur l'image de $\hat{F}_T\xrightarrow{j_T}\tau_t(F)\twoheadrightarrow\tau_t(F)/G$, l'action de $T$ est {\em triviale}. On déduit maintenant de la relation $j_T a_T=i_t$ qu'il en est de même pour l'image de $F\xrightarrow{i_t}\tau_t(F)\twoheadrightarrow\tau_t(F)/G$, et ce pour tout $T\in\mathfrak{T}$.

On en déduit que, pour tout $x\in {\rm Ob}\,\C$, l'image de $\tau_t(F)(x)=F(t+x)\xrightarrow{i_t(F)(t+x)}F(t+t+x)=\tau_{t+t}(F)(x)\twoheadrightarrow\tau_{t+t}(F)(x)/G(t+x)$ est invariante par le sous-groupe de ${\rm Aut}_\C(t+t+x)$ engendré par les $T(t+x)$ pour $T\in\mathfrak{T}_t$ et par l'image de ${\rm Aut}_\C(t+x)\xrightarrow{f\mapsto f+t+x}{\rm Aut}_\C(t+t+x)$. Or cette image contient l'automorphisme de $t+t+x$ induite par le tressage sur $t+t$ (par hypothèse et par fonctorialité de $T$), de sorte qu'on peut appliquer la proposition~\ref{pr-sw1} pour conclure.
\end{proof}

Le résultat suivant se place dans le même cadre que \cite{DV-pol} dont il améliore légèrement, sous une hypothèse supplémentaire, le théorème~3.8, et utilise la notation tilde introduite au début de sa section~3.

\begin{thm}[Cf. \cite{DV-pol}, théorème~3.8]\label{th-tilde}
 Soient $(\C,+,0)$ une petite catégorie monoïdale symétrique dont l'unité est objet initial et $\M$ une catégorie de Grothendieck. On suppose que pour tout foncteur constant $C : \C\to\M$ et tout foncteur fortement monoïdal $A : \C\to\M$, on a ${\rm Ext}^i_{(\C,\M)}(A,C)=0$ pour $i\in\{0,1\}$. Alors le foncteur canonique $o : \C\to\widetilde{\C}$ induit pour tout entier $d\geq 1$ une équivalence
 $$\pol_d(\widetilde{\C},\M)/\pol_{d-2}(\widetilde{\C},\M)\to\pol_d(\C,\M)/\pol_{d-2}(\C,\M).$$
\end{thm}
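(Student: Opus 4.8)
The plan is to deduce the statement from (the proof of) théorème~3.8 of \cite{DV-pol} by an induction on $d$, the supplementary hypothesis on $\mathrm{Ext}^i(A,C)$ serving precisely to sharpen the quotient from $\pol_{d-1}$ to $\pol_{d-2}$. First I would record that the induced functor of the statement exists. Since $o$ is strongly monoidal and the identity on objects, it is in particular essentially surjective, so Proposition~\ref{fonct-st} furnishes an exact functor $o^* : \St(\widetilde\C,\M)\to\St(\C,\M)$ together with a left adjoint $o_*$, and the compatibility $o^*\tau_{o(t)}\simeq\tau_t o^*$ (hence the analogous identity for $\delta_t$). By Definition~\ref{df-pol-gl} this shows $o^*$ maps $\pol_k(\widetilde\C,\M)$ into $\pol_k(\C,\M)$ for every $k$, so it descends to the asserted functor on the quotients. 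I then prove that this functor is fully faithful and essentially surjective, granting as the base of the induction the weaker form of the statement --- with $\pol_{d-1}$ in place of $\pol_{d-2}$ --- which is contained in théorème~3.8 of \cite{DV-pol} and requires no extra hypothesis.

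The heart of the argument is to glue the two top graded pieces. Writing $\pol_k$ for $\pol_k(\C,\M)$ (and similarly with a tilde), one has the localization sequences of Serre quotients
$$\pol_{d-1}/\pol_{d-2}\hookrightarrow\pol_d/\pol_{d-2}\twoheadrightarrow\pol_d/\pol_{d-1}$$
on both sides, and $o^*$ is compatible with them. Théorème~3.8 of \cite{DV-pol}, applied in degrees $d$ and $d-1$, already gives that $o^*$ is an equivalence on the two outer terms. To promote this to an equivalence on the middle term it is enough to check that $o^*$ matches the data binding the two layers, i.e. that for polynomial functors $X$ of degree $d$ and $Y$ of degree at most $d-1$ it induces isomorphisms on $\mathrm{Hom}$ and on $\mathrm{Ext}^1$ in the stable categories, modulo the ambiguity carried by $\pol_{d-2}$; essential surjectivity modulo $\pol_{d-2}$ is then obtained by lifting $X$ through the adjoint $o_*$ and measuring the defect of the unit $X\to o^*o_*X$ by the same groups.

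The hard part will be this comparison of $\mathrm{Ext}^1$. My plan is a dévissage, iterating the exact difference functors $\delta_{t_0}\cdots\delta_{t_{d-1}}$, which lower the polynomial degree and commute with $o^*$, so as to strip $X$ and $Y$ down to their leading cross-effects. Proposition~\ref{pr-sw1} controls, through the two canonical maps $t\to t+t$, exactly when such an iterated difference is forced to vanish modulo $\pol_{d-2}$, so that the only contribution surviving to the compared $\mathrm{Ext}^1$ is a pairing of a strongly monoidal functor $A$ --- the degree-$d$ cross-effect --- against a constant functor $C$ --- the residue of the lower layer. At this extreme the hypothesis $\mathrm{Ext}^i_{(\C,\M)}(A,C)=0$ for $i\in\{0,1\}$ makes both the relevant $\mathrm{Hom}$ and the obstructing $\mathrm{Ext}^1$ agree on the two sides, killing any discrepancy between the $\C$- and $\widetilde\C$-filtrations that could survive past degree $d-2$. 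The genuine obstacle is the bookkeeping of this reduction --- ensuring that the comparison $\mathrm{Ext}$-groups collapse cleanly onto the base pairing $(A,C)$ and that $o^*$ is compatible with the cross-effect decomposition throughout, in the spirit of Proposition~\ref{pr-swp} --- rather than the final invocation of the hypothesis, which is immediate once the reduction is in place.
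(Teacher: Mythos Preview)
Your strategy diverges from the paper's, and the core of your reduction contains a confusion that makes the argument, as written, incomplete.

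The paper does not attempt a direct gluing of the two top layers via an $\mathrm{Ext}^1$ comparison. Instead it isolates the case $d=1$ and treats it by hand: since $\widetilde\C$ has a zero object, every $\widetilde X\in\pol_1(\widetilde\C,\M)$ splits as $A\oplus C$ with $A$ strongly monoidal and $C$ constant; for $X\in\pol_1(\C,\M)$ one studies the unit $X\to o^*\widetilde X$, whose kernel $N$ and cokernel $K$ are constant, and uses the hypotheses $\mathrm{Hom}(o^*A,K)=0$ and $\mathrm{Ext}^1(o^*A,N)=0$ to split off $o^*A$ and conclude $X\simeq o^*A\oplus C''$. This shows $o^*:\pol_1(\widetilde\C,\M)\to\pol_1(\C,\M)$ is an equivalence (not merely modulo $\pol_{-1}$). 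The general $d$ is then obtained by a recursion \emph{identical in form} to the one carried out in the proof of th\'eor\`eme~3.8 of \cite{DV-pol}; the hypothesis is used \emph{only} at the bottom of the filtration, in degree~$1$.

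Your d\'evissage claims that after iterating $\delta_{t_0}\cdots\delta_{t_{d-1}}$ the surviving obstruction is an $\mathrm{Ext}$ between ``a strongly monoidal functor $A$ --- the degree-$d$ cross-effect'' and a constant $C$. This is not right: the $d$-th cross-effect of a polynomial functor of degree $d$ is a \emph{multi-additive} functor of $d$ variables, not a strongly monoidal functor $\C\to\M$, so the hypothesis of the theorem does not apply to it directly. What is true is that, variable by variable, such a multi-additive functor behaves like an additive (hence strongly monoidal) functor; but exploiting this is precisely what forces you back to the degree-$1$ statement as a genuine base case, which you have not established. In other words, the reduction you sketch only lands on the hypothesis once you have already proved the $d=1$ equivalence --- exactly what the paper does first and what your proposal skips. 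Without that anchor, your $\mathrm{Ext}$-gluing remains a plan rather than a proof.
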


\begin{proof}
 On reprend la démonstration du théorème~3.8 de \cite{DV-pol} qu'on adapte. Pour $d=1$, il s'agit de démontrer que $o^* : \pol_1(\widetilde{\C},\M)\to\pol_1(\C,\M)$ est une équivalence. Si $X$ est un objet de $\pol_1(\C,\M)$, comme $\pol_1(\widetilde{\C},\M)$ se scinde en le produit de la catégorie des foncteurs fortement monoïdaux $\widetilde{\C}\to\M$ et de la catégorie $\M$ (correspondant aux foncteurs constants), parce que l'unité de $\widetilde{\C}$ en est un objet {\em nul}, on peut écrire $\tilde{X}\simeq A\oplus C$ où $A$ est fortement monoïdal et $C$ constant. Le noyau $N$ et le conoyau $K$ de l'unité $X\to o^*\tilde{X}$ sont polynomiaux de degré au plus $0$, c'est-à-dire constants (cf. \cite{DV-pol}). La suite exacte
 $$0\to N\to X\to o^*A\oplus C\to K\to 0$$
 (on se permet d'écrire $C$ plutôt que $o^* C$, ce foncteur étant constant) ainsi que l'hypothèse ${\rm Hom}(o^*A,K)=0$ permettent de la réduire à une suite exacte courte
 $$0\to N\to X\to o^*A\oplus C'\to 0$$
 avec $C'={\rm Ker}\,(C\to K)$ constant, puis l'hypothèse ${\rm Ext}^1(o^*A,N)=0$ (noter que l'Ext peut être calculé indifféremment dans $(\C,\M)$ ou $\St(\C,\M)$ --- cf. \cite{DV-pol}) garantit $X\simeq o^* A\oplus C''$ où $C''$, constant, est une extension de $C'$ par $N$. Cela montre que $X$ appartient à l'image essentielle de $o^* : \pol_1(\widetilde{\C},\M)\to\pol_1(\C,\M)$. On en déduit aisément que ce foncteur est une équivalence.
 
 Le cas général s'en déduit par une récurrence sur le degré $d$ analogue à celle de la démonstration du théorème~3.8 de \cite{DV-pol}.
\end{proof}

\begin{rem}
 \begin{enumerate}
  \item Réciproquement, on vérifie sans peine (à partir du scindement entre foncteurs constants et foncteurs réduits sur une catégorie possèdant un objet nul) que, si le foncteur canonique $\C\to\tilde{\C}$ induit une équivalence de catégories $\pol_1(\tilde{\C},\M)\to\pol_1(\C,\M)$, alors ${\rm Ext}^i_{(\C,\M)}(A,C)=0$ pour $i\in\{0,1\}$, tout foncteur constant $C : \C\to\M$ et tout foncteur fortement monoïdal $A : \C\to\M$.
  \item L'hypothèse du théorème précédent n'est pas toujours satisfaite. Ainsi, dans la catégorie $(\Theta,\kk)$, le foncteur de linéarisation $\kk[-](=P^\Theta_{\mathbf{1}}$) est fortement monoïdal, mais ${\rm Hom}_{(\Theta,\kk)}(\kk[-],\kk)\simeq\kk$. En revanche, dans $(\Theta,\kk)$, on a ${\rm Ext}^1(A,C)=0$ si $A$ est fortement monoïdal et $C$ constant. On donne maintenant un exemple où cette condition n'est pas satisfaite.
  
  Considérons la catégorie $\C$ dont les objets sont les couples $(V,v)$ formés d'un espace vectoriel à gauche de dimension finie sur un corps $k$ fixé et d'un élément non nul $v$ de $V$, les morphismes $(V,v)\to (W,w)$ étant les applications linéaires $f : V\to W$ telles que $f(v)=w$. On définit sur $\C$ une structure monoïdale symétrique $+$ par $(V,v)+(W,w):=(U\underset{k}{+}V, \bar{u}=\bar{v})$ (où la barre désigne l'image canonique dans $U\underset{k}{+}V$ d'un élément de $U$ ou de $V$) ; l'objet $(k,1)$ de $\C$ est à la fois unité pour cette structure et objet initial. Le foncteur $A : (\C,k)\to k\md$ associant $V/v$ à $V$ est fortement monoïdal, mais ${\rm Ext}^1_{(\C,k)}(A,k)$ est non nul, la suite exacte canonique
  $$0\to k\to V\to V/v\to 0$$
  définissant une extension de foncteurs non triviale.
 \end{enumerate}
\end{rem}

\subsection{Catégories de modèles pour l'homologie stable}\label{pcf}

On donne maintenant une variante du cadre formel pour l'étude de l'homologie stable de familles de groupes introduit dans \cite[§\,1]{DV} et retravaillé dans \cite[§\,1]{Dja-JKT} (voir aussi la notion de {\em catégorie homogène} de Randal-Williams et Wahl \cite{RWW}). Au prix de quelques raffinements techniques, la définition ci-après pourrait être étendue (cf. remarque~\ref{rq-gldf} ci-après) sans que cela modifie fondamentalement les considérations ultérieures, mais cela n'aurait pas d'incidence sur les applications que nous avons en vue --- à savoir celles de la section~\ref{sf} et de l'appendice.

\begin{defi}\label{df-cform}
 On appelle {\em catégorie de modèles pour l'homologie stable} (CMHS en abrégé) une petite catégorie monoïdale symétrique $(\C,+,0)$ munie d'une structure de foncteur d'automorphismes (noté ${\rm Aut_\C}$) monoïdal vérifiant les conditions suivantes.
\begin{enumerate}
 \item L'unité $0$ de la structure monoïdale est objet initial de $\C$.
\item (Axiome de transitivité stable) Si $f, g : a\to b$ sont des flèches parallèles de $\C$, il existe un objet $t$ de $\C$ et un automorphisme $\alpha$ de $t+b$ faisant commuter le diagramme 
$$\xymatrix{a\ar[d]_f\ar[r]^g & b\ar[r] & t+b\ar[d]^\alpha\\
b\ar[rr] & & t+b
}$$
dans lequel les flèches non spécifiées sont le morphisme canonique déduit de ce que $0$ est objet initial de $\C$.
\item Pour tout objet $a$ de $\C$, le foncteur $\C\to\C_{\C(a,-)}$ associant à $t$ l'objet $a+t$ muni de la flèche canonique $a\to a+t$ est une équivalence.
\end{enumerate}

On dit que la catégorie de modèles pour l'homologie stable est {\em régulière} si le renforcement suivant de l'axiome de transivité stable est vérifié : pour tous objets $a$ et $b$ de $\C$, le groupe ${\rm Aut}_\C(b)$ opère transitivement sur l'ensemble $\C(a,b)$.

Un {\em morphisme de catégories de modèles pour l'homologie stable} est un foncteur monoïdal symétrique {\em au sens fort} (i.e. tel que les morphismes structuraux de compatibilité aux structures monoïdales soient des isomorphismes) qui est compatible aux foncteurs d'automorphismes.

Les catégories de modèles pour l'homologie stable forment ainsi une catégorie notée $\mathbf{CMHS}$.
\end{defi}

La dernière condition permet d'associer à toute flèche $f : a\to b$ de $\C$ un objet qu'on notera $b\underset{f}{-}a$ tel que $f$ soit isomorphe à la flèche canonique $a\to a+(b\underset{f}{-}a)$.

\begin{rem}\label{rq-quillen}
 À tout petit groupoïde monoïdal symétrique $(\mathfrak{G},+,0)$ on peut associer une CMHS, selon une construction classique due à Quillen, notée $<\mathfrak{G},\mathfrak{G}>$ dans \cite[page~3]{Gray} (voir aussi \cite[§,1.1]{RWW}, et \cite[§\,3]{DV-pol}, où la construction duale est utilisée). Cette CMHS est régulière si et seulement si $\mathfrak{G}$ est régulier au sens où $a+t\simeq b+t$ entraîne $a\simeq b$, où $a$, $b$ et $t$ sont des objets de $\mathfrak{G}$.
 
 Il n'est pas difficile de voir que, si $\C$ est une CMHS et que $\mathfrak{G}$ désigne le groupoïde monoïdal sous-jacent, on dispose d'un morphisme canonique de CMHS $<\mathfrak{G},\mathfrak{G}>\to\C$ qui est une {\em équivalence}.
\end{rem}

On vérifie aussitôt :

\begin{pr}\label{cmhs-quot}
 Soient $\C$ une CMHS et $G : \C\to\mathbf{Grp}$ un sous-foncteur monoïdal de ${\rm Aut}_\C$. Les conditions suivantes sont équivalentes :
 \begin{enumerate}
  \item le foncteur ${\rm Aut}_\C/G : \C\to\mathbf{Grp}$ envoie les flèches canoniques $a\to a+t$ sur des monomorphismes ;
  \item pour tous objets $a$ et $t$ de $\C$, le diagramme commutatif de groupes
$$\xymatrix{G(a)\ar[r]\ar[d] & G(a+t)\ar[d] \\
{\rm Aut}_\C(a)\ar[r] & {\rm Aut}_\C(a+t)
}$$
dont les flèches verticales sont les inclusions et les flèches horizontales sont induites par le morphisme canonique $a\to a+t$  est cartésien ;
  \item il existe une structure de CMHS sur $\C/G$ telle que le foncteur canonique $\Pi_G : \C\to\C/G$ soit un morphisme de CMHS.
 \end{enumerate}
Si elles sont vérifiées, cette structure de CMHS sur $\C/G$ est alors unique.
\end{pr}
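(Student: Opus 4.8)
The plan is to prove (i)$\Leftrightarrow$(ii) directly, then to reduce the existence asserted in (iii) to verifying a single axiom of Définition~\ref{df-cform} for the canonical structure on $\C/G$, and finally to match that axiom with condition (ii). The reductions are formal; the real content sits in the last matching.

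First, for (i)$\Leftrightarrow$(ii): the canonical arrow $a\to a+t$ induces on automorphisms the map $g\mapsto g+t$, hence a morphism of groups ${\rm Aut}_\C(a)/G(a)\to {\rm Aut}_\C(a+t)/G(a+t)$. Its kernel is the image of $\{g\in {\rm Aut}_\C(a)\mid g+t\in G(a+t)\}$, so this map is a monomorphism precisely when that subgroup coincides with $G(a)$; and this is exactly the assertion that the square of (ii) is cartesian. Thus (i) and (ii) are equivalent, the point being a triviality about kernels of maps induced on quotient groups.

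Next I would treat (iii). By Proposition et définition~\ref{prdf-hfct} the quotient $\C/G$ carries a canonical symmetric monoidal structure and a monoidal automorphism functor for which $\Pi_G$ is strictly monoidal and compatible with automorphisms; since $\Pi_G$ is the identity on objects and full, any CMHS structure turning $\Pi_G$ into a morphism of CMHS must agree with this one, which already yields the uniqueness clause. Hence (iii) is equivalent to the statement that $\C/G$, with its canonical structure, is a CMHS. The first two axioms of Définition~\ref{df-cform} hold unconditionally: the unit $0$ remains initial because $\C(0,b)$ is a singleton and quotienting by $G(b)$ changes nothing, and the stable transitivity axiom follows by lifting a pair of parallel arrows of $\C/G$ to $\C$, invoking the axiom there, and applying $\Pi_G$ to the resulting diagram (the image of the automorphism $\alpha$ being again an automorphism, and canonical arrows mapping to canonical arrows). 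Consequently (iii) reduces to the third axiom for $\C/G$: for every object $a$, the functor $t\mapsto a+t$ (with its canonical arrow $a\to a+t$) is an equivalence $\C/G\to (\C/G)_{(\C/G)(a,-)}$.

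It remains to match this third axiom with (ii). Essential surjectivity is automatic: an object of $(\C/G)_{(\C/G)(a,-)}$ is a morphism $a\to t$ of $\C/G$, which one lifts to $\C$ and decomposes, by the third axiom for $\C$, as a canonical arrow $a\to a+s$; applying $\Pi_G$ gives the desired isomorphism. The crux is full faithfulness. Fixing $t,t'$ and setting $S:=\{\phi\in\C(a+t,a+t')\mid \phi \text{ carries the canonical } a\to a+t \text{ to the canonical } a\to a+t'\}$, the third axiom for $\C$ identifies $\C(t,t')\xrightarrow{\sim}S$ via $\psi\mapsto {\rm id}_a+\psi$. On the quotient side one checks that $(\C/G)(t,t')\cong K\backslash S$ with $K=\{{\rm id}_a+g\mid g\in G(t')\}$ (using that ${\rm id}_a+(-)$ is faithful to get $G(t')\cong K$), while the morphisms of the category of elements between the two relevant objects form $H\backslash S$, where $H$ is the stabiliser in $G(a+t')$, under post-composition, of the canonical arrow $a\to a+t'$. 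The third axiom for $\C$ computes this stabiliser inside ${\rm Aut}_\C(a+t')$ as $\{{\rm id}_a+\psi\mid\psi\in {\rm Aut}_\C(t')\}$, whence $H=G(a+t')\cap\{{\rm id}_a+\psi\mid\psi\in {\rm Aut}_\C(t')\}$. The comparison functor then becomes the canonical projection $K\backslash S\to H\backslash S$, which is always surjective and, taking $t=t'$ and $s={\rm id}_{a+t'}$, is bijective for all $t,t'$ if and only if $H=K$. Transporting by the braiding turns $H=K$ into precisely condition (ii), so the third axiom holds for all $a$ iff (ii) holds. The main obstacle is exactly this last step: describing the morphism sets of the quotient category of elements as orbit sets and computing the stabiliser $H$ from the third axiom of $\C$, so as to recognise the identity $H=K$ as the cartesian condition of (ii).
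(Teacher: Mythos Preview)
Your argument is correct. The paper itself gives no proof at all for this proposition: it is introduced by ``On vérifie aussitôt'' and left to the reader as a routine verification, so there is nothing to compare against beyond noting that your route --- reducing (iii) to the third CMHS axiom for $\C/G$ via Proposition--D\'efinition~\ref{prdf-hfct}, then identifying the resulting stabiliser condition $H=K$ with (ii) --- is exactly the kind of direct check the author had in mind.
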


(La structure de foncteur d'automorphismes sur $\C/G$ est celle de la proposition-définition~\ref{prdf-hfct}.)

\begin{rem}\label{rq-gldf}
\begin{enumerate}
 \item Les conditions précédentes ne sont pas automatiques ; par exemple, on peut <<~tronquer~>> le foncteur ${\rm Aut}_\C$ pour obtenir un sous-foncteur n'y satisfaisant pas (sur $\Theta$, cela revient à poser $G(\mathbf{n})=\Sigma_n$ pour $n>N$ et $G(\mathbf{n})=\{1\}$ sinon, où $N$ est un entier fixé).
 \item On pourrait affaiblir légèrement les axiomes des CMHS de manière à ce qu'un quotient $\C/G$ comme dans l'énoncé ci-dessus soit toujours une CMHS. Il faudrait pour cela relâcher, dans la dernière condition, l'hypothèse que le foncteur canonique $\C\to\C_{\C(a,-)}$ est fidèle en une hypothèse de stable fidélité. On pourrait également affaiblir les hypothèses d'essentielle surjectivité et de plénitude en des versions stables.
\end{enumerate}
\end{rem}

\begin{pr}\label{cmhs-prod}
 Le produit de deux CMHS, muni de la structure monoïdale produit et de la structure de foncteurs d'automorphismes produit, est une CMHS.
\end{pr}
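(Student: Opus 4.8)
The plan is to verify directly that $\C\times\D$ satisfies the three numbered conditions of Definition~\ref{df-cform}, checking each one componentwise. First I would recall that a product of two symmetric monoidal categories is symmetric monoidal with unit $(0_\C,0_\D)$, and that the product automorphism functor structure is exactly the one furnished by Proposition et définition~\ref{prdf-hfct}(5); since an automorphism of $(x,y)$ in $\C\times\D$ is precisely a pair $(u,v)$ with $u\in{\rm Aut}_\C(x)$ and $v\in{\rm Aut}_\D(y)$, this structure is genuinely a monoidal automorphism functor structure in the required sense. It then remains to check the three axioms.

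The first is immediate: $(0_\C,0_\D)$ is initial in $\C\times\D$ because a product of initial objects is initial. For the stable transitivity axiom, given parallel arrows $(f_1,f_2),(g_1,g_2):(a_1,a_2)\to(b_1,b_2)$, I would apply the axiom separately in $\C$ and in $\D$ to obtain objects $t_i$ and automorphisms $\alpha_i\in{\rm Aut}(t_i+b_i)$ (for $i=1,2$) witnessing the required commutation; setting $t:=(t_1,t_2)$ and $\alpha:=(\alpha_1,\alpha_2)$, the identification ${\rm Aut}_{\C\times\D}(t+(b_1,b_2))\simeq{\rm Aut}_\C(t_1+b_1)\times{\rm Aut}_\D(t_2+b_2)$ shows that $(t,\alpha)$ witnesses stable transitivity for the pair of arrows in $\C\times\D$, the relevant diagram commuting in each factor.

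The only point requiring a little care is the third axiom. Writing $F=(\C\times\D)((a_1,a_2),-)$, one has $F(x_1,x_2)=\C(a_1,x_1)\times\D(a_2,x_2)$, so that the category of elements decomposes canonically as $(\C\times\D)_F\simeq\C_{\C(a_1,-)}\times\D_{\D(a_2,-)}$, an element of $F(x_1,x_2)$ being a pair $(\xi_1,\xi_2)$. Under this identification, the functor $\C\times\D\to(\C\times\D)_F$ sending $(t_1,t_2)$ to $(a_1,a_2)+(t_1,t_2)$ equipped with the canonical arrow is precisely the product of the two functors $\C\to\C_{\C(a_1,-)}$ and $\D\to\D_{\D(a_2,-)}$ furnished by the CMHS structures of $\C$ and $\D$. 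Since each of these is an equivalence by hypothesis and a product of equivalences is an equivalence, the third axiom follows. I expect this identification of the category of elements of a representable functor on a product as the product of the two categories of elements to be the main (though entirely routine) step; everything else reduces formally to the corresponding property in each factor.
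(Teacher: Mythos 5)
Your proof is correct and is precisely the routine componentwise verification that the paper intends — it states Proposition~\ref{cmhs-prod} without any proof, treating it as immediate. Your identification $(\C\times\D)_{(\C\times\D)((a_1,a_2),-)}\simeq\C_{\C(a_1,-)}\times\D_{\D(a_2,-)}$ for the third axiom, together with the observation that all the other data (initial unit, stable transitivity witnesses, the monoidal automorphism functor structure of Proposition et définition~\ref{prdf-hfct}) split as products, settles every point.
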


La situation générale qui nous intéresse est la suivante. Supposons que $\Phi : \C\to\D$ est un morphisme de CMHS. Définissons, pour $x\in {\rm Ob}\,\C$,
$$\Gamma_\Phi(x):={\rm Ker}\,\big({\rm Aut}_\C(x)\to{\rm Aut}_\D(\Phi x)\big)$$
(le morphisme de groupes étant induit par la fonctorialité de $\Phi$).
On obtient ainsi un sous-foncteur monoïdal $\Gamma_\Phi : \C\to\mathbf{Grp}$ de ${\rm Aut}_\C$. 

On note que les conditions de la proposition~\ref{cmhs-quot} sont automatiquement vérifiées par le sous-foncteur $\Gamma_\Phi$ de ${\rm Aut}_\C$.

\begin{rem}\label{rq-sl}
 On pourrait généraliser les résultats du présent travail à la situation suivante : $G_\C : \C\to\mathbf{Grp}$ (resp. $G_\D : \D\to\mathbf{Grp}$) désignant un sous-foncteur de ${\rm Aut}_\C$ (resp. ${\rm Aut}_\D$) {\em contenant le sous-foncteur des commutateurs} $[{\rm Aut}_\C,{\rm Aut}_\C]$ (resp. $[{\rm Aut}_\D,{\rm Aut}_\D]$), si l'on suppose que la transformation naturelle ${\rm Aut}_\C\to\Phi^*{\rm Aut}_\D$ qu'induit $\Phi$ envoie $G_\C$ dans $\Phi^*G_\D$, on pourrait remplacer $\Gamma_\Phi$ par le noyau de la transformation naturelle naturelle $G_\C\to\Phi^*G_\D$, au prix de quelques raffinements techniques.
 
 Cette généralisation (qui permet notamment de traiter de groupes de con\-gruence dans le groupe {\em spécial} linéaire sur un anneau commutatif) est classique pour le problème de la stabilité homologique (voir \cite{RWW} pour un cadre général, fonctoriel, à propos de cette question). 
\end{rem}

Par la proposition--définition~\ref{prh1}, on obtient en considérant l'homologie des groupes $\Gamma_\Phi(x)$ à coefficients dans $F(x)$, où $F$ est un objet de $(\C,\kk)$, un foncteur homologique $H_\bullet(\Gamma_\Phi;-) : (\C,\kk)\to (\C/\Gamma_\Phi,\kk)$. Celui-ci induit un foncteur homologique $\St(\C,\kk)\to\St(\C/\Gamma_\Phi;\kk)$. Cela provient du résultat plus général suivant :

\begin{pr}\label{pr-hstd}
 Soient $\C$ une petite catégorie monoïdale symétrique dont l'unité est objet initial, munie d'une structure de foncteur d'automorphismes monoïdale, et $G$ un sous-foncteur de ${\rm Aut}_\C$. Si $F$ est un foncteur appartenant à $\s n(\C,\kk)$, alors $H_\bullet(G;F)$ appartient à $\s n(\C/G,\kk)$.
\end{pr}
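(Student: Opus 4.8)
The plan is to reduce the statement to a pointwise, chain-level assertion about group homology. First I would record the concrete form of stable nullity in a functor category. For $F$ in $(\C,\kk)$ and objects $x,c$ of $\C$, Définition~\ref{df-sn} gives $\kappa_x(F)(c)={\rm Ker}\big(F(c)\to F(x+c)\big)$, the map being induced by the canonical morphism $c\to x+c$ (coming from $0\to x$). Since this canonical morphism factors as $c\to x+c\to (x'+x)+c$, one has $\kappa_x(F)\subseteq\kappa_{x'+x}(F)$, so the family $(\kappa_x(F))_x$ is directed and $\kappa(F)=\sum_x\kappa_x(F)$ is a filtered union. Consequently $F\in\s n(\C,\kk)$ means exactly that every element of every $F(c)$ is annihilated by the stabilization map $F(c)\to F(x+c)$ for some object $x$; moreover, given finitely many elements of $F(c)$, a single $x$ (the monoidal sum of the individual ones) annihilates all of them at once.

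Next I would identify the stabilization maps relevant to $H_d(G;F)$ in $(\C/G,\kk)$. Because $\Pi_G:\C\to\C/G$ is the identity on objects and strictly monoidal (Proposition-définition~\ref{prdf-hfct}), the objects of $\C/G$ and the canonical morphisms $x\to y+x$ are the images under $\Pi_G$ of those of $\C$. Hence $H_d(G;F)$ lies in $\s n(\C/G,\kk)$ if and only if, for every object $x$ and every class $\xi\in H_d(G(x);F(x))$, there is an object $y$ such that the map $H_d(G;F)(x)\to H_d(G;F)(y+x)$ induced by the canonical morphism $x\to y+x$ kills $\xi$. By Proposition~\ref{prh1}, this map is the map in group homology induced by the pair consisting of the group homomorphism $G(f):G(x)\to G(y+x)$ (functoriality of $G$) and the $\kk$-linear $G(f)$-equivariant map $F(f):F(x)\to F(y+x)$ (functoriality of $F$, the equivariance being the very axiom making $H_d(G;F)$ a functor), where $f:x\to y+x$ is canonical.

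Finally I would run the chain-level argument. I compute $H_d(G(x);F(x))$ with the normalized bar complex $B_\bullet(G(x))\otimes_{\mathbb{Z}[G(x)]}F(x)$; the map of pairs above induces a chain map to $B_\bullet(G(y+x))\otimes_{\mathbb{Z}[G(y+x)]}F(y+x)$ sending $[g_1|\cdots|g_d]\otimes m$ to $[G(f)(g_1)|\cdots|G(f)(g_d)]\otimes F(f)(m)$. A cycle representing $\xi$ is a single element of the degree-$d$ chain group, hence a finite combination involving only finitely many coefficients $m_1,\dots,m_k\in F(x)$. As $F$ is stably null, the first paragraph provides a single object $y$ with $F(f)(m_j)=0$ for all $j$. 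The chain map then sends the chosen cycle to $0$, so $\xi$ maps to $0$ in $H_d(G(y+x);F(y+x))$, which gives the claim for every $d$.

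The argument is essentially formal once these pieces are assembled; the only point requiring care—and the step I would treat as the main (mild) obstacle—is the interplay between the two stabilizations: one must check that the canonical stabilization morphisms in $\C/G$ are precisely the images under $\Pi_G$ of those in $\C$, and that a representing cycle genuinely involves only finitely many module coefficients, so that one stabilizing object $y$ can annihilate all of them simultaneously.
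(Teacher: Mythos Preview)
Your proof is correct and follows essentially the same idea as the paper's: the stabilization map on $H_d(G;F)$ factors through the map induced on coefficients by $F(x)\to F(t+x)$, which vanishes because $F$ is stably null. The only difference is presentational: where you pick a cycle representative and exploit that it involves only finitely many coefficients $m_j\in F(x)$ to find a single stabilizing object, the paper invokes the commutation of $H_\bullet(G;-)$ with filtered colimits to reduce at once to the case where $i_t:F\to\tau_t(F)$ is globally zero, then observes the same factorization. Your element-level argument is exactly what underlies that commutation, so the two proofs are equivalent.
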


\begin{proof}
 Par commutation de $H_\bullet(G;-)$ aux colimites filtrantes, on peut se ramener au cas où existe un $t\in {\rm Ob}\,\C$ tel que $i_t : F\to\tau_t(F)$ soit nul. La conclusion provient alors de ce que le morphisme
 $$i_t : H_\bullet(G(x);F(x))\to H_\bullet(G(t+x);F(t+x))$$
 se factorise de manière évidente par le morphisme
 $$H_\bullet(G(x);F(x))\to H_\bullet(G(x);F(t+x))$$
 induit par $i_t : F(x)\to F(t+x)$, qu'on a supposé nul.
\end{proof}

\begin{prdef}\label{prdfh}
 Supposons que $\Phi : \C\to\D$ est un morphisme de CMHS faiblement surjectif et stablement plein. Alors $\Phi^*$ induit une équivalence de catégories $\St(\D,\kk)\to\St(\C/\Gamma_\Phi;\kk)$. On note
 $$\Hh_\bullet(\Gamma_\Phi ; -) : \St(\C,\kk)\to\St(\D,\kk)$$
 le foncteur homologique composé du foncteur $\St(\C,\kk)\to\St(\C/\Gamma_\Phi;\kk)$ induit par $H_\bullet(\Gamma_\Phi;-)$ (via la proposition~\ref{prh1}) et d'un quasi-inverse de l'équivalence précédente.
 
 La proposition~\ref{prh1} fournit une transformation naturelle $\Hh_\bullet(\Gamma_\Phi;-)\to\mathbf{L}_\bullet(\Phi_*)$ de foncteurs homologiques $\St(\C,\kk)\to\St(\D,\kk)$ qui est un isomorphisme en degré $0$.
\end{prdef}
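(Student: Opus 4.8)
Le plan est de se ramener à la proposition~\ref{pr-eqstn}. On commencerait par observer que $\Phi$ se factorise canoniquement en $\C\xrightarrow{\Pi_{\Gamma_\Phi}}\C/\Gamma_\Phi\xrightarrow{\bar\Phi}\D$ : en effet, deux flèches de $\C$ différant par un élément de $\Gamma_\Phi$ ont même image par $\Phi$, puisque $\Gamma_\Phi$ est par définition le noyau de la transformation naturelle ${\rm Aut}_\C\to\Phi^*{\rm Aut}_\D$. Les conditions de la proposition~\ref{cmhs-quot} étant automatiquement vérifiées par $\Gamma_\Phi$, la catégorie $\C/\Gamma_\Phi$ est une CMHS et $\bar\Phi$ en est un morphisme. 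Comme, pour tout $X$ de $(\D,\kk)$, le foncteur $\Phi^*X$ est invariant sous l'action (triviale via $\Phi$) de $\Gamma_\Phi$, il se factorise par $\Pi_{\Gamma_\Phi}$ ; l'équivalence annoncée est donc celle induite par $\bar\Phi^*$, et il suffirait de vérifier que $\bar\Phi$ est faiblement surjectif, stablement plein et stablement fidèle. Les deux premières propriétés se déduiraient aussitôt des hypothèses faites sur $\Phi$ : $\bar\Phi$ coïncide avec $\Phi$ sur les objets et $\Pi_{\Gamma_\Phi}$ est plein, de sorte que les images des fonctions $\C(a,t+b)\to\D(\Phi a,\Phi(t+b))$ et $(\C/\Gamma_\Phi)(a,t+b)\to\D(\bar\Phi a,\bar\Phi(t+b))$ coïncident.

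La difficulté principale réside dans la \emph{stable fidélité} de $\bar\Phi$, que l'hypothèse analogue sur $\Phi$ (non supposée) ne fournit pas, mais qui doit provenir précisément du passage au quotient par $\Gamma_\Phi$. Partant de $f,g\in\C(t,x)$ avec $\Phi(f)=\Phi(g)$, j'utiliserais l'axiome de transitivité stable dans $\C$ pour produire un objet $s$ et un automorphisme $\alpha$ de $s+x$ tels que $\alpha\circ\iota_s\circ g=\iota_s\circ f$ (où $\iota_s : x\to s+x$ est la flèche canonique) ; en appliquant $\Phi$ et en utilisant $\Phi(f)=\Phi(g)$, on constate que $\Phi(\alpha)$ fixe la flèche canonique $\iota_s\circ\Phi(g)$. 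L'égalité voulue $[\iota_s f]=[\iota_s g]$ dans $\C/\Gamma_\Phi$ équivaut alors, $\Gamma_\Phi$ étant le noyau de $\Phi$ sur les automorphismes, à ce que $\Phi(\alpha)$ appartienne à l'image par $\Phi$ du stabilisateur ${\rm Stab}_\C(\iota_s g)$. Via le troisième axiome des CMHS (l'équivalence $\C\simeq\C_{\C(t,-)}$), j'identifierais ce stabilisateur à ${\rm Aut}_\C(w)$, pour $w:=(s+x)\underset{\iota_s g}{-}t$, et de même ${\rm Stab}_\D(\iota_s\Phi g)$ à ${\rm Aut}_\D(\Phi w)$ ; tout se ramènerait ainsi à la surjectivité \emph{stable} de ${\rm Aut}_\C(w)\to{\rm Aut}_\D(\Phi w)$, que je m'attends à extraire de la stable plénitude de $\Phi$ (appliquée à un automorphisme de $\Phi w$ vu comme flèche $\Phi w\to\Phi w$, puis combinée de nouveau à la transitivité stable et au troisième axiome pour en tirer un relèvement convenable). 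C'est ce contrôle stable des stabilisateurs qui constitue le c{\oe}ur technique de l'énoncé ; une fois acquis, la proposition~\ref{pr-eqstn} conclut quant à l'équivalence.

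Pour la partie <<~définition~>> et le dernier énoncé, je procéderais de façon formelle. Le foncteur $H_\bullet(\Gamma_\Phi;-):(\C,\kk)\to(\C/\Gamma_\Phi,\kk)$ préservant les foncteurs stablement nuls (proposition~\ref{pr-hstd}), il induit un foncteur homologique $\St(\C,\kk)\to\St(\C/\Gamma_\Phi,\kk)$, que l'on composerait avec le quasi-inverse de l'équivalence précédente — à savoir l'extension de Kan $\bar\Phi_*$ induite sur les catégories $\St$ (adjointe à gauche de $\bar\Phi^*$ par la proposition~\ref{fonct-st}, donc \emph{exacte} en tant qu'équivalence) — pour obtenir $\Hh_\bullet(\Gamma_\Phi;-)$. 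Enfin, la transformation naturelle $H_\bullet(\Gamma_\Phi;-)\to\mathbf{L}_\bullet(\Pi_{\Gamma_\Phi})_*$ de la proposition~\ref{prh1}, isomorphisme en degré $0$, se descendrait aux catégories $\St$ ; en lui appliquant l'équivalence exacte $\bar\Phi_*$ et en utilisant la factorisation $\Phi_*=\bar\Phi_*\circ(\Pi_{\Gamma_\Phi})_*$ des extensions de Kan (d'où $\mathbf{L}\Phi_*=\bar\Phi_*\circ\mathbf{L}(\Pi_{\Gamma_\Phi})_*$, puisque $\bar\Phi_*$ est exacte sur $\St$), on obtiendrait la transformation naturelle $\Hh_\bullet(\Gamma_\Phi;-)\to\mathbf{L}_\bullet\Phi_*$ recherchée, encore isomorphisme en degré $0$.

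Hormis la stable fidélité, toutes les étapes sont donc essentiellement formelles et reposent sur des résultats déjà établis ; c'est bien la surjectivité stable sur les groupes d'automorphismes des <<~compléments~>> que j'anticipe comme l'unique obstacle sérieux, et sur lequel se concentrerait l'essentiel de la rédaction.
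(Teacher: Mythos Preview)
Your overall strategy matches the paper's: factor $\Phi$ through $\bar\Phi : \C/\Gamma_\Phi \to \D$, verify that $\bar\Phi$ is weakly surjective, stably full, and stably faithful, then invoke Proposition~\ref{pr-eqstn}. The first two properties are indeed immediate from the hypotheses, as you say, and your treatment of the definitional part and of the natural transformation $\Hh_\bullet(\Gamma_\Phi;-) \to \mathbf{L}_\bullet \Phi_*$ is fine (more detailed, in fact, than what the paper spells out).

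The divergence is in the proof of stable faithfulness. Your proposed route --- reduce to ``stable surjectivity'' of ${\rm Aut}_\C(w) \to {\rm Aut}_\D(\Phi w)$ and extract this from stable fullness --- runs into a genuine difficulty. Stable fullness lifts $\xi \in {\rm Aut}_\D(\Phi w)$ only to a \emph{morphism} $\phi \in \C(w, u+w)$, not an automorphism. Comparing $\phi$ with the canonical arrow via transitive stability yields some $\beta \in {\rm Aut}_\C(v+u+w)$, but $\Phi(\beta)$ differs from the stabilized $\xi$ by an element of the stabilizer ${\rm Aut}_\D(\Phi(v+u))$ --- a new automorphism you would then have to lift, and the third axiom alone does not break this regress.

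The paper sidesteps this entirely with a commutator trick. After reducing (as you do) to $y=x+t$, $g : x\to x+t$ the canonical map, and $g = \alpha f$ with $\Phi(\alpha)$ in the image of ${\rm Aut}_\D(\Phi t)$, one stabilizes by adjoining a \emph{second copy of $x$} on the left. Since $\Phi(\alpha)$ acts only on the $\Phi t$ summand, $\Phi(x+\alpha) \in {\rm Aut}_\D(\Phi(x+x+t))$ commutes with the braiding involution $\epsilon$ of the two copies of $\Phi x$. Setting
\[
\beta := (x+\alpha)\,\epsilon\,(x+\alpha^{-1})\,\epsilon \in {\rm Aut}_\C(x+x+t),
\]
one gets $\Phi(\beta) = 1$, i.e.\ $\beta \in \Gamma_\Phi(x+x+t)$, and a direct check gives $\tilde g = \beta \tilde f$ for the stabilized arrows. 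No lifting of automorphisms from $\D$ to $\C$ is required: the element of $\Gamma_\Phi$ is built explicitly inside $\C$ using the symmetry of the monoidal structure.
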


(Noter que remplacer $\D$ par l'image $\bar{\D}$ du foncteur $\Phi$ et ce foncteur par le foncteur $\bar{\Phi} : \C\to\bar{\D}$ qu'il induit ne modifie par $\Gamma_\Phi$, et que $\bar{\Phi}$ vérifie les hypothèses précédentes : celles-ci ne sont donc pas tellement restrictives pour l'étude de l'homologie de $\Gamma_\Phi$.)

\begin{proof}
 De manière générale, $\Phi$ induit un morphisme de CMHS
 $$\bar{\Phi} : \C/\Gamma_\Phi\to\D\;;$$
 celui-ci est faiblement surjectif (resp. stablement plein) si $\Phi$ l'est. Pour conclure par application de la proposition~\ref{pr-eqstn}, il suffit donc de montrer que $\bar{\Phi}$ est stablement fidèle.
 
 Soient $x$, $y$ des objets de $\C$ et $f$, $g$ des éléments de $\C(x,y)$ ayant la même image dans $\D(\Phi x,\Phi y)$. Par l'axiome de transitivité stable on voit que, quitte à remplacer $f$ et $g$ par leur composée avec un morphisme canonique du type $y\to t+y$, on peut supposer l'existence d'un automorphisme $\alpha$ de $y$ tel que $g=\alpha f$ ; par le troisième axiome on voit qu'on peut également supposer que $y$ est de la forme $x+t$ et que $g : x\to x+t$ est le morphisme canonique. De $\Phi(g)=\Phi(\alpha)\Phi(f)=\Phi(\alpha)\Phi(g)$ on tire donc que $\Phi(\alpha)$ appartient à l'image du morphisme canonique ${\rm Aut}_\D(\Phi(t))\to {\rm Aut}_\D(\Phi(x)+\Phi(t))\simeq {\rm Aut}_\D(\Phi(x+t))$. Il s'ensuit que l'automorphisme $\Phi(x+\alpha)$ de $\Phi(x+x+t)$ commute avec l'involution $\epsilon$ induite par le tressage sur $x+x$. Posons $\beta=(x+\alpha)\epsilon (x+\alpha^{-1})\epsilon$ : on a donc $\beta\in\Gamma_\Phi(x+x+t)$. On vérifie facilement que, en notant $\tilde{f}$ (resp. $\tilde{g}$) la composée de $f$ (resp. $g$) avec le morphisme canonique $y\to x+y(=x+x+t)$, on a $\tilde{g}=\beta\tilde{f}$, de sorte que $\tilde{f}$ et $\tilde{g}$ définissent le même morphisme dans $\C/\Gamma_\Phi$, ce qui achève la démonstration.
\end{proof}

\begin{rem}\label{rreg}
 Si $\Phi : \C\to\D$ est un morphisme de CMHS plein et essentiellement surjectif et que $\C$ est une CMHS {\em régulière}, on vérifie facilement que $\Phi$ induit même une équivalence $\C/\Gamma_\Phi\to\D$.
\end{rem}

Notre but consiste à étudier les $\Hh_\bullet(\Gamma_\Phi;X)$, notamment lorsque $X$ est constant --- mais pour ce faire nous aurons besoin de l'étude de cette homologie stable pour des $X$ assez généraux (au moins polynomiaux).

\medskip

La construction qu'on vient d'introduire possède la fonctorialité suivante. Supposons donné un diagramme de CMHS
\begin{equation}\label{dc-cmhs}
 \xymatrix{\C_1\ar[r]^{\Phi_1}\ar[d]_{\Psi} & \D_1\ar[d]^\Xi \\
\C_2\ar[r]_{\Phi_2} & \D_2
}
\end{equation}
commutatif (à isomorphisme près). Alors $\Psi$ induit des morphismes ${\rm Aut}_{\C_1}\to\Psi^* {\rm Aut}_{\C_2}$ et $\Gamma_{\Phi_1}\to\Psi^*\Gamma_{\Phi_2}$ de foncteurs $\C_1\to\mathbf{Grp}$. Si l'on suppose $\Phi_1$ et $\Phi_2$ faiblement surjectifs et stablement pleins, alors $\Psi$ induit un morphisme gradué
\begin{equation}\label{foncf}
 \Hh_\bullet(\Gamma_{\Phi_1};\Psi^* X)\to\Xi^*\Hh_\bullet(\Gamma_{\Phi_2};X)
\end{equation}
naturel en l'objet $X$ de $\St(\C_2,\kk)$.

La définition qui suit est calquée sur la situation usuelle de la $K$-théorie algébrique (cf. l'introduction). La construction $+$ ou la construction $Q$ de Quillen \cite{QK} peuvent d'ailleurs être appliquées dans le cadre des CMHS, ce qui permet, comme dans le cas des groupes de congruence classiques, de donner un critère homotopique équivalent à la définition homologique qui suit.

\begin{defi}\label{df-exci}
 On dit que $\Phi$ est {\em $\kk$-excisif pour l'homologie stable} si l'objet $\Hh_i(\Gamma_\Phi;\kk)$ de $\St(\D,\kk)$ est (stablement) constant pour tout entier $i$. On dit que $\Phi$ est $\kk$-excisif pour l'homologie stable jusqu'en degré $d$ si cela est vrai pour $i\leq d$.
\end{defi}

Si cela est vrai pour $\kk=\mathbb{Z}$, on dira simplement que $\Phi$ est excisif (jusqu'en degré $d$), ce qui implique que ce foncteur est $\kk$-excisif (jusqu'en degré $d$) pour tout anneau commutatif $\kk$ par la formule des coefficients universels.

\subsection{Produits tensoriels}\label{sprt}

Soit $\C$ une petite catégorie.

Le produit tensoriel usuel $\otimes=\underset{\kk}{\otimes} : (\C,\kk)\times (\C,\kk)\to (\C,\kk)$ se calcule au but --- c'est donc la composition du produit tensoriel extérieur avec la précomposition par la diagonale $\C\to\C\times\C$. Il passe aux catégories dérivées, fournissant $\overset{\mathbf{L}}{\underset{\kk}{\otimes}} : \mathbf{D}(\C,\kk)\times\mathbf{D}(\C,\kk)\to\mathbf{D}(\C,\kk)$. Il passe également aux catégories $\St$, de sorte qu'on obtient des foncteurs $\otimes : \St(\C,\kk)\times\St(\C,\kk)\to\St(\C,\kk)$ et $\overset{\mathbf{L}}{\underset{\kk}{\otimes}} : \dst(\C,\kk)\times\dst(\C,\kk)\to\dst(\C,\kk)$ (les foncteurs dérivés usuels seront notés ${\rm Tor}^\kk_\bullet$, ils se calculent aussi au but). Ces différents produits tensoriels usuels définissent des structures monoïdales symétriques.

Supposons maintenant que $\C$ est munie d'une structure monoïdale symétrique, notée $+$. On note alors $\underset{[\C,+]}{\otimes}$ le foncteur, appelé {\em produit tensoriel le long de $+$} et noté
$$\underset{[\C,+]}{\otimes} : (\C,\kk)\times (\C,\kk)\xrightarrow{\boxtimes}(\C\times\C,\A)\xrightarrow{\Delta[\C,+]}(\C,\kk)$$
composé du produit tensoriel extérieur $\boxtimes$ et de la {\em diagonale le long de $+$}
$$\Delta[\C,+] : (\C\times\C,\kk)\to (\C,\kk)$$
définie comme l'extension de Kan à gauche le long du foncteur $+ : \C\times\C\to\C$.
Cette diagonale et ce produit tensoriel coïncident avec les notions usuelles lorsque la structure monoïdale $+$ sur $\C$ est une somme catégorique.

Ce produit tensoriel se dérive à gauche relativement à l'une ou l'autre des variables de la façon usuelle si $P^\C_c\underset{[\C,+]}{\otimes}-$ est un foncteur exact pour tout objet $c$ de $\C$. C'est le cas si $\C$ est une CMHS, auquel cas on a, pour tout $c\in {\rm Ob}\,\C$, un endofoncteur $\omega^\C_c$ (noté parfois simplement $\omega_c$) de $(\C,\A)$ (pour toute catégorie de Grothendieck $\A$) défini sur les objets par 
$$\omega_c^\C(F)(t)=\bigoplus_{f\in\C(c,t)}F(t\underset{f}{-}c).$$
On déduit facilement du troisième axiome d'une CMHS (et de l'adjonction générale donnée par (\ref{eqom})) que $P^\C_c\underset{[\C,+]}{\otimes}-\simeq\omega_c^\C$. Les foncteurs dérivés usuels de $\underset{[\C,+]}{\otimes}$ seront notés ${\rm Tor}^{[\C,+]}_\bullet$.

Soient $t$ un objet de $\C$, $A$ et $F$ des objets de $(\C,\kk)$. Considérons le morphisme naturel $\tau_t(A)\underset{[\C,+]}{\otimes}F\to\tau_t(A\underset{[\C,+]}{\otimes}F)$ adjoint au morphisme $\tau_t(A)\boxtimes F\to\tau_t(A\underset{[\C,+]}{\otimes}F)\circ +$ dont l'évaluation sur $(x,y)$ est le morphisme
$$A(t+x)\otimes F(y)\to(A\underset{[\C,+]}{\otimes}F)(t+x+y)$$
unité de l'adjonction évaluée sur $t+x$ et $y$. On vérifie aisément que la composée de ce morphisme $\tau_t(A)\underset{[\C,+]}{\otimes}F\to\tau_t(A\underset{[\C,+]}{\otimes}F)$ et du morphisme $A\underset{[\C,+]}{\otimes}F\to\tau_t(A)\underset{[\C,+]}{\otimes}F$ induit par le morphisme canonique $A\to\tau_t(A)$ est le morphisme canonique $A\underset{[\C,+]}{\otimes}F\to\tau_t(A\underset{[\C,+]}{\otimes}F)$.
En raisonnant comme dans la démonstration du deuxième point de la proposition~\ref{fonct-st}, on en déduit que $-\underset{[\C,+]}{\otimes}F$ induit un endofoncteur de $\St(\C,\kk)$, puis que $\underset{[\C,+]}{\otimes}$ induit un foncteur (toujours noté de la même façon) $\St(\C,\kk)\times\St(\C,\kk)\to\St(\C,\kk)$.

Là encore, $\underset{[\C,+]}{\otimes}$ (resp. $\overset{\mathbf{L}}{\underset{[\C,+]}{\otimes}}$, dans le cas où le produit tensoriel le long de $+$ est équilibré)  définit des structures monoïdales symétriques sur $(\C,\kk)$ (resp. $\mathbf{D}(\C,\kk)$) et sur $\St(\C,\kk)$ (resp. $\dst(\C,\kk)$).  

Les produits tensoriels usuel, $\otimes$, et le long de $+$, $\underset{[\C,+]}{\otimes}$, sont liés de la façon suivante. Supposons que l'unité de $\C$ en est un objet initial. On note déjà que l'unité de $\underset{[\C,+]}{\otimes}$  est alors $\kk$, comme pour le produit tensoriel usuel. On dispose aussi d'un morphisme
$$P^\C_a\underset{[\C,+]}{\otimes}P^\C_b=P^\C_{a+b}\to P^\C_a\otimes P^\C_b$$
naturel en les objets $a$ et $b$ de $\C$ (il est induit par les morphismes canoniques $a\to a+b$ et $b\to a+b$), qui est un isomorphisme si $a$ ou $b$ égale $0$. On en déduit pour des raisons formelles un morphisme
\begin{equation}\label{ecpt}
F\underset{[\C,+]}{\otimes}G\to F\otimes G
\end{equation}
naturel en les objets $F$ et $G$ de $(\C,\kk)$, qui est un isomorphisme lorsque $F$ ou $G$ est constant, et qui est compatible aux structures monoïdales symétriques des produits tensoriels. Ce morphisme induit bien sûr des transformations naturelles analogues entre les deux structures monoïdales symétriques qu'on discute sur $\St(\C,\kk)$, $\mathbf{D}(\C,\kk)$ et $\dst(\C,\kk)$ (sous les hypothèses où l'on peut dériver raisonnablement).

L'étude du produit $\underset{[\C,+]}{\otimes}$ semble difficile en général ; nous montrerons plus tard des propriétés cruciales de ce foncteur sur des objets polynomiaux lorsque $\C$ est une catégorie d'objets hermitiens (ce qui nous suffira amplement). On a toutefois le résultat général simple suivant.

\begin{prdef}\label{pg-ptt}
 Soit $\C$ une CMHS.
 \begin{enumerate}
  \item Soient $t$ un objet de $\C$ et $F$ un objet de $(\C,\kk)$. Il existe un diagramme commutatif
  $$\xymatrix{P^\C_t\underset{[\C,+]}{\otimes}F\ar[r]\ar[d]_\simeq & P^\C_t\otimes F\ar[d]^\simeq \\
  \omega_t F\ar[r] & \omega_t\tau_t F
  }$$
  naturel en $F$, dans lequel la flèche supérieure est la transformation naturelle définie ci-dessus et la flèche inférieure est induite par le morphisme $i_t(F) : F\to\tau_t(F)$.
  \item L'endofoncteur $\omega_t$ de $\St(\C,\kk)$ est exact et fidèle ; il est adjoint à gauche à $\tau_t$.
  \item\label{p3ptt} Si la transformation naturelle $-\underset{[\C,+]}{\otimes}X\to -\otimes X$ d'endofoncteurs de $\St(\C,\kk)$ est un épimorphisme, alors l'objet $X$ de $\St(\C,\kk)$ appartient à $\pol_0(\C,\kk)$.
  \item Notons $\underset{[\C,+]}{\bar{\otimes}} : \St(\C,\kk)\times\St(\C,\kk)\to\St(\C,\kk)$ le conoyau de la transformation naturelle $\underset{[\C,+]}{\otimes}\to\otimes$, conoyau appelé {\em produit tensoriel réduit le long de $+$}. Ce bifoncteur commute aux colimites en chaque variable et est exact lorsqu'on fixe une des variables en l'image dans $\St(\C,\kk)$ d'un $P^\C_t$. On peut donc le dériver à gauche comme $\otimes$, obtenant des foncteurs dérivés usuels notés
  $$\overline{{\rm Tor}}^{[\C,+]}_\bullet : \St(\C,\kk)\times\St(\C,\kk)\to\St(\C,\kk)$$
  et un foncteur dérivé total
  $$\overset{\mathbf{L}}{\underset{[\C,+]}{\bar{\otimes}}} : \dst(\C,\kk)\times\dst(\C,\kk)\to\dst(\C,\kk).$$
  \item On dispose dans $\dst(\C,\kk)$ d'un triangle distingué
  $$X\overset{\mathbf{L}}{\underset{[\C,+]}{\otimes}}Y\to X\overset{\mathbf{L}}{\underset{\kk}{\otimes}}Y\to X\overset{\mathbf{L}}{\underset{[\C,+]}{\bar{\otimes}}}Y\to (X\overset{\mathbf{L}}{\underset{[\C,+]}{\otimes}}Y)[1]$$
  naturel en les objets $X$ et $Y$ de $\dst(\C,\kk)$ ; cela induit dans $\St(\C,\kk)$ une suite exacte longue
  $$\cdots\to{\rm Tor}^{[\C,+]}_n(F,G)\to {\rm Tor}^\kk_n(F,G)\to\overline{{\rm Tor}}^{[\C,+]}_n(F,G)\to{\rm Tor}^{[\C,+]}_{n-1}(F,G)\to\cdots$$
  $$\cdots\to F\underset{[\C,+]}{\otimes}G\to F\otimes G\to F\underset{[\C,+]}{\bar{\otimes}}G\to 0$$
  naturelle en les objets $F$ et $G$ de $\St(\C,\kk)$. En particulier, on dispose d'un isomorphisme naturel $\overline{{\rm Tor}}^{[\C,+]}_n(F,G)\simeq{\rm Tor}^{[\C,+]}_{n-1}(F,G)$ pour si $\kk$ est de dimension homologique inférieure ou égale à $n-2$ (et, si $\kk$ est un corps, $\overline{{\rm Tor}}^{[\C,+]}_1(F,G)$ s'identifie au noyau de la transformation naturelle $F\underset{[\C,+]}{\otimes}G\to F\otimes G$).
 \end{enumerate}
\end{prdef}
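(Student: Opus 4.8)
The plan is to treat assertions~(1) and~(2) as the computational and structural bedrock, and then to obtain~(3), (4) and~(5) from them by essentially formal arguments. For~(1) I would simply compute both composites from the explicit description $\omega_t(F)(s)=\bigoplus_{f\in\C(t,s)}F(s\underset{f}{-}t)$ and the already-established isomorphism $P^\C_t\underset{[\C,+]}{\otimes}-\simeq\omega_t$. Evaluating $P^\C_t\otimes F$ at $s$ gives $\kk[\C(t,s)]\otimes F(s)=\bigoplus_{f\in\C(t,s)}F(s)$, and the canonical isomorphism $s\simeq t+(s\underset{f}{-}t)$ attached to each $f$ identifies this with $\omega_t\tau_t F(s)=\bigoplus_{f\in\C(t,s)}F(t+(s\underset{f}{-}t))$; this is the right-hand vertical isomorphism. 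Unwinding the definition~(\ref{ecpt}) of the transformation $\underset{[\C,+]}{\otimes}\to\otimes$ on the representable $P^\C_t$ shows, summand by summand, that the top arrow becomes exactly $\omega_t$ applied to $i_t(F):F\to\tau_t F$. This is routine bookkeeping.

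For~(2) I would first identify $\tau_t$ with the precomposition $(t+-)^*$: the third axiom of Definition~\ref{df-cform} says precisely that $x\mapsto(t+x)$, equipped with the canonical arrow, realizes an equivalence $\C\xrightarrow{\simeq}\C_{\C(t,-)}$, under which $\pi_{\C(t,-)}^*$ becomes $\tau_t$ and its left adjoint $\Omega_{\C(t,-)}\simeq(\pi_{\C(t,-)})_*$ (adjunction~(\ref{eqom})) becomes the functor whose value formula is that of $\omega_t$; hence $\omega_t\dashv\tau_t$ on $(\C,\kk)$. Exactness of $\omega_t$ is immediate from its formula, a direct sum of evaluations. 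Both functors descend to $\St(\C,\kk)$ --- $\tau_t$ by construction of $\St$, and $\omega_t=P^\C_t\underset{[\C,+]}{\otimes}-$ by the discussion making $\underset{[\C,+]}{\otimes}$ descend --- so the adjunction and the exactness pass to the quotient. For faithfulness I would invoke the criterion that a left adjoint is faithful iff its unit is a monomorphism: the unit $F\to\tau_t\omega_t F$ is, by the Kan-extension description, the inclusion of the summand indexed by the canonical arrow $t\to t+s$, hence a pointwise split monomorphism in $(\C,\kk)$; applying the exact localization functor it remains a monomorphism in $\St(\C,\kk)$, whence $\omega_t$ is faithful there.

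Assertions~(3) and~(4) are then short. For~(3), the hypothesis applied to the argument $P^\C_t$ says, via~(1) with $F=X$, that $\omega_t(i_t(X)):\omega_t X\to\omega_t\tau_t X$ is an epimorphism in $\St(\C,\kk)$ for every $t$; since $\omega_t$ is exact and faithful it reflects epimorphisms, so $i_t(X)$ is epi, i.e. $\delta_t(X)=0$ for all $t$, which is exactly $X\in\pol_0(\C,\kk)$. For~(4), the map $P^\C_t\underset{[\C,+]}{\otimes}-\to P^\C_t\otimes-$ is $\omega_t(i_t(-))$ by~(1), a monomorphism in $\St$ because $i_t$ is monic stably (there $\kappa_t$ is killed) and $\omega_t$ is exact; its cokernel $P^\C_t\underset{[\C,+]}{\bar{\otimes}}-$ is therefore $\omega_t\delta_t$, a composite of exact functors ($\delta_t$ being exact on $\St$ since $i_t$ is monic there), hence exact. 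Commutation with colimits follows because $\underset{[\C,+]}{\otimes}$ and $\otimes$ commute with colimits and cokernels commute with colimits.

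Finally~(5): I would choose a resolution $P_\bullet\to X$ by sums of representables, whose images generate $\St(\C,\kk)$. By~(2) and~(4) the $P^\C_t$ are acyclic for $\underset{[\C,+]}{\otimes}$, for $\otimes$ (whose $P^\C_t$-component is $\omega_t\tau_t$ by~(1)) and for $\underset{[\C,+]}{\bar{\otimes}}$, and by symmetry and balancedness the same complexes compute the three total left derived functors. Applying the three bifunctors to $P_\bullet$ against $Y$ yields in each degree the short exact sequence $0\to P_n\underset{[\C,+]}{\otimes}Y\to P_n\otimes Y\to P_n\underset{[\C,+]}{\bar{\otimes}}Y\to 0$ of~(4), hence a short exact sequence of complexes and the asserted distinguished triangle; the long exact sequence is its homology, naturally in $X$ and $Y$. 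The comparison isomorphisms follow from that long exact sequence and the vanishing of ${\rm Tor}^\kk_i$ for $i$ exceeding the homological dimension of $\kk$ (these being computed at the target). I expect the main obstacle to be the careful descent in~(2): checking that the adjunction $\omega_t\dashv\tau_t$, its exactness and especially its faithfulness survive the passage to $\St(\C,\kk)$, and that the unit produced by the Kan-extension/CMHS equivalence really is the canonical summand inclusion. Everything in~(3)--(5) is then formal, the only delicate point being the acyclicity and balancing bookkeeping legitimizing the computation of all three derived functors from one resolution by representables.
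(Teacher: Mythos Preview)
Your proposal is correct and follows essentially the same route as the paper's proof. The paper is considerably more terse: it declares~(1) immediate from the definitions, singles out faithfulness of $\omega_t$ as the only point in~(2) deserving comment (arguing, as you do, via the unit being a monomorphism already in $(\C,\kk)$, exhibited either via the summand inclusion or via the factorisation through $\tau_t(P^\C_t)\otimes X$), deduces the identification $P^\C_t\underset{[\C,+]}{\bar\otimes}-\simeq\omega_t\delta_t$ and emphasises that $\delta_t$ is exact on $\St(\C,\kk)$ (but not on $(\C,\kk)$!), and then says (4) and (5) follow formally, while (3) follows from $P^\C_t\underset{[\C,+]}{\bar\otimes}X\simeq\omega_t\delta_t(X)$ together with faithfulness of $\omega_t$. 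Your more explicit treatment of~(5) via a resolution by sums of representables is exactly the unpacking the paper leaves implicit.
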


\begin{proof}
 La première assertion est immédiate à partir de la définition de la transformation naturelle $\underset{[\C,+]}{\otimes}\to\otimes$. Le seul point qui mérite un commentaire pour en déduire la deuxième assertion est la fidélité de $\omega_t$. Celle-ci est équivalente au fait que l'unité $X\to\tau_t\omega_t(X)$ est un monomorphisme. Or c'est déjà le cas dans $(\C,\kk)$, comme on peut le voir soit en notant que la formule définissant $\omega_t$ montre directement sa fidélité (sur $(\C,\kk)$), soit en notant que cette unité s'insère dans un diagramme commutatif
 $$\xymatrix{X\ar[r]\ar[rd] & \tau_t\omega_t(X)\ar[d]^\simeq\\
 & \tau_t(P^\C_t)\otimes X
 }$$
 dont la flèche oblique est induite par $\kk\to\tau_t(P^\C_t)$ venant de la transformation naturelle ensembliste $*\to\C(t,t+-)$ déduite du morphisme canonique $t\to t+x$.
 
 On déduit du premier point que  $P^\C_t\underset{[\C,+]}{\bar{\otimes}}-$ est isomorphe à $\omega_t\delta_t$. Comme $\delta_t$ est un endofoncteur exact de $\St(\C,\kk)$ (mais généralement pas de $(\C,\kk)$ !), cela donne formellement les deux dernières assertions.
 
 La troisième assertion s'obtient à partir de l'isomorphisme canonique $P^\C_t\underset{[\C,+]}{\bar{\otimes}}X\simeq\omega_t\delta_t(X)$ et de la fidélité des foncteurs $\omega_t$.
\end{proof}

\begin{rem}\label{rq-fcptt}
 Il n'y a pas de fonctorialité claire en $t$ de $\omega_t\tau_t$, car $\tau_t$ est fonctoriel {\em covariant} en $t$ tandis que $\omega_t$ est fonctoriel {\em contravariant} en $t$.
\end{rem}

\paragraph*{Fonctorialité en $\C$ de $\underset{[\C,+]}{\otimes}$}

Soient $\Phi : \C\to\D$ un morphisme de CMHS et $F$, $G$ des foncteurs dans $(\D,\kk)$. On dispose d'une application naturelle
\begin{equation}\label{eqf-pt1}
 \Phi^*(F)\underset{[\C,+]}{\otimes}\Phi^*(G)\to\Phi^*( F\underset{[\D,+]}{\otimes}G)
\end{equation}
dans $(\C,\kk)$, adjointe au morphisme
$$\Phi^*(F)\boxtimes\Phi^*(G)\simeq (\Phi\times\Phi)^*(F\boxtimes G)\to (\Phi\times\Phi)^*\big((F\underset{[\D,+]}{\otimes}G)\circ +\big)\simeq\Phi^*(F\underset{[\D,+]}{\otimes}G)\circ +$$
obtenu en appliquant $(\Phi\times\Phi)^*$ à l'unité $F\boxtimes G\to (F\underset{[\D,+]}{\otimes}G)\circ +$. Ce morphisme naturel fait commuter le diagramme
$$\xymatrix{\Phi^*(F)\underset{[\C,+]}{\otimes}\Phi^*(G)\ar[r]\ar[d] & \Phi^*(F\underset{[\D,+]}{\otimes}G)\ar[d]\\
\Phi^*(F)\otimes\Phi^*(G)\ar[r]^{\simeq} & \Phi^*(F\otimes G)
}$$
dont les flèches verticales sont données par~(\ref{ecpt}).

Par conséquent, (\ref{eqf-pt1}) induit un morphisme naturel
\begin{equation}\label{eqf-pt2}
 \Phi^*(F)\underset{[\C,+]}{\bar{\otimes}}\Phi^*(G)\to\Phi^*(F\underset{[\D,+]}{\bar{\otimes}}G)\;;
\end{equation}
bien sûr, les morphismes naturels~(\ref{eqf-pt1}) et~(\ref{eqf-pt2}) induisent des analogues dans $\St(\C,\kk)$, $\mathbf{D}(\C,\kk)$ et $\mathbf{D}_{st}(\C,\kk)$.

\paragraph*{Une compatibilité entre les produits $\otimes$ et $\underset{[\C,+]}{\otimes}$}

On dispose d'un morphisme 
\begin{equation}\label{2pt1}
 (T\otimes U)\underset{[\C,+]}{\otimes}(A\otimes B)\to (T\underset{[\C,+]}{\otimes}A)\otimes (U\underset{[\C,+]}{\otimes}B)
\end{equation}
naturel en les objets $A$, $B$, $T$ et $U$ de $(\C,\kk)$ (ou $\St(\C,\kk)$), adjoint au morphisme
$$(T\otimes U)\boxtimes (A\otimes B)\simeq (T\boxtimes A)\otimes (U\boxtimes B)\to +^*(T\underset{[\C,+]}{\otimes}A)\otimes +^*(U\underset{[\C,+]}{\otimes}B)$$
$$\simeq+^*\big((T\underset{[\C,+]}{\otimes}A)\otimes (U\underset{[\C,+]}{\otimes}B)\big)$$
où la flèche centrale est le produit tensoriel (usuel) des unités $T\boxtimes A\to +^*(T\underset{[\C,+]}{\otimes}A)$ et $U\boxtimes B\to +^*(U\underset{[\C,+]}{\otimes}B)$.

Ce morphisme satisfait à des conditions d'associativité et de commutativité qu'on peut traduire en disant qu'il fournit une structure monoïdale symétrique sur le foncteur $\otimes : (\C,\kk)\times (\C,\kk)\to (\C,\kk)$, où $(\C,\kk)$ est munie de la structure monoïdale $\underset{[\C,+]}{\otimes}$ (et $(\C,\kk)\times (\C,\kk)$ de la structure monoïdale produit) ; on vérifie également qu'il s'insère dans un diagramme commutatif
$$\xymatrix{(T\underset{[\C,+]}{\otimes}U)\underset{[\C,+]}{\otimes}(A\underset{[\C,+]}{\otimes}B)\ar[r]^\simeq\ar[d] & (T\underset{[\C,+]}{\otimes}A)\underset{[\C,+]}{\otimes}(U\underset{[\C,+]}{\otimes}B)\ar[d] \\
(T\otimes U)\underset{[\C,+]}{\otimes}(A\otimes B)\ar[r]\ar[d] & (T\underset{[\C,+]}{\otimes}A)\otimes (U\underset{[\C,+]}{\otimes}B)\ar[d] \\(T\otimes U)\otimes (A\otimes B)\ar[r]^\simeq & (T\otimes A)\otimes (U\otimes B)
}$$
dont les flèches verticales sont induites par la transformation naturelle $\underset{[\C,+]}{\otimes}\to\otimes$ introduite en~(\ref{ecpt}).

Bien sûr, tout cela passe au niveau dérivé : dans $\dst(\C,\kk)$, on dispose d'un morphisme naturel
\begin{equation}\label{2pt2}
 (T\overset{\mathbf{L}}{\underset{\kk}{\otimes}}U)\overset{\mathbf{L}}{\underset{[\C,+]}{\otimes}}(A\overset{\mathbf{L}}{\underset{\kk}{\otimes}}B)\to (T\overset{\mathbf{L}}{\underset{[\C,+]}{\otimes}}A)\overset{\mathbf{L}}{\underset{\kk}{\otimes}}(U\overset{\mathbf{L}}{\underset{[\C,+]}{\otimes}}B)
\end{equation}
qui vérifie des propriétés de compatibilité analogues.

\paragraph*{Structure (co)multiplicative sur l'homologie stable des groupes de type congruence}

Les différents produits tensoriels nous permettent de discuter de structures (co)multiplicatives à disposition sur $\Hh_\bullet(\Gamma_\Phi)$, où $\Phi : \C\to\D$ est un morphisme de CMHS stablement plein et faiblement surjectif.

On note déjà que si $\kk$ est un corps (ou, plus généralement, sous des hypothèses de platitude appropriées), le coproduit externe en homologie des groupes fournit dans $(\D,\kk)$ un morphisme gradué
$$\Hh_\bullet(\Gamma_\Phi;X\otimes Y)\to\Hh_\bullet(\Gamma_\Phi;X)\otimes\Hh_\bullet(\Gamma_\Phi;Y)$$
naturel et comonoïdal symétrique en les objets $X$ et $Y$ de $\St(\C,\kk)$.

Par ailleurs, le foncteur $+ : \C\times\C\to\C$ est un morphisme de CMHS. Comme on dispose d'un isomorphisme canonique $\Gamma_{\Phi\times\Phi}\simeq\Gamma_\Phi\times\Gamma_\Phi$ de foncteurs : $\C\times\C\to\mathbf{Grp}$, on obtient, pour deux objets $X$ et $Y$ de $\St(\C,\kk)$, un morphisme naturel gradué de $\St(\D\times\D,\kk)$
$$\Hh_\bullet(\Gamma_\Phi;X)\boxtimes\Hh_\bullet(\Gamma_\Phi;Y)\to\Hh_\bullet(\Gamma_\Phi;X\underset{[\C,+]}{\otimes}Y))\circ +$$
en composant
$$\Hh_\bullet(\Gamma_\Phi;X)\boxtimes\Hh_\bullet(\Gamma_\Phi;Y)\to\Hh_\bullet(\Gamma_{\Phi}\times\Gamma_\Phi;X\boxtimes Y)\simeq\Hh_\bullet(\Gamma_{\Phi\times\Phi};X\boxtimes Y)$$
(induit par le produit externe en homologie des groupes) avec
$$\Hh_\bullet(\Gamma_{\Phi\times\Phi};X\boxtimes Y)\to\Hh_\bullet\big(\Gamma_{\Phi\times\Phi};(X\underset{[\C,+]}{\otimes}Y)\circ +)\big)\to\Hh_\bullet(\Gamma_\Phi;X\underset{[\C,+]}{\otimes}Y))\circ +$$
(morphisme induit par l'unité $X\boxtimes Y\to(X\underset{[\C,+]}{\otimes}Y)\circ +$ suivi du morphisme induit par $+$). Le morphisme gradué de $\St(\D,\kk)$
$$\Hh_\bullet(\Gamma_\Phi;X)\underset{[\D,+]}{\otimes}\Hh_\bullet(\Gamma_\Phi;Y)\to\Hh_\bullet(\Gamma_\Phi;X\underset{[\C,+]}{\otimes}Y)$$
adjoint au précédent définit un morphisme naturel et monoïdal symétrique en les objets $X$ et $Y$ de $\St(\C,\kk)$. En particulier, $\Hh_\bullet(\Gamma_\Phi;\kk)$ est une algèbre graduée commutative pour le produit $\underset{[\C,+]}{\otimes}$.

Lorsque $\kk$ est un corps, les structures multiplicative et comultiplicative sur le foncteur $\Hh_\bullet(\Gamma_\Phi)$ vérifient la propriété de compatibilité suivante : si $X$, $Y$, $A$ et $B$ sont des objets de $\St(\C,\kk)$, le diagramme
$$\xymatrix{\Hh_\bullet(\Gamma_\Phi;X\otimes A)\underset{[\D,+]}{\otimes}\Hh_\bullet(\Gamma_\Phi;Y\otimes B)\ar[r]^-{(c)}\ar[d]_-{(m)} &
\big(\Hh_\bullet(\Gamma_\Phi;X)\otimes\Hh_\bullet(\Gamma_\Phi;A)\big)\underset{[\D,+]}{\otimes}\big(\Hh_\bullet(\Gamma_\Phi;Y)\otimes\Hh_\bullet(\Gamma_\Phi;B)\big)\ar[d]^-{(*)} \\
\Hh_\bullet\big(\Gamma_\Phi;(X\otimes A)\underset{[\C,+]}{\otimes}(Y\otimes B)\big)\ar[d]_-{(*)} & \big(\Hh_\bullet(\Gamma_\Phi;X)\underset{[\D,+]}{\otimes}\Hh_\bullet(\Gamma_\Phi;Y)\big)\otimes\big(\Hh_\bullet(\Gamma_\Phi;A)\underset{[\D,+]}{\otimes}\Hh_\bullet(\Gamma_\Phi;B)\big)\ar[d]^-{(m)}
\\
\Hh_\bullet\big(\Gamma_\Phi;(X\underset{[\C,+]}{\otimes}Y)\otimes (A\underset{[\C,+]}{\otimes}B)\big)\ar[r]^-{(c)} & \Hh_\bullet(\Gamma_\Phi;X\underset{[\C,+]}{\otimes}Y)\otimes\Hh_\bullet(\Gamma_\Phi;A\underset{[\C,+]}{\otimes}B)
}$$
commute, où les flèches désignées par $(m)$ (resp. $(c)$) se déduisent de la structure multiplicative (resp. comultiplicative) tandis que celles désignées par $(*)$ proviennent du morphisme naturel~(\ref{2pt1}).

Si $\Phi$ est $\kk$-excisif pour l'homologie stable, la flèche naturelle $-\underset{[\D,+]}{\otimes}\Hh_\bullet(\Gamma_\Phi;\kk)\to -\otimes\Hh_\bullet(\Gamma_\Phi;\kk)$ est un isomorphisme, de sorte que cela signifie (on suppose encore ici que $\kk$ est un corps) que $\Hh_\bullet(\Gamma_\Phi;\kk)$ est une algèbre de Hopf graduée connexe commutative et cocommutative, phénomène classique en $K$-théorie algébrique.

\subsection{Suite spectrale fondamentale}\label{sssf}

Nos conventions en matière de suites spectrales sont celles qu'on peut trouver par exemple dans \cite[chap.~5]{Weib}.

\begin{lm}\label{prel-thp1}
 Soient $\C$ une CMHS et $G : \C\to\mathbf{Grp}$ un sous-foncteur monoïdal de ${\rm Aut}_\C$ vérifiant les conditions de la proposition~\ref{cmhs-quot}.
 \begin{enumerate}
  \item Il existe dans $(\C/G,\kk)$ un isomorphisme gradué
  $$H_\bullet(G;P^\C_t)\simeq\omega^{\C/G}_{\Pi_G(t)}(H_\bullet(G;\kk))$$
  naturel en $t\in {\rm Ob}\,\C$ ;
  \item Il existe dans  $(\C/G,\kk)$ une suite spectrale
  $$E^2_{p,q}=\mathbf{L}_p\big((-\underset{[\C/G,+]}{\otimes}H_q(G;\kk))\circ (\Pi_G)_*\big)(X)\Rightarrow H_{p+q}(G;X)$$
naturelle en l'objet $X$ de $(\C,\kk)$.
 \end{enumerate}

 De plus, le morphisme de coin $H_\bullet(G;X)\to\mathbf{L}_\bullet((\Pi_G)_*)(X)$ est le morphisme canonique de la proposition~\ref{prh1}, tandis que le morphisme de coin
 $$(\Pi_G)_*(X)\underset{[\C/G,+]}{\otimes}H_\bullet(G;\kk)\to H_\bullet(G;X)$$
 est la composée
$$(\Pi_G)_*(X)\underset{[\C/G,+]}{\otimes}H_\bullet(G;\kk)\simeq H_0(G;X)\underset{[\C/G,+]}{\otimes}H_\bullet(G;\kk)\to H_\bullet(G;X)$$
dont le deuxième morphisme est la multiplication.
 \end{lm}

\begin{proof}
 Soient $t$, $x$ des objets de $\C$ et $f\in\C(t,x)$. Le stabilisateur de $f$ sous l'action de $G(x)$ est $G(x\underset{f}{-}t)$, par le troisime axiome d'une CMHS et la deuxième condition de la proposition~\ref{cmhs-quot}. On en déduit une bijection
 $$\C(t,x)\simeq\underset{\bar{f}\in\C(t,x)/G(x)}{\bigsqcup}G(x)/G(x\underset{f}{-}t)$$
$G(x)$-équivariante naturelle en $x\in {\rm Ob}\,\C$ (où $f$ désigne un relèvement de $\bar{f}$ dans $\C(t,x)$). En linéarisant et en appliquant le lemme de Shapiro, on en déduit la première assertion. La deuxième en découle pour des raisons formelles, en résolvant $X$ par des sommes directes de foncteurs du type $P^\C_t$ (noter que tous les foncteurs qu'on applique commutent aux sommes directes) et en utilisant l'isomorphisme canonique $\omega^{\C/G}_{\Pi_G(t)}\simeq(\Pi_G)_*(P^\C_t)\underset{[\C/G,+]}{\otimes}-$.

Il suffit de vérifier l'assertion relative aux morphismes de coin lorsque $X$ est un foncteur du type $P^\C_t$. Le cas de $H_\bullet(G;X)\to\mathbf{L}_\bullet((\Pi_G)_*)(X)$ est alors clair car seul subsiste le degré nul, où l'on obtient un isomorphisme canonique. Pour l'autre morphisme de coin, il s'agit de voir que le morphisme
$$\omega^{\C/G}_{\Pi_G(t)}(H_\bullet(G;\kk))\xrightarrow{\simeq}P^{\C/G}_{\Pi_G(t)}\underset{[\C/G,+]}{\otimes}H_\bullet(G;\kk)\xrightarrow{\simeq}H_0(G;P^\C_t)\underset{[\C/G,+]}{\otimes}H_\bullet(G;\kk)\to H_\bullet(G;P^\C_t)$$
composé des isomorphismes canoniques et du produit coïncide avec celui qu'on a construit plus haut, ce qu'on vérifie directement à partir des définitions.
\end{proof}

Le théorème suivant constitue le résultat principal de toute la section~\ref{ls1}.

\begin{thm}\label{th-spf}
 Soit $\Phi : \C\to\D$ un morphisme de CMHS faiblement surjectif et stablement plein. Il existe une suite spectrale
 $$E^2_{p,q}(X;\Phi)=\mathbf{L}_p\big((-\underset{[\D,+]}{\otimes}\Hh_q(\Gamma_\Phi;\kk))\circ\Phi_*\big)(X)\Rightarrow\Hh_{p+q}(\Gamma_\Phi;X)$$
naturelle en l'objet $X$ de $\St(\C,\kk)$.

De plus, le morphisme de coin $\Hh_\bullet(\Gamma_\Phi;X)\to\mathbf{L}_\bullet(\Phi_*)(X)$ est le morphisme canonique de la proposition~\ref{prdfh}, tandis que le morphisme de coin
$$\Phi_*(X)\underset{[\D,+]}{\otimes}\Hh_\bullet(\Gamma_\Phi;\kk)\to\Hh_\bullet(\Gamma_\Phi;X)$$
est la composée
$$\Phi_*(X)\underset{[\D,+]}{\otimes}\Hh_\bullet(\Gamma_\Phi;\kk)\simeq\Hh_0(\Gamma_\Phi;X)\underset{[\D,+]}{\otimes}\Hh_\bullet(\Gamma_\Phi;\kk)\to\Hh_\bullet(\Gamma_\Phi;X)$$
dont le deuxième morphisme est la multiplication.

La deuxième page de cette suite spectrale peut se comprendre grâce à la suite spectrale de foncteurs composés
$$E'^2_{i,j}(X,T;\Phi)={\rm Tor}_i^{[\D,+]}(\mathbf{L}_j(\Phi_*)(X),T)\Rightarrow\mathbf{L}_{i+j}\big((-\underset{[\D,+]}{\otimes}T)\circ\Phi_*\big)(X).$$
naturelle en les objets $X$ et $T$ de $\St(\C,\kk)$.
\end{thm}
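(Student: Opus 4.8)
The plan is to reduce the whole statement to Lemma~\ref{prel-thp1}, applied to the monoidal subfunctor $G=\Gamma_\Phi$ of ${\rm Aut}_\C$ (which satisfies the conditions of Proposition~\ref{cmhs-quot}, as already observed right after the definition of $\Gamma_\Phi$), and then to transport the resulting spectral sequence from $\St(\C/\Gamma_\Phi;\kk)$ to $\St(\D,\kk)$ along the equivalence of Proposition~\ref{prdfh}. Concretely, I factor $\Phi$ as $\C\xrightarrow{\Pi_{\Gamma_\Phi}}\C/\Gamma_\Phi\xrightarrow{\bar{\Phi}}\D$, where $\bar{\Phi}$ is the morphism of CMHS built in the proof of Proposition~\ref{prdfh}; since $\Phi$ is faiblement surjectif and stablement plein, so is $\bar{\Phi}$ (loc.\ cit.), whence $\bar{\Phi}^*:\St(\D,\kk)\to\St(\C/\Gamma_\Phi;\kk)$ is an equivalence with quasi-inverse its left adjoint $\bar{\Phi}_*$ (Proposition~\ref{fonct-st}). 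The constructions of Lemma~\ref{prel-thp1} transpose verbatim to the stable level: its proof uses only the Shapiro computation of part~(1), resolution of $X$ by sums of representables $P^\C_t$, exactness of stabilisation, commutation of all functors with direct sums, and the fact (Proposition~\ref{pr-hstd}) that $H_\bullet(\Gamma_\Phi;-)$ descends to $\St$. This yields in $\St(\C/\Gamma_\Phi;\kk)$ the spectral sequence $\mathbf{L}_p\big((-\underset{[\C/\Gamma_\Phi,+]}{\otimes}H_q(\Gamma_\Phi;\kk))\circ(\Pi_{\Gamma_\Phi})_*\big)(X)\Rightarrow H_{p+q}(\Gamma_\Phi;X)$ together with its two corner maps.

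Next I apply the exact monoidal equivalence $\bar{\Phi}_*$ to this spectral sequence. Three identifications make the $E^2$-term and the abutment match the statement: first $\Phi_*\simeq\bar{\Phi}_*\circ(\Pi_{\Gamma_\Phi})_*$, since a left Kan extension along a composite is the composite of left Kan extensions (equivalently, left adjoints compose); second $\Hh_\bullet(\Gamma_\Phi;-)=\bar{\Phi}_*\circ H_\bullet(\Gamma_\Phi;-)$ by the very definition in Proposition~\ref{prdfh}, so in particular $\Hh_q(\Gamma_\Phi;\kk)=\bar{\Phi}_*H_q(\Gamma_\Phi;\kk)$; and third the key point that $\bar{\Phi}_*$ is strong symmetric monoidal for the products $\underset{[\C/\Gamma_\Phi,+]}{\otimes}$ and $\underset{[\D,+]}{\otimes}$. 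Granting this, $\bar{\Phi}_*\circ(-\underset{[\C/\Gamma_\Phi,+]}{\otimes}H_q)\circ(\Pi_{\Gamma_\Phi})_*\simeq(-\underset{[\D,+]}{\otimes}\Hh_q)\circ\Phi_*$, and because $\bar{\Phi}_*$ is induced by an exact equivalence of Grothendieck categories it commutes with the total left-derived functors of Proposition~\ref{fonct-st} and hence with the formation of the $\mathbf{L}_p$; thus the transported $E^2$-term is exactly $E^2_{p,q}(X;\Phi)$ and the abutment becomes $\Hh_{p+q}(\Gamma_\Phi;X)$. The two corner maps transport likewise: applying $\bar{\Phi}_*$ to the Proposition~\ref{prh1} morphism gives the canonical morphism of Proposition~\ref{prdfh} (that is precisely how the latter is defined), and applying the monoidal structure of $\bar{\Phi}_*$ to the multiplicative corner map of Lemma~\ref{prel-thp1} gives the asserted multiplication.

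The main obstacle is the strong monoidality in the third point, i.e.\ that the natural transformation~(\ref{eqf-pt1}) attached to $\bar{\Phi}$ is an isomorphism in $\St(\C/\Gamma_\Phi;\kk)$ (equivalently that its mate makes $\bar{\Phi}_*$ monoidal). Both functors in question are exact and commute with colimits, so it suffices to test on a generating family. By the generation argument in the proof of Proposition~\ref{pr-eqstn} (using faible surjectivité of $\Phi$), the images of the $P^\D_{\Phi(c)}$ generate $\St(\D,\kk)$, and $\bar{\Phi}_*(P^{\C/\Gamma_\Phi}_{\Pi_{\Gamma_\Phi}(c)})\simeq P^\D_{\Phi(c)}$. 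On representables the products are computed by $P^\C_a\underset{[\C,+]}{\otimes}P^\C_b\simeq P^\C_{a+b}$, so the structure map identifies with the isomorphism $P^\D_{\Phi(c)+\Phi(c')}\simeq P^\D_{\Phi(c+c')}$ coming from the strong monoidality of $\Phi$; hence~(\ref{eqf-pt1}) is an isomorphism on generators and therefore everywhere.

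Finally, the second (composite-functor) spectral sequence is the hyper-$\mathrm{Tor}$ spectral sequence for the total derived functors $\mathbf{L}\Phi_*$ and $-\underset{[\D,+]}{\otimes}T$, whose homology objects are $\mathbf{L}_j\Phi_*$ and ${\rm Tor}_i^{[\D,+]}(-,T)$. The only hypothesis to check is that the composite of these total derived functors is the total derived functor of $(-\underset{[\D,+]}{\otimes}T)\circ\Phi_*$, for which it suffices that $\Phi_*$ carry an adapted generating class to objects flat for $\underset{[\D,+]}{\otimes}$. This holds because $\Phi_*(P^\C_c)\simeq P^\D_{\Phi(c)}$ and $P^\D_{\Phi(c)}\underset{[\D,+]}{\otimes}-\simeq\omega^\D_{\Phi(c)}$ is exact (\S\,\ref{sprt}; Proposition~\ref{pg-ptt}), so ${\rm Tor}_i^{[\D,+]}(P^\D_{\Phi(c)},T)=0$ for $i>0$. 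Specialising $T=\Hh_q(\Gamma_\Phi;\kk)$ then feeds this spectral sequence into the $E^2$-page of the first one, as claimed.
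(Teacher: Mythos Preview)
Your argument is correct and follows the same route as the paper's proof: reduce to the case $\D=\C/\Gamma_\Phi$ via the equivalence of Proposition~\ref{prdfh}, invoke Lemma~\ref{prel-thp1}, and note that the second spectral sequence is the Grothendieck spectral sequence of the composite $(-\underset{[\D,+]}{\otimes}T)\circ\Phi_*$. The paper compresses all of this into three sentences (``on peut supposer $\D=\C/\Gamma_\Phi$'' followed by ``image par le foncteur canonique $(\D,\kk)\to\St(\D,\kk)$'' and ``arguments formels''); you have unpacked the transport along $\bar\Phi_*$ explicitly, in particular the strong monoidality check on representables and the acyclicity condition $\Phi_*(P^\C_c)\simeq P^\D_{\Phi(c)}$ needed for the composite-functor spectral sequence, which the paper leaves to the reader.

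One minor simplification: rather than arguing that the construction of Lemma~\ref{prel-thp1} ``transposes verbatim to the stable level'', it is cleaner (and is what the paper does) to build the spectral sequence unstably in $(\C/\Gamma_\Phi,\kk)$ and then push it through the exact localisation functor to $\St(\C/\Gamma_\Phi,\kk)$; this sidesteps any worry about whether the $P^\C_t$ remain adapted in $\St$.
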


Les notations $E(X;\Phi)$ et $E'(X,T;\Phi)$ pour ces suites spectrales seront conservées dans toute la suite de cet article ; la mention à $\Phi$ pourra être omise lorsque cela ne prête pas à confusion.

\begin{proof}
 On peut supposer $\D=\C/\Gamma_\Phi$, au vu de la proposition~\ref{prdfh}. Dans ce cas, la première suite spectrale est l'image par le foncteur canonique $(\D,\kk)\to\St(\D,\kk)$ de celle du lemme~\ref{prel-thp1}. La deuxième suite spectrale s'obtient par des arguments formels.
\end{proof}

Les suites spectrales du théorème~\ref{th-spf} possèdent la fonctorialité suivante. Supposons donné un diagramme commutatif de CMHS du type (\ref{dc-cmhs}), avec $\Phi_1$ et $\Phi_2$ stablement pleins et faiblement surjectifs. Commençons par la deuxième suite spectrale : si $T$ est un objet de $\St(\D_2,\kk)$ et $X$ un objet de $\St(\C_2,\kk)$, on dispose d'un morphisme naturel de suites spectrales
$$E'(\Psi^*X,\Xi^*T;\Phi_1)\to\Xi^* E'(X,T;\Phi_2)$$
qui, au niveau des deuxièmes pages, est le morphisme naturel
$${\rm Tor}_i^{[\D_1,+]}(\mathbf{L}_j(\Phi_1)_*(\Psi^* X),\Xi^* T)\to\Xi^* {\rm Tor}_i^{[\D_2,+]}(\mathbf{L}_j(\Phi_2)_*(X),T)$$
obtenu en utilisant le morphisme naturel $\mathbf{L}_j(\Phi_1)_*(\Psi^* X)\to\Xi^*\mathbf{L}_j(\Phi_2)_*(X)$ déduit de la transformation naturelle $\mathbf{L}(\Phi_1)_*\circ\Psi^*\to\Xi^*\circ\mathbf{L}(\Phi_2)_*$ de foncteurs $\dst(\D_2,\kk)\to\dst(\D_1,\kk)$ adjointe à la transformation naturelle
$$\Psi^*\to\Psi^*\Phi_2^*\mathbf{L}(\Phi_2)_*\simeq\Phi_1^*\Xi^*\mathbf{L}(\Phi_2)_*$$
induite par l'unité ${\rm Id}\to\Phi_2^*\mathbf{L}(\Phi_2)_*$, qui fournit
$${\rm Tor}_i^{[\D_1,+]}(\mathbf{L}_j(\Phi_1)_*(\Psi^* X),\Xi^* T)\to {\rm Tor}_i^{[\D_1,+]}(\Xi^*\mathbf{L}_j(\Phi_2)_*(X),\Xi^* T)$$
qu'on compose ensuite avec le morphisme naturel
$${\rm Tor}_i^{[\D_1,+]}(\Xi^*\mathbf{L}_j(\Phi_2)_*(X),\Xi^* T)\to\Xi^* {\rm Tor}_i^{[\D_2,+]}(\mathbf{L}_j(\Phi_2)_*(X),T)$$
obtenu en dérivant la transformation naturelle~(\ref{eqf-pt1}).

On dispose d'une transformation naturelle
\begin{equation}\label{eqtn1}
\mathbf{L}_\bullet\big((-\underset{[\D_1,+]}{\otimes}\Xi^* T)\circ (\Phi_1)_*\big)\circ\Psi^*\to\Xi^*\mathbf{L}_\bullet\big((-\underset{[\D_2,+]}{\otimes}T)\circ (\Phi_2)_*\big)
\end{equation}
définie de façon similaire qui décrit notre morphisme de suites spectrales au niveau des aboutissements.

On dispose également d'un morphisme naturel de suites spectrales
$$E(\Psi^*X;\Phi_1)\to\Xi^* E(X;\Phi_2)$$
qui, entre les aboutissements, est le morphisme $\Hh_\bullet(\Gamma_{\Phi_1};\Psi^* X)\to\Xi^*\Hh_\bullet(\Gamma_{\Phi_2};X)$ défini en (\ref{foncf}), et sur les deuxièmes pages est le morphisme
$$\mathbf{L}_p\big((-\underset{[\D_1,+]}{\otimes}\Hh_q(\Gamma_{\Phi_1};\kk))\circ (\Phi_1)_*\big)(\Psi^* X)\to\Xi^*\mathbf{L}_p\big((-\underset{[\D_2,+]}{\otimes}\Hh_q(\Gamma_{\Phi_2};\kk))\circ (\Phi_2)_*\big)(X)$$
obtenu en composant la flèche
$$\mathbf{L}_p\big((-\underset{[\D_1,+]}{\otimes}\Hh_q(\Gamma_{\Phi_1};\kk))\circ (\Phi_1)_*\big)(\Psi^* X)\to\mathbf{L}_p\big((-\underset{[\D_1,+]}{\otimes}\Xi^*\Hh_q(\Gamma_{\Phi_2};\kk))\circ (\Phi_1)_*\big)(\Psi^* X)$$
induit par $\Hh_\bullet(\Gamma_{\Phi_1};\kk)\to\Xi^*\Hh_\bullet(\Gamma_{\Phi_2};\kk)$ et  
$$\mathbf{L}_p\big((-\underset{[\D_1,+]}{\otimes}\Xi^*\Hh_q(\Gamma_{\Phi_2};\kk))\circ (\Phi_1)_*\big)(\Psi^* X)\to\Xi^*\mathbf{L}_p\big((-\underset{[\D_2,+]}{\otimes}\Hh_q(\Gamma_{\Phi_2};\kk))\circ (\Phi_2)_*\big)(X)$$
donnée par~(\ref{eqtn1}).

Le lecteur courageux pourra écrire les propriétés de compatibilité à la structure multiplicative (et à la structure comultiplicative, lorsque $\kk$ est un corps) sur $\Hh_\bullet(\Gamma_\Phi)$ que vérifient les suites spectrales du théorème~\ref{th-spf}.

Un cas particulièrement important est celui de la suite spectrale $E(\Phi^* F;\Phi)$, où $F$ est un objet de $\St(\D,\kk)$. Comme $\Gamma_\Phi$ opère trivialement sur $\Phi^*F$, l'aboutissement $\Hh_\bullet(\Gamma_\Phi;\Phi^*F)$ de cette suite spectrale est également celui d'une suite spectrale de coefficients universels que nous noterons
$$CU^2_{p,q}(F;\Phi)={\rm Tor}^\kk_p(F,\Hh_q(\Gamma_\Phi;\kk))\Rightarrow\Hh_{p+q}(\Gamma_\Phi;\Phi^*F)$$
(on pourra noter simplement $CU(F)$ cette suite spectrale si $\Phi$ est clair dans le contexte).

On dispose d'un morphisme de suites spectrales $E(\Phi^* F;\Phi)\to CU(F;\Phi)$ naturel en $F$, qui est l'identité sur les aboutissements, et qu'on peut se décrire sur les deuxièmes pages comme le morphisme
$$E^2_{p,q}(\Phi^*F;\Phi)=\mathbf{L}_p\big((-\underset{[\D,+]}{\otimes}\Hh_q(\Gamma_\Phi;\kk))\circ\Phi_*\big)(\Phi^*F)\to {\rm Tor}^{[\D,+]}_p(F,\Hh_q(\Gamma_\Phi;\kk))\to\cdots$$
$${\rm Tor}^\kk_p(F,\Hh_q(\Gamma_\Phi;\kk))=CU^2_{p,q}(F;\Phi)$$
composé du morphisme de coin
$$\mathbf{L}_p\big((-\underset{[\D,+]}{\otimes}\Hh_q(\Gamma_\Phi;\kk))\circ\Phi_*\big)(\Phi^*F)\to E'^2_{p,0}(\Phi^*F,\Hh_q(\Gamma_\Phi;\kk);\Phi)$$
(modulo l'identification de $\Phi_*\Phi^*(F)$ à $F$ via la coünité de l'adjonction) et de l'évaluation de la transformation naturelle 
${\rm Tor}^{[\D,+]}_\bullet\to {\rm Tor}^\kk_\bullet$ obtenue en dérivant~(\ref{ecpt}).

Ce morphisme naturel de suites spectrales s'obtient comme suit : on peut tout d'abord se ramener au cas où $\Phi$ est le foncteur de projection $\C\to\C/\Gamma_\Phi$ (cf. démonstration du théorème~\ref{th-spf}) et raisonner donc avec des suites spectrales vivant dans $(\D,\kk)$ plutôt que $\St(\D,\kk)$. Ceci posé, notons $\R^\C(X)$ une résolution projective d'un foncteur $X$ de $(\C,\kk)$ (qu'on peut choisir fonctorielle en $X$) : évaluée sur un objet $x$ de $\C/\Gamma_\Phi$, c'est-à-dire un objet de $\C$, la suite spectrale $E(X;\Phi)$ est la suite spectrale d'hyperhomologie associée à $\kk\overset{\mathbf{L}}{\underset{\kk[\Gamma_\Phi(x)]}{\otimes}}\R^\C(X)(x)$. Si $F$ est un foncteur de $(\D,\kk)$, la suite spectrale $CU(F;\Phi)$ (qu'on relève aussi dans $(\D,\kk)$), évaluée sur un objet $x$ de $\C$ est la suite spectrale d'hyperhomologie associée à $\kk\overset{\mathbf{L}}{\underset{\kk[\Gamma_\Phi(x)]}{\otimes}}\Phi^*\R^\D(F)(x)$ car $\R^\D(F)(\Phi x)$ est un complexe de $\kk[\Gamma_\Phi(x)]$-modules triviaux et $\kk$-projectifs. Le quasi-isomorphisme naturel de complexes $\R^\C(\Phi^* F)\to\Phi^*\R^\D(F)$ (qui est unique à homotopie près) induit entre les suites spectrales d'hyperhomologie correspondantes notre morphisme $E(\Phi^* F;\Phi)\to CU(F;\Phi)$.

L'énoncé qui suit caractérise l'excisivité en homologie stable en termes de foncteurs dérivés ; on pourrait utiliser la comparaison entre les suites spectrales $E(\Phi^* F;\Phi)$ et $CU(F;\Phi)$ qu'on vient d'expliciter pour l'établir, mais il est plus simple de se ramener par un argument forme à une situation de valeurs $\kk$-projectives.

\begin{pr}\label{cnex}
Soit $\Phi : \C\to\D$ un morphisme de CMHS faiblement surjectif et stablement plein.

 Le foncteur $\Phi$ est $\kk$-excisif pour l'homologie stable jusqu'en degré $d$ si et seulement si les endofoncteurs $\mathbf{L}_i(\Phi_*)\Phi^*$ de $\St(\D,\kk)$ sont nuls pour $0<i\leq d$. De plus, on dispose alors d'un isomorphisme
 $$F\underset{[\D,+]}{\overline{\otimes}}\Hh_{d+1}(\Gamma_\Phi;\kk)\simeq\mathbf{L}_{d+1}(\Phi_*)(\Phi^* F)$$
 naturel en l'objet $F$ de $\St(\D,\kk)$.
\end{pr}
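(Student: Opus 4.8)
The plan is to prove, by induction on $d$, the combined statement that $\Phi$ is $\kk$-excisive up to degree $d$ if and only if $\mathbf{L}_i(\Phi_*)\Phi^*=0$ for $0<i\le d$, \emph{together with} the asserted isomorphism $F\underset{[\D,+]}{\bar{\otimes}}\Hh_{d+1}(\Gamma_\Phi;\kk)\simeq\mathbf{L}_{d+1}(\Phi_*)(\Phi^*F)$. The base case $d=0$ is vacuous on the two conditions (recall $\Hh_0(\Gamma_\Phi;\kk)=\kk$), so only the isomorphism for $d+1=1$ has content there. Throughout I use that the coünit identifies $\Phi_*\Phi^*$ with the identity of $\St(\D,\kk)$ (proposition~\ref{prdfh}), so that the bottom row $q=0$ of the spectral sequence $E(\Phi^*F)$ of theorem~\ref{th-spf} is $\mathbf{L}_p(\Phi_*)(\Phi^*F)$ (using $-\underset{[\D,+]}{\otimes}\kk=\mathrm{Id}$), and that $\pol_0(\D,\kk)$ is exactly the class of stably constant functors (definition~\ref{df-pol-gl}).

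Granting the isomorphism at each level, the inductive step for the equivalence is formal. If $\Phi$ is excisive up to $d$, then it is excisive up to $d-1$, so by induction $\mathbf{L}_i(\Phi_*)\Phi^*=0$ for $0<i\le d-1$ and $F\underset{[\D,+]}{\bar{\otimes}}\Hh_d\simeq\mathbf{L}_d(\Phi_*)(\Phi^*F)$; since $\Hh_d(\Gamma_\Phi;\kk)$ is now constant, the transformation~(\ref{ecpt}) $F\underset{[\D,+]}{\otimes}\Hh_d\to F\otimes\Hh_d$ is an isomorphism, its cokernel $F\underset{[\D,+]}{\bar{\otimes}}\Hh_d$ vanishes, and therefore $\mathbf{L}_d(\Phi_*)\Phi^*=0$, completing the vanishing in the range $0<i\le d$. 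Conversely, if $\mathbf{L}_i(\Phi_*)\Phi^*=0$ for $0<i\le d$, induction gives excisivity up to $d-1$, and the isomorphism at the previous level shows $F\underset{[\D,+]}{\bar{\otimes}}\Hh_d\simeq\mathbf{L}_d(\Phi_*)(\Phi^*F)=0$ for all $F$; hence $-\underset{[\D,+]}{\otimes}\Hh_d\to-\otimes\Hh_d$ is an epimorphism and proposition~\ref{pg-ptt}(\ref{p3ptt}) forces $\Hh_d(\Gamma_\Phi;\kk)\in\pol_0(\D,\kk)$, i.e. excisivity up to $d$.

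The heart is thus the isomorphism, to be established under the hypothesis $\mathbf{L}_i(\Phi_*)\Phi^*=0$ for $0<i\le d$ (equivalently, $\Hh_q(\Gamma_\Phi;\kk)$ constant for $q\le d$). Both functors $F\mapsto\mathbf{L}_{d+1}(\Phi_*)(\Phi^*F)$ and $F\mapsto F\underset{[\D,+]}{\bar{\otimes}}\Hh_{d+1}$ commute with colimits; the first is right exact because the relevant connecting homomorphism factors through $\mathbf{L}_d(\Phi_*)\Phi^*$, which is zero for $d\ge1$ and is the exact functor $\mathrm{Id}$ for $d=0$, while the second is the cokernel of a map of right exact functors. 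It therefore suffices to produce a natural transformation between them and to check it is an isomorphism on the projective generators $P^\D_t$, since every object of $\St(\D,\kk)$ admits a presentation by images of such generators and the five lemma then propagates the isomorphism. This is the ``reduction to $\kk$-projective values'': the $P^\D_t$ have $\kk$-free values, so the universal coefficients spectral sequence $CU(P^\D_t)$ degenerates, giving $\Hh_n(\Gamma_\Phi;\Phi^*P^\D_t)\simeq P^\D_t\otimes\Hh_n(\Gamma_\Phi;\kk)$.

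For $F=P^\D_t$ I would then run $E(\Phi^*P^\D_t)$. Via the auxiliary sequence $E'$ the vanishing $\mathbf{L}_j(\Phi_*)\Phi^*=0$ ($0<j\le d$) together with the exactness of $P^\D_t\underset{[\D,+]}{\otimes}-=\omega_t$ (proposition~\ref{pg-ptt}) forces $E^2_{p,q}=0$ for $1\le p\le d$, so that on the antidiagonal $p+q=d+1$ only $E^2_{0,d+1}=P^\D_t\underset{[\D,+]}{\otimes}\Hh_{d+1}$ and $E^2_{d+1,0}=\mathbf{L}_{d+1}(\Phi_*)(\Phi^*P^\D_t)$ survive; convergence in total degree $d$ (where the abutment is entirely $P^\D_t\otimes\Hh_d$) forces the one differential that could leave the bottom corner to vanish, whence $E^\infty_{d+1,0}=E^2_{d+1,0}$. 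The comparison morphism $E(\Phi^*P^\D_t)\to CU(P^\D_t)$ identifies the left-hand edge map with~(\ref{ecpt}), so $E^\infty_{d+1,0}\simeq\mathrm{coker}\big(P^\D_t\underset{[\D,+]}{\otimes}\Hh_{d+1}\to P^\D_t\otimes\Hh_{d+1}\big)=P^\D_t\underset{[\D,+]}{\bar{\otimes}}\Hh_{d+1}$; assembling the edge map to the bottom row with the universal-coefficients edge map yields the natural transformation $F\underset{[\D,+]}{\bar{\otimes}}\Hh_{d+1}\to\mathbf{L}_{d+1}(\Phi_*)(\Phi^*F)$, which the computation shows to be an isomorphism on the $P^\D_t$. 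The main obstacle is exactly this bookkeeping: controlling the higher differentials along the antidiagonal and verifying that the edge-map transformation is the correct one. Reducing beforehand to $\kk$-projective values is what collapses the antidiagonal to two terms and lets the universal coefficients sequence pin down the edge map, turning an otherwise delicate double spectral sequence comparison into a short exact sequence.
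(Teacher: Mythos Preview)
Your proof is correct and follows essentially the same route as the paper: induction on $d$, the equivalence step handled via proposition~\ref{pg-ptt}(\ref{p3ptt}), reduction of the isomorphism to projectives by right exactness of $\mathbf{L}_{d+1}(\Phi_*)\Phi^*$, and then the spectral sequences $E(\Phi^*F)$ and $E'(\Phi^*F,\Hh_q)$ to isolate the two surviving terms on the antidiagonal $p+q=d+1$. The paper packages the last step as the edge exact sequence
\[
E^2_{0,d+1}\to F\otimes\Hh_{d+1}\to E^2_{d+1,0}\to E^2_{0,d}\to F\otimes\Hh_d,
\]
observing that the final arrow is an isomorphism since $\Hh_d$ is constant; your argument that the differential $d^{d+1}$ out of $E^{d+1}_{d+1,0}$ vanishes by accounting for total degree $d$ is the same fact phrased differently. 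Your vanishing $E^2_{p,q}=0$ for $1\le p\le d$ and \emph{all} $q$ (via exactness of $P^\D_t\underset{[\D,+]}{\otimes}-$) is slightly sharper than what the paper states but is exactly what is used.
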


\begin{proof}
Raisonnant par récurrence sur $d$, on peut supposer que l'assertion est vraie jusqu'en degré $d-1$. L'isomorphisme $F\underset{[\D,+]}{\overline{\otimes}}\Hh_d(\Gamma_\Phi)\simeq\mathbf{L}_d(\Phi_*)\Phi^* F$ qu'elle procure montre, en utilisant l'assertion~\ref{p3ptt} de la proposition~\ref{pg-ptt}, que $\Hh_d(\Gamma_\Phi;\kk)$ est (stablement) constant si et seulement si $\mathbf{L}_d(\Phi_*)\Phi^*=0$, de sorte qu'il suffit de vérifier l'isomorphisme naturel $F\underset{[\D,+]}{\overline{\otimes}}\Hh_{d+1}(\Gamma_\Phi;\kk)\simeq\mathbf{L}_{d+1}(\Phi_*)(\Phi^* F)$. 

La nullité de $\mathbf{L}_d(\Phi_*)\Phi^*$ (ou, pour $d=0$, le fait que la coünité $\Phi_*\Phi^*\to {\rm Id}$ est un isomorphisme) montre que $\mathbf{L}_{d+1}(\Phi_*)\Phi^*$ est un foncteur exact à droite, de sorte qu'il suffit d'exhiber un tel isomorphisme naturel pour $F$ {\em projectif}. En particulier, $F$ est à valeurs $\kk$-plates, ce qui garantit, puisque $\Gamma_\Phi$ opère trivialement sur $\Phi^*(F)$, un isomorphisme naturel $\Hh_\bullet(\Gamma_\Phi;\Phi^* F)\simeq F\otimes\Hh_\bullet(\Gamma_\Phi;\kk)$.
 
  Examinons les suites spectrales $E(\Phi^* F)$ et $E'(\Phi^* F,\Hh_q(\Gamma_\Phi;\kk))$ du théorème~\ref{th-spf} : on a $E'^2_{i,j}=0$ pour $0<j\leq d$, $E'^2_{0,0}\simeq F\underset{[\D,+]}{\otimes}\Hh_q(\Gamma_\Phi;\kk)$, $E'^2_{i,0}=0$ si $i>0$ puisque $F$ est projectif, ce qui implique que, pour $p+q=d+1$, $E^2_{p,q}$ est nul sauf pour $p=0$, auquel cas il s'identifie à $F\otimes\Hh_{d+1}(\Gamma_\Phi;\kk)$, ou pour $q=0$, auquel cas il s'identifie à $\mathbf{L}_{d+1}\Phi_*(\Phi^* F)$. De plus, l'hypothèse de récurrence montre que $E^2_{p,q}=0$ dès lors qu'on a à la fois $0<p\leq d$ et $q\leq d$.
  
  On en déduit que les morphismes de coin fournissent une suite exacte
  $$E^2_{0,d+1}\simeq F\underset{[\D,+]}{\otimes}\Hh_{d+1}(\Gamma_\Phi;\kk)\to F\otimes\Hh_{d+1}(\Gamma_\Phi;\kk)\to E^2_{d+1,0}\simeq\mathbf{L}_{d+1}\Phi_*(\Phi^* F)\to\cdots$$
  $$E^2_{0,d}\simeq F\underset{[\D,+]}{\otimes}\Hh_d(\Gamma_\Phi;\kk)\to F\otimes\Hh_d(\Gamma_\Phi;\kk)$$
 mais cette dernière flèche est un isomorphisme par l'hypothèse de récurrence, de sorte que cette suite exacte se réduit à la suivante
 $$E^2_{0,d+1}\simeq F\underset{[\D,+]}{\otimes}\Hh_{d+1}(\Gamma_\Phi;\kk)\to F\otimes\Hh_{d+1}(\Gamma_\Phi;\kk)\to E^2_{d+1,0}\simeq\mathbf{L}_{d+1}\Phi_*(\Phi^* F)\to 0,$$
 qui fournit exactement l'isomorphisme naturel souhaité et achève la démonstration.
\end{proof}

On obtient en particulier le résultat suivant, qui généralise \cite[§\,2.3]{DV}.

\begin{cor}\label{crclexc}
 Supposons que $\Phi$ est $\kk$-excisif pour l'homologie stable et que $F$ est un foncteur de $(\C,\kk)$.
 
 Si $\kk$ est un corps, on dispose d'un isomorphisme naturel gradué
$$\Hh_\bullet(\Gamma_\Phi;F)\simeq\Hh_\bullet(\Gamma_\Phi;\kk)\underset{\kk}{\otimes}\mathbf{L}_\bullet\Phi_*(F).$$

Si $\kk$ est de dimension homologique au plus $1$, on a une suite exacte naturelle (scindée, de façon a priori non naturelle) de coefficients universels
$$0\to\bigoplus_{i+j=n}\Hh_i(\Gamma_\Phi;\kk)\underset{\kk}{\otimes}\mathbf{L}_j\Phi_*(F)\to\Hh_n(\Gamma_\Phi;F)\to\bigoplus_{i+j=n-1}{\rm Tor}^\kk_1(\Hh_i(\Gamma_\Phi;\kk),\mathbf{L}_j\Phi_*(F))\to 0$$
pour tout $n\in\mathbb{N}$ ; il suffit que $\Phi$ soit $\kk$-excisif pour l'homologie stable jusqu'en degré $n$ pour que cela soit valable.
\end{cor}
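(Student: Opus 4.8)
The plan is to feed the excisiveness hypothesis into the fundamental spectral sequence of Theorem~\ref{th-spf} and then recognise its abutment as a derived tensor product over $\kk$, to which the classical Künneth / universal coefficient theorem applies. First I would record that $\kk$-excisiveness up to degree $n$ means precisely that $\Hh_q(\Gamma_\Phi;\kk)$ is (stably) constant for $q\leq n$ (Definition~\ref{df-exci}). For a constant functor the comparison morphism~(\ref{ecpt}) is an isomorphism, so $-\underset{[\D,+]}{\otimes}\Hh_q(\Gamma_\Phi;\kk)\to-\otimes_\kk\Hh_q(\Gamma_\Phi;\kk)$ is invertible. Writing $C_q:=\Hh_q(\Gamma_\Phi;\kk)$, the second page of $E(F;\Phi)$ therefore reads
$$E^2_{p,q}=\mathbf{L}_p\big((-\otimes_\kk C_q)\circ\Phi_*\big)(F)\qquad (q\leq n).$$

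The key step is to identify the abutment with a derived tensor product. I would resolve $F$ by a complex $P_\bullet$ of projectives of $(\C,\kk)$ built from the $P^\C_t$. By the first part of Lemma~\ref{prel-thp1} together with the first assertion of Proposition~\ref{pg-ptt}, and using that $C_\bullet$ is constant (so that $\tau_{\Phi t}C_\bullet\simeq C_\bullet$), one gets natural isomorphisms
$$\Hh_\bullet(\Gamma_\Phi;P^\C_t)\simeq\omega_{\Phi t}\big(\Hh_\bullet(\Gamma_\Phi;\kk)\big)\simeq\Phi_*(P^\C_t)\otimes_\kk\Hh_\bullet(\Gamma_\Phi;\kk),$$
the right-hand isomorphism being exactly where excisiveness enters, as it collapses $\underset{[\D,+]}{\otimes}$ to $\otimes_\kk$. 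Applying this termwise and totalising, and invoking the hyperhomology description of $\Hh_\bullet(\Gamma_\Phi;-)$ recalled after Theorem~\ref{th-spf} (so that $\Hh_\bullet(\Gamma_\Phi;F)$ is the homology of the total complex of $\Hh_\bullet(\Gamma_\Phi;P_\bullet)\simeq\Phi_*(P_\bullet)\otimes_\kk B_\bullet$, where $B_\bullet$ represents $\Hh_\bullet(\Gamma_\Phi;\kk)$), I would obtain a natural isomorphism in $\dst(\D,\kk)$
$$\Hh_\bullet(\Gamma_\Phi;F)\simeq\mathbf{L}\Phi_*(F)\overset{\mathbf{L}}{\underset{\kk}{\otimes}}\Hh_\bullet(\Gamma_\Phi;\kk),$$
valid at least through degree $n$, since $\Phi_*(P_\bullet)$ represents $\mathbf{L}\Phi_*(F)$ by definition of the derived Kan extension.

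It then remains to apply the algebraic Künneth formula for $\overset{\mathbf{L}}{\underset{\kk}{\otimes}}$, computed object by object at the target. When $\kk$ is a field every module is flat, the Künneth theorem gives a natural splitting, and one reads off the graded isomorphism $\Hh_\bullet(\Gamma_\Phi;F)\simeq\Hh_\bullet(\Gamma_\Phi;\kk)\otimes_\kk\mathbf{L}_\bullet\Phi_*(F)$ of the first assertion. When $\kk$ has homological dimension at most $1$, the two-column universal coefficient spectral sequence degenerates into the short exact sequence of the second assertion, with its $\mathrm{Tor}^\kk_1$ term and its (a priori non-natural) splitting; since only the groups $C_q$ with $q\leq n$ contribute to total degree $n$, excisiveness up to degree $n$ suffices for the degree-$n$ statement.

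The main obstacle is the middle paragraph: one must check carefully that $\Hh_\bullet(\Gamma_\Phi;-)$ really is computed by applying it termwise to a projective resolution and totalising --- i.e. that the fundamental spectral sequence is the hyper-$\mathrm{Tor}$ spectral sequence of the displayed derived tensor product --- and that on the representables the excisiveness hypothesis cleanly untwists $\omega_{\Phi t}$ into $\Phi_*(P^\C_t)\otimes_\kk(-)$. Once this derived identification is secured, the Künneth step is entirely formal. A slightly more hands-on alternative, avoiding the derived category, is to reduce to $F$ projective (where the spectral sequence collapses since $\mathbf{L}_p\Phi_*(F)=0$ for $p>0$, giving $\Hh_\bullet(\Gamma_\Phi;F)\simeq\Phi_*(F)\otimes_\kk\Hh_\bullet(\Gamma_\Phi;\kk)$) and then dimension-shift along short exact sequences, but this amounts to the same computation.
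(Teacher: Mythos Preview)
Your overall strategy --- identify the abutment with a derived tensor product over $\kk$ and then apply Künneth --- is the right one, and the termwise identification $\Hh_\bullet(\Gamma_\Phi;P^\C_t)\simeq\Phi_*(P^\C_t)\otimes_\kk C_\bullet$ is correct. But the passage ``applying this termwise and totalising'' hides precisely the point that needs work. What you have at the chain level, for each projective $P^\C_t$, is $\omega_{\Phi t}(B_\bullet)$ where $B_\bullet$ is the functorial complex $x\mapsto\kk\overset{\mathbf{L}}{\underset{\kk[\Gamma_\Phi(x)]}{\otimes}}\kk$. Excisiveness tells you that the \emph{homology} of $B_\bullet$ is (stably) constant, but $\omega_{\Phi t}(B_\bullet)(x)=\bigoplus_{\bar f}B_\bullet(x\underset{f}{-}\Phi t)$ is not the same complex as $\Phi_*(P^\C_t)(x)\otimes B_\bullet(x)$: the summands are evaluated at different objects. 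To identify them in $\dst(\D,\kk)$ you need $B_\bullet$ to be \emph{formal} as a complex of functors, i.e.\ quasi-isomorphic to its homology. This is not automatic (recall the warning after Proposition~\ref{prh1}: $H_\bullet(G;-)$ does not lift to derived categories in general), and it is exactly here that the hypothesis on $\kk$ enters, not just in the final Künneth step.

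The paper's proof supplies the missing ingredient: formality of $B_\bullet$ in $\dst(\C,\kk)$ follows from Dold's obstruction theory for chain complexes, because the homology is constant and ${\rm Ext}^r_{\St(\C,\kk)}(A,B)\simeq{\rm Ext}^r_\kk(A,B)$ for constant functors $A,B$ (via \cite[lemme~2.17]{DV-pol} and the initial object of $\C$), which vanishes for $r>1$ precisely when $\kk$ has homological dimension at most $1$. Once $B_\bullet$ is formal, your identification $\omega_{\Phi t}(B_\bullet)\simeq\Phi_*(P^\C_t)\otimes_\kk C_\bullet$ does hold in the derived category, and the rest of your argument goes through. Your proposed alternative (dimension-shift from the projective case) runs into the same issue: knowing $\Hh_\bullet(\Gamma_\Phi;P)$ as a graded object for each projective $P$ does not by itself determine the differentials in the hyperhomology spectral sequence of a resolution --- that again requires the formality argument.
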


\begin{proof}
 Grâce aux suites spectrales $E'(F,\Hh_q(\Gamma_\Phi;\kk);\Phi)$, à  la proposition~\ref{cnex} et à la formule de Künneth, il suffit de montrer que, si $\kk$ est de dimension homologique au plus $1$, la suite spectrale $E(F;\Phi)$ s'arrête à la deuxième page et que le gradué associé est trivial, si $\Phi$ est $\kk$-excisif pour l'homologie stable (avec une adaptation tronquée évidente en degrés $\leq n$ pour la dernière assertion). Cela provient de ce que le complexe de $\dst(\C,\kk)$ induit par $x\mapsto\kk\overset{\mathbf{L}}{\underset{\kk[\Gamma_\Phi(x)]}{\otimes}}\kk$ est {\em formel} (jusqu'en degré $n$ dans le cas tronqué). En effet, son homologie est par hypothèse (stablement) constante (jusqu'en degré $n$ dans le cas tronqué), de sorte qu'il suffit d'appliquer la théorie de l'obstruction pour les complexes de chaînes de Dold \cite{Dold} : en effet, si $A$ et $B$ sont deux $\kk$-modules, on a ${\rm Ext}^r_{\St(\C,\kk)}(A,B)\simeq {\rm Ext}^r_{(\C,\kk)}(A,B)\simeq {\rm Ext}^r_\kk(A,B)$ (par abus, on note encore $A$ et $B$ pour les foncteurs constants sur $\C$ associés, ainsi que leurs images dans $\St(\C,\kk)$), où le premier isomorphisme découle du lemme~2.17 de \cite{DV-pol} et le second de ce que $\C$ a un objet initial, or ce $\kk$-module d'extensions est nul pour $r>1$ par hypothèse sur $\kk$. Cette propriété de formalité pour $\kk\overset{\mathbf{L}}{\underset{\kk[\Gamma_\Phi]}{\otimes}}\kk$ s'étend aussitôt à $\kk\overset{\mathbf{L}}{\underset{\kk[\Gamma_\Phi]}{\otimes}}P$ où $P$ est projectif, ce dont on déduit le résultat souhaité en considérant une résolution projective de $F$. 
\end{proof}

\section{Étude dans le cas des groupes de congruence}\label{sf}

On commence par rappeler dans cette section le cadre catégorique général qui permet de définir des groupes de congruence de type linéaire, hermitien, symplectique, et comment lui associer naturellement des catégories de modèles pour l'homologie stable. On généralise ensuite un résultat important de \cite{Dja-JKT} (comparaison homologique à coefficients polyonomiaux de catégories d'objets hermitiens et d'objets hermitiens dégénérés), dans un contexte de catégories dérivées. Ce résultat est utilisé de façon essentielle pour étudier, sur des foncteurs polynomiaux, d'abord les extensions de Kan à gauche dérivées associées aux foncteurs induits entre catégories d'objets hermitiens par des foncteurs additifs, puis le produit tensoriel le long de la somme hermitienne et ses dérivés (cela nécessitera l'introduction d'un autre produit tensoriel auxiliaire). Cela permet, à l'aide des suites spectrales du théorème~\ref{th-spf}, de démontrer nos résultats principaux sur l'homologie des groupes de congruence. 

\subsection{Catégories additives à dualité et objets hermitiens}\label{sherm}

\begin{nota}
 \begin{enumerate}
  \item On note $\mathbf{Add}$ la catégorie des petites catégories additives dont les idempotents se scindent, les morphismes étant les foncteurs additifs.
  \item On note $\mathbf{Add}^{ps}$ la sous-catégorie de $\mathbf{Add}$ ayant les mêmes objets et dont les morphismes sont les foncteurs pleins et faiblement surjectifs (i.e. essentiellement surjectifs à facteur direct près).
  \item Si $\A$ est une catégorie additive, on note $\mathbf{S}(\A)$ ayant les mêmes objets et dont les morphismes $A\to B$ sont les couples $(u,v)\in\A(A,B)\times\A(B,A)$ tels que $vu={\rm Id}_\A$. Si $\A$ appartient à $\mathbf{Add}$, on vérifie aussitôt que $\mathbf{S}(\A)$ est une CMHS (pour la structure monoïdale symétrique induite par la somme directe) ; on définit ainsi un foncteur $\mathbf{S} : \mathbf{Add}\to\mathbf{CMHS}$ (l'effet sur les morphismes étant évident).
 \end{enumerate}
\end{nota}

\begin{rem}
\begin{enumerate}
 \item La structure de foncteur d'automorphismes sur $\mathbf{S}(\A)$ est donnée comme suit : à un morphisme $(u,v) : A\to B$ de $\mathbf{S}(\A)$, on associe le morphisme de groupes
 $${\rm Aut}_\A(A)(\simeq {\rm Aut}_{\mathbf{S}(\A)}(A))\to {\rm Aut}_\A(B)\quad f\mapsto ufv+1_B-uv.$$
 \item L'hypothèse de scindement des idempotents, simplement destinée à garantir que $\mathbf{S}(\A)$ est une CMHS, n'est guère restrictive (on peut remplacer la catégorie additive $\A$ par son enveloppe karoubienne). Elle pourrait être supprimée en généralisant un petit peu le cadre formel des CMHS.
\end{enumerate}
\end{rem}

Dans toute la suite, on se fixe $\varepsilon\in\{1,-1\}$ ; les structures hermitiennes que l'on considère (on renvoie par exemple au début de la section~4 de \cite{Dja-JKT} pour plus de précisions à ce sujet) pourraient être appelées $\varepsilon$-hermitiennes, le cas $\varepsilon=-1$ correspondant au cas symplectique et $\varepsilon=1$ au cas hermitien usuel.

\begin{nota}\label{nh}
 \begin{enumerate}
  \item On désigne par $\mathbf{Add}_D$ la catégorie des objets $\A$ de $\mathbf{Add}$ munis d'un foncteur de dualité $D_\A : \A\to\A^{op}$ (qui sera souvent noté simplement $D$) ; les morphismes sont les foncteurs additifs commutant à la dualité.
  \item On note $\mathbf{Add}_D^{ps}$ la sous-catégorie de $\mathbf{Add}_D$ ayant les mêmes objets et dont les morphismes sont les foncteurs pleins et faiblement surjectifs.
  \item Si $\A$ est un objet de $\mathbf{Add}_D$, on note $T_\A$ (ou simplement $T$) le foncteur quadratique $\A^{op}\to\mathbf{Ab}$ (on désignera également de la même façon sa composée avec le foncteur d'oubli $\mathbf{Ab}\to\mathbf{Ens}$) donné par $T_\A(x)=\A(x,Dx)_{\mathfrak{S}_2}$, où l'action de $\mathfrak{S}_2$ est donnée par l'auto-dualité du foncteur contravariant $D$, tordue par $\varepsilon$. Ainsi, $T(x)$ n'est autre que l'ensemble des structures ($\varepsilon$-)hermitiennes {\em éventuellement dégénérées} sur l'objet $x$ de $\A$. On notera $\mathbf{HD}(\A)$ la catégorie d'éléments $\A^T$ (c'est la catégorie des objets de $\A$ munis d'une structure hermitienne éventuellement dégénérée) ; le foncteur d'oubli $\pi^T : \mathbf{HD}(\A)\to\A$ sera noté $p_\A$ (ou simplement $p$). La catégorie $\mathbf{HD}(\A)$ possède une structure monoïdale symétrique naturelle donnée par la somme hermitienne, notée $\overset{\perp}{\oplus}$.
  \item\label{nh4} On note $\mathbf{H}(\A)$ la sous-catégorie pleine de $\mathbf{HD}(\A)$ constituée des objets $x$ de $\A$ munis d'une structure hermitienne (non dégénérée), c'est-à-dire tels que l'image par la norme
  $$T_\A(x)\to\A(x,Dx)\qquad [f]\mapsto f+\varepsilon.\bar{f}$$
  (où $[f]$ désigne la classe dans $T_\A(x)$ de $f\in\A(x,Dx)$, et $\bar{f}$ l'effet sur $f$ du foncteur de dualité $D$, modulo l'isomorphisme canonique $DDx\simeq x$) de la structure herminitienne sur $x$ soit un isomorphisme. La catégorie $\mathbf{H}(\A)$ est une sous-catégorie monoïdale de $\mathbf{HD}(\A)$ qui possède une structure canonique de CMHS. Tout morphisme $\Phi : \A\to\B$ de $\mathbf{Add}_D$ induit des foncteurs $\mathbf{HD}(\Phi) : \mathbf{HD}(\A)\to\mathbf{HD}(\B)$ et $\mathbf{HD}(\Phi) : \mathbf{HD}(\A)\to\mathbf{HD}(\B)$ ; en particulier, on dispose d'un foncteur $\mathbf{H} : \mathbf{Add}_D\to\mathbf{CMHS}$.
  
  On note $i_\A : \mathbf{H}(\A)\to\mathbf{HD}(\A)$ le foncteur d'oubli.
 \end{enumerate}
\end{nota}

On dispose d'un isomorphisme naturel d'adjonction
\begin{equation}\label{eq-omegh}
 {\rm Tor}_\bullet^\A(\omega_\A(X),F)\simeq {\rm Tor}_\bullet^{\mathbf{HD}(\A)}(X,p_\A^*(F))
\end{equation}
pour $X$ dans $(\mathbf{HD}(\A)^{op},\kk)$ et $F$ dans $(\A,\kk)$, où le foncteur exact $\omega_\A : (\mathbf{HD}(\A)^{op},\kk)\to (\A^{op},\kk)$ est donné sur les objets par
$$\omega_\A(X)(V)=\bigoplus_{\xi\in T(V)}X(V,\xi)$$
(ceci est un cas particulier des considérations générales d'extensions de Kan dans des catégories d'éléments --- cf. (\ref{eqom})).

\smallskip

Les exemples les plus importants d'objets de $\mathbf{Add}$ et de $\mathbf{Add}_D$ sont donnés comme suit.

\begin{nota}
Soit $A$ un anneau (associatif et unitaire). On note $\mathbf{P}(A)$ un squelette de la catégorie  des $A$-modules à gauche projectifs de type fini : c'est un objet de $\mathbf{Add}$. Si $A$ est muni d'une involution, le foncteur de dualité ${\rm Hom}_A(-,A)$ fait de $\mathbf{P}(A)$ un objet de $\mathbf{Add}_D$. On obtient ainsi un foncteur $\mathbf{P}$ de la catégorie des anneaux (resp. des anneaux à involution) vers $\mathbf{Add}$ (resp. $\mathbf{Add}_D$), l'effet sur les morphismes étant donné par l'extension des scalaires.

On notera $\mathbf{S}(A)$ (resp. $\mathbf{H}(A)$) pour $\mathbf{S}(\mathbf{P}(A))$ (resp. $\mathbf{H}(\mathbf{P}(A))$), où $A$ est un anneau (resp. un anneau à involution).
\end{nota}

\paragraph*{Liens entre les foncteurs $\mathbf{S}$ et $\mathbf{H}$}

On note d'abord que, à côté du foncteur d'oubli $\mathbf{Add}_D\to\mathbf{Add}$, on dispose d'un foncteur $\mathbf{Add}\to\mathbf{Add}_D$ associant la catégorie additive $\A^{\rm e}:=\A^{op}\times\A$ munie de la dualité échangeant les deux facteurs à la catégorie additive $\A$. La catégorie $\mathbf{H}(\A^{\rm e})$ est canoniquement équivalente à $\mathbf{S}(\A)$, équivalence par l'intermédiaire de laquelle la catégorie $\mathbf{HD}(\A^{\rm e})$ s'identifie à la catégorie des factorisations  $\mathbf{F}(\A)$ (cf. Quillen \cite{QK}).

\begin{nota}\label{nhs} Soit $\A$ un objet de $\mathbf{Add}_D$.
 \begin{enumerate}
  \item On désigne par $o_\A : \mathbf{H}(\A)\to\mathbf{S}(\A)$ le foncteur qui est l'oubli sur les objets et associe à une flèche $f : A\to B$ de $\mathbf{H}(\A)$ la flèche $(f,\xi_A^{-1}\bar{f}\xi_B)$, où $\bar{f}:= D(f) : DB\to DA$ et où $\xi_A : A\to DA$ désigne l'isomorphisme de $\A$ donné par la structure hermitienne (cf. notation~\ref{nh}.\ref{nh4}), et de même pour $B$.
  \item On note $h_\A : \mathbf{S}(\A)\simeq\mathbf{H}(\A^{\rm e})\to\mathbf{H}(\A)$ le foncteur induit par le morphisme $\A^{\rm e}=\A^{op}\times\A\to\A$ de $\mathbf{Add}_D$ dont la composante $\A^{op}\to\A$ (resp. $\A\to\A$) est le foncteur de dualité (resp. l'identité).
  \item On désigne par ${\rm H}_\A$ l'endofoncteur $h_\A o_\A$ de $\mathbf{H}(\A)$.
 \end{enumerate}
 
 Dans ces notations, les références à $\A$ en indice pourront être omises quand cela n'engendre pas de confusion.
\end{nota}

\begin{rem}
 Explicitement, $h_\A(V)$ est l'objet $V\oplus DV$ muni de la structure hermitienne image dans $T_\A(V\oplus DV)$ du morphisme $V\oplus DV\to D(V\oplus DV)\simeq DV\oplus V$ donné par la projection canonique sur $V$. Les objets de l'image essentielle de $h_\A$ sont par définition les objets hermitiens {\em hyperboliques} de $\mathbf{H}(\A)$. Comme on dispose toujours d'un morphisme $E\to {\rm H}_\A(E)$, pour $E$ dans $\mathbf{H}(\A)$, le foncteur $h_\A$ est faiblement surjectif.
\end{rem}

\paragraph*{Groupes de congruence associés}

Si $\Phi : \A\to\B$ est un morphisme de $\mathbf{Add}_D$, on dispose d'un foncteur $\Gamma_{\mathbf{H}(\Phi)} : \mathbf{H}(\A)\to\mathbf{Grp}$ ; on obtient donc, sous les hypothèses du lemme classique suivant (laissé au lecteur), un foncteur homologique $\Hh_\bullet(\Gamma_{\mathbf{H}(\Phi)};-) : \St(\mathbf{H}(\A),\kk)\to\St(\mathbf{H}(\B),\kk)$, dont l'étude constitue l'objet de la suite de cet article.

\begin{lm}\label{lm-cadreh}
 Soit $\Phi : \A\to\B$ un morphisme de $\mathbf{Add}_D^{ps}$. Le morphisme de CMHS $\mathbf{H}(\Phi) : \mathbf{H}(\A)\to\mathbf{H}(\B)$ est faiblement surjectif et stablement plein.
\end{lm}

Les groupes de congruence hermitiens usuels s'obtiennent lorsque $\A$ (resp. $\B)$ est de la forme $\mathbf{P}(A)$
(resp. $\mathbf{P}(B)$), $\Phi$ étant induit par un morphisme d'anneaux à involution $A\to B$. Les hypothèses du lemme sont vérifiées si ce morphisme est surjectif.

En appliquant ce qui précède à des catégories à dualité du type $\A^{{\rm e}}$, on voit que si $\Phi : \A\to\B$ est un morphisme de $\mathbf{Add}$, on obtient un foncteur $\Gamma_{\mathbf{S}(\Phi)} : \mathbf{S}(\A)\to\mathbf{Grp}$, et un foncteur homologique $\Hh_\bullet(\Gamma_{\mathbf{S}(\Phi)};-) : \St(\mathbf{S}(\A),\kk)\to\St(\mathbf{S}(\B),\kk)$ lorsque $\Phi$ appartient à $\mathbf{Add}^{ps}$. Cela s'applique à tout foncteur $\mathbf{P}(A)\to\mathbf{P}(B)$ induit par un morphisme surjectif d'anneaux $A\to B$ ; les $\Gamma_{\mathbf{S}(\Phi)}$ correspondent alors aux groupes de congruence usuels (dans les groupes linéaires).

\medskip

Pour certaines de nos considérations ultérieures, l'hypothèse supplémentaire suivante (qu'on ne fait pas de façon systématique) sur la catégorie additive à dualité $\A$ sera utile.

\begin{hyp}\label{h2inv}
 Le foncteur quadratique réduit $T : \A^{op}\to\mathbf{Ab}$ est {\em pseudo-diagonalisable} au sens de la définition~\ref{df-fqpd}.
\end{hyp}

Cette hypothèse est vérifiée dans deux cas particuliers fondamentaux :
\begin{enumerate}
 \item $\A$ appartient à l'image essentielle du foncteur $\mathbf{Add} \to\mathbf{Add}_D\quad\A\mapsto\A^{{\rm e}}$ ;
 \item $T$ est à valeurs dans les $\mathbb{Z}[1/2]$-modules.
\end{enumerate}

L'hypothèse~\ref{h2inv} sera utilisée par l'intermédiaire de la proposition suivante.

\begin{pr}\label{pr-h2inv}
 Soit $A : \A\to\kk\md$ un foncteur additif. On a
 $${\rm Tor}^{\kk[\A]}_i(\kk[T],A)=0\quad\text{et}\quad{\rm Ext}^i_{(\A,\kk)}(A,\kk^T)=0\quad\text{pour }i\leq 1.$$
 Ces annulations s'étendent à tous les degrés (co)homologiques $i$ si l'hypothèse~\ref{h2inv} est satisfaite ou que $2$ est inversible dans $\kk$.
\end{pr}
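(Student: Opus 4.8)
The plan is to read off the Tor statement from the adjunction~(\ref{eq-omegh}) and then reduce everything to a $\mathfrak{S}_2$-equivariant computation governed by the cross-effect bifunctor of $T$. First I would observe that the constant functor $\underline\kk$ on $\mathbf{HD}(\A)^{op}$ satisfies $\omega_\A(\underline\kk)(V)=\bigoplus_{\xi\in T(V)}\kk=\kk[T](V)$, so $\kk[T]=\omega_\A(\underline\kk)$; feeding this into~(\ref{eq-omegh}) (with ${\rm Tor}^\A$ read as ${\rm Tor}^{\kk[\A]}$) gives a natural isomorphism
$${\rm Tor}^{\kk[\A]}_\bullet(\kk[T],A)\simeq{\rm Tor}^{\kk[\mathbf{HD}(\A)]}_\bullet(\underline\kk,p_\A^*A)=H_\bullet(\mathbf{HD}(\A);p_\A^*A).$$
Since $T(0)$ is a point, $\kk[T]$ splits off a constant summand $\kk\simeq P^{op}_0$, which is projective and contributes nothing in any degree (using $A(0)=0$); it therefore suffices to treat the reduced part. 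So the goal becomes: the homology of the category $\mathbf{HD}(\A)$ of (possibly degenerate) hermitian objects, with the additive coefficients $p_\A^*A$, vanishes.

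The engine is the quadratic structure of $T$. Its second cross-effect is the hom-bifunctor $(x,y)\mapsto\A(x,Dy)$, which is \emph{representable in each variable separately} (via $\A(x,Dy)\simeq\A(y,Dx)$ and the duality), hence biprojective after $\kk$-linearization; the duality furnishes the $\mathfrak{S}_2$-symmetry interchanging the two variables, twisted by $\varepsilon$, and on the diagonal this is exactly the involution whose coinvariants define $T$. I would use a Scorichenko-type vanishing criterion (\cite{Sco}) to compute the homology attached to this biprojective bifunctor: paired against the additive functor $A$, the off-diagonal contributions are concentrated in degree zero and assemble, $\mathfrak{S}_2$-equivariantly, into an \emph{induced} module. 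Consequently $H_\bullet(\mathbf{HD}(\A);p_\A^*A)$ is governed by the group homology $H_\ast(\mathfrak{S}_2;-)$ of this induced/permutation module built from $\A(-,D-)$. I expect \textbf{this step — the Scorichenko diagonal computation together with the bookkeeping of the $\mathfrak{S}_2$-action — to be the main obstacle}, precisely because passing to the diagonal of a biprojective bifunctor is where the naïve Künneth/Fubini argument fails and the criterion of \cite{Sco} is genuinely needed, and because the free-module linearization $\kk[T]$ (set-level coinvariants) does not coincide with the module coinvariants of $\kk[\A(-,D-)]$.

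With that in hand the two regimes separate cleanly along the degree of $\mathfrak{S}_2$-homology. In total degrees $\le 1$ only $H_0(\mathfrak{S}_2;-)$ — the honest coinvariants that already define $T$ — can intervene, so there is no derived correction and the vanishing holds \emph{unconditionally}. The potential obstruction to vanishing in higher degrees is the positive-degree part $H_{\ge 1}(\mathfrak{S}_2;-)$, which is $2$-primary; it dies when $2$ is invertible in $\kk$ (then $\kk[\mathfrak{S}_2]$ is semisimple, so coinvariants are exact), and it dies in general when the $\mathfrak{S}_2$-action on the cross-effect is free/induced, which is exactly what Hypothesis~\ref{h2inv} (pseudo-diagonalizability, Définition~\ref{df-fqpd}) guarantees via the quadratic-functor analysis of the appendix. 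This covers both sufficient conditions and both example classes (the hyperbolic case $\A=\B^{\mathrm e}$, where the swap action is manifestly induced, and the case where $T$ is valued in $\mathbb Z[1/2]$-modules). Finally, the Ext statement is the dual assertion: $\kk^T$ is the cofree functor $x\mapsto\mathrm{Map}(T(x),\kk)$, i.e.\ the $\kk$-linear dual of $\kk[T]\in\mdd\kk[\A]$, so I would deduce the vanishing of ${\rm Ext}^i_{(\A,\kk)}(A,\kk^T)$ from that of ${\rm Tor}^{\kk[\A]}_i(\kk[T],A)$ by a universal-coefficients/duality argument (immediate when $\kk$ is a field, and in general by running the cohomological form of the same Scorichenko criterion, replacing coinvariants by invariants), which preserves the same ranges $i\le 1$ in general and all $i$ under Hypothèse~\ref{h2inv} or with $2$ invertible.
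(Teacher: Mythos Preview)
Your route through $H_\bullet(\mathbf{HD}(\A);p_\A^*A)$ and a Scorichenko-style diagonal computation is an enormous detour, and the step you yourself flag as the main obstacle is not actually resolved in your sketch: you never say how the set-level coinvariants defining $T$ (and hence $\kk[T]$) are reconciled with the module-level $\mathfrak{S}_2$-coinvariants of $\kk[\A(-,D-)]$, nor why the computation collapses to $H_\ast(\mathfrak{S}_2;-)$ of an induced module. That is exactly the place where the argument, as written, is a hope rather than a proof.

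The paper avoids all of this. Its argument is two lines of pure Pirashvili. First, if $S:x\mapsto\A(x,Dx)$ then $S$ is diagonalisable (precomposition of a hom-bifunctor by the diagonal), so Pirashvili's vanishing lemma gives ${\rm Tor}^{\kk[\A]}_\bullet(\kk[S],A)=0$ and ${\rm Ext}^\bullet_{(\A,\kk)}(A,\kk^S)=0$. Second, $T$ is the cokernel of the endomorphism $f\mapsto\bar f-\varepsilon f$ of $S$; the key input is then Lemma~\ref{lm-troesch} (a Troesch-type statement): the functor $X\mapsto{\rm Tor}^{\kk[\A]}_\bullet(\kk[X],B)$ is \emph{homological} in $X$. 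The resulting long exact sequence, together with the vanishing for $\kk[S]$, immediately gives the vanishing for $\kk[T]$ in degrees $i\le 1$. For all degrees under the extra hypotheses, one simply observes that Hypothesis~\ref{h2inv} makes $T$ a retract of a diagonalisable functor (so $\kk[T]$ is a retract of some $\kk[S']$), while if $2\in\kk^\times$ then $\kk[T]$ is a retract of $\kk[S]$; Pirashvili again finishes. The Ext case is parallel.

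So the missing idea in your proposal is not some delicate equivariant bookkeeping but the much more elementary observation that $\kk[-]$ turns short exact sequences of abelian-group-valued functors into long exact sequences in ${\rm Tor}^{\kk[\A]}$ (via the bar filtration and Pirashvili), which lets you pass from the diagonalisable $S$ to its quotient $T$ without ever computing $H_\bullet(\mathbf{HD}(\A);-)$ or invoking Scorichenko.
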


\begin{proof}
 Le lemme d'annulation de Pirashvili \cite{P-add} implique qu'on a
 $${\rm Tor}^{\kk[\A]}_\bullet(\kk[S],A)=0\quad\text{et}\quad{\rm Ext}^\bullet_{(\A,\kk)}(A,\kk^S)=0$$ si $S$ est un foncteur diagonalisable, ce qui montre d'emblée la deuxième assertion (si $2$ est inversible dans $\kk$, $\kk[T]$ est facteur direct de $\kk[S]$ pour le foncteur $S$ ci-après). Pour la première, on note que $T$ est le conoyau d'un endomorphisme (donné par $f\mapsto\bar{f}-\epsilon f$) du foncteur diagonalisable $S : x\mapsto\A(x,Dx)$. La conclusion découle donc du même lemme de Pirashvili ainsi que de la première assertion du lemme~\ref{lm-troesch} ci-dessous.
\end{proof}

La propriété suivante (qui nous sera de nouveau utile un peu plus tard) constitue une variante du résultat de Troesch~\cite[Théorème 4]{Troe}.

\begin{lm}\label{lm-troesch}
 Soient $\A$ une petite catégorie additive et $A : \A^{op}\to\mathbf{Ab}$ et $B : \A\to\kk\md$ des foncteurs additifs.
 \begin{enumerate}
  \item Le foncteur gradué
  $$X\mapsto {\rm Tor}_\bullet^{\kk[\A]}(\kk[X],B)$$
  défini sur $(\A^{op},\mathbf{Ab})$ est homologique.
  \item Il existe un isomorphisme naturel gradué
 $${\rm Tor}_\bullet^{\kk[\A]}(\kk[A],B)\simeq {\rm Tor}_\bullet^\A(A,B)$$
 (où l'on note par abus, dans le terme de droite, $B$ pour l'image de ce foncteur dans $\A\md$, c'est-à-dire sa post-composition par le foncteur d'oubli $\kk\md\to\mathbf{Ab}$).
 \end{enumerate}
\end{lm}

(Le membre de gauche de l'isomorphisme se calcule dans la catégorie de tous les foncteurs vers $\kk\md$, tandis que celui de droite se calcule dans la catégorie des foncteurs additifs vers $\mathbf{Ab}$.)

\begin{proof}
Si $0\to X\to Y\to Z\to 0$ est une suite exacte de foncteurs $\A^{op}\to\mathbf{Ab}$, une forme de la résolution barre fournit une suite exacte de foncteurs
$$\cdots\to\bar{\kk}[X]^{\otimes (n+1)}\otimes\kk[Y]\to\bar{\kk}[X]^{\otimes n}\otimes\kk[Y]\to\cdots\to\bar{\kk}[X]\otimes\kk[Y]\to\kk[Y]\to\kk[Z]\to 0$$
 (où $\bar{\kk}[-]$ désigne la partie réduite du foncteur $\kk[-]$) dont le lemme d'annulation de Pirashvili permet de tirer le premier point. Le deuxième s'en déduit puisqu'il se réduit au lemme de Yoneda lorsque $A$ est projectif (dans $\mdd\A$).
\end{proof}

Donnons maintenant quelques précisions sur les processus de stabilisation (au sens du §\,\ref{pps}) en jeu. Comme toute catégorie de modèles pour l'homologie stable, $\mathbf{H}(\A)$ (pour $\A$ dans $\mathbf{Add}_D$) est munie de l'auto-action donnée par sa structure monoïdale, la somme hermitienne. Quant à $\mathbf{HD}(\A)$, on la munit de l'action de sa sous-catégorie monoïdale $\mathbf{H}(\A)$ par somme hermitienne. Cela permet d'utiliser la proposition-définition~\ref{fonct-st} pour obtenir des adjonctions
$$i_* : \St(\mathbf{H}(\A),\M)\rightleftarrows\St(\mathbf{HD}(\A),\M) : i^*$$
et
\begin{equation}\label{adj-hhd}
 \mathbf{L}i_* : \dst(\mathbf{H}(\A),\M)\rightleftarrows\dst(\mathbf{HD}(\A),\M) : i^*.
\end{equation}

\begin{rem}
 On pourrait aussi munir $\mathbf{HD}(\A)$ de son auto-action canonique (i.e. se placer exactement dans le cadre de \cite{DV-pol}). Il n'est pas difficile de voir que cette autre action donne lieu à la même catégorie $\St$ (car tout objet hermitien dégénéré se plonge dans un objet hermitien non dégénéré) ; on peut voir {\em a posteriori} que les notions d'objet polynomial sont stablement les mêmes (ce n'est revanche pas vrai pour les foncteurs {\em fortement} polynomiaux), mais l'auteur n'en connaît pas de démonstration simple (on peut voir ce résultat comme une conséquence des considérations du paragraphe suivant). Quoi qu'il en soit, la notion d'objet polynomial associée à l'action de $\mathbf{H}(\A)$ est la plus adaptée dans notre contexte.
\end{rem}

\subsection{Comparaison de $\dst(\mathbf{H}(\A),\kk)$ et  $\dst(\mathbf{HD}(\A),\kk)$ sur des objets polynomiaux}\label{shd}

Dans tout ce paragraphe, $\A$ désigne un objet de $\mathbf{Add}_D$ et $\M$ une catégorie de Grothendieck $\kk$-linéaire.

Avant de démontrer nos résultats principaux, nous avons besoin de quelques préliminaires sur les foncteurs polynomiaux.

\begin{nota}
 Soit $E$ un ensemble fini. On note ${\rm T}_E$ l'endofoncteur ${\rm H}^{\overset{\perp}{\oplus}E}\overset{\perp}{\oplus}{\rm Id}$ de $\mathbf{H}(\A)$. (On rappelle que ${\rm H}$ est défini dans la notation~\ref{nhs}.)
 
 Si $I$ est une partie de $E$, on dispose d'une transformation naturelle $\alpha_I : {\rm Id}\to {\rm T}_E$ qui sur l'objet $V$ est le morphisme $V\to {\rm H}(V)^{\overset{\perp}{\oplus}E}\overset{\perp}{\oplus}V$ dont le morphisme de $\A$ sous-jacent a pour composantes l'identité $V\to V$ et soit l'inclusion canonique $V\hookrightarrow V\oplus DV={\rm H}(V)$, soit $0$,  $V\to{\rm H}(V)$ selon que le facteur correspondant est étiqueté par un élément de $E$ appartenant ou non à $I$.
 
 On définit l'{\em effet croisé hermitien} associé à $E$ comme la transformation naturelle
$$cr_E:=\underset{I\in\Pp(E)}{\sum}(-1)^{|I|}\alpha_I^* : {\rm Id}\to {\rm T}_E^*$$ d'endofoncteurs de $(\mathbf{H}(\A),\M)$, où $|I|$ désigne le cardinal de $I$.

On dispose d'une version duale de cette transformation naturelle, encore notée $cr_E : {\rm T}_E^*\to {\rm Id}$, entre endofoncteurs de $(\mathbf{H}(\A)^{op},\M)$.
\end{nota}

La notion d'effet croisé hermitien, et la notation $cr_E$ en ce sens, sera utilisée seulement dans ce §\,\ref{shd} ; ultérieurement interviendront les effets croisés usuels.

Si $F$ (resp. $X$) est un foncteur contravariant (resp. covariant) sur $\mathbf{H}(\A)$, on dispose d'un diagramme commutatif
\begin{equation}\label{diag-sco}
 \xymatrix{{\rm Tor}^{\kk[\mathbf{H}(\A)]}_\bullet({\rm T}_E^*F,X)\ar[rr]^{cr_E(X)_*}\ar[rrd]_{cr_E(F)_*} &  & {\rm Tor}^{\kk[\mathbf{H}(\A)]}_\bullet({\rm T}_E^*F,{\rm T}_E^*X)\ar[d]\\
& & {\rm Tor}^{\kk[\mathbf{H}(\A)]}_\bullet(F,X)
}
\end{equation}
dont la flèche verticale est induite par l'endofoncteur ${\rm T}_E$ de $\mathbf{H}(\A)$.

La remarque suivante est immédiate et cruciale : ${\rm H}$ ne s'étend pas en un endofoncteur de $\mathbf{HD}(\A)$, mais ${\rm H}(V)$ est un objet de $\mathbf{HD}(\A)$ bien défini si $V$ est un objet de $\mathbf{HD}(\A)$, et donc ${\rm T}_E(V)$ également. De plus, la formule ci-avant s'étend pour définir un morphisme $\alpha_I(V) : V\to {\rm H}(V)$ de $\mathbf{HD}(\A)$. On dispose aussi d'une forme faible de fonctorialité de ${\rm H}$ et de ${\rm T}_E$ sur $\mathbf{HD}(\A)$ (outre la fonctorialité évidente relative aux isomorphismes) : tout morphisme $U\to V$ de $\mathbf{HD}(\A)$ dont la source $U$ appartient à $\mathbf{H}(\A)$ induit un morphisme $U\to V$ dans $\mathbf{S}(\A)$, donc des morphismes ${\rm H}(U)\to {\rm H}(V)$ et ${\rm T}_E(U)\to {\rm T}_E(V)$ dans $\mathbf{H}(\A)$, et ce d'une façon compatible à la composition et aux applications $\alpha_I$ en un sens évident.

Cela nous permet d'énoncer la proposition suivante (qui s'inspire, de même que les énoncés qu'on en déduit ensuite, de Scorichenko \cite{Sco} ; voir aussi \cite[§\,5.1]{Dja-JKT} pour une utilisation dans un cadre hermitien du critère de Scorichenko), dont la vérification est immédiate.

\begin{pr}\label{sco1}
 Soit $F$ un foncteur de $\pol_d(\A,\M)$. Si $E$ est un ensemble fini de cardinal au moins $d+1$, alors, pour tout objet $V$ de $\mathbf{HD}(\A)$, le morphisme $cr_E(p^*F)(V)$ est nul.
\end{pr}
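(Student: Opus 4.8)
The plan is to unwind the definition of the hermitian cross-effect, recognise $cr_E(p^*F)(V)$ as an ordinary iterated cross-effect of $F$ taken purely in the hyperbolic directions, and then kill it for degree reasons. Write $n=|E|\geq d+1$ and $W=p(V)$. By definition
$$cr_E(p^*F)(V)=\sum_{I\in\Pp(E)}(-1)^{|I|}F\big(p(\alpha_I(V))\big)\colon F(W)\longrightarrow F\big(p({\rm T}_E V)\big),$$
so I would first read off the underlying morphisms of $\A$. Here $p({\rm T}_E V)={\rm H}(W)^{\oplus E}\oplus W$ with ${\rm H}(W)=W\oplus DW$, and $p(\alpha_I(V))$ is the morphism $W\to {\rm H}(W)^{\oplus E}\oplus W$ which is the identity onto the last ($\overset{\perp}{\oplus}V$) summand and, on the $e$-th copy of ${\rm H}(W)$, the hyperbolic inclusion $W\hookrightarrow W\oplus DW$ if $e\in I$ and $0$ otherwise. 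The crucial structural point is that every term shares the same identity map onto the fixed last summand, while the $I$-dependence is concentrated in $|I|$ inclusions into pairwise distinct direct summands; thus $cr_E$ is an honest cross-effect in the ${\rm H}(W)$-directions only.

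Second, I would show the image of $cr_E(p^*F)(V)$ lands in the fully reduced part of $F(p({\rm T}_E V))$. For a fixed $e_0\in E$, consider the $\A$-projection $\rho_{e_0}\colon {\rm H}(W)^{\oplus E}\oplus W\to {\rm H}(W)^{\oplus(E\setminus\{e_0\})}\oplus W$ that kills the $e_0$-th hyperbolic summand. A direct check gives $\rho_{e_0}\circ p(\alpha_I(V))=p(\alpha_{I\setminus\{e_0\}}(V))$, whence $F(\rho_{e_0})\circ cr_E(p^*F)(V)=\sum_{I}(-1)^{|I|}F(p(\alpha_{I\setminus\{e_0\}}(V)))$; pairing each $I$ not containing $e_0$ with $I\cup\{e_0\}$ makes the two corresponding terms cancel, so this composite is zero for every $e_0$. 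Since in the additive category $\A$ the functor $\delta_tF$ is exactly the kernel of the projection $F(t\oplus-)\to F(-)$, this says the image of $cr_E(p^*F)(V)$ lies in $\bigcap_{e\in E}\ker F(\rho_e)=\big(\delta_{{\rm H}(W)}^{\,n}F\big)(W)$, the part of $F(p({\rm T}_E V))$ reduced in each of the $n$ hyperbolic summands simultaneously.

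Finally, because $F$ has degree at most $d$ and the operators $\delta_t$ commute, taking all the $t_i$ equal to ${\rm H}(W)$ gives $\delta_{{\rm H}(W)}^{\,n}F=\delta_{{\rm H}(W)}^{\,n-d-1}\big(\delta_{{\rm H}(W)}^{\,d+1}F\big)=0$ as soon as $n\geq d+1$; hence the target subobject is zero and $cr_E(p^*F)(V)=0$. I expect the only real work to be the bookkeeping of the second step, namely matching the combinatorial hermitian gadget $cr_E$ (with its $\alpha_I$ built from the hyperbolic construction ${\rm H}$) against the iterated deviation $\delta^{\,n}$ through which polynomiality is defined. The point that makes the vanishing genuine rather than merely stable is precisely that every $\alpha_I$ restricts to the identity on the fixed $\overset{\perp}{\oplus}V$ factor, so that $cr_E(p^*F)(V)$ factors through an honest cross-effect in the ${\rm H}(V)$-directions, where the degree hypothesis $\delta_{{\rm H}(W)}^{\,d+1}F=0$ applies directly.
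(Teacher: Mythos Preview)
Your argument is correct and is precisely the detailed verification the paper dismisses as ``immédiate''. The key observation --- that applying $p$ to every $\alpha_I(V)$ yields morphisms of $\A$ depending only on $W=p(V)$, all sharing the identity component on the last summand $W$, so that the alternating sum picks out a genuine iterated cross-effect in the $n$ hyperbolic summands --- is exactly what makes the vanishing immediate once one knows $\delta_{t_0}\cdots\delta_{t_d}F=0$ on the additive category $\A$. Your cancellation argument with the projections $\rho_{e_0}$ and the identification $\bigcap_e\ker F(\rho_e)=\delta_{{\rm H}(W)}^{\,n}F(W)$ (equivalently $cr_n(F)({\rm H}(W),\ldots,{\rm H}(W))\oplus cr_{n+1}(F)({\rm H}(W),\ldots,{\rm H}(W),W)$, both zero for $n\geq d+1$) are standard and correct.
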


\begin{nota}\label{nota-ekdaux}
 Soient $X$ un objet de $\mathbf{D}(\mathbf{HD}(\A),\M)$ et $V$ un objet de de $\mathbf{HD}(\A)$. On note $R_V(X)$ le cône (dans $\mathbf{D}(\kk\md)$) du morphisme canonique
 $$i^* P^{\mathbf{HD}(\A)^{op}}_V\overset{\mathbf{L}}{\underset{\kk[\mathbf{H}(\A)]}{\otimes}}i^*X\to P^{\mathbf{HD}(\A)^{op}}_V\overset{\mathbf{L}}{\underset{\kk[\mathbf{HD}(\A)]}{\otimes}}X\simeq X(V)$$ induit par le foncteur $i : \mathbf{H}(\A)\to\mathbf{HD}(\A)$.
\end{nota}

\begin{pr}\label{scotech} Soient $d\in\mathbb{N}$, $V$ un objet de $\mathbf{HD}(\A)$, $E$ un ensemble fini de cardinal $d+1$ et $X$ un foncteur de $(\mathbf{HD}(\A),\M)$ tel que $cr_E(i^*X)=0$ et $cr_E(X)(V)=0$.
 
Alors le morphisme
$$R_V(X)\to R_{{\rm T}_E(V)}(X)$$
 induit par le morphisme canonique $V\to{\rm T}_E(V)={\rm H}(V)^{\overset{\perp}{\oplus}E}\overset{\perp}{\oplus}V$ (qui n'est autre que $\alpha_\emptyset$) induit $0$ en homologie.
\end{pr}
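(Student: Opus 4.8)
The plan is to reorganise the two complexes of Notation~\ref{nota-ekdaux} into a single object of $\mathbf{D}(\mathbf{HD}(\A),\M)$ and then to read off the map induced by $\alpha_\emptyset$ from its functoriality. Writing $\epsilon : \mathbf{L}i_*i^*X\to X$ for the counit of the derived adjunction~(\ref{adj-hhd}), the coend (bar) formula for $\mathbf{L}i_*$ (cf. proposition-définition~\ref{fonct-st}.3) identifies, naturally in $V$, the source of the comparison map defining $R_V(X)$ with $(\mathbf{L}i_*i^*X)(V)$ and its target with $X(V)$, the map being $\epsilon_V$. Thus $R_V(X)\simeq R(X)(V)$ where $R(X):=\mathrm{cone}(\epsilon)\in\mathbf{D}(\mathbf{HD}(\A),\M)$, and the morphism of the statement is just $R(X)$ applied to the $\mathbf{HD}(\A)$-morphism $\alpha_\emptyset(V):V\to {\rm T}_E(V)$. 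Since $i$ is the inclusion of a \emph{full} subcategory, the derived unit ${\rm Id}\to i^*\mathbf{L}i_*$ is an isomorphism ($\mathbf{L}i_*$ preserves projectives and $i^*\mathbf{L}i_*=i^*i_*\simeq{\rm Id}$ on them), whence $i^*R(X)\simeq 0$: the complex $R(X)$ is acyclic on non-degenerate objects.

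First I would exploit the cross-effect relation. As $cr_E=\sum_{I}(-1)^{|I|}\alpha_I^*$, one has, via the weak functoriality of ${\rm T}_E$ on $\mathbf{HD}(\A)$ recalled before proposition~\ref{sco1}, an identity of maps $R(X)(V)\to R(X)({\rm T}_E(V))$
$$R(X)(\alpha_\emptyset(V))=cr_E(R(X))(V)-\sum_{\emptyset\neq I\subseteq E}(-1)^{|I|}R(X)(\alpha_I(V)).$$
It therefore suffices to prove, in homology, that $cr_E(R(X))(V)=0$ and that $R(X)(\alpha_I(V))=0$ for every non-empty $I$.

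The first vanishing is the bookkeeping step: the cone $R(X)$ has two ends, and the two hypotheses control them separately. On the $X$-end the contribution of $cr_E$ is exactly $cr_E(X)(V)$, which vanishes by assumption. On the $\mathbf{L}i_*i^*X$-end, the cross-effect at $V$ acts through the representable variable of the bar complex $i^*P^{\mathbf{HD}(\A)^{op}}_V\overset{\mathbf{L}}{\underset{\kk[\mathbf{H}(\A)]}{\otimes}}i^*X$, and the compatibility of diagram~(\ref{diag-sco}) transfers it to the covariant variable, where it becomes $cr_E(i^*X)=0$. Passing to the cone then gives $cr_E(R(X))(V)=0$ in homology.

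The second vanishing is the genuine difficulty, and is the hermitian incarnation of Scorichenko's argument. For $\emptyset\neq I$ and a chosen $e_0\in I$, the morphism $\alpha_I(V)$ routes $V$ into the hyperbolic summand ${\rm H}(V)$ labelled by $e_0$, which lies in $\mathbf{H}(\A)$; my plan is to turn this extra hyperbolic direction into an \emph{extra degeneracy} of the two-sided bar complex computing $(\mathbf{L}i_*i^*X)({\rm T}_E(V))$, by inserting at the top of a bar chain the $\mathbf{H}(\A)$-morphism given by the $e_0$-summand inclusion, thereby raising the bar degree and producing a contracting homotopy. The delicate point — and where I expect the real work to lie — is that $\alpha_I(V)$ also carries the identity component onto the trailing copy of $V$ (which is degenerate), so this homotopy does not nullify $R(X)(\alpha_I(V))$ outright but only modulo terms indexed by $I\smallsetminus\{e_0\}$; a descending recursion on $|I|$, bottoming out against the cross-effect term already treated and using that $i^*R(X)\simeq 0$ forces the inserted $\mathbf{H}(\A)$-morphisms to act by zero, closes the argument. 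Substituting the two vanishings into the displayed identity yields $R(X)(\alpha_\emptyset(V))=0$ in homology, as asserted. Finally I would observe that, by proposition~\ref{sco1}, these hypotheses hold for $X=p^*F$ with $F$ in $\pol_d(\A,\M)$ and $|E|=d+1$, which is the form in which the statement feeds the comparison of $\dst(\mathbf{H}(\A),\kk)$ and $\dst(\mathbf{HD}(\A),\kk)$.
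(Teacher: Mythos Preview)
Your overall decomposition is exactly the paper's: write $\alpha_\emptyset$ as $cr_E$ minus the alternating sum of the $\alpha_I$ for $I\neq\emptyset$, and kill both pieces separately. Your treatment of the $cr_E$-piece, using the two hypotheses to annihilate the map on each end of the cone, matches the paper's first two bullet points.

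The gap is in the $I\neq\emptyset$ piece. You already observed the key fact $i^*R(X)\simeq 0$, but then do not use it in the direct way. What you are missing --- and this is the paper's third bullet --- is that for $I\neq\emptyset$ the morphism $\alpha_I(V):V\to{\rm T}_E(V)$ of $\mathbf{HD}(\A)$ \emph{factors through an object of} $\mathbf{H}(\A)$, despite the trailing identity component onto the degenerate $V$ that worried you. Concretely, for $E=I=\{e_0\}$, if $[\phi]\in T_\A(V)$ is the form on $V$, then $\iota:V\to{\rm H}(V)$, $v\mapsto(v,\bar\phi(v))$, is a morphism of $\mathbf{HD}(\A)$, and so is $g:{\rm H}(V)\to{\rm H}(V)\overset{\perp}{\oplus}V$, $(v,w)\mapsto((v,w-\bar\phi(v)),v)$; one checks $g\circ\iota=\alpha_I(V)$ and that $g$ pulls back the orthogonal sum of the hyperbolic form with $[\phi]$ exactly to the hyperbolic form. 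The general $E$ is handled by the same $\iota$ and an obvious extension of $g$; the paper cites \cite[lemme~5.2]{Dja-JKT}. Consequently $R(X)(\alpha_I(V))$ factors through $R(X)({\rm H}(V))$, which is acyclic, and is therefore zero in homology --- no extra-degeneracy homotopy and no recursion are needed.

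Your proposed recursion is in fact circular as stated: reducing $\alpha_I$ to $\alpha_{I\smallsetminus\{e_0\}}$ eventually lands on $\alpha_\emptyset$, which is precisely the map whose vanishing you are trying to establish; and ``bottoming out against the cross-effect term'' does not help, since knowing that $\sum_I(-1)^{|I|}R(X)(\alpha_I(V))=0$ cannot by itself force any individual summand to vanish.
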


\begin{proof}
 Cela vient formellement des faits suivants :
 \begin{itemize}
  \item comme $cr_E(i^*X)=0$, le diagramme~(\ref{diag-sco}) montre que $cr_E(i^*P^{\mathbf{HD}(\A)^{op}}_V)$ induit $0$ quand on applique ${\rm Tor}^{\kk[\mathbf{H}(\A)]}_\bullet(-,i^*X)$ ;
  \item l'observation de naturalité faible de ${\rm T}_E$ fournit un morphisme
  $$i^*P^{\mathbf{HD}(\A)^{op}}_V\to{\rm T}_E^*i^*P^{\mathbf{HD}(\A)^{op}}_{{\rm T}_E(V)}$$
  et la composée de celui-ci avec $cr_E : {\rm T}_E^*i^*P^{\mathbf{HD}(\A)^{op}}_{{\rm T}_E(V)}\to i^*P^{\mathbf{HD}(\A)^{op}}_{{\rm T}_E(V)}$ coïncide avec le morphisme induit par $cr_E\in\kk[\mathbf{HD}(\A)(V,{\rm T}_E(V))]$ ;
  \item si $I\in\Pp(E)$ est non vide, le morphisme $\alpha_I(V)\in\mathbf{HD}(\A)(V,{\rm T}_E(V))$ se factorise par un objet de $\mathbf{H}(\A)$ (ce fait élémentaire mais crucial se trouve par exemple dans le dernier point du lemme 5.2 de \cite{Dja-JKT}), de sorte que le morphisme qu'il induit $R_V(X)\to R_{{\rm T}_E(V)}(X)$ induit $0$ en homologie.
 \end{itemize}
\end{proof}

Comme le foncteur $i : \mathbf{H}(\A)\to\mathbf{HD}(\A)$ est pleinement fidèle, il en est de même pour $i_*$ et $\mathbf{L}i_*$ --- ce qui équivaut (dans ce dernier cas) à dire que l'unité ${\rm Id}\to i^*\mathbf{L}i_*(X)$ est un isomorphisme. On rappelle que $\dsto(\mathbf{HD}(\A),\M)$ désigne la sous-catégorie de $\dst(\mathbf{HD}(\A),\M)$ des complexes dont l'homologie est analytique.

\begin{thm}\label{th-hhd}
 Pour tout objet $X$ de $\dsto(\mathbf{HD}(\A),\M)$, la coünité $\mathbf{L}i_*(i^* X)\to X$ de l'adjonction (\ref{adj-hhd}) est un isomorphisme.
\end{thm}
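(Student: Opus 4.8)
Le plan est de montrer que le cône $C$ de la coünité $\mathbf{L}i_*(i^*X)\to X$ est nul dans $\dst(\mathbf{HD}(\A),\M)$, c'est-à-dire que chacun de ses foncteurs d'homologie est stablement nul. Je commencerais par identifier la valeur de ce cône en chaque objet $V$ de $\mathbf{HD}(\A)$. Comme $i$ est pleinement fidèle, l'unité ${\rm Id}\to i^*\mathbf{L}i_*$ est un isomorphisme, et la formule explicite de la proposition-définition~\ref{fonct-st} donne $\mathbf{L}i_*(i^*X)(V)\simeq i^*P^{\mathbf{HD}(\A)^{op}}_V\overset{\mathbf{L}}{\underset{\kk[\mathbf{H}(\A)]}{\otimes}}i^*X$, tandis que $X(V)\simeq P^{\mathbf{HD}(\A)^{op}}_V\overset{\mathbf{L}}{\underset{\kk[\mathbf{HD}(\A)]}{\otimes}}X$. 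La coünité évaluée en $V$ est précisément le morphisme de la notation~\ref{nota-ekdaux}, de sorte que $C(V)\simeq R_V(X)$ naturellement en $V$. La coünité est donc un isomorphisme si et seulement si le foncteur $V\mapsto H_n(R_V(X))$ est stablement nul pour tout $n$.

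Je me ramènerais ensuite au cas polynomial borné. La sous-catégorie pleine des $X$ de $\dst(\mathbf{HD}(\A),\M)$ pour lesquels la coünité est un isomorphisme est localisante (triangulée et stable par sommes directes), car $\mathbf{L}i_*$ et $i^*$ sont des foncteurs triangulés commutant aux colimites et que le fait d'être un isomorphisme se teste sur les triangles et les sommes directes. Comme $\dsto(\mathbf{HD}(\A),\M)$ est par définition la sous-catégorie localisante engendrée par les $\pol_d(\mathbf{HD}(\A),\M)$, il suffit de traiter le cas où $X$ est (représenté par) un foncteur polynomial $F$ de degré au plus $d$ concentré en degré nul.

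Pour un tel $F$, je choisirais un ensemble fini $E$ de cardinal $d+1$. L'annulation de l'effet croisé hermitien (proposition~\ref{sco1} et sa traduction pour les foncteurs polynomiaux sur $\mathbf{HD}(\A)$) fournit $cr_E(i^*F)=0$ et $cr_E(F)(V)=0$ pour tout $V$ ; la proposition~\ref{scotech} entraîne alors que, pour tout $V$, le morphisme $R_V(F)\to R_{{\rm T}_E(V)}(F)$ induit par $\alpha_\emptyset : V\to {\rm T}_E(V)={\rm H}(V)^{\overset{\perp}{\oplus}E}\overset{\perp}{\oplus}V$ est nul en homologie. Or ce morphisme n'est autre que la valeur du foncteur $R_\bullet(F)$ sur $\alpha_\emptyset$, et $\alpha_\emptyset$ est le morphisme canonique de stabilisation associé à l'objet $x={\rm H}(V)^{\overset{\perp}{\oplus}E}$ de $\mathbf{H}(\A)$. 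Ainsi toute classe de $H_n(R_V(F))$ est annulée par le morphisme $H_n(R_\bullet(F))\to\tau_x H_n(R_\bullet(F))$, ce qui montre que $H_n(R_\bullet(F))$ coïncide avec $\kappa(H_n(R_\bullet(F)))$, donc est stablement nul au sens de la définition~\ref{df-sn}. Cela conclut.

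L'obstacle principal me paraît résider dans cette dernière étape de stabilisation, et plus précisément dans le fait que l'objet stabilisateur $x={\rm H}(V)^{\overset{\perp}{\oplus}E}$ dépend de $V$ : il faut vérifier avec soin, grâce à la faible fonctorialité de ${\rm T}_E$ sur $\mathbf{HD}(\A)$ et au fait que, dans la définition de $\kappa=\sum_x\kappa_x$, l'objet $x$ peut dépendre de l'objet considéré, qu'une annulation obtenue objet par objet suffit à établir la nullité stable globale du foncteur $H_n(R_\bullet(F))$. Le second point délicat est le passage à l'échelle de $\St$ (en représentant proprement $X$ par un complexe de foncteurs de $(\mathbf{HD}(\A),\M)$, compte tenu de la remarque~\ref{rq-dst}), ainsi que la vérification, à partir de la proposition~\ref{sco1} énoncée pour les $p^*F$, de l'annulation des effets croisés hermitiens pour les foncteurs polynomiaux généraux de degré borné sur $\mathbf{HD}(\A)$.
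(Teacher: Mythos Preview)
Your overall strategy --- identifying the cone of the co\"unit with $V\mapsto R_V(X)$, reducing by a localizing argument, and then invoking Propositions~\ref{sco1} and~\ref{scotech} to show that $\mathbf{H}(\A)$-stabilization annihilates $H_n(R_\bullet)$ --- is precisely the paper's approach. Your first worry is not an obstacle: by definition $\kappa=\sum_x\kappa_x$, so it suffices that each $H_n(R_V(F))$ be killed by \emph{some} $\mathbf{H}(\A)$-translation, which may depend on $V$; Proposition~\ref{scotech} provides exactly this.

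The genuine gap is the one you identify yourself, in the reduction step. You reduce to $X\in\pol_d(\mathbf{HD}(\A),\M)$, but Proposition~\ref{sco1} is stated only for functors of the shape $p^*F$ with $F\in\pol_d(\A,\M)$. The hermitian cross effect $cr_E$ is built from the morphisms $\alpha_I$, which mix the identity of $V$ with the inclusion $V\hookrightarrow{\rm H}(V)$; it is \emph{not} a usual cross effect relative to the $\mathbf{H}(\A)$-action, so its vanishing for a general (weakly) polynomial functor on $\mathbf{HD}(\A)$ is not immediate. Moreover, Proposition~\ref{scotech} requires pointwise conditions on an actual functor in $(\mathbf{HD}(\A),\M)$, whereas an object of $\pol_d(\mathbf{HD}(\A),\M)\subset\St(\mathbf{HD}(\A),\M)$ need not admit a representative on which $cr_E$ vanishes.

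The paper closes this gap by a different reduction: it invokes \cite[corollaire~6.20]{DV-pol}, which says that the localizing subcategory of $\dst(\mathbf{H}(\A),\M)$ generated by the images of the $\pol_d(\A,\M)$ (via $(pi)^*$) is exactly $\dsto(\mathbf{H}(\A),\M)$. Combined with the full faithfulness of $\mathbf{L}i_*$ and the fact that all functors involved are triangulated and commute with direct sums, this reduces the theorem to the case $X=p^*F$ with $F$ a genuine polynomial functor on $\A$, where Proposition~\ref{sco1} applies verbatim. Replacing your reduction ``to $\pol_d(\mathbf{HD}(\A),\M)$'' by this reduction ``to the essential image of $p^*$ on $\pol_d(\A,\M)$'' makes the argument complete.
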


\begin{proof}
Il suffit de vérifier l'assertion lorsque $X$ appartient à l'image essentielle de $p^* : \dsto(\A,\M)\to\dsto(\mathbf{HD}(\A),\M)$  --- disons $X=p^*(F)$ ; on peut même se contenter de le faire lorsque l'homologie de $F$ appartient à $\pol_d(\A,\M)$ pour un certain $d\in\mathbb{N}$. Cela provient formellement de la pleine fidélité de $\mathbf{L}i_*$ et de ce que le corollaire~6.20 de \cite{DV-pol} montre que la sous-catégorie triangulée de $\dst(\mathbf{H}(\A),\M)$ stable par sommes directes engendrée par l'image des $\pol_d(\A,\M)$ dans $\dst(\mathbf{H}(\A),\M)$ est exactement $\dsto(\mathbf{H}(\A),\M)$, et de ce que tous les foncteurs en jeu sont triangulés et commutent aux sommes directes.

La conclusion se déduit alors des propositions~\ref{sco1} et~\ref{scotech}.
\end{proof}

L'important résultat suivant généralise \cite[Théorème~5.3]{Dja-JKT}.

\begin{cor}\label{cor-hhd}
 \begin{enumerate}
  \item L'adjonction (\ref{adj-hhd}) induit une équivalence entre les catégories $\dsto(\mathbf{H}(\A),\M)$ et $\dsto(\mathbf{HD}(\A),\M)$.
  \item Pour tout $d\in\mathbb{N}$, le foncteur $i^* : \pol_d(\mathbf{HD}(\A),\M)\to\pol_d(\mathbf{H}(\A),\M)$ est une équivalence de catégories.
 \end{enumerate}
\end{cor}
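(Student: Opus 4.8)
The plan is to obtain both statements formally from Theorem~\ref{th-hhd} and from the full faithfulness of $\mathbf{L}i_*$ recorded just above, that is, from the invertibility of the unit ${\rm Id}\to i^*\mathbf{L}i_*$. For the first assertion I would begin by checking that the adjunction~(\ref{adj-hhd}) restricts to the subcategories $\dsto$. The functor $i^*$ sends $\dsto(\mathbf{HD}(\A),\M)$ into $\dsto(\mathbf{H}(\A),\M)$, since it is exact, commutes with direct sums, and---being induced by a morphism compatible with the $\mathbf{H}(\A)$-actions, so that $\tau_t i^*\simeq i^*\tau_t$ as in the proof of proposition~\ref{fonct-st}---commutes with $\tau_t$ and $\delta_t$, hence preserves polynomial degree and analyticity. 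For the opposite direction I would use the description, already exploited in the proof of Theorem~\ref{th-hhd}, of $\dsto(\mathbf{H}(\A),\M)$ as the triangulated subcategory, stable under arbitrary direct sums, generated by the objects $i^*p^*F$ with $F$ in $\pol_d(\A,\M)$. As $\mathbf{L}i_*$ is triangulated and commutes with direct sums, it suffices to note that $\mathbf{L}i_*(i^*p^*F)$ is isomorphic, via the counit of Theorem~\ref{th-hhd} applied to $X=p^*F$, to $p^*F$ itself, which is analytic. Once both restrictions are established, the unit is invertible throughout by full faithfulness of $\mathbf{L}i_*$ and the counit is invertible on $\dsto(\mathbf{HD}(\A),\M)$ by Theorem~\ref{th-hhd}; the usual criterion for an adjunction to be an equivalence then yields the first assertion.

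For the second assertion I would restrict this equivalence to objects concentrated in degree $0$. Since $i^*$ is exact and preserves polynomial degree, it carries $\pol_d(\mathbf{HD}(\A),\M)$, seen in degree $0$ inside $\dsto(\mathbf{HD}(\A),\M)$, into $\pol_d(\mathbf{H}(\A),\M)$. Full faithfulness is then immediate: for $X$ and $X'$ in $\pol_d(\mathbf{HD}(\A),\M)$ the ${\rm Hom}$-groups computed in $\St(\mathbf{HD}(\A),\M)$ agree with those computed in $\dst(\mathbf{HD}(\A),\M)$ (these objects lie in the heart), so the equivalence of the first part furnishes ${\rm Hom}(X,X')\simeq {\rm Hom}(i^*X,i^*X')$.

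The only point needing a genuine argument is essential surjectivity. Given $Y$ in $\pol_d(\mathbf{H}(\A),\M)$, I would form $\mathbf{L}i_*(Y)\in\dsto(\mathbf{HD}(\A),\M)$, whose image under $i^*$ is $Y$ by the unit isomorphism. Applying the exact functor $i^*$ to the homology objects $H_n(\mathbf{L}i_*Y)$ returns $H_n(Y)$, which vanishes for $n\neq0$; as each $H_n(\mathbf{L}i_*Y)$ is analytic, hence lies in $\dsto(\mathbf{HD}(\A),\M)$ where $i^*$ is an equivalence and so conservative, it follows that $\mathbf{L}i_*Y$ is concentrated in degree $0$. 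Writing $X$ for that sole homology object, the same conservativity applied to the analytic object $\delta_{t_0}\cdots\delta_{t_d}(X)$, whose $i^*$-image equals $\delta_{t_0}\cdots\delta_{t_d}(Y)=0$, shows that $X$ is polynomial of degree at most $d$, and $i^*X\simeq Y$. The main---though mild---obstacle is precisely this last step: one must know that $i^*$ reflects both concentration in a single degree and the polynomial degree, and I would derive both from the conservativity supplied by the first assertion rather than attempt a direct computation on $\mathbf{HD}(\A)$.
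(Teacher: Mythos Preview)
The paper states this corollary without proof, leaving it as a formal consequence of Theorem~\ref{th-hhd} together with the full faithfulness of $\mathbf{L}i_*$ noted just before. Your argument spells out exactly those formal steps and is correct; in particular, your use of conservativity of $i^*$ on analytic objects (derived from the equivalence of part~1) to deduce both the concentration in degree~$0$ of $\mathbf{L}i_*Y$ and the polynomial bound on $X$ is the right way to handle essential surjectivity in part~2.
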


\begin{thm}[Cf. \cite{DV-pol}]\label{th-dd2}
 Pour tout $d\in\mathbb{N}$, les foncteurs $p^*$ et $i^*$ induisent des équivalences de catégories
 $$\pol_d(\A,\M)/\pol_{d-2}(\A,\M)\xrightarrow{\simeq}\pol_d(\mathbf{HD}(\A),\M)/\pol_{d-2}(\mathbf{HD}(\A),\M)\xrightarrow{\simeq}$$
 $$\pol_d(\mathbf{H}(\A),\M)/\pol_{d-2}(\mathbf{H}(\A),\M).$$
\end{thm}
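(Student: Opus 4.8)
Le plan est de traiter séparément les deux équivalences annoncées. La seconde, induite par $i^*$, est en fait déjà contenue dans le corollaire~\ref{cor-hhd} : celui-ci affirme que $i^* : \pol_d(\mathbf{HD}(\A),\M)\to\pol_d(\mathbf{H}(\A),\M)$ est une équivalence pour chaque $d$, avant même tout passage au quotient. Comme $i^*$ commute aux $\tau_t$, donc aux $\delta_t$, il préserve le degré polynomial et envoie $\pol_{d-2}(\mathbf{HD}(\A),\M)$ dans $\pol_{d-2}(\mathbf{H}(\A),\M)$ ; l'équivalence passe donc aux catégories quotients, ce qui fournit le second isomorphisme de l'énoncé.

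Pour la première équivalence, induite par $p^*$, j'applique le théorème~\ref{th-tilde} à la catégorie monoïdale symétrique $\C=\mathbf{HD}(\A)$ (dont l'unité, l'objet nul muni de la forme nulle, est bien initiale), en prenant $p : \mathbf{HD}(\A)\to\A$ dans le rôle du foncteur canonique $o : \C\to\widetilde\C$ --- autrement dit en identifiant $\A$ à $\widetilde{\mathbf{HD}(\A)}$. Pour que cette identification soit licite et que le théorème s'applique, il faut d'une part reconnaître les foncteurs fortement monoïdaux $\mathbf{HD}(\A)\to\M$ comme les $p^*A_0$ associés aux foncteurs additifs $A_0 : \A\to\M$, d'autre part vérifier l'hypothèse d'annulation ${\rm Ext}^i_{(\mathbf{HD}(\A),\M)}(A,C)=0$ pour $i\in\{0,1\}$, tout $A$ fortement monoïdal et tout $C$ constant. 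Ces deux points réunis, le théorème~\ref{th-tilde} livre directement l'équivalence $\pol_d(\A,\M)/\pol_{d-2}(\A,\M)\xrightarrow{\simeq}\pol_d(\mathbf{HD}(\A),\M)/\pol_{d-2}(\mathbf{HD}(\A),\M)$ induite par $p^*$, et la composition avec le cas de $i^*$ conclut.

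Tout repose donc sur l'annulation des ${\rm Ext}$ en degrés $0$ et $1$, que je ramène à la proposition~\ref{pr-h2inv}. Écrivant $A=p^*A_0$ et $C=p^*C_0$, l'adjonction $(\omega_\A\dashv p^*)$ sous-jacente à~(\ref{eq-omegh}) et son analogue pour l'extension de Kan à droite le long de $p$ relient ces groupes d'extensions sur $\mathbf{HD}(\A)$ à des groupes calculés sur $\A$ : l'adjoint à gauche $\omega_\A$, qui est {\em exact}, vérifie $\omega_\A(\kk)=\kk[T]$, et l'image directe à droite du foncteur constant est $\kk^T$. En bas degré, l'annulation voulue équivaut ainsi à ${\rm Ext}^i_{(\A,\kk)}(A_0,\kk^T)=0$ pour $i\leq 1$ (et, du côté homologique, à ${\rm Tor}^{\kk[\A]}_i(\kk[T],A_0)=0$ pour $i\leq 1$) --- c'est exactement la proposition~\ref{pr-h2inv}. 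Il est crucial ici que le théorème~\ref{th-tilde} ne réclame que les degrés $0$ et $1$ : on n'a donc besoin que de la partie inconditionnelle de la proposition~\ref{pr-h2inv}, ce qui explique l'absence de l'hypothèse~\ref{h2inv} dans l'énoncé du présent théorème.

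Le principal obstacle sera le cas $d=1$, qui sous-tend à la fois l'identification $\widetilde{\mathbf{HD}(\A)}\simeq\A$ et l'annulation ci-dessus : c'est là que la présence des structures hermitiennes {\em dégénérées}, encodées par le foncteur quadratique $T$, pourrait empêcher le passage de $\A$ à $\mathbf{HD}(\A)$, et c'est précisément la proposition~\ref{pr-h2inv} --- reposant sur le lemme d'annulation de Pirashvili et sur l'écriture de $T$ comme conoyau d'un endomorphisme d'un foncteur diagonalisable --- qui garantit que les deux premières couches de foncteurs polynomiaux sont insensibles à cet oubli. La récurrence sur $d\geq 2$ est ensuite formelle et calquée sur celle de la démonstration du théorème~\ref{th-tilde} (elle-même adaptée de \cite{DV-pol}).
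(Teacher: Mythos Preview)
Your treatment of the $i^*$-equivalence via the corollaire~\ref{cor-hhd} is fine and matches the paper. The Ext-vanishing computation at the end --- reducing ${\rm Ext}^i_{(\mathbf{HD}(\A),\M)}(p^*A_0,C)$ to ${\rm Ext}^i_{(\A,\M)}(A_0,\kk^T)$ by adjunction and invoking proposition~\ref{pr-h2inv} --- is also correct and is indeed the heart of the argument.

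The gap is in your application of th\'eor\`eme~\ref{th-tilde} with $\C=\mathbf{HD}(\A)$ and the identification ``$\widetilde{\mathbf{HD}(\A)}\simeq\A$ via $p$''. Two things go wrong. First, $\widetilde{\C}$ is a \emph{specific} construction from \cite{DV-pol} (it has the same objects as $\C$), and you give no argument identifying it with $\A$; the claim that strongly monoidal functors on $\mathbf{HD}(\A)$ are the $p^*A_0$ is neither proved nor sufficient to substitute $\A$ for $\widetilde{\mathbf{HD}(\A)}$ in th\'eor\`eme~\ref{th-tilde}. Second, and more subtly, th\'eor\`eme~\ref{th-tilde} is formulated for the \emph{self-action} of $\C$, whereas the polynomial filtration on $(\mathbf{HD}(\A),\M)$ appearing in the present statement is defined via the $\mathbf{H}(\A)$-action; the paper explicitly remarks (just before §\,\ref{shd}) that the coincidence of these two notions is a fact with no known simple proof, so you cannot take it for granted here.

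The paper routes around both issues by applying th\'eor\`eme~\ref{th-tilde} to $\C=\mathbf{H}(\A)$ instead (where the self-action \emph{is} the relevant action), combined with \cite[proposition~6.17]{DV-pol}, which handles the passage from $\widetilde{\mathbf{H}(\A)}$-polynomials to $\A$-polynomials. This reduces everything to checking ${\rm Ext}^i_{(\mathbf{H}(\A),\M)}(A,C)=0$ for $i\leq 1$, $A$ additive factoring through $pi$, $C$ constant. Th\'eor\`eme~\ref{th-hhd} is then invoked to transfer this vanishing to $(\mathbf{HD}(\A),\M)$ --- and from there your adjunction argument and proposition~\ref{pr-h2inv} conclude exactly as you wrote. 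So the computational core of your proposal is right; what is missing is the correct categorical staging: work on $\mathbf{H}(\A)$, cite \cite[prop.~6.17]{DV-pol}, and use th\'eor\`eme~\ref{th-hhd} to descend to $\mathbf{HD}(\A)$ for the Ext calculation.
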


\begin{proof}
Le fait que 
$p^*$ et $i^*$ induisent des équivalences de catégories
 $$\pol_d(\A,\M)/\pol_{d-1}(\A,\M)\xrightarrow{\simeq}\pol_d(\mathbf{HD}(\A),\M)/\pol_{d-1}(\mathbf{HD}(\A),\M)\xrightarrow{\simeq}$$
 $$\pol_d(\mathbf{H}(\A),\M)/\pol_{d-1}(\mathbf{H}(\A),\M)$$
 découle directement des résultats précédents et du théorème~6.19 de \cite{DV-pol}.
 
 Pour obtenir le renforcement annoncé, on utilise notre théorème~\ref{th-tilde} et la proposition~6.17 de \cite{DV-pol}, qui montrent qu'il suffit de vérifier l'annulation de ${\rm Ext}^i_{(\mathbf{H}(\A),\M)}(A,C)$ pour $i\leq 1$, $A$ additif (factorisant par $pi : \mathbf{H}(\A)\to\A$) et $C$ constant. Le théorème~\ref{th-hhd} montre qu'il suffit de vérifier l'annulation de ${\rm Ext}^i_{(\mathbf{HD}(\A),\M)}(A,C)$ pour $i\leq 1$, $A$ additif (factorisant par $p : \mathbf{HD}(\A)\to\A$) et $C$ constant. On obtient cette annulation par adjonction (dans le cadre général des catégories d'éléments --- cf. (\ref{eqom})) : ${\rm Ext}^\bullet_{(\mathbf{HD}(\A),\M)}(A,C)\simeq {\rm Ext}^\bullet_{(\A,\M)}(A,C^T)$ (avec des abus de notation évidents), de sorte que la proposition~\ref{pr-h2inv} permet de conclure.
\end{proof}

\subsection{Extensions de Kan dérivées associées à un foncteur additif}\label{pkdf}

Dans toute la suite de cet article, les effets croisés $cr$ ont la signification usuelle issue d'Eilenberg-Mac Lane \cite[chapitre~II]{EML} (et non celle du §\,\ref{shd}) : si $\A$ est une petite catégorie additive, pour tout $d\in\mathbb{N}$, on dispose d'un foncteur d'effet croisé $cr_d^\A : (\A,\kk)\to (\A^d,\kk)$ (noté parfois simplement $cr_d$), qui est facteur direct de la précomposition par la somme $d$-itérée $\A^d\to\A$.

L'énoncé qui suit, dont l'intérêt dépasse le cadre du présent travail, montre que les extensions de Kan dérivées le long d'un foncteur additif se comportent de façon très raisonnable et peuvent donner lieu à des calculs explicites dans de nombreux cas favorables. Il est sans doute connu des spécialistes du sujet.

\begin{pr}\label{pr-ekad}
 Soient $\A$, $\B$ des petites catégories additives et $\Phi : \A\to\B$ un foncteur additif.
 \begin{enumerate}
  \item Pour tous $i, d\in\mathbb{N}$, $\mathbf{L}_i(\Phi_*) : (\A,\kk)\to (\B,\kk)$ envoie $\pol_d(\A,\kk)$ dans $\pol_d(\B,\kk)$.
  \item Pour tout entier $d\geq 0$, on dispose d'un isomorphisme naturel gradué
  $$cr^\B_d\circ\mathbf{L}_\bullet(\Phi_*)\simeq\mathbf{L}_\bullet(\Phi^{\times d}_*)\circ cr^\A_d$$
  de foncteurs $(\A,\kk)\to (\B^d,\kk)$, où $\Phi^{\times d} : \A^d\to\B^d$ désigne le foncteur additif $(a_1,\dots,a_d)\mapsto (\Phi(a_1),\dots,\Phi(a_d))$.
  \item Si $\kk$ est un corps, on dispose de la formule de Künneth suivante :
  $$\mathbf{L}_n(\Phi_*)(F\otimes G)\simeq\bigoplus_{i+j=n}\mathbf{L}_i(\Phi_*)(F)\otimes\mathbf{L}_j(\Phi_*)(G)$$
  (isomorphisme monoïdal symétrique naturel en les foncteurs $F$ et $G$ de $(\A,\kk)$).
  \item Supposons que le foncteur $\Phi$ appartient à $\mathbf{Add}^{ps}$. Alors, pour tout $e\in\mathbb{N}\cup\{+\infty\}$, les assertions suivantes sont équivalentes :
  \begin{enumerate}
   \item\label{lp1} les foncteurs $\mathbf{L}_i(\Phi_*)\circ\Phi^* : (\B,\kk)\to (\B,\kk)$ sont nuls pour $0<i<e$ ;
   \item\label{lp2} les foncteurs $\mathbf{L}_i(\Phi_*)\circ\Phi^* : (\B,\kk)\to (\B,\kk)$ sont nuls sur les foncteurs additifs pour $0<i<e$ ;
    \item\label{lp3} les foncteurs $\mathbf{L}_i(\Phi_*)\circ\Phi^* : (\B,\kk)\to (\B,\kk)$ sont nuls sur les foncteurs du type $\kk[A]$ pour $A\in {\rm Ob}\,\B\md$ (i.e. $A$ additif) et $0<i<e$ ;
   \item\label{lp4} si $A$ et $B$ sont des objets de $\mdd\B$ et $\B\md_\kk$ respectivement, on a ${\rm Tor}^\A_i(\Phi^*(A),\Phi^*(B))=0$ pour $0<i<e$ --- autrement dit, si $\mathbf{L}_\bullet^{add}(\Phi_*)$ désigne le dérivé à gauche du foncteur qu'induit $\Phi_*$ de $\A\md$ dans $\B\md$, c'est-à-dire du foncteur adjoint à gauche au foncteur qu'induit $\Phi^*$ de $\B\md$ dans $\A\md$, le foncteur $\mathbf{L}^{add}_i(\Phi_*)\circ\Phi^*$ est nul sur $\B\md_\kk$ (vu comme sous-catégorie de $\B\md$ par le même abus qu'au lemme~\ref{lm-troesch}) pour $0<i<e$.
  \end{enumerate}
  De plus, de manière générale, le morphisme canonique $\mathbf{L}^{add}_\bullet(\Phi_*)(A)\to\mathbf{L}_\bullet(\Phi_*)(A)$ est un isomorphisme pour $A$ dans $\A\md_\kk$.
 \end{enumerate}
\end{pr}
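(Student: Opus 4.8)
The whole proposition rests on the behaviour of $\Phi_*$ on representable functors: since $\Phi$ is additive one has $\Phi_*(P^\A_a)\simeq P^\B_{\Phi(a)}$, and because $0$ is a zero object, $P^\A_a$ splits naturally as $\underline{\kk}\oplus\bar P^\A_a$ with $\underline\kk=P^\A_0$ projective and $\bar P^\A_a$ the reduced (augmentation) summand, again projective. The plan is to push every assertion through a functorial projective resolution by sums of representables, on which all the functors in sight are explicitly computable.

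First I would prove the cross-effect formula, assertion~(2), as it is the engine for the rest. The key computation is that for representables the $d$-th Eilenberg--Mac Lane cross-effect is
\[ cr_d^\A(P^\A_a)\simeq (\bar P^\A_a)^{\boxtimes d},\]
which follows from $\A(a,b_1\oplus\cdots\oplus b_d)\cong\prod_i\A(a,b_i)$ and $\kk[X\times Y]\cong\kk[X]\otimes\kk[Y]$; in particular $cr_d^\A(P^\A_a)$ is a direct summand of $P^{\A^d}_{(a,\dots,a)}$, hence projective in $(\A^d,\kk)$. Choosing a functorial resolution $P_\bullet\to F$ by sums of representables, the exactness of $cr_d^\A$ makes $cr_d^\A(P_\bullet)$ a projective resolution of $cr_d^\A(F)$. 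Applying $\Phi^{\times d}_*$ to it and using $\Phi_*(\bar P^\A_a)\simeq\bar P^\B_{\Phi(a)}$ together with $\Phi^{\times d}_*(G_1\boxtimes\cdots\boxtimes G_d)\simeq\Phi_*(G_1)\boxtimes\cdots\boxtimes\Phi_*(G_d)$ on representables, one identifies the complex $\Phi^{\times d}_*\,cr_d^\A(P_\bullet)$ with $cr_d^\B\,\Phi_*(P_\bullet)$ term by term; taking homology (and using once more that $cr_d^\B$ is exact) yields the natural isomorphism of~(2). Assertion~(1) is then immediate: a functor is polynomial of degree $\le d$ iff its $(d+1)$-st cross-effect vanishes, and~(2) gives $cr_{d+1}^\B\mathbf{L}_i(\Phi_*)(F)\simeq\mathbf{L}_i(\Phi^{\times(d+1)}_*)(cr_{d+1}^\A F)$, which is $0$ as soon as $cr_{d+1}^\A F=0$ (the weak version following since $\mathbf{L}\Phi_*$ descends to $\dst$ by the proposition-définition~\ref{fonct-st}). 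For~(3) I would use that over a field the pointwise tensor product is exact and that $P^\A_a\otimes P^\A_{a'}\simeq P^\A_{a\oplus a'}$, so $\Phi_*(P\otimes Q)\simeq\Phi_*(P)\otimes\Phi_*(Q)$ on representables; feeding projective resolutions of $F$ and $G$ into the pointwise Künneth theorem twice gives the stated formula.

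For~(4), the first observation is that $\Phi\in\mathbf{Add}^{ps}$ (full, essentially surjective up to direct factors) makes $\Phi^*$ fully faithful, so the counit $\Phi_*\Phi^*\to\mathrm{Id}$ is an isomorphism. The equivalence (\ref{lp2})$\Leftrightarrow$(\ref{lp4}) and the final comparison $\mathbf{L}^{add}_\bullet(\Phi_*)\to\mathbf{L}_\bullet(\Phi_*)$ on $\A\md_\kk$ both come from the lemme~\ref{lm-troesch}: evaluating $\mathbf{L}_i(\Phi_*)\Phi^*(A)$ at $b$ gives ${\rm Tor}^{\kk[\A]}_i(\kk[\Phi^*\B(-,b)],\Phi^*A)$, which for additive $A$ is ${\rm Tor}^\A_i(\Phi^*\B(-,b),\Phi^*A)$; since the $\B(-,b)$ are projective generators of $\mdd\B$, a dévissage (dimension shifting against a representable resolution of an arbitrary $A'$) turns the vanishing on representables into the vanishing~(\ref{lp4}). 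The equivalence (\ref{lp1})$\Leftrightarrow$(\ref{lp3}) is the structural heart: given a resolution $P_\bullet\to F$ by sums of representables, the hyperhomology spectral sequence $E^1_{p,q}=\mathbf{L}_q(\Phi_*)(\Phi^*P_p)\Rightarrow\mathbf{L}_{p+q}(\Phi_*)(\Phi^*F)$ has its rows $0<q<e$ killed exactly by~(\ref{lp3}) (each $\Phi^*P_p$ is of the form $\kk[\text{additive}]$), while its row $q=0$ is $\Phi_*\Phi^*(P_\bullet)\simeq P_\bullet$, acyclic in positive degrees because $\Phi_*\Phi^*\simeq\mathrm{Id}$; hence the abutment vanishes for $0<p+q<e$, which is~(\ref{lp1}). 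Trivially (\ref{lp1})$\Rightarrow$(\ref{lp2}), so it only remains to bridge from the additive world back to the $\kk[-]$ world.

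That bridge, (\ref{lp4})$\Rightarrow$(\ref{lp3}), is where I expect the real difficulty. One must show that the additive-$\mathrm{Tor}$ vanishing ${\rm Tor}^\A_j(\Phi^*A',\Phi^*B')=0$ for $0<j<e$ forces ${\rm Tor}^{\kk[\A]}_i(\kk[\Phi^*\B(-,b)],\kk[\Phi^*\B(b',-)])=0$ in the same range, i.e. with both arguments of the form $\kk[\text{additive}]$ rather than additive. My plan is to reduce the latter to the former by an exponential/Künneth mechanism: the identity $\kk[M\oplus N]\simeq\kk[M]\otimes\kk[N]$ makes $\kk[-]$ of additive functors behave like an exponential functor, so that the derived $\kk[\A]$-tensor product of two such should be expressible through a symmetric/divided-power construction applied to the additive ${\rm Tor}^\A$, via the bar resolution already used in the lemme~\ref{lm-troesch} and Pirashvili's vanishing lemma~\cite{P-add}; since such a construction applied to a graded object concentrated in degrees $0$ and $\ge e$ stays concentrated in degrees $0$ and $\ge e$, the vanishing propagates. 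Making this last step precise --- the exact bookkeeping of the exponential filtration and its compatibility with the bar resolution --- is the main technical obstacle, everything else being a formal consequence of the representable computation.
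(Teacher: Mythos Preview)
Your proof of parts (1)--(3) and of the equivalences (\ref{lp1})$\Leftrightarrow$(\ref{lp3}), (\ref{lp2})$\Leftrightarrow$(\ref{lp4}) is correct and matches the paper's strategy; the paper packages (1)--(3) more tersely via the single isomorphism $\mathbf{L}_\bullet(\Phi_*)(F)(V)\simeq {\rm Tor}^{\kk[\A]}_\bullet(\Phi^*P^{\B^{op}}_V,F)$ together with the exponentiality of $\Phi^*P^{\B^{op}}_V$, but your explicit cross-effect computation on representables is the same thing unfolded.

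Where you and the paper diverge is the bridge (\ref{lp4})$\Rightarrow$(\ref{lp3}), which you flag as ``the real difficulty'' and propose to attack by an exponential filtration and Pirashvili vanishing. This is more work than necessary: the paper observes directly that for $A$ in $\mdd\A$ and $B$ in $\A\md$ there is a natural isomorphism
\[
\kk[A]\underset{\kk[\A]}{\otimes}\kk[B]\;\simeq\;\kk\big[A\underset{\A}{\otimes}B\big],
\]
i.e.\ the functor $\kk[-]$ is strong monoidal from the additive tensor to the $\kk[\A]$-tensor. Granting this, take a projective resolution $P_\bullet\to A$ in $\mdd\A$, view it via Dold--Kan as a simplicial additive functor, and apply $\kk[-]$ levelwise: the $\kk[P_n]$ are projective in $(\A^{op},\kk)$, so
\[
{\rm Tor}^{\kk[\A]}_\bullet(\kk[A],\kk[B])\;\simeq\;\pi_\bullet\big(\kk[P_\bullet\underset{\A}{\otimes}B]\big).
\]
The simplicial abelian group $P_\bullet\underset{\A}{\otimes}B$ has $\pi_i={\rm Tor}^\A_i(A,B)$, which under hypothesis~(\ref{lp4}) vanishes for $0<i<e$; a Hurewicz-type argument (the Dold--Puppe picture of non-additive derived functors) then forces $\pi_i(\kk[-])$ to vanish in that range as well. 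This replaces your proposed filtration bookkeeping by a one-line monoidality statement plus a standard connectivity fact, and is how the paper closes the loop.
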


La notation $\mathbf{L}_\bullet^{add}$ sera conservée dans la suite.

\begin{proof}
 Les trois premières propriétés sont des conséquences aisées de l'isomorphisme canonique
 $$\mathbf{L}_\bullet(\Phi_*)(F)(V)\simeq {\rm Tor}^{\kk[\A]}_\bullet(\Phi^*P^{\B^{op}}_V,F)$$
 et du fait que le foncteur $\Phi^*P^{\B^{op}}_V$ est exponentiel (i.e. transforme somme directe en produit tensoriel).
 
 L'équivalence entre~(\ref{lp1}) et (\ref{lp3}) provient d'un argument formel de comparaison de foncteurs homologiques, étant donné que la coünité $\Phi_*\Phi^*\to {\rm Id}_{(\B,\kk)}$ est un isomorphisme en raison de la plénitude et de la faible surjectivité de $\Phi$.
 
 L'équivalence entre~(\ref{lp2}) et (\ref{lp4}) et la comparaison entre $\mathbf{L}^{add}_\bullet(\Phi_*)$ et $\mathbf{L}_\bullet(\Phi_*)$ sur les foncteurs additifs se déduisent du lemme~\ref{lm-troesch}.
 
 Le fait que (\ref{lp4}) implique (\ref{lp3}) s'obtient par un argument formel à partir de l'isomorphisme naturel 
 $$ \kk[A]\underset{\kk[\A]}{\otimes}\kk[B]\simeq\kk[A\underset{\A}{\otimes}B]$$
 (pour $A$ et $B$ dans $\mdd\A$ et $\A\md$ respectivement)
 en utilisant la correspondance de Dold-Kan et une résolution projective de $A$ ou de $B$ (cf. la construction des foncteurs dérivés à la Dold-Puppe \cite{DP}).
\end{proof}

Nous nous penchons maintenant sur le comportement des extensions de Kan à gauche dérivées long de foncteurs induits par des foncteurs additifs entre catégories d'objets hermitiens, en commençant par le cas de catégories d'objets hermitiens éventuellement dégénérés, plus abordable puisqu'il s'agit de catégories d'éléments.

\begin{pr}\label{pr-ekhd}
 Soit $\Phi : \A\to\B$ un morphisme de $\mathbf{Add}_D^{ps}$. Il existe un foncteur homologique $\bar{\mathbf{L}}^H_\bullet(\Phi) : (\A,\kk)\to (\mathbf{HD}(\B),\kk)$ et des morphismes de foncteurs homologiques $\bar{\mathbf{L}}^H_\bullet(\Phi)\to\mathbf{L}_\bullet(\mathbf{HD}(\Phi)_*)\circ p_\A^*\to p_\B^*\circ\mathbf{L}_\bullet(\Phi_*)$ vérifiant les propriétés suivantes.
 \begin{enumerate}
  \item Ces morphismes s'insèrent dans une suite exacte longue
  $$\cdots\to\bar{\mathbf{L}}^H_i(\Phi)\to\mathbf{L}_i(\mathbf{HD}(\Phi)_*)\circ p^*\to p^*\circ\mathbf{L}_i(\Phi_*)\to \bar{\mathbf{L}}^H_{i-1}(\Phi)\to\cdots$$
  $$\to\bar{\mathbf{L}}^H_0(\Phi)\to\mathbf{HD}(\Phi)_*\circ p^*\to p^*\circ\Phi_*\to 0$$
  de foncteurs $(\A,\kk)\to (\mathbf{HD}(\B),\kk)$.
  \item Pour tous $i, d\in\mathbb{N}$, la composée de $\bar{\mathbf{L}}^H_i(\Phi) : (\A,\kk)\to (\mathbf{HD}(\B),\kk)$ et du foncteur canonique $(\mathbf{HD}(\B),\kk)\to\St(\mathbf{HD}(\B),\kk)$ envoie $\pol_d(\A,\kk)$ dans $\pol_{d-1}(\mathbf{HD}(\B),\kk)$.
  \item Si $2$ est inversible dans $\kk$, ou que l'hypothèse~\ref{h2inv} est satisfaite dans $\A$ et dans $\B$, ce foncteur envoie même $\pol_d(\A,\kk)$ dans $\pol_{d-2}(\mathbf{HD}(\B),\kk)$.
 \end{enumerate}
\end{pr}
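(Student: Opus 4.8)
The plan is to extract $\bar{\mathbf{L}}^H_\bullet(\Phi)$ from the commutative square of categories $p_\B\circ\mathbf{HD}(\Phi)=\Phi\circ p_\A$. This square produces a \emph{base change morphism} $\theta_F\colon\mathbf{L}(\mathbf{HD}(\Phi)_*)(p_\A^*F)\to p_\B^*(\mathbf{L}(\Phi_*)F)$ in $\mathbf{D}(\mathbf{HD}(\B),\kk)$, natural in $F\in(\A,\kk)$, built from the unit of $\mathbf{L}(\Phi_*)\dashv\Phi^*$ exactly as in the proof of Proposition~\ref{fonct-st}. Working with a functorial projective resolution, I would define $\bar{\mathbf{L}}^H_\bullet(\Phi)(F)$ as the homology of the fibre of $\theta_F$; the distinguished triangle $\mathrm{fib}(\theta_F)\to\mathbf{L}(\mathbf{HD}(\Phi)_*)(p_\A^*F)\xrightarrow{\theta_F}p_\B^*(\mathbf{L}(\Phi_*)F)$ then yields assertion~(1) as its long exact homology sequence, using that $p_\B^*$ is exact, so $H_i(p_\B^*\mathbf{L}(\Phi_*)F)=p_\B^*\mathbf{L}_i(\Phi_*)F$. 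The sequence terminates as written because the degree-zero component $\mathbf{HD}(\Phi)_*\circ p^*\to p^*\circ\Phi_*$ is an epimorphism: through the colimit formula~(\ref{eqom}) it is induced by the natural transformation $\nu\colon T_\A\to\Phi^*T_\B$ coming from the functoriality of the quadratic functor $T$ (Notation~\ref{nh}), and $\nu$ is pointwise surjective since $\Phi$ is full.

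For the degree estimates~(2) and~(3) I would first localise the computation by factoring $\mathbf{HD}(\Phi)$ as $\mathbf{HD}(\A)=\A^{T_\A}\xrightarrow{j}\A^{\Phi^*T_\B}\xrightarrow{q}\B^{T_\B}=\mathbf{HD}(\B)$, where $j$ is induced by $\nu$ and $q$ by the \emph{cartesian} square over $\Phi$. A direct comma-category computation shows that the base change along $q$ is already an isomorphism $\mathbf{L}(q_*)\circ(\pi^{\Phi^*T_\B})^*\simeq p_\B^*\circ\mathbf{L}(\Phi_*)$, because $q\downarrow(W,\zeta)\simeq\Phi\downarrow W$ (the hermitian datum $\zeta$ fixes $\eta=g^*\zeta$ and imposes no further constraint), so the pointwise Kan extensions agree in every degree. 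Since $p_\A=\pi^{\Phi^*T_\B}\circ j$, this identifies $\theta_F$ with $\mathbf{L}(q_*)$ applied to the counit $\mathbf{L}(j_*)\,j^*\to\mathrm{Id}$ evaluated at $(\pi^{\Phi^*T_\B})^*F$, whence $\bar{\mathbf{L}}^H_\bullet(\Phi)(F)\simeq\mathbf{L}(q_*)$ of the fibre of that counit. Thus the whole defect is concentrated on the passage $j$, that is, on the fibres of $\nu$.

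Next I would make this fibre explicit. The surjection $\nu_V\colon T_\A(V)\twoheadrightarrow T_\B(\Phi V)$ has kernel a subfunctor $K$ of the quadratic functor $T_\A$, and its fibres are $K(V)$-torsors; a relative bar-type analysis, in the spirit of Lemma~\ref{lm-troesch}, then expresses the counit defect through the reduced free functor $\bar\kk[K]$. Feeding this through the category-of-elements adjunction~(\ref{eq-omegh}) converts $\bar{\mathbf{L}}^H_\bullet(\Phi)(F)$ into ${\rm Tor}^\A_\bullet$ of $F$ against a functor assembled from $\bar\kk[K]$, which I would then push forward to $\mathbf{HD}(\B)$. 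The polynomial degree of the outcome is controlled by Lemma~\ref{lm-troesch} together with Proposition~\ref{pr-h2inv}: the reduced free functor on the quadratic $K$ is Tor-acyclic against additive functors through homological degree~$1$ unconditionally, and in all degrees once $2$ is invertible in $\kk$ or Hypothesis~\ref{h2inv} holds in $\A$ and $\B$. Combining this vanishing with the cross-effect criterion for polynomial degree, and using Theorem~\ref{th-dd2} to transfer degree statements between $\A$, $\mathbf{HD}(\B)$ and $\mathbf{H}(\B)$, should give the drop to $\pol_{d-1}(\mathbf{HD}(\B),\kk)$ in general and to $\pol_{d-2}(\mathbf{HD}(\B),\kk)$ under the extra hypothesis.

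The main obstacle is precisely this final bookkeeping: turning the qualitative fact ``the defect is carried by the reduced free functor $\bar\kk[K]$ on the quadratic kernel'' into the exact bounds $d-1$ and $d-2$. The delicate point is that $\bar\kk[K]$ is \emph{not} itself polynomial, so one cannot argue by naive additivity of degrees; instead one must track how its quadratic leading cross-effect interacts with $F$ through the Tor spectral sequence, and verify that the unconditional degree-$1$ acyclicity of Proposition~\ref{pr-h2inv} removes exactly one unit of polynomial degree while the conditional all-degrees acyclicity removes two. I expect this to require the same style of argument as Proposition~\ref{pr-ekad} (cross-effects commuting with derived Kan extensions along additive functors), now applied to $\nu$ and to the hermitian summand $T$; as a consistency check, the long exact sequence of~(1) then forces $\mathbf{L}_i(\mathbf{HD}(\Phi)_*)\circ p^*$ to preserve the degree $d$, matching Proposition~\ref{pr-ekad}(1) for $\mathbf{L}_i(\Phi_*)$.
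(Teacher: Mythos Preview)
Your structural setup coincides with the paper's: the factorisation $q\circ j$ and the kernel $K$ are exactly the paper's fibre product $X_E:=\Phi^*\B(-,E)\times_{\Phi^*T_\B}T_\A$ and $T_\Phi:=\ker(T_\A\to\Phi^*T_\B)$; the paper sets $\bar{\mathbf{L}}^H_\bullet(\Phi)(F)(E):={\rm Tor}^{\kk[\A]}_\bullet\big(\ker(\kk[X_E]\twoheadrightarrow\Phi^*P^{\B^{op}}_{p(E)}),F\big)$, which via~(\ref{eq-omegh}) is the homology of your fibre of $\theta_F$, and the bar resolution expressing that kernel as the complex $(\kk[X_E]\otimes\bar\kk[T_\Phi]^{\otimes n})_{n\ge 1}$ is your ``relative bar-type analysis''. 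So part~(1) and the raw ingredients for~(2)--(3) are in place.

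The gap is the degree bookkeeping, and your proposed route through Proposition~\ref{pr-ekad} is the wrong one: $\mathbf{HD}(\Phi)$ is not a functor between additive categories, so there is no reason for cross-effects to commute with $\mathbf{L}(\mathbf{HD}(\Phi)_*)$. The paper's mechanism rests on an observation you do not make: the functor $E\mapsto X_E$ from $\mathbf{HD}(\B)$ to $(\A^{op},\mathbf{Ab})$ is \emph{strongly polynomial of degree~$1$ in $E$} (immediate from the exact sequence $0\to T_\Phi\to X_E\to\Phi^*\B(-,E)\to 0$, since $T_\Phi$ is independent of $E$). Applying $\delta_{H_1}\cdots\delta_{H_r}$ in the $E$-variable to $\kk[X_E]$ and using a \emph{second} bar resolution then produces, stably in $E$, a complex whose terms are tensor products of $r$ reduced functors of $\A^{op}$. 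Hence, after $r$ differences in $E$, each term of the defect complex carries at least $r+1$ reduced tensor factors ($r$ from the differences, at least one from $\bar\kk[T_\Phi]^{\otimes n}$ with $n\ge 1$), and Pirashvili's lemma kills the Tor against $F\in\pol_d$ once $r\ge d$: this is the bound $d-1$. Under the extra hypothesis, Proposition~\ref{pr-h2inv} --- extended from $T$ to $T_\Phi$ via the homological part of Lemma~\ref{lm-troesch} and the exact sequence $0\to T_\Phi\to T_\A\to\Phi^*T_\B\to 0$ --- makes each $\bar\kk[T_\Phi]$ factor annihilate additive slots as well, so it counts for two in the Pirashvili tally, yielding $d-2$. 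Theorem~\ref{th-dd2} plays no role.
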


\begin{proof}
 Grâce à l'isomorphisme~(\ref{eq-omegh}), le foncteur $\mathbf{L}_\bullet(\mathbf{HD}(\Phi)_*)\circ p^*$ est décrit par un isomorphisme canonique
 $$\mathbf{L}_\bullet(\mathbf{HD}(\Phi)_*)(p^*F)(E)\simeq {\rm Tor}^{\kk[\A]}_\bullet\big(\omega_\A\big(\mathbf{HD}(\Phi)^* P^{\mathbf{HD}(\B)^{op}}_E\big),F\big).$$
 On constate que le foncteur $\omega_\A\big(\mathbf{HD}(\Phi)^* P^{\mathbf{HD}(\B)^{op}}_E\big)$ est canoniquement isomorphe à $\kk[X_E]$, où $X_E : \A^{op}\to\mathbf{Ab}$ désigne le foncteur (de degré $2$)
 $$\Phi^*\B(-,E)\underset{\Phi^* T_\B}{\times}T_\A,$$
 le produit fibré étant relatif au morphisme $\Phi^*\B(-,E)\to\Phi^* T_\B$ induit par $\B(-,E)\to T_\B$ (obtenu en tirant en arrière la structure hermitienne sur $E$) et $T_\A\to\Phi^* T_\B$ induit par $\Phi$.
 
 Comme $\Phi$ est plein, le morphisme $T_\A\to\Phi^* T_\B$ qu'il induit est surjectif, de sorte que, si l'on note $T_\Phi$ son noyau, $X_E$ s'insère dans une suite exacte naturelle
 \begin{equation}\label{esef}
  0\to T_\Phi\to X_E\to\Phi^*\B(-,E)\to 0.
 \end{equation}
 Le foncteur homologique $\bar{\mathbf{L}}^H_\bullet(\Phi)$ est défini par
 $$\bar{\mathbf{L}}^H_\bullet(\Phi)(F)(E):={\rm Tor}^{\kk[\A]}_\bullet\big({\rm Ker}\,\big(\kk[X_E]\twoheadrightarrow\Phi^* P^{\B^{op}}_{p(E)}\big),F\big),$$
 ce qui donne aussitôt des transformations naturelles vérifiant la première propriété.
 
 La résolution barre montre que le foncteur ${\rm Ker}\,\big(\kk[X_E]\twoheadrightarrow\Phi^* P^{\B^{op}}_{p(E)}\big)$ est quasi-isomorphe (et ce naturellement en $E$) à un complexe de chaînes de foncteurs $(\kk[X_E]\otimes\bar{\kk}[T_\Phi]^{\otimes n})_{n\geq 1}$ (cf. la démonstation du lemme~\ref{lm-troesch}). En utilisant derechef la résolution barre, et le fait que $E\mapsto X_E$ définit un foncteur $\mathbf{HD}(B)\to (\mathbf{HD}(\A)^{op},\kk)$ fortement polynomial de degré $1$ (en raison de la suite exacte (\ref{esef})), on obtient que l'application de $\delta_{H_1}\dots\delta_{H_r}$ à $E\mapsto\kk[X_E]$ est quasi-isomorphe, naturellement et stablement en $E$, à un complexe dont chaque terme est un produit tensoriel de $r$ foncteurs {\em réduits} de $(\mathbf{HD}(\A)^{op},\kk)$, ce qui permet facilement de conclure en utilisant le lemme d'annulation de Pirashvili et la proposition~\ref{pr-h2inv} (ainsi que la première partie du lemme~\ref{lm-troesch}, qui montre que l'annulation de la proposition~\ref{pr-h2inv} vaut en remplaçant $T$ par $T_\Phi$).
\end{proof}

\begin{rem}
 Si le foncteur $\Phi : \A\to\B$ possède une section, la démonstration précédente se simplifie ; $\bar{\mathbf{L}}^H_\bullet(\Phi)$ est alors facteur direct de $\mathbf{L}_\bullet(\mathbf{HD}(\Phi)_*)\circ p^*$ et se factorise par $p^* : (\B,\kk)\to (\mathbf{HD}(\B),\kk)$.
\end{rem}

Le corollaire~\ref{crclexc}, la proposition~\ref{pr-ekad} et le théorème ci-après donnent des renseignements précis (donnant la possibilité de mener des calculs sur des foncteurs raisonnables) sur l'homologie stable des groupes de congruence à coefficients polynomiaux dans le cas excisif (qui sera caractérisé dans le théorème~\ref{sus-gal}), à partir de l'homologie stable des mêmes groupes à coefficients constants.

\begin{thm}\label{th-ekherm}
 Soient $d\in\mathbb{N}$ et $\Phi : \A\to\B$ un morphisme de $\mathbf{Add}_D^{ps}$.
 \begin{enumerate}
  \item Le foncteur homologique $\mathbf{L}_\bullet\mathbf{H}(\Phi)_* : \St(\mathbf{H}(\A),\kk)\to\St(\mathbf{H}(\B),\kk)$ envoie $\pol_d(\mathbf{H}(\A),\kk)$ dans $\pol_d(\mathbf{H}(\B),\kk)$.
  \item Le diagramme
  $$\xymatrix{\pol_d(\A,\kk)/\pol_{d-1}(\A,\kk)\ar[r]\ar[d]_\simeq & \pol_d(\B,\kk)/\pol_{d-1}(\B,\kk)\ar[d]^\simeq \\
\pol_d(\mathbf{H}(\A),\kk)/\pol_{d-1}(\mathbf{H}(\A),\kk)\ar[r] & \pol_d(\mathbf{H}(\B),\kk)/\pol_{d-1}(\mathbf{H}(\B),\kk)
}$$
 commute (à isomorphisme canonique près), où les flèches verticales sont les équivalences induites par $pi$ d'après le théorème~\ref{th-dd2} et les flèches horizontales sont induites par $\mathbf{L}_\bullet(\Phi_*)$ et $\mathbf{L}_\bullet\mathbf{H}(\Phi)_*$.
 \item Si $2$ est inversible dans $\kk$ ou que l'hypothèse~\ref{h2inv} est satisfaite dans $\A$ et dans $\B$, on a même la commutation du diagramme analogue
 $$\xymatrix{\pol_d(\A,\kk)/\pol_{d-2}(\A,\kk)\ar[r]\ar[d]_\simeq & \pol_d(\B,\kk)/\pol_{d-2}(\B,\kk)\ar[d]^\simeq \\
\pol_d(\mathbf{H}(\A),\kk)/\pol_{d-2}(\mathbf{H}(\A),\kk)\ar[r] & \pol_d(\mathbf{H}(\B),\kk)/\pol_{d-2}(\mathbf{H}(\B),\kk)
}.$$
 \end{enumerate}

\end{thm}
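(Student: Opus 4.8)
The plan is to prove everything first for the category of possibly-degenerate hermitian objects $\mathbf{HD}(-)$, where the category-of-elements description makes the explicit formulas of Proposition~\ref{pr-ekhd} available, and then to transport the conclusions to $\mathbf{H}(-)$ through the equivalence of Corollary~\ref{cor-hhd}. The transport rests on a single formal observation: the square of CMHS morphisms expressing $i_\B\circ\mathbf{H}(\Phi)=\mathbf{HD}(\Phi)\circ i_\A$ commutes, so by functoriality of (derived) left Kan extensions one has a natural isomorphism $\mathbf{L}(i_\B)_*\circ\mathbf{L}\mathbf{H}(\Phi)_*\simeq\mathbf{L}\mathbf{HD}(\Phi)_*\circ\mathbf{L}(i_\A)_*$ on the analytic derived categories. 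Since Corollary~\ref{cor-hhd} makes $\mathbf{L}i_*$ an equivalence $\dsto(\mathbf{H}(\A),\kk)\xrightarrow{\simeq}\dsto(\mathbf{HD}(\A),\kk)$ with quasi-inverse $i^*$ --- and since $i^*$ is an equivalence $\pol_d(\mathbf{HD}(\A),\kk)\xrightarrow{\simeq}\pol_d(\mathbf{H}(\A),\kk)$ in each degree --- applying $i_\B^*$ to this isomorphism and using that the unit ${\rm Id}\to i^*\mathbf{L}i_*$ is invertible reduces the preservation statement~(1) and the commutativity statements~(2) and~(3) for $\mathbf{H}$ to their exact analogues for $\mathbf{HD}$.

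For the $\mathbf{HD}$ form of~(1) I would argue by induction on $d$. By the unconditional equivalence $p^*:\pol_d(\A,\kk)/\pol_{d-1}(\A,\kk)\xrightarrow{\simeq}\pol_d(\mathbf{HD}(\A),\kk)/\pol_{d-1}(\mathbf{HD}(\A),\kk)$ (the $\pol_{d-1}$-version underlying Theorem~\ref{th-dd2}), any $X$ in $\pol_d(\mathbf{HD}(\A),\kk)$ fits into exact sequences relating it to $p^*F$ for some $F\in\pol_d(\A,\kk)$, with kernel and cokernel in $\pol_{d-1}(\mathbf{HD}(\A),\kk)$; applying the homological functor $\mathbf{L}_\bullet\mathbf{HD}(\Phi)_*$ and invoking the inductive hypothesis, it suffices to treat $X=p^*F$. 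For such $X$ the long exact sequence of Proposition~\ref{pr-ekhd} exhibits $\mathbf{L}_i(\mathbf{HD}(\Phi)_*)(p^*F)$ as an extension built from $p^*\mathbf{L}_i(\Phi_*)(F)$ and $\bar{\mathbf{L}}^H_\bullet(\Phi)(F)$: the first lies in $\pol_d(\mathbf{HD}(\B),\kk)$ because $\mathbf{L}_i(\Phi_*)$ preserves polynomial degree (Proposition~\ref{pr-ekad}) and $p^*$ preserves it (Theorem~\ref{th-dd2}), while the second lies in $\pol_{d-1}(\mathbf{HD}(\B),\kk)$ by Proposition~\ref{pr-ekhd}(2). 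As $\pol_d$ is localizing, the extension stays in $\pol_d$, proving the claim, which then transports to $\mathbf{H}$ as above.

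For~(2) and~(3) the same long exact sequence provides the key: the natural morphism $\mathbf{L}_i(\mathbf{HD}(\Phi)_*)\circ p^*\to p^*\circ\mathbf{L}_i(\Phi_*)$ has its ``defect'' governed by $\bar{\mathbf{L}}^H_\bullet(\Phi)$, which carries $\pol_d(\A,\kk)$ into $\pol_{d-1}(\mathbf{HD}(\B),\kk)$ in general, and into $\pol_{d-2}(\mathbf{HD}(\B),\kk)$ when $2$ is invertible in $\kk$ or Hypothesis~\ref{h2inv} holds, by Proposition~\ref{pr-ekhd}(2)--(3). Hence this morphism becomes an isomorphism in $\pol_d(\mathbf{HD}(\B),\kk)/\pol_{d-1}$, respectively in $\pol_d(\mathbf{HD}(\B),\kk)/\pol_{d-2}$; combined with the equivalences of Theorem~\ref{th-dd2} (which hold modulo $\pol_{d-2}$ unconditionally), this is exactly the commutativity of the $\mathbf{HD}$-analogue of the displayed squares, and transport along $i^*$ yields the stated diagrams for $\mathbf{H}$. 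The main obstacle I expect is not any single estimate but the coherence bookkeeping: one must check that the intertwining isomorphism $\mathbf{L}(i_\B)_*\circ\mathbf{L}\mathbf{H}(\Phi)_*\simeq\mathbf{L}\mathbf{HD}(\Phi)_*\circ\mathbf{L}(i_\A)_*$ is compatible with the natural transformation $\mathbf{L}_\bullet(\mathbf{HD}(\Phi)_*)\circ p^*\to p^*\circ\mathbf{L}_\bullet(\Phi_*)$ of Proposition~\ref{pr-ekhd}, so that the canonical isomorphism produced on $\mathbf{HD}$ is carried to the asserted canonical isomorphism on $\mathbf{H}$ rather than to some uncontrolled one. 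Verifying this compatibility at the level of the explicit category-of-elements resolutions underlying $\bar{\mathbf{L}}^H_\bullet(\Phi)$ is where the real care is needed.
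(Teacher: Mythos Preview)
Your proposal is correct and follows exactly the approach the paper intends: the paper's own proof is a one-line instruction to ``combiner les th\'eor\`emes~\ref{th-hhd}, \ref{th-dd2} et la proposition~\ref{pr-ekhd}'', and your argument is precisely a detailed unpacking of that combination (with the additional explicit invocation of Proposition~\ref{pr-ekad}, which is tacitly needed). The coherence bookkeeping you flag at the end is real but routine, and the paper does not spell it out either.
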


\begin{proof}
 Combiner les théorèmes~\ref{th-hhd}, \ref{th-dd2} et la proposition~\ref{pr-ekhd}.
\end{proof}

\subsection{Étude des produits tensoriels sur les foncteurs polynomiaux}\label{spt}

Soit $\A$ une petite catégorie additive à dualité. Si $F$ est un foncteur défini sur $\A$, on notera $F^\vee$ la précomposition par le foncteur de dualité $D : \A^{op}\to\A$.

\begin{nota}\label{not-ptm2}
 Soient $F$ et $G$ des foncteurs dans $(\A,\kk)$. On note $F\underset{(\A,\tau)}{\otimes}G$ le foncteur de $(\A,\kk)$ défini par
 $$(F\underset{(\A,\tau)}{\otimes}G)(x):=\tau_x(F)^\vee\underset{\kk[\A]}{\otimes}\tau_x(G)$$
 et, plus généralement, pour $i\in\mathbb{N}$, on définit un foncteur ${\rm Tor}_i^{(\A,\tau)}(F,G)$ par
 $${\rm Tor}_i^{(\A,\tau)}(F,G)(x):={\rm Tor}_i^\A(\tau_x(F)^\vee,\tau_x(G)).$$
 
 Cette construction se remonte aux catégories dérivées, fournissant un foncteur
 $$-\overset{\mathbf{L}}{\underset{(\A,\tau)}{\otimes}} - : \mathbf{D}(\A,\kk)\times\mathbf{D}(\A,\kk)\to\mathbf{D}(\A,\kk).$$
\end{nota}

Le foncteur $\underset{(\A,\tau)}{\otimes}$ définit une structure monoïdale symétrique sur $(\A,\kk)$ (par auto-adjonction du foncteur $D$) et les produits tensoriels (dérivés) possèdent les mêmes propriétés formelles (commutation aux colimites filtrantes, bifoncteur homologique en chaque variable...) que les produits tensoriels usuels (dérivés), du fait que les foncteurs de translation préservent les objets projectifs de $(\A,\kk)$.

Les projections naturellement scindées $\tau_x(F)^\vee\twoheadrightarrow F(x)$ et $\tau_x(G)\twoheadrightarrow G(x)$ (où les buts sont vus comme foncteurs constants) induisent des transformations naturelles
$$\underset{(\A,\tau)}{\otimes}\to\otimes,\qquad {\rm Tor}_i^{(\A,\tau)}\to{\rm Tor}_i^\kk$$
de foncteurs $(\A,\kk)\times (\A,\kk)\to (\A,\kk)$
et
$$\overset{\mathbf{L}}{\underset{(\A,\tau)}{\otimes}}\to\overset{\mathbf{L}}{\underset{\kk}{\otimes}}$$
de foncteurs $\mathbf{D}(\A,\kk)\times\mathbf{D}(\A,\kk)\to\mathbf{D}(\A,\kk)$ qui sont des épimorphismes scindés
dont les noyaux seront notés respectivement
$$\underset{(\A,\tau)}{\bar{\otimes}},\qquad \overline{{\rm Tor}}_i^{(\A,\tau)},\quad\text{et}\quad\overset{\mathbf{L}}{\underset{(\A,\tau)}{\bar{\otimes}}}.$$

Le résultat suivant est immédiat.

\begin{lm}\label{lm-deg}
 Soient $i, a, b\in\mathbb{N}$ et $F$, $G$ des foncteurs de $\pol_a(\A,\kk)$ et $\pol_b(\A,\kk)$ respectivement. Alors ${\rm Tor}_i^{(\A,\tau)}(F,G)$ appartient à $\pol_{a+b}(\A,\kk)$ tandis que $\overline{{\rm Tor}}_i^{(\A,\tau)}(F,G)$ appartient à $\pol_{a+b-2}(\A,\kk)$.
\end{lm}

\begin{nota}
 Si $x$ est un objet de $\A$, on note ${\rm d}_x$ l'endofoncteur $-\underset{(\A,\tau)}{\bar{\otimes}}(\kk\otimes\A(Dx,-))$ de $(\A,\kk)$.
\end{nota}

On dispose donc d'isomorphismes canoniques
\begin{equation}\label{ediv}
 {\rm d}_x(F)(t)\simeq (\kk\otimes\A(-,x))\underset{\kk[\A]}{\otimes}\tau_t(F)\simeq  (\kk\otimes\A(-,x))\underset{\kk[\A]}{\otimes}(F\oplus cr_2(F)(t,-)).
\end{equation}

On notera $\bd x$ l'endofoncteur $F\mapsto \big(t\mapsto (\kk\otimes\A(-,x))\underset{\kk[\A]}{\otimes}cr_2(F)(t,-)\big)$ de $(\A,\kk)$ ; ainsi $\bd x(F)$ s'identifie à la partie réduite de ${\rm d}_x(F)$, tandis que son terme constant s'identifie à l'évaluation en $x$ du plus grand quotient additif de $F$.

Cette construction possède une fonctorialité en $\A$ : si $\Xi : \A\to\B$ est un morphisme de $\mathbf{Add}_D$, $F$ un objet de $(\B,\kk)$ et $x$ un objet de $\A$, on dispose d'un morphisme naturel
\begin{equation}\label{fdiv}
{\rm d}_x(\Xi^* F)\to\Xi^*{\rm d}_{\Xi(x)}(F)
\end{equation}
dans $(\A,\kk)$.

\begin{pr}\label{pr-bd}
\begin{enumerate}
 \item Pour tout objet $x$ de $\A$, il existe une suite exacte
 $$(\delta_x)^2\to\delta_x\to {\rm d}_x\to 0$$
 d'endofoncteurs de $(\A,\kk)$.
 \item Soient $F$ un foncteur polynomial de $(\A,\kk)$ polynomial et $d\in\mathbb{N}$. Supposons que ${\rm d}_x(F)$ est de degré au plus $d-1$ pour tout $x\in {\rm Ob}\,\A$. Alors $F$ appartient à $\pol_d(\A,\kk)$.
 \item Si $F$ appartient à $\pol_2(\A,\kk)$, alors on dispose d'un isomorphisme naturel
 $$\bd x(F)(t)\simeq cr_2(F)(x,t).$$
 \item\label{ider} Pour tout objet $x$ de $\A$, le foncteur ${\rm d}_x$ est une dérivation au sens où l'on dispose d'un isomorphisme
 $${\rm d}_x(F\underset{\kk}{\otimes}G)\simeq\big({\rm d}_x(F)\underset{\kk}{\otimes}G\big)\oplus\big(F\underset{\kk}{\otimes}{\rm d}_x(G)\big)$$
 naturel et monoïdal symétrique en les objets $F$ et $G$ de $(\A,\kk)$.
\end{enumerate}
\end{pr}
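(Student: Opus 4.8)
The whole argument runs off the description (\ref{ediv}) of ${\rm d}_x$ as an additivization. Write $a$ for the largest additive quotient functor (left adjoint to the inclusion of additive functors into $(\A,\kk)$), so that (\ref{ediv}) reads ${\rm d}_x F(t)\simeq (a\,\tau_t F)(x)$, the additivization of the inner functor $s\mapsto F(t\oplus s)$ evaluated at $x$. Splitting $\tau_t F$ into its constant part $F(t)$ and its reduced part $cr_1(F)(s)\oplus cr_2(F)(t,s)$, and using that $a$ preserves direct sums and kills constants, I would first record the identifications ${\rm d}_x F(t)\simeq (aF)(x)\oplus\bd x F(t)$, with $(aF)(x)$ constant in $t$ and $\bd x F(t)\simeq (a_2\,cr_2(F)(t,-))(x)$ the additivization of $cr_2(F)(t,-)$ in its second slot, alongside $\delta_x F(t)\simeq cr_1(F)(x)\oplus cr_2(F)(x,t)$. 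Everything else is read off these.

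For (1), the unit of the additivization adjunction supplies, summand by summand, a natural epimorphism $\delta_x F\twoheadrightarrow{\rm d}_x F$ (namely $cr_1(F)(x)\twoheadrightarrow (aF)(x)$ on the constant part and $cr_2(F)(x,t)\twoheadrightarrow\bd x F(t)$ on the reduced part, the latter through $cr_2(F)(x,t)\simeq cr_2(F)(t,x)$). The real content is to realize its kernel as the image of a natural map $(\delta_x)^2\to\delta_x$. I would take this map to be the one induced by the codiagonal $\nabla:x\oplus x\to x$, folding the two ``$x$-slots'' of $(\delta_x)^2 F(t)\simeq cr_2(F)(x,x)\oplus cr_3(F)(x,x,t)$. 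The point that needs care --- and is the main obstacle --- is that the image of this single fold captures all of $\ker(\delta_x F\to{\rm d}_x F)$, i.e.\ the whole submodule of decomposables defining the additivization, and not merely the decomposables attached to the diagonal splitting $x=x\oplus x$. This follows from a retraction argument: for any splitting $x=x_1\oplus x_2$ the map $j=\iota_1\oplus\iota_2:x_1\oplus x_2\to x\oplus x$ satisfies $\nabla\circ j={\rm id}_x$, so for a functor $H$ the map $H(j)$ carries $cr_2(H)(x_1,x_2)$ into $cr_2(H)(x,x)$ while $H(\nabla)$ sends it back, exhibiting every decomposable in the image of the fold. Exactness of $(\delta_x)^2\to\delta_x\to{\rm d}_x\to 0$ results.

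For (2) I would argue by induction on $d$, the engine being the commutation ${\rm d}_x\delta_y\simeq\delta_y{\rm d}_x$. This is a one-line consequence of (\ref{ediv}): since $\tau_{y\oplus t}=\tau_t\tau_y$, the stabilisation map $i_y$ in the outer variable becomes $\tau_t(i_y)$ inside the tensor, and right-exactness of $-\underset{\kk[\A]}{\otimes}-$ together with exactness of $\tau_t$ give ${\rm d}_x(\delta_y F)(t)\simeq(\kk\otimes\A(-,x))\underset{\kk[\A]}{\otimes}\tau_t(\delta_y F)\simeq\delta_y({\rm d}_x F)(t)$. For the base case $d=0$: if ${\rm d}_x F=0$ for all $x$, the sequence of (1) exhibits $\delta_x F$ as a quotient of $(\delta_x)^2F$; were $\deg F=e\ge 1$ one could choose $x$ with $\deg\delta_x F=e-1$ (from $F\in\pol_d\Leftrightarrow\delta_x F\in\pol_{d-1}$ for all $x$), contradicting $\deg\delta_x F\le\deg(\delta_x)^2F\le e-2$; hence $F\in\pol_0$. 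For the inductive step, the commutation yields ${\rm d}_x(\delta_y F)\simeq\delta_y({\rm d}_x F)\in\pol_{d-2}$ for all $x$, so the polynomial functor $\delta_y F$ satisfies the hypothesis at level $d-1$; the inductive hypothesis gives $\delta_y F\in\pol_{d-1}$ for every $y$, that is $F\in\pol_d$.

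Assertion (3) is then immediate: for $F\in\pol_2(\A,\kk)$ the third cross-effect vanishes, so $cr_2(F)(t,-)$ is additive in its second variable and coincides with its own additivization, giving $\bd x F(t)\simeq cr_2(F)(t,x)\simeq cr_2(F)(x,t)$. Finally, for the Leibniz rule (4) I would combine $\tau_t(F\otimes G)=\tau_t F\otimes\tau_t G$ with the derivation property of additivization on a pointwise tensor product: the additive part in $s$ of $H\otimes K$ is $(aH)\otimes K(0)\oplus H(0)\otimes(aK)$, which comes from the binomial formula for the cross-effects of a tensor product. Taking $H=\tau_t F$ and $K=\tau_t G$ (so $H(0)=F(t)$, $K(0)=G(t)$) and evaluating at $x$ produces exactly ${\rm d}_x(F\otimes G)(t)\simeq {\rm d}_x F(t)\otimes G(t)\oplus F(t)\otimes{\rm d}_x G(t)$, the naturality and symmetric monoidality being inherited from those of the additivization Leibniz isomorphism. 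Lemma~\ref{lm-deg} may be invoked freely throughout for the degree bookkeeping.
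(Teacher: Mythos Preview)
Your argument is essentially correct, and not far from the paper's, but a few points deserve comment.

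\textbf{On (1).} The decompositions you record, $\delta_x F(t)\simeq cr_1(F)(x)\oplus cr_2(F)(x,t)$ and $(\delta_x)^2 F(t)\simeq cr_2(F)(x,x)\oplus cr_3(F)(x,x,t)$, are only valid for $F$ of degree $\le 2$ (resp.\ $\le 3$); in general there are further summands. Your actual argument does not need them: the surjection $\delta_x F\twoheadrightarrow{\rm d}_x F$ is just the additivization unit on $H=\tau_t F$, and your codiagonal map identifies with the deviation map of the bar presentation $\bar{\kk}[\A(-,x)]^{\otimes 2}\to\bar{\kk}[\A(-,x)]\to\kk\otimes\A(-,x)\to 0$ tensored with $H$ over $\kk[\A]$. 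This is exactly the route the paper takes (its sequence~(\ref{ppa})); your retraction trick then amounts to checking that the quotient by the image of the fold is additive, which the bar presentation gives for free.

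\textbf{On (2).} Your forward induction via ${\rm d}_x\delta_y\simeq\delta_y{\rm d}_x$ is correct, but superfluous: your base case argument works verbatim for every $d$. If $\deg F=e>d$, choose $x$ with $\deg\delta_x F=e-1$; the exact sequence of (1) exhibits $\delta_x F$ as an extension of ${\rm d}_x F$ (degree $\le d-1\le e-2$) by a quotient of $(\delta_x)^2F$ (degree $\le e-2$), contradiction. This is the ``immediate'' deduction the paper has in mind.

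\textbf{On (4).} Your Leibniz rule for additivization needs the vanishing of the additivization of $\overline{\tau_t F}\otimes\overline{\tau_t G}$ (a tensor of two reduced functors). That is precisely Pirashvili's vanishing lemma, which the paper invokes directly; the ``binomial formula'' alone does not give it.
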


\begin{proof}
Le premier point résulte de (\ref{ediv}) et de la suite exacte classique
\begin{equation}\label{ppa}
\bar{\kk}[\A(-,x)]^{\otimes 2}\to\bar{\kk}[\A(-,x)]\to\kk\otimes\A(-,x)\to 0
\end{equation}
de $(\A^{op},\kk)$. Le deuxième s'en déduit aussitôt. Le troisième provient directement de (\ref{ediv}) et de ce que $\kk\otimes\A(-,x)$ est le plus grand quotient additif de $\kk[\A(-,x)]$ (ce qu'indique aussi (\ref{ppa})).

L'assertion~\ref{ider} se déduit directement de la définition de ${\rm d}_x$ et du lemme d'annulation de Pirashvili.
\end{proof}

\begin{rem}\label{rdd}
 Les foncteurs ${\rm d}_x$ sont des variantes des foncteurs considérés dans \cite[§\,3]{D-div}.
\end{rem}

Avant d'utiliser le produit tensoriel $ - {\underset{(\A,\tau)}{\otimes}} -$ (dont les foncteurs ${\rm d}_x$ constituent le cas particulier le plus important), nous aurons besoin du résultat général suivant.

\begin{lm}\label{lm-rcos}
 Soient $\A$ un objet de $\mathbf{Add}_D$, $\C$ la catégorie des factorisations associée à $\mathbf{H}(\A)$ (ainsi, les objets de $\C$ sont les triplets $(x,t,f)$ constitués d'objets $x$ et $t$ de $\mathbf{H}(\A)$ et d'un morphisme $f\in\mathbf{H}(\A)(t,x)$), $\mathfrak{c} : \C\to\A$ le foncteur composé de $\C\to\mathbf{H}(\A)\quad (x,t,f)\mapsto x\underset{f}{-}t$ et du foncteur d'oubli $\mathbf{H}(\A)\to\A$,  $\mathfrak{i} : \C\to\A$ le foncteur composé de $(x,t,f)\mapsto x$ et du même foncteur d'oubli, et $\mathfrak{r} : \C\to\A$ le foncteur composé de $\C\to\mathbf{H}(\A)^{op}\quad (x,t,f)\mapsto t$, de l'oubli $\mathbf{H}(\A)^{op}\to\A^{op}$ et du foncteur de dualité $D : \A^{op}\to\A$. On dispose ainsi d'une transformation naturelle canonique $\mathfrak{c}\to\mathfrak{i}$. Si $F$ est un foncteur dans $(\A,\kk)$, le foncteur $\mathfrak{c}^*F$ de $(\C,\kk)$ est naturellement quasi-isomorphe à un complexe de cochaînes
 $$\big((\mathfrak{r},\dots,\mathfrak{r})^* cr_n(F)\oplus (\mathfrak{i},\mathfrak{r},\dots,\mathfrak{r})^* cr_{n+1}(F)\big)_{n\geq 0}$$
 et le conoyau du morphisme canonique $\mathfrak{c}^*F\to\mathfrak{i}^*F$ est naturellement quasi-isomorphe à un complexe de cochaînes
 $$\big((\mathfrak{r},\dots,\mathfrak{r})^* cr_{i+1}(F)\oplus (\mathfrak{i},\mathfrak{r},\dots,\mathfrak{r})^* cr_{i+2}(F)\big)_{i\geq 0}.$$
\end{lm}

\begin{proof}
 Évaluée sur un objet $(x,t,f)$, la transformation naturelle canonique $\mathfrak{c}\to\mathfrak{i}$ est le monomorphisme scindé $x\underset{f}{-}t\to x$ de $\A$ isomorphe à $x\underset{f}{-}t\to t\oplus (x\underset{f}{-}t)$. En utilisant les décompositions canoniques $F(x)\simeq cr_0(F)\oplus cr_1(F)(x)$ et $F(x)\simeq F\big(t\oplus (x\underset{f}{-}t)\big)\simeq cr_0(F)\oplus cr_1(F)(x\underset{f}{-}t)\oplus cr_1(F)(t)\oplus cr_2(F)(x\underset{f}{-}t,t)$, on obtient une suite exacte
 $$0\to F(x\underset{f}{-}t)\to cr_0(F)\oplus cr_1(F)(x)\to cr_1(F)(t)\oplus cr_2(F)(x\underset{f}{-}t,t)\to 0.$$
 En utilisant l'isomorphisme (de $\A$) $t\simeq Dt$ donné par la structure hermitienne sur $t$ et le morphisme canonique $x\underset{f}{-}t\to x$, on déduit de cette suite exacte courte une suite exacte
 $$0\to F(x\underset{f}{-}t)\to cr_0(F)\oplus cr_1(F)(x)\to cr_1(F)(Dt)\oplus cr_2(F)(x,Dt)$$
 où le conoyau de la dernière flèche s'identifie à
 $$cr_2(F)(x,Dt)/cr_2(F)(x\underset{f}{-}t,Dt)\simeq cr_2(F)(t,Dt)\oplus cr_3(F)(x\underset{f}{-}t,t,Dt)\;;$$
 une récurrence immédiate permet de poursuivre pour obtenir une suite exacte
 $$0\to F(x\underset{f}{-}t)\to$$
 $$cr_0(F)\oplus cr_1(F)(x)\to cr_1(F)(Dt)\oplus cr_2(F)(x,Dt)\to cr_2(F)(Dt,Dt)\oplus cr_3(F)(x,Dt,Dt)\to\cdots $$
 dont la deuxième ligne est exactement l'évaluation en $(x,t,f)$ du complexe de cochaînes de l'énoncé (la naturalité est immédiate). De plus, le monomorphisme canonique $F(x\underset{f}{-}t)\to F(x)$ s'identifie via ce quasi-isomorphisme à la troncature au-delà du terme de degré $0$, $cr_0(F)\oplus cr_1(F)(x)\simeq F(x)$, ce qui permet d'en déduire le deuxième quasi-isomorphisme annoncé.
\end{proof}

On en déduit aussitôt :
\begin{lm}\label{lm-o2}
 Soient $\A$ un objet de $\mathbf{Add}_D$, $F$ un foncteur dans $(\A,\kk)$. Le $\kk$-module $\omega_t^{\mathbf{H}(\A)}((pi)^* F)(x)$ (resp. le conoyau du morphisme naturel $\omega_t^{\mathbf{H}(\A)}((pi)^* F)(x)\to\kk[\mathbf{H}(\A)(t,x)]\otimes ((pi)^* F)(x)$) est quasi-isomorphe à un complexe de cochaînes
 $$\big(\kk[\mathbf{H}(\A)(t,x)]\otimes (cr_n(F)(Dt,\dots,Dt)\oplus cr_{n+1}(F)(x,Dt,\dots,Dt))\big)_{n\geq 0}$$
 $$\text{(resp. }\big(\kk[\mathbf{H}(\A)(t,x)]\otimes (cr_{i+1}(F)(Dt,\dots,Dt)\oplus cr_{i+2}(F)(x,Dt,\dots,Dt))\big)_{i\geq 0}\text{)}$$
 naturellement en $t\in {\rm Ob}\,\mathbf{H}(\A)^{op}$ et $x\in {\rm Ob}\,\mathbf{H}(\A)$.
\end{lm}

Nous sommes maintenant en mesure d'énoncer et démontrer le résultat principal de ce paragraphe, qui donne accès à un contrôle qualitatif précis des foncteurs ${\rm Tor}_i^{[\mathbf{H}(\A),\overset{\perp}{\oplus}]}$ évalués sur des foncteurs polynomiaux.

\begin{thm}\label{thp-ptens}
 Soient $\A$ un objet de $\mathbf{Add}_D$ et $a$, $b$, $i$ des entiers naturels.
 \begin{enumerate}
  \item Le foncteur ${\rm Tor}_i^{[\mathbf{H}(\A),\overset{\perp}{\oplus}]}$ envoie $\pol_a(\mathbf{H}(\A),\kk)\times\pol_b(\mathbf{H}(\A),\kk)$ dans $\pol_{a+b}(\mathbf{H}(\A),\kk)$.
  \item Le foncteur $\overline{{\rm Tor}}_i^{[\mathbf{H}(\A),\overset{\perp}{\oplus}]}$ envoie $\pol_a(\mathbf{H}(\A),\kk)\times\pol_b(\mathbf{H}(\A),\kk)$ dans $\pol_{a+b-2}(\mathbf{H}(\A),\kk)$.
  \item Si $F$ et $G$ sont des foncteurs de $\pol_a(\A,\kk)$ et $\pol_b(\A,\kk)$ respectivement, alors il existe un morphisme naturel
$$\overline{{\rm Tor}}_i^{[\mathbf{H}(\A),\overset{\perp}{\oplus}]}((pi)^* F, (pi)^* G)\to (pi)^*\overline{{\rm Tor}}_i^{(\A,\tau)}(F,G)$$
 dont noyau et conoyau appartiennent à $\pol_{a+b-4}(\mathbf{H}(\A),\kk)$.
 \end{enumerate}
\end{thm}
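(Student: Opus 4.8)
The plan is to establish the three assertions in the order (3), (2), (1): the precise comparison of (3) serves as the engine, from which the two degree bounds of (2) and (1) follow by dévissage and by the defining exact sequence of the reduced tensor product.

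I would begin with assertion (3), the technical heart. The guiding idea is that the auxiliary product $\underset{(\A,\tau)}{\otimes}$ of Notation~\ref{not-ptm2} is the local model, on functors pulled back from $\A$, of the hermitian tensor product, and that Lemma~\ref{lm-deg} already records the degree of its reduced derived functors. To make this precise I would evaluate $\overline{{\rm Tor}}^{[\mathbf{H}(\A),\overset{\perp}{\oplus}]}_\bullet((pi)^*F,(pi)^*G)$ at an object $x$ and rewrite it as a ${\rm Tor}$ over $\A$: using the identification $P^{\mathbf{H}(\A)}_t\underset{[\mathbf{H}(\A),\overset{\perp}{\oplus}]}{\bar{\otimes}}-\simeq\omega_t\delta_t$ of Proposition~\ref{pg-ptt} together with the explicit cross-effect description of $\omega_t^{\mathbf{H}(\A)}((pi)^*-)$ and of its reduction furnished by Lemma~\ref{lm-o2} (itself read off the factorisation-category computation of Lemma~\ref{lm-rcos}), both members become the homology of explicit complexes whose terms are groups ${\rm Tor}^\A$ of functors assembled from the cross-effects $cr_\bullet(F)$ and $cr_\bullet(G)$. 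I would then build a natural chain-level map towards $(pi)^*\overline{{\rm Tor}}^{(\A,\tau)}_\bullet(F,G)$—the latter being governed by the cross-effect decompositions of $\tau_x(F)^\vee$ and $\tau_x(G)$ entering Notation~\ref{not-ptm2}—and check that the leading cross-effect terms coincide. The remaining discrepancy only involves contributions in which two further cross-effect slots are inactive; the Pirashvili vanishing lemma~\cite{P-add} and the estimate of Lemma~\ref{lm-deg} then show that both kernel and cokernel lie in $\pol_{a+b-4}(\mathbf{H}(\A),\kk)$.

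With (3) available, I would prove (2) by induction on $a+b$, the base case being when one of $F$, $G$ is stably constant, where the map~(\ref{ecpt}) is invertible and the reduced tensor vanishes. For the inductive step, Theorem~\ref{th-dd2} connects an arbitrary $F\in\pol_a(\mathbf{H}(\A),\kk)$ to some $(pi)^*F_0$ with $F_0\in\pol_a(\A,\kk)$ through morphisms whose kernels and cokernels lie in $\pol_{a-2}$, and likewise for $G$. Feeding these into the long exact sequences of $\overline{{\rm Tor}}^{[\mathbf{H}(\A),\overset{\perp}{\oplus}]}_\bullet$ in each variable reduces the problem to two kinds of contribution: the pieces built from $\pol_{a-2}$ or $\pol_{b-2}$ factors, which by induction already lie in $\pol_{a+b-4}\subseteq\pol_{a+b-2}$; and the leading term $\overline{{\rm Tor}}^{[\mathbf{H}(\A),\overset{\perp}{\oplus}]}_\bullet((pi)^*F_0,(pi)^*G_0)$, which by (3) differs from $(pi)^*\overline{{\rm Tor}}^{(\A,\tau)}_\bullet(F_0,G_0)\in\pol_{a+b-2}$ (Lemma~\ref{lm-deg}) by a functor of degree at most $a+b-4$. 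Since every $\pol_d(\mathbf{H}(\A),\kk)$ is thick, these pieces assemble to give $\overline{{\rm Tor}}^{[\mathbf{H}(\A),\overset{\perp}{\oplus}]}_i(F,G)\in\pol_{a+b-2}$.

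Finally I would deduce (1) from (2) using the defining exact sequence of the reduced tensor product provided by Proposition~\ref{pg-ptt}, which relates ${\rm Tor}^{[\mathbf{H}(\A),\overset{\perp}{\oplus}]}_\bullet$, the usual target-wise ${\rm Tor}^\kk_\bullet$ and $\overline{{\rm Tor}}^{[\mathbf{H}(\A),\overset{\perp}{\oplus}]}_\bullet$. The usual tensor being computed at the target, the standard behaviour of $\otimes$ on polynomial functors gives ${\rm Tor}^\kk_i(F,G)\in\pol_{a+b}$, while (2) gives $\overline{{\rm Tor}}^{[\mathbf{H}(\A),\overset{\perp}{\oplus}]}_i(F,G)\in\pol_{a+b-2}\subseteq\pol_{a+b}$; thickness of $\pol_{a+b}(\mathbf{H}(\A),\kk)$ forces ${\rm Tor}^{[\mathbf{H}(\A),\overset{\perp}{\oplus}]}_i(F,G)\in\pol_{a+b}$. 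The main obstacle throughout is assertion~(3): constructing the natural comparison at the level of the cross-effect complexes and tracking exactly which cross-effect terms survive, so as to secure the degree-$(a+b-4)$ bound on its kernel and cokernel.
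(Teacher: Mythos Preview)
Your overall architecture---prove~(3) first, then recover~(2) by d\'evissage via Theorem~\ref{th-dd2}, then~(1) from the defining triangle of Proposition~\ref{pg-ptt}---agrees with the paper (which compresses the d\'evissage into the single sentence ``le th\'eor\`eme~\ref{th-dd2} montre qu'il suffit de montrer la derni\`ere assertion''). Your arguments for $(3)\Rightarrow(2)\Rightarrow(1)$ are fine.

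The gap is in your treatment of~(3). You assert that, after Lemma~\ref{lm-o2}, ``both members become the homology of explicit complexes whose terms are groups ${\rm Tor}^\A$''. But Lemma~\ref{lm-o2} only gives a quasi-isomorphism of the contravariant functor $t\mapsto{\rm Coker}\big(\omega_t^{\mathbf{H}(\A)}((pi)^*F)(x)\to\cdots\big)$ on $\mathbf{H}(\A)^{op}$ with a complex whose terms have the shape $\kk[\mathbf{H}(\A)(t,x)]\otimes(\cdots)$. The resulting ${\rm Tor}$ groups are still over $\kk[\mathbf{H}(\A)]$, not over $\kk[\A]$, and there is no elementary reason to identify them. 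This passage is precisely the step where the paper invokes Theorem~\ref{th-hhd}: one observes that each $C_r(F,x,-)$ extends from $\mathbf{H}(\A)^{op}$ to $\mathbf{HD}(\A)^{op}$, uses the polynomiality of $G$ together with Theorem~\ref{th-hhd} to replace ${\rm Tor}^{\kk[\mathbf{H}(\A)]}_s(i^*C_r,(pi)^*G)$ by ${\rm Tor}^{\kk[\mathbf{HD}(\A)]}_s(C_r,p^*G)$ \emph{stably in~$x$}, and then applies the adjunction~(\ref{eq-omegh}) to land in ${\rm Tor}^{\kk[\A]}$. Your sketch omits this entirely, and without it the reduction to cross-effect computations on~$\A$ does not go through.

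A secondary point: the morphism of~(3) is not produced by a direct chain-level comparison as you suggest, but as the edge map of a hyperhomology spectral sequence
$E^1_{r,s}={\rm Tor}_s^{\kk[\mathbf{H}(\A)]}(C_r(F,x,-),(pi)^*G)\Rightarrow\overline{{\rm Tor}}^{[\mathbf{H}(\A),\overset{\perp}{\oplus}]}_{r+s}((pi)^*F,(pi)^*G)(x)$,
concentrated in $r\le 0$ and bounded because $F$ is polynomial. After the $\mathbf{H}(\A)\to\mathbf{HD}(\A)\to\A$ passage above, the column $r=0$ is identified with $\overline{{\rm Tor}}^{(\A,\tau)}_s(F,G)(x)$; the columns $r\le -1$ involve $cr_n(F)$ and $cr_n(G)$ with $n\ge 2$ and are seen, by the adjunction somme/diagonale, to be polynomial of degree at most $a+b-4$ in~$x$. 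Framing this as a spectral-sequence edge map, rather than a direct chain map, is what makes the degree bound on kernel and cokernel transparent.
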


\begin{proof}
 Le théorème~\ref{th-dd2} montre qu'il suffit de montrer la dernière assertion. Pour ce faire, on part de l'isomorphisme canonique
 $$\overline{{\rm Tor}}_\bullet^{[\mathbf{H}(\A),\overset{\perp}{\oplus}]}((pi)^* F, (pi)^* G)(x)\simeq$$
 $${\rm Tor}_\bullet^{\kk[\mathbf{H}(\A)]}\big(t\mapsto{\rm Coker}\,\big(\omega_t^{\mathbf{H}(\A)}((pi)^* F)(x)\to\kk[\mathbf{H}(\A)(t,x)]\otimes ((pi)^* F)(x)\big), (pi)^* G\big)$$
 qui est isomorphe par le lemme~\ref{lm-o2} à l'homologie du complexe
 $$C_\bullet(F,x,-)\overset{\mathbf{L}}{\underset{\kk[\mathbf{H}(\A)]}{\otimes}}(pi)^* G$$
 où $C_\bullet(F,x,t)$ est un complexe de {\em chaînes} tel que
 $$C_i(F,x,t)=\kk[\mathbf{H}(\A)(t,x)]\otimes (cr_{-i+1}(F)(Dt,\dots,Dt)\oplus cr_{-i+2}(F)(x,Dt,\dots,Dt))$$
 concentré en degrés $i\leq 0$. Par conséquent, on dispose d'une suite spectrale d'hyperhomologie
 $$E^1_{r,s}={\rm Tor}_s^{\kk[\mathbf{H}(\A)]}(C_r(F,x,-),(pi)^*G)\Rightarrow\overline{{\rm Tor}}_{r+s}^{[\mathbf{H}(\A),\overset{\perp}{\oplus}]}((pi)^* F, (pi)^* G)(x)$$
 (où $E^1_{r,s}$ est nul sauf si $\leq 0$ ; la différentielle $d^n$ est de bidegré $(-n,n-1)$). Comme $F$ est polynomial, le complexe $C_\bullet(F,-,-)$ est borné, de sorte qu'il n'y a pas de problème de convergence, contrairement à ce qui arriverait sans cette hypothèse de polynomialité.
 
 On note que $C_r(F,x,-)$ s'identifie à la restriction à $\mathbf{H}(\A)^{op}$ du foncteur de $(\mathbf{HD}(\A)^{op},\kk)$ donné par
 $$t\mapsto\kk[\mathbf{HD}(\A)(t,x)]\otimes (cr_{-r+1}(F)(Dt,\dots,Dt)\oplus cr_{-r+2}(F)(x,Dt,\dots,Dt)),$$
 de sorte que la polynomialité de $G$ et le théorème~\ref{th-hhd}, combinés à l'isomorphisme d'adjonction~(\ref{eq-omegh}), donnent un isomorphisme naturel
 $$E^1_{r,s}\simeq {\rm Tor}_s^{\kk[\A]}(t\mapsto\kk[\A(t,x)]\otimes (cr_{-r+1}(F)(Dt,\dots,Dt)\oplus cr_{-r+2}(F)(x,Dt,\dots,Dt)),G)$$
 {\em stablement en $x$} ; le terme de droite s'identifie à
 $${\rm Tor}_s^{\kk[\A]}(t\mapsto(cr_{-r+1}(F)(Dt,\dots,Dt)\oplus cr_{-r+2}(F)(x,Dt,\dots,Dt)),\tau_x(G))$$
 qui pour $r=0$ se réduit à $\overline{{\rm Tor}}_s^{(\A,\tau)}(F,G)(x)$.
 
 Par adjonction somme/diagonale, on a en général
 $${\rm Tor}_\bullet^{\kk[\A]}(t\mapsto(cr_n(F)(Dt,\dots,Dt)\oplus cr_{n+1}(F)(x,Dt,\dots,Dt)),\tau_x(G))\simeq$$
 $${\rm Tor}_\bullet^{\kk[\A^n]}(cr_n(F^\vee)\oplus cr_{n+1}(F)(x,-)^\vee,(t_1,\dots,t_n)\mapsto G(x\oplus t_1\oplus\dots\oplus t_n))$$
 $$\simeq {\rm Tor}_\bullet^{\kk[\A^n]}(cr_n(F^\vee)\oplus cr_{n+1}(F)(x,-)^\vee,cr_n(G)\oplus cr_{n+1}(G)(x,-))$$
 qui est polynomial de degré au plus $a+b-4$ en $x$ lorsque $n\geq 2$, d'où la conclusion.
\end{proof}

\begin{rem}\label{rqi-til}
\begin{enumerate}
 \item Ce théorème nous sera surtout utile lorsque l'un des arguments $F$ et $G$ des groupes de torsion est additif, auquel cas la démonstration se simplifie (la suite spectrale se réduit à une suite exacte longue).
 \item En utilisant la comparaison des foncteurs polynomiaux sur $\mathbf{H}(\A)$ et $\A$ \cite[théorème~6.17]{DV-pol}, on voit facilement qu'on dispose stablement d'un isomorphisme naturel
$$\widetilde{F\underset{[\mathbf{H}(\A),\overset{\perp}{\oplus}]}{\otimes}G}\simeq\tilde{F}\otimes\tilde{G}$$
lorsque $F$ et $G$ sont des foncteurs polynomiaux (ou analytiques) sur $\mathbf{H}(\A)$ (noter que cet isomorphisme vit en fait dans $(\A,\kk)$).
Cependant, il paraît difficile d'utiliser la même voie pour étudier précisément sur les ${\rm Tor}^{[\mathbf{H}(\A),+]}$ de degré supérieur.
\end{enumerate}
\end{rem}

\subsection{Homologie des groupes de congruence}\label{sconc}

On commence par montrer que l'homologie (en chaque degré) des groupes de congruence définit des foncteurs faiblement polynomiaux. On utilise à cet effet la notion de {\em donnée triangulaire} introduite dans la définition~\ref{df-triang1}. Signalons que les résultats antérieurs de polynomialité de cette homologie reposant sur des méthodes de stabilité homologique \cite{Pu,CEFN,CE-cong,CMNR} conduisent à de la polynomialité {\em forte} (\cite{CMNR} donne une meilleure borne sur le degré faible --- qui y est appelé {\em degré stable} --- que celle donnée par le degré fort, mais en général non optimale, et sans s'affranchir d'une hypothèse de rang stable de Bass fini qui garantit la polynomialité forte).

\begin{lm}\label{lm-triad}
 Soient $\A$ une petite catégorie additive et $V$ un objet de $\A$. Alors les formules $T^{\rm s}_V(A)=\A(A,V)\simeq\left(\begin{array}{cc}
                                                            1 & *\\
                                                            0 & 1
                                                            \end{array}
\right)\subset {\rm Aut}_\A(V\oplus A)\simeq\tau_V {\rm Aut}_{\mathbf{S}(\A)}(A)$ et $T^{\rm i}_V(A)=\A(V,A)\simeq \left(\begin{array}{cc}
                                                            1 & 0\\
                                                            * & 1
                                                            \end{array}
\right)\subset {\rm Aut}_\A(V\oplus A)$ définissent des sous-foncteurs $T^{\rm s}_V$ et $T^{\rm i}_V$ de $\tau_V {\rm Aut}_{\mathbf{S}(\A)} : \mathbf{S}(\A)\to\mathbf{Grp}$ (où les fonctorialités en $V$ s'entendent via les foncteurs canoniques $\mathbf{S}(\A)\to\A^{op}$ et $\mathbf{S}(\A)\to\A$).
 
De plus, l'ensemble $\{T^{\rm s}_V,T^{\rm i}_V\}$ est une donnée triangulaire en $V$ sur $\mathbf{S}(\A)$.
\end{lm}

\begin{proof}
 La première assertion est évidente.
 
 La seconde résulte du calcul matriciel :
$$\left(\begin{array}{cc}
                                                            0 & 1\\
                                                            1 & 0
                                                            \end{array}\right)=\left(\begin{array}{cc}
                                                            -1 & 0\\
                                                            0 & 1
                                                            \end{array}\right)\left(\begin{array}{cc}
                                                            1 & -1\\
                                                            0 & 1
                                                            \end{array}\right)\left(\begin{array}{cc}
                                                            1 & 0\\
                                                            1 & 1
                                                            \end{array}\right)\left(\begin{array}{cc}
                                                            1 & -1\\
                                                            0 & 1
                                                            \end{array}\right)$$
dans ${\rm Aut}_\A(V\oplus V)$.
\end{proof}

\begin{thm}\label{thcong1}
 Soit $\Phi : \A\to\B$ un morphisme de $\mathbf{Add}^{ps}$. Pour tout entier $n$, l'objet $\Hh_n(\Gamma_\Phi;\kk)$ (où l'on note $\Gamma_\Phi$ pour $\Gamma_{\mathbf{S}(\Phi)}$) de $\St(\mathbf{S}(\B),\kk)$ appartient à $\pol_{2n}(\mathbf{S}(\B),\kk)$. Plus généralement, pour $X$ dans $\pol_d(\mathbf{S}(\A),\kk)$,  $\Hh_n(\Gamma_\Phi;X)$ appartient à $\pol_{2n+d}(\mathbf{S}(\B),\kk)$.
\end{thm}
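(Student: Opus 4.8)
The plan is to argue by induction on the homological degree $n$, proving at each stage the full statement (for all $d$ and all $X\in\pol_d(\mathbf{S}(\A),\kk)$ at once), since the constant-coefficient case and the general case feed into one another. Throughout I identify $\mathbf{S}(\A)$ with $\mathbf{H}(\A^{\rm e})$, so that $\mathbf{S}(\Phi)$ becomes $\mathbf{H}(\Phi^{\rm e})$ with $\Phi^{\rm e}=\Phi^{op}\times\Phi$ a morphism of $\mathbf{Add}_D^{ps}$; this makes the results of the previous paragraphs directly available, and the monoidal sum $+$ on $\mathbf{S}(\B)$ becomes the hermitian sum $\overset{\perp}{\oplus}$ on $\mathbf{H}(\B^{\rm e})$. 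For $n=0$, Proposition~\ref{prdfh} gives $\Hh_0(\Gamma_\Phi;X)\simeq\Phi_*(X)$, and Theorem~\ref{th-ekherm}(1) shows that $\mathbf{S}(\Phi)_*=\mathbf{H}(\Phi^{\rm e})_*$ preserves $\pol_d$; this settles the base case.

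Assume the statement for all degrees $<n$, and treat first the constant coefficient $X=\kk$. Fix an object $V$ of $\mathbf{S}(\A)$; by Lemma~\ref{lm-triad} the pair $\mathfrak{T}_V=\{T^{\rm s}_V,T^{\rm i}_V\}$ is a triangular datum at $V$. For $T=T^{\rm s}_V$, with $T(A)=\A(A,V)$, set $T_G(A):={\rm Ker}\,\big(\A(A,V)\to\B(\Phi A,\Phi V)\big)$, the congruence elementary subgroup, and symmetrically for $T^{\rm i}_V$. Conditions~(2) and~(3) of Proposition~\ref{pr-extri} are immediate ($T_G$ is a subgroup of the abelian group $T$, hence normal, and $\Phi$ kills its entries, so $T_G\subset\tau_V\Gamma_\Phi$); condition~(1) follows from the matrix computation that conjugating $\left(\begin{smallmatrix}1&f\\0&1\end{smallmatrix}\right)$ by $g\in\Gamma_\Phi(A)$ replaces $f$ by $f\circ g^{-1}$, whence $(g_*u)u^{-1}$ has entry $f\circ(g^{-1}-{\rm id})$, which lies in $T_G(A)$ because $\Phi(g^{-1}-{\rm id})=0$. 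For condition~(4), observe that $H_j(T_G;\kk)$, as a functor on $\mathbf{S}(\A)$, is the composite of the additive functor $A\mapsto{\rm Ker}\,(\A(A,V)\to\B(\Phi A,\Phi V))$ with $H_j(-;\kk)\colon\mathbf{Ab}\to\kk\md$; the latter is polynomial of degree $\leq j$ (by Künneth, its $k$-th cross-effect can be nonzero only when $j$ splits as a sum of $k$ strictly positive reduced homological degrees, forcing $k\leq j$), so $H_j(T_G;\kk)\in\pol_j(\mathbf{S}(\A),\kk)$. For $i+j=n$ with $j\geq 1$ we have $i<n$, so the induction hypothesis yields $\Hh_i(\Gamma_\Phi;H_j(T_G;\kk))\in\pol_{2i+j}(\mathbf{S}(\B),\kk)=\pol_{2n-j}\subseteq\pol_{2n-1}$. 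Hence Proposition~\ref{pr-extri} furnishes a triangular extension of $H_n(\Gamma_\Phi;\kk)$ at $V$ of degree $\leq 2n-1$, and since $V$ was arbitrary, Proposition~\ref{pr-swp} gives $\Hh_n(\Gamma_\Phi;\kk)\in\pol_{2n}(\mathbf{S}(\B),\kk)$.

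It remains to upgrade to arbitrary $X\in\pol_d(\mathbf{S}(\A),\kk)$. Here I invoke the fundamental spectral sequence of Theorem~\ref{th-spf}, $E^2_{p,q}(X)=\mathbf{L}_p\big((-\underset{[\mathbf{S}(\B),+]}{\otimes}\Hh_q(\Gamma_\Phi;\kk))\circ\Phi_*\big)(X)\Rightarrow\Hh_{p+q}(\Gamma_\Phi;X)$, together with the auxiliary spectral sequence whose second page is ${\rm Tor}^{[\mathbf{S}(\B),+]}_i(\mathbf{L}_j(\Phi_*)(X),\Hh_q(\Gamma_\Phi;\kk))$. By Theorem~\ref{th-ekherm}(1) each $\mathbf{L}_j(\Phi_*)(X)$ lies in $\pol_d(\mathbf{S}(\B),\kk)$; by the constant case just proved, $\Hh_q(\Gamma_\Phi;\kk)\in\pol_{2q}$ for $q\leq n$; and by Theorem~\ref{thp-ptens}(1) the functors ${\rm Tor}^{[\mathbf{S}(\B),+]}_i$ send $\pol_d\times\pol_{2q}$ into $\pol_{d+2q}$. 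Thus $E^2_{p,q}(X)\in\pol_{d+2q}\subseteq\pol_{d+2n}$ whenever $p+q=n$. Since $\pol_{d+2n}(\mathbf{S}(\B),\kk)$ is a localizing subcategory (stable under subobjects, quotients and extensions), the finite filtration of $\Hh_n(\Gamma_\Phi;X)$ by the subquotients $E^\infty_{p,q}$ with $p+q=n$ forces $\Hh_n(\Gamma_\Phi;X)\in\pol_{2n+d}(\mathbf{S}(\B),\kk)$, completing the induction.

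The genuinely hard analytic inputs --- that the tensor product along the hermitian sum and the derived Kan extensions preserve polynomial degrees (Theorems~\ref{thp-ptens} and~\ref{th-ekherm}, resting on the comparison Theorem~\ref{th-hhd}) --- are already in hand, so the main obstacle here is the bookkeeping that keeps the bound sharp. One must use the \emph{congruence} elementary subgroups $T_G$ (not the full triangular ones $T$), so that $H_j(T_G;\kk)$ is genuinely polynomial of controlled degree, and one must exploit the estimate $\deg H_j(T_G;\kk)\leq j\leq 2j-1$ for $j\geq 1$ to obtain extension degree exactly $2n-1$, hence the optimal bound $2n$ rather than $2n+1$. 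The remaining delicate point is checking that the $\Gamma_\Phi$-equivariance encoded in the triangular extension is compatible with the functoriality on $\mathbf{S}(\A)$, which is precisely where the explicit form of Lemma~\ref{lm-triad} and conditions~(1)--(3) of Proposition~\ref{pr-extri} are used.
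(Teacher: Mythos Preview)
Your proof is correct and follows essentially the same approach as the paper. The only difference is organizational: the paper separates the induction into two interleaved statements $(H_n)$ (constant coefficients up to degree $n-1$) and $(H'_n)$ (twisted coefficients up to degree $n-1$), proving $(H_n)\Rightarrow(H'_n)\Rightarrow(H_{n+1})$, whereas you prove the full statement at each step; but the ingredients --- the triangular data of Lemma~\ref{lm-triad} with the congruence subfunctors $T_G$, Propositions~\ref{pr-extri} and~\ref{pr-swp} for the constant case, and the spectral sequence of Theorem~\ref{th-spf} together with Theorems~\ref{th-ekherm} and~\ref{thp-ptens} for the twisted case --- are used identically.
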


\begin{proof}
 On raisonne par récurrence sur $n$ selon le schéma suivant : notant $(H_n)$ l'assertion selon laquelle $\Hh_m(\Gamma_\Phi;\kk)$ appartient à $\pol_{2m}(\mathbf{S}(\B),\kk)$ pour tout $m<n$ et $(H'_m)$ celle que $\Hh_m(\Gamma_\Phi;X)$ appartient à $\pol_{2m+d}(\mathbf{S}(\B),\kk)$ pour tout $m<n$, tout $d\in\mathbb{N}$ et tout $X$ de $\pol_d(\mathbf{S}(\A),\kk)$, on établit les implications
 $$(H_n)\Rightarrow (H'_n)\Rightarrow (H_{n+1}).$$
 
 Le fait que $(H_n)$ entraîne $(H'_n)$ découle directement des théorèmes~\ref{th-spf}, \ref{th-ekherm} et~\ref{thp-ptens}.
 
 Supposons $(H'_n)$ vérifiée et montrons que $\Hh_n(\Gamma_\Phi;\kk)$ est de degré au plus $2n$. On utilise pour cela la proposition~\ref{pr-extri} avec la donnée triangulaire du lemme~\ref{lm-triad} et $G=\Gamma_\Phi$. On définit $T^{\rm s}_{V,\Phi}(A):={\rm Ker}\,\big(\A(A,V)\to\B(\Phi(A),\Phi(V))\big)$ (morphisme induit par $\Phi$ ; $T^{\rm s}_{V,\Phi}$ est une abréviation pour $\big(T^{\rm s}_V\big)_{\Gamma_\Phi}$) et de même $T^{\rm i}_{V,\Phi}(A):={\rm Ker}\,\big(\A(V,A)\to\B(\Phi(V),\Phi(A))\big)$. On vérifie aussitôt les trois premières hypothèses sur ces foncteurs requises dans la proposition~\ref{pr-extri}, tandis que la quatrième, pour $d=2n-1$ provient de ce que $H_j(T^{\rm s}_{V,\Phi};\kk)$ et $H_j(T^{\rm i}_{V,\Phi};\kk)$ sont des foncteurs polynomiaux de degré au plus $j$, puisque $T^{\rm s}_{V,\Phi}$ et $T^{\rm i}_{V,\Phi}$ prennent leurs valeurs dans les groupes abéliens. Il suffit donc d'appliquer la proposition~\ref{pr-swp} pour conclure.
\end{proof}

On déduit maintenant de ce résultat la généralisation hermitienne suivante\,\footnote{Il serait naturel de démontrer directement ce résultat, mais les notions de donnée et d'extension triangulaires que nous avons introduites en~\ref{df-triang1} ne semblent pas le permettre si facilement.} :
\begin{thm}\label{thcong2}
 Soit $\Phi : \A\to\B$ un morphisme de $\mathbf{Add}_D^{ps}$. Pour tout entier $n$, l'objet $\Hh_n(\Gamma_{\mathbf{H}(\Phi)};\kk)$ de $\St(\mathbf{H}(\B),\kk)$ appartient à $\pol_{2n}(\mathbf{H}(\B),\kk)$. Plus généralement, pour $X$ dans $\pol_d(\mathbf{H}(\A),\kk)$,  $\Hh_n(\Gamma_\Phi;X)$ appartient à $\pol_{2n+d}(\mathbf{H}(\B),\kk)$.
\end{thm}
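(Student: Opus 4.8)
The plan is to run the same double induction on $n$ as in the proof of the linear statement (Théorème~\ref{thcong1}). Writing $(H_n)$ for the assertion that $\Hh_m(\Gamma_{\mathbf{H}(\Phi)};\kk)\in\pol_{2m}(\mathbf{H}(\B),\kk)$ for all $m<n$, and $(H'_n)$ for the assertion that $\Hh_m(\Gamma_{\mathbf{H}(\Phi)};X)\in\pol_{2m+d}(\mathbf{H}(\B),\kk)$ for all $m<n$ and all $X\in\pol_d(\mathbf{H}(\A),\kk)$, I would establish the implications $(H_n)\Rightarrow(H'_n)\Rightarrow(H_{n+1})$. The first implication goes through verbatim as in Théorème~\ref{thcong1}: the fundamental spectral sequence of Théorème~\ref{th-spf} for $\mathbf{H}(\Phi)$, whose $E^2$-page is assembled from $\mathbf{L}_j\mathbf{H}(\Phi)_*(X)$ and from the functors ${\rm Tor}^{[\mathbf{H}(\B),\overset{\perp}{\oplus}]}_i(-,\Hh_q(\Gamma_{\mathbf{H}(\Phi)};\kk))$ with $q<n$, combined with the degree estimates of Théorème~\ref{th-ekherm} (extensions de Kan dérivées) and Théorème~\ref{thp-ptens} (produit tensoriel le long de $\overset{\perp}{\oplus}$), directly yields the bound $2m+d$. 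All three inputs are stated for an arbitrary object of $\mathbf{Add}_D$, so nothing here is special to the split case.

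The only genuinely new point is the implication $(H'_n)\Rightarrow(H_{n+1})$, i.e. the bound $\Hh_n(\Gamma_{\mathbf{H}(\Phi)};\kk)\in\pol_{2n}(\mathbf{H}(\B),\kk)$; as the footnote warns, the triangular-datum route (Propositions~\ref{pr-extri} and~\ref{pr-swp}) of the linear case does not transfer smoothly, the elementary hermitian (Eichler) transvections yielding subfunctors whose homology is too large for the degree count to close. Instead I would deduce the estimate from the already established linear case. Set $Z:=\Hh_n(\Gamma_{\mathbf{H}(\Phi)};\kk)$ and $W:=\Hh_n(\Gamma_{\mathbf{S}(\Phi)};\kk)$, the latter lying in $\pol_{2n}(\mathbf{S}(\B),\kk)$ by Théorème~\ref{thcong1}. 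The forgetful and hyperbolic functors $o_\B:\mathbf{H}(\B)\to\mathbf{S}(\B)$ and $h_\B:\mathbf{S}(\B)\to\mathbf{H}(\B)$ are morphisms de CMHS compatibles avec $\Phi$ (i.e. $o_\B\mathbf{H}(\Phi)=\mathbf{S}(\Phi)o_\A$ and $h_\B\mathbf{S}(\Phi)=\mathbf{H}(\Phi)h_\A$), so the functoriality~(\ref{foncf}) supplies a natural comparison $Z\to o_\B^*W$, induced on homology by the inclusions $\Gamma_{\mathbf{H}(\Phi)}(E)\hookrightarrow\Gamma_{\mathbf{S}(\Phi)}(o_\B E)$ of a unitary congruence group into the ambient linear one. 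Since $o_\B^*$ commutes with the translation functors ($\tau^{\mathbf{H}(\B)}_t o_\B^*\simeq o_\B^*\tau^{\mathbf{S}(\B)}_{o_\B(t)}$), it preserves le degré polynomial, whence $o_\B^*W\in\pol_{2n}(\mathbf{H}(\B),\kk)$.

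To conclude I would apply the second part of Proposition~\ref{pr-sw1} with $d=2n-1$, testing on the hyperbolic objects $t={\rm H}_\B(V)$, which form a set of générateurs monoïdaux faibles of $\mathbf{H}(\B)$ since every object embeds in its hyperbolic ${\rm H}_\B$. It suffices to check that the two maps $\tau_t(Z)\to\tau_{t\overset{\perp}{\oplus}t}(Z)$ induced by the two canonical inclusions agree in $\St(\mathbf{H}(\B),\kk)/\pol_{2n-1}(\mathbf{H}(\B),\kk)$. These two maps differ by the braiding involution of $t\overset{\perp}{\oplus}t$; after composing with $Z\to o_\B^*W$ they become equal modulo $\pol_{2n-1}$, because $o_\B^*W$ is polynomial of degree $2n$. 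The remaining discrepancy is thus governed by the failure of the braiding to act trivially, and this is exactly what Proposition~\ref{swa} is designed to control: applied to $Z$ (with $\C=\mathbf{H}(\B)$), it shows that the kernel of $(i_1,i_2):Z\oplus Z\to Z(-\overset{\perp}{\oplus}-)$ is faiblement polynomial de degré au plus $0$, hence lies in $\pol_{2n-1}$ as soon as $n\ge 1$ (the case $n=0$ being trivial, $\Hh_0(\Gamma_{\mathbf{H}(\Phi)};\kk)\simeq\kk$). Feeding this into Proposition~\ref{pr-sw1} gives $Z\in\pol_{2n}(\mathbf{H}(\B),\kk)$ and closes the induction. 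The hard part will be precisely this last step: making rigorous that the braiding discrepancy of $Z$ is absorbed by the degree-$0$ kernel of Proposition~\ref{swa}, i.e. identifying the involution $(r,s)\mapsto(-s,-r)$ appearing there with the $\varepsilon$-twisted braiding of the somme hermitienne, without recourse to an explicit generation of the braiding by elementary transvections.
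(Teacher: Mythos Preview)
Your reduction $(H_n)\Rightarrow(H'_n)$ via the suite spectrale fondamentale, Théorème~\ref{th-ekherm} and Théorème~\ref{thp-ptens} is fine and matches the paper. The gap is in $(H'_n)\Rightarrow(H_{n+1})$.

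You want to apply Proposition~\ref{pr-sw1}, which requires that the two maps $\tau_t(Z)\to\tau_{t\overset{\perp}{\oplus}t}(Z)$ coincide in $\St(\mathbf{H}(\B),\kk)/\pol_{2n-1}$. Your justification has two non-sequiturs. First, the assertion that ``after composing with $Z\to o_\B^*W$ they become equal modulo $\pol_{2n-1}$, because $o_\B^*W$ is polynomial of degree $2n$'' is simply false: polynomiality of degree $2n$ says nothing about the two canonical inclusions $t\to t\overset{\perp}{\oplus}t$ inducing the same map on $\tau_t(o_\B^*W)$ modulo $\pol_{2n-1}$ --- for a generic functor of degree $2n$ they do not. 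Second, Proposition~\ref{swa} controls the \emph{kernel} of $(i_1,i_2):Z\oplus Z\to Z(-\overset{\perp}{\oplus}-)$; it says nothing about the \emph{difference} $i_1-i_2$ as maps out of $Z$, which is what you would need to feed into Proposition~\ref{pr-sw1}. The identification you hope for between the $(r,s)\mapsto(-s,-r)$ involution and the braiding does not convert one statement into the other. Your closing paragraph essentially concedes that this step is not done.

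The paper avoids Proposition~\ref{pr-sw1} altogether. It first restricts along the faiblement surjectif $h_\B:\mathbf{S}(\B)\to\mathbf{H}(\B)$, so it suffices to bound $F:=h_\B^*\Hh_n(\Gamma_{\mathbf{H}(\Phi)};\kk)$ in $\pol_{2n}(\mathbf{S}(\B),\kk)$. The crucial ingredient you are missing is the natural isomorphism $h_\A o_\A h_\A\simeq h_\A\overset{\perp}{\oplus}h_\A$ of functors $\mathbf{S}(\A)\to\mathbf{H}(\A)$: it yields a commutative square of foncteurs vers $\mathbf{Grp}$ in which the ``doubling'' map $\Gamma_{\mathbf{H}(\Phi)}\circ h_\A\to\Gamma_{\mathbf{H}(\Phi)}\circ(h_\A\overset{\perp}{\oplus}h_\A)$ factors \emph{through} $\Gamma_{\mathbf{S}(\Phi)}\circ(o_\A h_\A)$. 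Hence the induced map $F\to F\circ(E\mapsto E\oplus E)$ has image contained in (a pullback of) $\Hh_n(\Gamma_{\mathbf{S}(\Phi)};\kk)$, which is in $\pol_{2n}$ by Théorème~\ref{thcong1}. The lower route of the same square (diagonale puis produit), combined with Künneth and the induction hypothesis on $\Hh_i$ for $i<n$, identifies this map modulo $\pol_{2n}$ with $i^1_*+i^2_*$; thus the image of $(i^1_*,i^2_*):F\oplus F\to F\circ(E\mapsto E\oplus E)$ lies in $\pol_{2n}$. Now Proposition~\ref{swa} is applied \emph{directly}: the kernel of $(i^1_*,i^2_*)$ is faiblement polynomial de degré $\le 0$, so $F\oplus F$ sits in a short exact sequence between an object of $\pol_0$ and one of $\pol_{2n}$, whence $F\in\pol_{2n}$. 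No appeal to Proposition~\ref{pr-sw1} is needed.
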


\begin{proof}
Il suffit d'établir l'assertion à coefficients dans $\kk$, dont la seconde se déduit exactement comme dans la démonstration précédente. Comme le foncteur monoïdal $h_\B : \mathbf{S}(\B)\to\mathbf{H}(\B)$ est faiblement surjectif, il suffit de montrer que $h_\B^*\Hh_n(\Gamma_{\mathbf{H}(\Phi)};\kk)$ appartient à $\pol_{2n}(\mathbf{S}(\B),\kk)$.

On dispose d'un isomorphisme $h_\A o_\A h_\A\simeq h_\A\overset{\perp}{\oplus}h_\A$ de foncteurs $\mathbf{S}(\A)\to\mathbf{H}(\A)$, ce qui permet de former un diagramme commutatif
$$\xymatrix{ & \Gamma_{\mathbf{S}(\Phi)}\circ (o_\A h_\A)\ar[rd] & \\
\Gamma_{\mathbf{H}(\Phi)}\circ h_\A\ar[rr]\ar[ru]\ar[rd] && \Gamma_{\mathbf{H}(\Phi)}\circ(h_\A\overset{\perp}{\oplus}h_\A) \\
 & (\Gamma_{\mathbf{H}(\Phi)}\circ h_\A) \times (\Gamma_{\mathbf{H}(\Phi)}\circ h_\A)\ar[ru] &
}$$
de foncteurs $\mathbf{S}(\A)\to\mathbf{Grp}$ dans lequel la flèche en haut à gauche est l'inclusion induite par le foncteur $o_\A$, celle en haut à droite est induite par $h_\A$ (via l'isomorphisme qu'on vient de rappeler), celle du milieu par l'endofoncteur $E\mapsto E\overset{\perp}{\oplus} E$ de $\mathbf{H}(\A)$, celle en bas à gauche est l'inclusion diagonale et celle en bas à droite est donnée par la structure monoïdale sur le foncteur $\Gamma_{\mathbf{H}(\Phi)}$.

Raisonnant par récurrence sur le degré homologique $n$, on peut supposer $\Hh_i(\Gamma_{\mathbf{H}(\Phi)})$ de degré au plus $2i$ pour $i<n$, et le théorème~\ref{thcong1} montre que l'image du morphisme $F:=h_\B^*\Hh_n(\Gamma_{\mathbf{H}(\Phi)};\kk)\to h_\B^*\Hh_n(\Gamma_{\mathbf{H}(\Phi)};\kk)\circ (E\mapsto E\oplus E)$ induit par le diagramme précédent a une image dans $\pol_{2n}(\mathbf{S}(\B),\kk)$. En utilisant la partie inférieure du diagramme, l'hypothèse de récurrence et la formule de Künneth, on en déduit que l'image de $F\xrightarrow{i^1_*+i^2_*}F\circ (E\mapsto E\oplus E)$, où $i^1$ et $i^2$ sont les deux inclusions canoniques $E\to E\overset{\perp}{\oplus}E$ a une image dans $\pol_{2n}(\mathbf{S}(\B),\kk)$. On obtient le résultat souhaité en appliquant la proposition~\ref{swa} au noyau de la projection de $F$ sur ladite image.
 \end{proof}

Le résultat suivant constitue la généralisation hermitienne catégorique de Suslin \cite{S-exc}.
 
 \begin{thm}\label{sus-gal}
  Soit $\Phi : \A\to\B$ un morphisme de $\mathbf{Add}_D^{ps}$. Notons $e$ la borne inférieure dans $\mathbb{N}^*\cup\{+\infty\}$ des entiers $i>0$ tels que $\mathbf{L}_i^{add}(\Phi_*)\circ\Phi^*$ soit non nul sur $\B\md_\kk$, avec les notations de la proposition~\ref{pr-ekad}. Alors $\Gamma_{\mathbf{H}(\Phi)}$ est $\kk$-excisif en degrés $<e$.
  
  Si $e$ est fini, $\Gamma_{\mathbf{H}(\Phi)}$ n'est pas $\kk$-excisif en degré $e$ ; de plus, $\Hh_e(\Gamma_{\mathbf{H}(\Phi)};\kk)$ est polynomial de degré $2$, et son image dans $\pol_2(\mathbf{H}(\B),\kk)/\pol_1(\mathbf{H}(\B),\kk)\simeq\pol_2(\B,\kk)/\pol_1(\B,\kk)$ --- et même dans $\pol_2(\mathbf{H}(\B),\kk)/\pol_0(\mathbf{H}(\B),\kk)\simeq\pol_2(\B,\kk)/\pol_0(\B,\kk)$ si $2$ est inversible dans $\kk$ ou que $\A$ et $\B$ vérifient l'hypothèse~\ref{h2inv} --- (cf. théorème~\ref{th-dd2}) est isomorphe à 
 $$V\mapsto {\rm Tor}^\A_e\big(\Phi^*\B(-,V),\kk\otimes\Phi^*\B(DV,-)\big)_{\mathfrak{S}_2},$$
 où l'action de $\mathfrak{S}_2$ est induite par l'auto-adjonction du foncteur de dualité $D$.
 \end{thm}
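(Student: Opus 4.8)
The whole argument is organised around Proposition~\ref{cnex} applied to the CMHS morphism $\mathbf{H}(\Phi)$: everything reduces to understanding the endofunctors $\mathbf{L}_i(\mathbf{H}(\Phi)_*)\circ\mathbf{H}(\Phi)^*$ of $\St(\mathbf{H}(\B),\kk)$ up to the first degree where they fail to vanish. First I would reduce the excisivity statement to an additive computation. By Proposition~\ref{pg-ptt} an object $Y$ of $\St(\mathbf{H}(\B),\kk)$ is (stably) constant if and only if $P^{\mathbf{H}(\B)}_t\underset{[\mathbf{H}(\B),\overset{\perp}{\oplus}]}{\overline{\otimes}}Y\simeq\omega_t\delta_t(Y)$ vanishes for every $t$ (fidelity of $\omega_t$); combined with Proposition~\ref{cnex} this means, by induction on $i$, that $\mathbf{H}(\Phi)$ is $\kk$-excisive up to degree $d$ as soon as $\mathbf{L}_i(\mathbf{H}(\Phi)_*)\mathbf{H}(\Phi)^*P^{\mathbf{H}(\B)}_t=0$ for $0<i\leq d$ and all $t$. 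Using the canonical description $\mathbf{L}_\bullet(\mathbf{H}(\Phi)_*)(Y)(E)\simeq{\rm Tor}^{\kk[\mathbf{H}(\A)]}_\bullet(\mathbf{H}(\Phi)^*P^{\mathbf{H}(\B)^{op}}_E,Y)$ together with the Scorichenko-type comparison of Theorem~\ref{th-hhd} (via Corollary~\ref{cor-hhd} and the adjunction~(\ref{eq-omegh})), I would rewrite this hermitian Tor as a Tor over $\A$ of the additive functors $\Phi^*\B(-,E)$ and $\kk\otimes\Phi^*\B(Dt,-)$ extracted from the forgetful/hyperbolic analysis of the hermitian Hom-sets. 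By Proposition~\ref{pr-ekad} the comparison $\mathbf{L}^{add}_\bullet(\Phi_*)\to\mathbf{L}_\bullet(\Phi_*)$ is an isomorphism on additive functors, so these Tor groups vanish for $0<i<e$ exactly by the definition of $e$; this yields $\kk$-excisivity in degrees $<e$.

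Once excisivity up to degree $e-1$ is known, Proposition~\ref{cnex} supplies a natural isomorphism $F\underset{[\mathbf{H}(\B),\overset{\perp}{\oplus}]}{\overline{\otimes}}\Hh_e(\Gamma_{\mathbf{H}(\Phi)};\kk)\simeq\mathbf{L}_e(\mathbf{H}(\Phi)_*)\mathbf{H}(\Phi)^*F$. Evaluating on $F=P^{\mathbf{H}(\B)}_t$ turns the left-hand side into $\omega_t\delta_t\Hh_e$, while the same reduction as above identifies the right-hand side, as a functor of the evaluation variable, with a Tor over $\A$ of additive functors, hence with an object of $\pol_1(\mathbf{H}(\B),\kk)$. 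Since $\omega_t$ is faithful and the unit $X\to\tau_t\omega_t X$ is a monomorphism (Proposition~\ref{pg-ptt}), this forces $\delta_t\Hh_e\in\pol_1$ for all $t$, i.e. $\Hh_e\in\pol_2(\mathbf{H}(\B),\kk)$. The parameter $t$ being itself of degree $1$, the same computation exhibits the second cross-effect $cr_2(\Hh_e)(V_1,V_2)$ as the bifunctor ${\rm Tor}^\A_e(\Phi^*\B(-,V_1),\kk\otimes\Phi^*\B(DV_2,-))$; here Theorem~\ref{thp-ptens}(3) (which compares $\overline{{\rm Tor}}^{[\mathbf{H}(\A),\overset{\perp}{\oplus}]}$ with $\overline{{\rm Tor}}^{(\A,\tau)}$ modulo degree $a+b-4$) and Theorem~\ref{th-ekherm} and Proposition~\ref{pr-ekhd} are the tools that pin down the additive slots precisely rather than up to lower order.

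It then remains to pass from the bifunctor $cr_2(\Hh_e)$ to $\Hh_e$ itself modulo lower degree. For a quadratic functor the degree-exactly-two part is recovered as $V\mapsto cr_2(\Hh_e)(V,V)_{\mathfrak{S}_2}$, the $\mathfrak{S}_2$-coinvariants of the restriction to the diagonal; the swap $\mathfrak{S}_2$ acts by exchanging the two additive slots through the self-adjunction of the duality $D$, which both explains the coinvariants and, via the Koszul sign in the Tor complex, produces the signature twist when $e$ is odd. Transporting through the equivalence $\pol_2(\mathbf{H}(\B),\kk)/\pol_1\simeq\pol_2(\B,\kk)/\pol_1$ of Theorem~\ref{th-dd2} gives the stated formula, the refinement to $\pol_2/\pol_0$ under Hypothesis~\ref{h2inv} or $2\in\kk^\times$ being exactly the $\pol_{d-2}$-strengthening recorded in Theorem~\ref{th-ekherm}(3). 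Finally, when $e$ is finite the functor $\mathbf{L}^{add}_e(\Phi_*)\Phi^*$ is nonzero on $\B\md_\kk$ by definition of $e$, so the displayed Tor-functor is genuinely of degree $2$ (its second cross-effect does not vanish); hence $\Hh_e$ is polynomial of degree exactly $2$ and in particular nonconstant, so $\mathbf{H}(\Phi)$ is not $\kk$-excisive in degree $e$.

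The main obstacle is the second paragraph: carrying the full hermitian decoration --- the duality $D$, the $\mathfrak{S}_2$-symmetry and its sign --- through the comparison of Theorem~\ref{th-hhd} and the cross-effect bookkeeping of Theorem~\ref{thp-ptens}, so as to obtain the precise pair of additive functors $\Phi^*\B(-,V)$ and $\kk\otimes\Phi^*\B(DV,-)$ with the correct equivariant structure, and not merely \emph{some} degree-two functor built from ${\rm Tor}^\A_e$. Everything else is either formal (the excisivity induction, the passage from $cr_2$ to the diagonal coinvariants) or already packaged in the cited results.
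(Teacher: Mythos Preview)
There is a genuine gap. In your first two paragraphs you evaluate the isomorphism of Proposition~\ref{cnex} on the projectives $F=P^{\mathbf{H}(\B)}_t$ and then claim that Theorem~\ref{th-hhd} (via Corollary~\ref{cor-hhd} and~(\ref{eq-omegh})) rewrites
\[
{\rm Tor}^{\kk[\mathbf{H}(\A)]}_\bullet\big(\mathbf{H}(\Phi)^*P^{\mathbf{H}(\B)^{op}}_E,\;\mathbf{H}(\Phi)^*P^{\mathbf{H}(\B)}_t\big)
\]
as ${\rm Tor}^\A_\bullet\big(\Phi^*\B(-,E),\kk\otimes\Phi^*\B(Dt,-)\big)$. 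But Theorem~\ref{th-hhd} is a statement about $\dsto$, i.e.\ about complexes with \emph{analytic} homology; all of the comparison machinery built on it (Theorems~\ref{thp-ptens} and~\ref{th-ekherm}, Proposition~\ref{pr-ekhd}) inherits this restriction. The functors $\mathbf{H}(\Phi)^*P^{\mathbf{H}(\B)}_t$ are exponential, not polynomial, so none of these results apply to them, and your claimed identification --- hence also the assertion that $\mathbf{L}_e(\mathbf{H}(\Phi)_*)(\mathbf{H}(\Phi)^*P^{\mathbf{H}(\B)}_t)\in\pol_1$ --- has no justification. The ``main obstacle'' is therefore not the bookkeeping of the $\mathfrak{S}_2$-action you flag in your last paragraph, but this earlier step.

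The ingredient you never invoke, and which the paper uses explicitly, is Theorem~\ref{thcong2}: it supplies \emph{a priori} that each $\Hh_i(\Gamma_{\mathbf{H}(\Phi)};\kk)$ already lies in $\pol_{2i}(\mathbf{H}(\B),\kk)$. With polynomiality of $\Hh_i$ in hand, one evaluates the isomorphism of Proposition~\ref{cnex} not on projectives but on the \emph{additive} test functors $F=(pi)^*(\kk\otimes\B(Dx,-))$. Now both arguments of the reduced tensor are polynomial, so Theorem~\ref{thp-ptens} and formula~(\ref{ediv}) identify the left-hand side with ${\rm d}_x(\Hh_i)$ modulo lower degree, while Proposition~\ref{pr-ekhd} (together with Theorem~\ref{th-hhd} and Proposition~\ref{pr-ekad}) identifies the right-hand side, again modulo lower degree, with $\mathbf{L}_i^{add}(\Phi_*)(\kk\otimes\Phi^*\B(Dx,-))$. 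Its vanishing for $0<i<e$ then forces ${\rm d}_x(\Hh_i)$ to drop degree, and Proposition~\ref{pr-bd} --- whose item~(2) itself requires the polynomiality supplied by Theorem~\ref{thcong2} --- converts this into a bound on $\deg\Hh_i$, iterated down to the constant range. The degree-$e$ description is obtained by the same mechanism. (A minor side remark: there is no Koszul sign in the $\mathfrak{S}_2$-action of the present statement; the signature twist appears only later, in Theorem~\ref{thcong3}.)
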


 \begin{proof}
  Ce résultat se déduit des propositions~\ref{cnex},~\ref{pr-ekad},~\ref{pr-ekhd} et~\ref{pr-bd} (et de la formule~(\ref{ediv})) ainsi que des théorèmes~\ref{thp-ptens} et~\ref{thcong2}.
  
  Précisons la description à travers les isomorphismes naturels
  $$cr_2(H)(x,t)\simeq {\rm d}_t(H)(x)\simeq\mathbf{L}_e^{add}(\Phi_*)(\kk\otimes\Phi^*\B(Dt,-))(x)\simeq {\rm Tor}^\A_e\big(\Phi^*\B(-,x),\kk\otimes\Phi^*\B(Dt,-)\big),$$
  où l'on note $H$ pour $\Hh_e(\Gamma_{\mathbf{H}(\Phi)};\kk)$ (ou plutôt, par abus, un foncteur défini sur $\B$ ayant une image isomorphe dans la catégorie quotient via l'équivalence du théorème~\ref{th-dd2}), de l'isomorphisme canonique $cr_2(H)(x,t)\simeq cr_2(H)(t,x)$. On note tout d'abord qu'on dispose d'un isomorphisme canonique
  $${\rm Tor}^\A_e\big(\Phi^*\B(-,x),\kk\otimes\Phi^*\B(Dt,-)\big)\simeq {\rm Tor}^\A_e\big(\kk\otimes\Phi^*\B(-,x),\Phi^*\B(Dt,-)\big)$$
  (qui se réduit à la composée des isomorphismes canoniques $${\rm Tor}^\A_e\big(\Phi^*\B(-,x),\kk\otimes\Phi^*\B(Dt,-)\big)\simeq\kk\otimes{\rm Tor}^\A_e\big(\Phi^*\B(-,x),\Phi^*\B(Dt,-)\big)$$
  et
  $$\kk\otimes{\rm Tor}^\A_e\big(\Phi^*\B(-,x),\Phi^*\B(Dt,-)\big)\simeq{\rm Tor}^\A_e\big(\kk\otimes\Phi^*\B(-,x),\Phi^*\B(Dt,-)\big)$$
  si le groupe abélien sous-jacent à $\kk$ est sans torsion) en comparant chaque membre à l'homologie en degré $e$ de
  $$\Phi^*\B(-,x)\overset{\mathbf{L}}{\underset{\A}{\otimes}}\big(\kk\overset{\mathbf{L}}{\underset{\mathbb{Z}}{\otimes}}\Phi^*\B(Dt,-)\big)\simeq\big(\kk\overset{\mathbf{L}}{\underset{\mathbb{Z}}{\otimes}}\Phi^*\B(-,x)\big)\overset{\mathbf{L}}{\underset{\A}{\otimes}}\Phi^*\B(Dt,-),$$
  en utilisant que ${\rm Tor}^\A_i\big(\Phi^*\B(-,x),{\rm Tor}_1^\mathbb{Z}(\kk,\Phi^*\B(Dt,-))\big)$ et ${\rm Tor}^\A_i\big({\rm Tor}_1^\mathbb{Z}(\kk,\Phi^*\B(-,x)),\Phi^*\B(Dt,-)\big)$ sont nuls pour $0<i<e$ (grâce à l'hypothèse de minimalité faite sur $e$)
  et canoniquement isomorphes (à ${\rm Tor}_1^\mathbb{Z}(\kk,\B(Dt,x))$) pour $i=0$.
  
  On considère alors le diagramme commutatif
  $$\xymatrix{cr_2(H)(x,t)\ar[r]\ar[d] & {\rm Tor}^\A_e\big(\Phi^*\B(-,x),\kk\otimes\Phi^*\B(Dt,-)\big)\ar[d] \\
  cr_2(H')(Dx,Dt)\ar[r]\ar[dd] & {\rm Tor}^{\A^{op}}_e\big(\Phi^*\B(Dx,-),\kk\otimes\Phi^*\B(-,t)\big)\ar[d] \\
  & {\rm Tor}^{\A}_e\big(\kk\otimes\Phi^*\B(-,t),\Phi^*\B(Dx,-)\big)\ar[d] \\
  cr_2(H)(t,x)\ar[r] & {\rm Tor}^\A_e\big(\Phi^*\B(-,t),\kk\otimes\Phi^*\B(Dx,-)\big)
  }$$
  dans lequel $H'$ désigne l'analogue de $H$ en remplaçant $\Phi : \A\to\B$ par $\Phi^{op} : \A^{op}\to\B^{op}$, les flèches horizontales sont les isomorphismes précédents, les flèches verticales supérieures sont induites par la dualité, la flèche verticale en bas à droite est l'isomorphisme canonique spécifié précédemment (et celle au-dessus est l'isomorphisme d'échange des facteurs dans le groupe de torsion). On vérifie aussitôt que les composées verticales décrivent notre isomorphisme d'échange (en utilisant la façon dont l'isomorphisme $cr_2(H)(x,t)\simeq {\rm d}_t(H)(x)$, valable parce que $H$ est quadratique sans quotient additif, est construit), ce dont on déduit le résultat souhaité.
  
  Un autre point mérite quelques précisions : le fait qu'on obtienne un isomorphisme dans $\pol_2(\mathbf{H}(\B),\kk)/\pol_0(\mathbf{H}(\B),\kk)\simeq\pol_2(\B,\kk)/\pol_0(\B,\kk)$ si $2$ est inversible dans $\kk$ ou que $\A$ et $\B$ vérifient l'hypothèse~\ref{h2inv}. Le fait que ${\rm d}_t(H)$ soit un foncteur réduit, pour tout $t$, montre que le foncteur quadratique $H$ n'a pas de quotient additif non nul ; par conséquent, modulo les foncteurs constants, le morphisme canonique ${\rm cr}_2(H)\to H$ est un {\em épimorphisme} (où ${\rm cr}_2$ a la signification de l'appendice~\ref{a2}). Il suffit donc de noter que la partie réduite de $H$ est {\em pseudo-diagonalisable} au sens de la proposition~\ref{df-fqpd}, ce qui est gratuit si $2$ est inversible dans $\kk$ ou se déduit via les corollaires~\ref{pbcr} et~\ref{pdcr} de ce que $\B$ vérifie l'hypothèse~\ref{h2inv}.
 \end{proof}

 \begin{rem}
 \begin{enumerate}
  \item Dans l'énoncé précédent, on peut remplacer les coïnvariants (sous l'action de $\mathfrak{S}_2$) par les invariants, qui leur sont canoniquement isomorphes dans la catégorie quotient considérée (par la proposition~\ref{df-fqpd} si cette catégorie quotient est $\pol_2/\pol_0$).
  \item On peut décrire concrètement l'isomorphisme du théorème de la façon suivante. Soient $\tilde{x}$ un objet de $\mathbf{H}(\A)$ et $x:=\mathbf{H}(\Phi)(x)$. On dispose d'un morphisme naturel
  $$\Hh_e(\Gamma_{\mathbf{H}(\Phi)};\kk)(x)=H_e(\Gamma_{\mathbf{H}(\Phi)}(\tilde{x});\kk)={\rm Tor}^{\mathbb{Z}[\Gamma_{\mathbf{H}(\Phi)}(\tilde{x})]}_e(\mathbb{Z},\kk)\to\cdots$$
  $${\rm Tor}^{{\rm End}_\A(\tilde{x})}_e(\Phi^*\B(-,x)(\tilde{x}),\kk\otimes\Phi^*\B(Dx,-)(\tilde{x}))\to {\rm Tor}^\A_e(\Phi^*\B(-,x),\kk\otimes\Phi^*\B(Dx,-))$$
 où la première flèche est induite par le morphisme de monoïdes (multiplicatifs) $\mathbb{Z}[\Gamma_{\mathbf{H}(\Phi)}(\tilde{x})]\twoheadrightarrow\Gamma_{\mathbf{H}(\Phi)}(\tilde{x})\hookrightarrow {\rm End}_\A(\tilde{x})$ et sur les coefficients par les morphismes $\Gamma_{\mathbf{H}(\Phi)}(\tilde{x})$-équivariants $\mathbb{Z}\to\B(x,x)=\Phi^*\B(-,x)(\tilde{x})$ donné par l'identité de $x$ et $\kk\to\kk\otimes\B(Dx,x)=\kk\otimes\Phi^*\B(Dx,-)(\tilde{x})$ par l'isomorphisme $Dx\to x$ inverse de celui induit par la structure hermitienne sur $x$, tandis que la deuxième flèche est induite par le foncteur additif pleinement fidèle canonique de la catégorie préadditive à un objet associée à l'anneau ${\rm End}_\A(\tilde{x})$ dans $\A$. Il suffit alors de projeter sur les coïnvariants sous l'action de l'involution décrite dans l'énoncé (ou de constater qu'on tombe dans les invariants --- cf. remarque précédente) et de passer à la catégorie quotient idoine pour obtenir l'isomorphisme du théorème~\ref{sus-gal}.
 
 Cette construction est une généralisation directe de Suslin \cite[§\,4]{S-exc}.
 \end{enumerate}
 \end{rem}

 En particulier :
 \begin{thm}\label{sus-gal2}
  Soit $\Phi : \A\to\B$ un morphisme de $\mathbf{Add}^{ps}$. Notons $e$ la borne inférieure dans $\mathbf{N}^*\cup\{+\infty\}$ des entiers $i>0$ tels que $\mathbf{L}_i^{add}(\Phi_*)\circ\Phi^*$ soit non nul sur $\B\md_\kk$ (avec les notations de la proposition~\ref{pr-ekad}). Alors $\Gamma_{\mathbf{S}(\Phi)}$ est $\kk$-excisif en degrés $<e$.
  
 Si $e$ est fini, $\Gamma_{\mathbf{S}(\Phi)}$ n'est pas $\kk$-excisif en degré $e$ ; de plus, $\Hh_e(\Gamma_\Phi;\kk)$ est polynomial de degré $2$, et son image dans $\pol_2(\mathbf{S}(\B),\kk)/\pol_0(\mathbf{S}(\B),\kk)\simeq\pol_2(\B^{op}\times\B,\kk)/\pol_0(\B^{op}\times\B,\kk)$ (cf. théorème~\ref{th-dd2}) est isomorphe à 
 $$(U,V)\mapsto {\rm Tor}^\A_e\big(\Phi^*\B(-,V),\kk\otimes\Phi^*\B(U,-)\big).$$
 \end{thm}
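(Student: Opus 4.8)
The plan is to deduce the statement from the hermitian Theorem~\ref{sus-gal} by applying the latter to the category with duality $\A^{\rm e}=\A^{op}\times\A$ equipped with the factor-exchange duality. First I would set $\Phi^{\rm e}:=\Phi^{op}\times\Phi : \A^{\rm e}\to\B^{\rm e}$; since $\Phi$ is full and weakly surjective, so are $\Phi^{op}$ and hence $\Phi^{\rm e}$, and the latter visibly commutes with the exchange duality, so $\Phi^{\rm e}$ is a morphism of $\mathbf{Add}_D^{ps}$. Using the canonical equivalences $\mathbf{H}(\A^{\rm e})\simeq\mathbf{S}(\A)$ and $\mathbf{H}(\B^{\rm e})\simeq\mathbf{S}(\B)$ recalled in \S\ref{sherm} (under which $\mathbf{HD}(\A^{\rm e})$ identifies with the factorization category $\mathbf{F}(\A)$), the morphism $\mathbf{H}(\Phi^{\rm e})$ corresponds to $\mathbf{S}(\Phi)$; in particular $\Gamma_{\mathbf{H}(\Phi^{\rm e})}$ is carried to $\Gamma_{\mathbf{S}(\Phi)}$ and the homological functors $\Hh_\bullet(\Gamma_{\mathbf{H}(\Phi^{\rm e})};-)$ and $\Hh_\bullet(\Gamma_{\mathbf{S}(\Phi)};-)$ agree through these equivalences.

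Next I would check that the integer $e$ governing Theorem~\ref{sus-gal} for $\Phi^{\rm e}$ coincides with the one defined for $\Phi$ in the statement. By criterion~\ref{lp4} of Proposition~\ref{pr-ekad}, $e$ is controlled by the vanishing of ${\rm Tor}^{\A^{\rm e}}_i={\rm Tor}^{\A^{op}\times\A}_i$ on pullbacks of additive functors. A Künneth formula for functor homology over a product of additive categories expresses ${\rm Tor}^{\A^{op}\times\A}_n$ in terms of the groups ${\rm Tor}^{\A^{op}}_i$ and ${\rm Tor}^\A_j$ with $i+j\le n$; combined with the symmetry ${\rm Tor}^{\A^{op}}_i(F,G)\simeq{\rm Tor}^\A_i(G,F)$, a degree count shows that these products all vanish for $0<n<e$ while the term with one factor in degree $e$ and the other in degree $0$ survives for $n=e$. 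Hence the first nonzero degree for $\Phi^{\rm e}$ is exactly $e$, and the excision assertions (excisivity in degrees $<e$, failure in degree $e$) follow at once from the corresponding assertions of Theorem~\ref{sus-gal}.

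It remains to transport the description of $\Hh_e$. Here I would use that $\A^{\rm e}$ and $\B^{\rm e}$ always satisfy Hypothesis~\ref{h2inv} (they lie in the essential image of $\A\mapsto\A^{\rm e}$, the first of its two guaranteed cases), so Theorem~\ref{sus-gal} yields an isomorphism already in $\pol_2(\mathbf{H}(\B^{\rm e}))/\pol_0(\mathbf{H}(\B^{\rm e}))\simeq\pol_2(\B^{op}\times\B)/\pol_0$, with no hypothesis on $2$. Writing an object of $\B^{\rm e}$ as a pair $(U,V)$, so that its exchange-dual is $(V,U)$, the same Künneth decomposition turns the hermitian expression ${\rm Tor}^{\A^{\rm e}}_e((\Phi^{\rm e})^*\B^{\rm e}(-,W),\kk\otimes(\Phi^{\rm e})^*\B^{\rm e}(DW,-))$ into ${\rm Tor}^\A_e(\Phi^*\B(-,V),\kk\otimes\Phi^*\B(U,-))$. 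The one genuinely delicate point --- and the main thing to get right --- is the disappearance of the coinvariants: the $\mathfrak{S}_2$ of Theorem~\ref{sus-gal} acts through the self-adjunction of the duality, which here is precisely the exchange of the two factors of $\A^{\rm e}$, hence of the two Künneth factors. Under the equivalence of Theorem~\ref{th-dd2} for $\B^{op}\times\B$, these two factors become the two independent variables $U\in\B^{op}$ and $V\in\B$, between which no symmetry is imposed; taking $\mathfrak{S}_2$-coinvariants on the hermitian side therefore corresponds to recording the full bifunctor $(U,V)\mapsto {\rm Tor}^\A_e(\Phi^*\B(-,V),\kk\otimes\Phi^*\B(U,-))$ without further symmetrization. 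Checking this compatibility carefully --- via the explicit form of the exchange isomorphism $cr_2(H)(x,t)\simeq cr_2(H)(t,x)$ used in the proof of Theorem~\ref{sus-gal} --- is the crux of the argument; the rest is bookkeeping through the equivalences already established.
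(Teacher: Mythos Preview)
Your proposal is correct and is exactly the approach the paper takes: the paper gives no separate proof for this theorem, introducing it simply with \og En particulier~:\fg{} immediately after Theorem~\ref{sus-gal}, so the intended argument is precisely the specialisation $\A\mapsto\A^{\rm e}$ you describe, using the equivalence $\mathbf{H}(\A^{\rm e})\simeq\mathbf{S}(\A)$ and the fact that $\A^{\rm e}$ automatically satisfies Hypothesis~\ref{h2inv}. Your handling of the integer $e$ and of the $\mathfrak{S}_2$-coinvariants is the right unpacking of this specialisation; one minor simplification is that, since additive functors on a product $\A^{op}\times\A$ split as direct sums of functors depending on a single factor, the comparison of the two values of $e$ via criterion~\ref{lp4} of Proposition~\ref{pr-ekad} can be done by this splitting directly rather than invoking a full K\"unneth spectral sequence.
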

 
 Dans l'énoncé qui suit, si $A$ est un anneau, $GL(A)$ désigne le foncteur d'automorphismes de la catégorie $\mathbf{S}(A)$ et, si $M$ est un $\kk$-$A$-bimodule, $\mathfrak{gl}(M)$ désigne le foncteur $V\mapsto {\rm End}_A(V)\otimes_A M$ de $(\mathbf{S}(A),\kk)$.
 
 \begin{cor}[Suslin \cite{S-exc}]\label{csus}
  Soient $A$ un anneau et $I$ un idéal bilatère de $A$. Notons $e$ la borne inférieure dans $\mathbf{N}^*\cup\{+\infty\}$ des entiers $i>0$ tels que ${\rm Tor}^A_i(A/I,\kk\otimes A/I)$ soit non nul. Alors l'anneau sans unité $I$ est $\kk$-excisif pour la $K$-théorie algébrique jusqu'en degrés $<e$ ; de plus, on dispose d'un isomorphisme
  $$H_e({\rm Ker}\,(GL(A)\to GL(A/I));\kk)\simeq\mathfrak{gl}({\rm Tor}^A_e(A/I,\kk\otimes A/I))$$
  dans $\pol_2(\mathbf{S}(A/I),\kk)/\pol_0(\mathbf{S}(A/I),\kk)$.
 \end{cor}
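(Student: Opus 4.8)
The plan is to deduce the statement from Theorem~\ref{sus-gal2} applied to the morphism $\Phi:\mathbf{P}(A)\to\mathbf{P}(A/I)$ of $\mathbf{Add}^{ps}$ induced by the quotient $A\to A/I$ (extension of scalars). First I would check that $\Phi$ indeed lies in $\mathbf{Add}^{ps}$: the surjectivity of $A\to A/I$ makes $\mathrm{Hom}_A(P,Q)\to\mathrm{Hom}_{A/I}(P/IP,Q/IQ)$ surjective for $P,Q$ finitely generated projective (reduce to free modules), so $\Phi$ is full, and every finitely generated projective $A/I$-module is a direct factor of some $(A/I)^n=A^n/IA^n$, so $\Phi$ is weakly surjective. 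With this choice one has $\Gamma_{\mathbf{S}(\Phi)}(A^n)=\mathrm{Ker}\big(GL_n(A)\to GL_n(A/I)\big)=GL_n(I)$, so that $\Hh_\bullet(\Gamma_{\mathbf{S}(\Phi)};\kk)$ computes the stable homology of the congruence subgroups; and, as recalled in the introduction after \cite{SW}, the $\kk$-excisivity of $\Phi$ in the sense of Definition~\ref{df-exci} is exactly the $\kk$-excisivity of the nonunital ring $I$ for algebraic $K$-theory (via the $+$-construction, which is available in the CMHS framework).

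Next I would match the invariant $e$ of Theorem~\ref{sus-gal2} with the one of the statement. By Proposition~\ref{pr-ekad}, condition~(\ref{lp4}), the vanishing of $\mathbf{L}^{add}_i(\Phi_*)\circ\Phi^*$ on $\mathbf{P}(A/I)\md_\kk$ for $0<i<e$ is equivalent to the vanishing of $\mathrm{Tor}^{\mathbf{P}(A)}_i(\Phi^*M,\Phi^*N)$ for additive $M\in\mdd\mathbf{P}(A/I)$ and $N\in\mathbf{P}(A/I)\md_\kk$. Here I would invoke the classical Morita-type identification of the abelian category of additive functors on $\mathbf{P}(A)$ with $A$-modules, under which the functor homology $\mathrm{Tor}^{\mathbf{P}(A)}_\bullet$ of additive functors becomes module homology $\mathrm{Tor}^A_\bullet$; since $\mathbf{P}(A/I)$ is additively generated by $A/I$, it suffices to test the generators $M=\mathbf{P}(A/I)(-,A/I)$ and $N=\kk\otimes\mathbf{P}(A/I)(A/I,-)$, whose restrictions along $\Phi$ correspond respectively to $A/I$ and $\kk\otimes A/I$ viewed over $A$ through $A\to A/I$. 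This turns the condition into the nonvanishing of $\mathrm{Tor}^A_i(A/I,\kk\otimes A/I)$, so the two definitions of $e$ coincide and the excisivity assertion in degrees $<e$ follows at once (the case $e=+\infty$, full excisivity of $I$, being the degenerate case of the theorem).

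It then remains to identify the degree-$e$ term. Theorem~\ref{sus-gal2} describes the image of $\Hh_e(\Gamma_{\mathbf{S}(\Phi)};\kk)$ in $\pol_2(\mathbf{S}(A/I),\kk)/\pol_0\simeq\pol_2(\mathbf{P}(A/I)^{op}\times\mathbf{P}(A/I),\kk)/\pol_0$ as the bifunctor $(U,V)\mapsto\mathrm{Tor}^{\mathbf{P}(A)}_e\big(\Phi^*\mathbf{P}(A/I)(-,V),\kk\otimes\Phi^*\mathbf{P}(A/I)(U,-)\big)$. Applying the same dictionary, $\Phi^*\mathbf{P}(A/I)(-,V)$ corresponds to $V$ and $\kk\otimes\Phi^*\mathbf{P}(A/I)(U,-)$ to $\kk\otimes\mathrm{Hom}_{A/I}(U,A/I)$ (all over $A$ through the quotient), and the exponential behaviour of $\mathrm{Tor}$ on projectives lets me rewrite the bifunctor as $(U,V)\mapsto\mathrm{Hom}_{A/I}(U,V)\otimes_{A/I}\mathrm{Tor}^A_e(A/I,\kk\otimes A/I)$. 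This is precisely the cross-effect $cr_2$ of $\mathfrak{gl}(M)$ for $M=\mathrm{Tor}^A_e(A/I,\kk\otimes A/I)$, so transporting back along the equivalence of Theorem~\ref{th-dd2} yields the announced isomorphism in $\pol_2(\mathbf{S}(A/I),\kk)/\pol_0(\mathbf{S}(A/I),\kk)$.

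The main work, and the only genuinely delicate point, is the translation dictionary underlying the last two paragraphs: matching the abstract functor-homological quantities $\mathbf{L}^{add}_\bullet(\Phi_*)$ and $\mathrm{Tor}^{\mathbf{P}(A)}_\bullet$ with honest module $\mathrm{Tor}$ over $A$, while keeping careful track of the variances (covariant versus contravariant additive functors) and of the resulting left/right $A/I$- and $\kk$-module structures on the coefficients, so that the $\mathfrak{S}_2$-action and the bimodule structure of $\mathfrak{gl}(M)$ come out with the correct sides. Everything else is a direct specialization of the general theorems already established.
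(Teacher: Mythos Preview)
Your proof is correct and follows exactly the approach the paper intends: the corollary is stated with ``En particulier'' after Theorem~\ref{sus-gal2} and given no separate proof, so the only content is the specialization $\Phi=\mathbf{P}(A)\to\mathbf{P}(A/I)$ together with the standard Morita dictionary identifying $\mathrm{Tor}^{\mathbf{P}(A)}_\bullet$ of additive functors with $\mathrm{Tor}^A_\bullet$ of modules, which you carry out carefully. Your final identification of the bifunctor $(U,V)\mapsto\mathrm{Hom}_{A/I}(U,V)\otimes_{A/I}\mathrm{Tor}^A_e(A/I,\kk\otimes A/I)$ with the image of $\mathfrak{gl}(M)$ under the equivalence of Theorem~\ref{th-dd2} is exactly what is needed.
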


 Nous pouvons maintenant examiner l'homologie stable de nos groupes de congruence au-delà du premier degré non excisif (sous une hypothèse raisonnable sur $\kk$). 
 \begin{pr}\label{hne}
 Supposons que $\kk$ est de dimension homologique au plus~$1$.
 
 Soient $\Phi : \A\to\B$ un morphisme de $\mathbf{Add}_D^{ps}$ et $e\in\mathbb{N}^*$. On suppose que $\mathbf{L}^{add}_i\Phi_*\Phi^*$ est nul sur $\B\md_\kk$  pour $0<i<e$. 
 
 Pour tout entier $n>0$, $\Hh_n(\Gamma_\Phi;\kk)$ est de degré au plus $2\big[\frac{n}{e}\big]$ (où les crochets indiquent la partie entière). De plus, si $n=me$ et que $F$ est un objet de $\mathcal{P}ol_d(\mathbf{H}(\B),\kk)$, on a un isomorphisme naturel 
$$F\underset{[\mathbf{H}(\B),\overset{\perp}{\oplus}]}{\overline{\otimes}}\Hh_{me}(\Gamma_{\mathbf{H}(\Phi)};\kk)\simeq\mathbf{L}_e(\mathbf{H}(\Phi)_*)\mathbf{H}(\Phi)^*(F)\underset{\kk}{\otimes}\Hh_{(m-1)e}(\Gamma_{\mathbf{H}(\Phi)};\kk)$$
dans $\mathcal{P}ol_{d+2m-2}(\mathbf{H}(\B),\kk)/\mathcal{P}ol_{d+2m-4}(\mathbf{H}(\B),\kk)$. 
\end{pr}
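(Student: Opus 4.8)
The plan is to run an induction on the homological degree $n$, using the excisivity of $\mathbf{H}(\Phi)$ in low degrees to compress the fundamental spectral sequence of Theorem~\ref{th-spf}. Write $H_q:=\Hh_q(\Gamma_{\mathbf{H}(\Phi)};\kk)$. The base case is Theorem~\ref{sus-gal}: the morphism $\mathbf{H}(\Phi)$ is $\kk$-excisif en degrés $<e$, which by Proposition~\ref{cnex} is exactly the vanishing of $\mathbf{L}_j(\mathbf{H}(\Phi)_*)\mathbf{H}(\Phi)^*$ on $\St(\mathbf{H}(\B),\kk)$ for $0<j<e$, so that $H_q$ is stably constant (of degree $0=2[q/e]$) for $0<q<e$ and $H_0\simeq\kk$. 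The inductive hypothesis at stage $n$ (with $m=[n/e]$) is that $H_q\in\pol_{2[q/e]}(\mathbf{H}(\B),\kk)$ for all $q<n$, together with the lower instances of the recursion.

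The engine is a comparison, for every $F\in\pol_d(\mathbf{H}(\B),\kk)$, of the two spectral sequences converging to $\Hh_n(\Gamma_{\mathbf{H}(\Phi)};\mathbf{H}(\Phi)^*F)$. On one side the universal coefficients sequence $CU(F)$ has second page ${\rm Tor}^\kk_p(F,H_q)$ and, since $\kk$ is of homological dimension $\le1$, degenerates into universal-coefficients short exact sequences. On the other side $E(\mathbf{H}(\Phi)^*F)$, unfolded through the companion sequence $E'$ of Theorem~\ref{th-spf}, has its rows $0<j<e$ annihilated by the base case; via the counit $\mathbf{H}(\Phi)_*\mathbf{H}(\Phi)^*F\simeq F$ its $j=0$ row contributes ${\rm Tor}^{[\mathbf{H}(\B),\overset{\perp}{\oplus}]}_i(F,H_q)$ — which differs from the corresponding ${\rm Tor}^\kk_i(F,H_q)$ of $CU(F)$ only by reduced ${\rm Tor}$'s, of degree two lower by Theorem~\ref{thp-ptens} — while its first surviving higher row $j=e$ contributes ${\rm Tor}^{[\mathbf{H}(\B),\overset{\perp}{\oplus}]}_i(\mathbf{L}_e(\mathbf{H}(\Phi)_*)\mathbf{H}(\Phi)^*F,H_q)$. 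Since the comparison $E(\mathbf{H}(\Phi)^*F)\to CU(F)$ is an isomorphism on the abutment, the defect of the map~(\ref{ecpt}) at the spot $(0,n)$, namely $F\underset{[\mathbf{H}(\B),\overset{\perp}{\oplus}]}{\overline{\otimes}}H_n$, is forced to coincide, as an iterated extension, with the $j\ge e$ contributions in total degree $n$. These involve only $H_{n-e},H_{n-2e},\dots$, which the inductive hypothesis places in $\pol_{2m-2},\pol_{2m-4},\dots$; so this step never refers to the degree of $H_n$ itself.

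Tracking the top degree gives the recursion. The leading ($i=0$) term of the $j=e$ row is $\mathbf{L}_e(\mathbf{H}(\Phi)_*)\mathbf{H}(\Phi)^*F\underset{[\mathbf{H}(\B),\overset{\perp}{\oplus}]}{\otimes}H_{(m-1)e}$, which modulo $\pol_{d+2m-4}$ equals $\mathbf{L}_e(\mathbf{H}(\Phi)_*)\mathbf{H}(\Phi)^*F\otimes_\kk H_{(m-1)e}$ (the two products differ by a reduced product of degree $\le d+2m-4$, by Theorem~\ref{thp-ptens}), whereas the rows $j\ge2e$ land in $\pol_{d+2m-4}$; this yields the asserted isomorphism for $n=me$. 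To make the identification of the $j=e$ term canonical I would descend to the additive category $\B$ through Theorem~\ref{th-dd2} and Theorem~\ref{thp-ptens}(3), where $\mathbf{L}_e$ becomes the computable $\mathbf{L}^{add}_e$ of Proposition~\ref{pr-ekad}, the factor $H_{(m-1)e}$ splitting off by the derivation and Künneth formulas of Proposition~\ref{pr-ekad}(3) and Proposition~\ref{pr-bd}(\ref{ider}). The degree estimate then follows by bootstrapping: taking $F=(pi)^*(\kk\otimes\B(Dx,-))\in\pol_1(\mathbf{H}(\B),\kk)$, the control above gives $F\underset{[\mathbf{H}(\B),\overset{\perp}{\oplus}]}{\overline{\otimes}}H_n\in\pol_{2m-1}$, since $\mathbf{L}_e(\mathbf{H}(\Phi)_*)\mathbf{H}(\Phi)^*F\in\pol_1$ by Theorem~\ref{th-ekherm}(1) and $H_{n-e}\in\pol_{2m-2}$. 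Starting from the crude bound $H_n\in\pol_{2n}$ of Theorem~\ref{thcong2}, I reduce $H_n$ to a $\B$-functor $\mathcal H$ via Theorem~\ref{th-dd2}, identify ${\rm d}_x\mathcal H$ with $F\underset{[\mathbf{H}(\B),\overset{\perp}{\oplus}]}{\overline{\otimes}}H_n$ up to lower degree by Theorem~\ref{thp-ptens}(3), and apply the criterion of Proposition~\ref{pr-bd}(2); this lowers the bound by $2$, and iterating drives it down to $\pol_{2m}$, i.e. $H_n\in\pol_{2[n/e]}$.

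The hard part will be the comparison of the two spectral sequences modulo $\pol_{d+2m-4}$: one must prove that $E(\mathbf{H}(\Phi)^*F)\to CU(F)$ restricts to an isomorphism on the $j=0$ parts up to reduced products, so that the whole defect $F\underset{[\mathbf{H}(\B),\overset{\perp}{\oplus}]}{\overline{\otimes}}H_n$ is accounted for by the $j\ge e$ rows, and that neither the higher differentials nor the extension problems contribute a spurious term of top degree. This is what simultaneously pins down the recursion and, by confining every auxiliary term to homologies $H_{<n-e}$ already controlled by induction, keeps the argument free of circularity (in particular the reduction to $\B$ is performed at whatever degree is currently available, and only sharpened afterwards). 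The subsidiary identification of the $j=e$ row with $\mathbf{L}^{add}_e$ on $\B$, through Proposition~\ref{pr-ekhd} and Proposition~\ref{pr-ekad}, is the other point demanding care.
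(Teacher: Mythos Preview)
Your plan is the paper's: induct on $n$, compare $E(\mathbf{H}(\Phi)^*F)$ to $CU(F)$, use excisivity below degree $e$ (via $E'$) to identify $E^2_{p,q}\simeq{\rm Tor}^{[\mathbf{H}(\B),\overset{\perp}{\oplus}]}_p(F,H_q)$ for $p<e$, show the comparison is an isomorphism in the relevant quotient on the strip $p<e,\,q<n$, and read off the defect $F\underset{[\mathbf{H}(\B),\overset{\perp}{\oplus}]}{\overline{\otimes}}H_n$ at $(0,n)$ against the $(e,(m-1)e)$ contribution. Two comments.

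Your bootstrapping paragraph (start from $H_n\in\pol_{2n}$ via Theorem~\ref{thcong2}, identify ${\rm d}_x$ of a $\B$-lift with the reduced product via Theorem~\ref{thp-ptens}(3), apply Proposition~\ref{pr-bd}(2), iterate) is exactly how the paper's one-line invocation ``Proposition~\ref{pr-bd}, Théorèmes~\ref{thp-ptens} et~\ref{thcong2}'' unpacks: since Theorem~\ref{thp-ptens}(3) controls the error only by the \emph{current} degree of $H_n$, a single application does not suffice and the descent by $2$ must indeed be repeated. You have made explicit a step the paper leaves to the reader.

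By contrast, the detour through $\B$ for the \emph{second} assertion is unnecessary here. The statement of Proposition~\ref{hne} is formulated with $\mathbf{L}_e(\mathbf{H}(\Phi)_*)\mathbf{H}(\Phi)^*F$, not with $\mathbf{L}^{add}_e$, and the paper obtains it directly: in the finer quotient one analyses $E^2_{e,(m-1)e}$ via the $E'$ spectral sequence (only $E'^2_{0,e}$ and $E'^2_{e,0}$ survive, the latter mapping isomorphically to $CU^2_{e,(m-1)e}$ modulo $\pol_{d+2m-4}$), checks that this persists to $E^\infty$, and matches the kernel at $(e,(m-1)e)$ against the cokernel at $(0,me)$. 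The descent to $\B$ and the identification with $\mathbf{L}^{add}_e$ via Propositions~\ref{pr-ekhd} and~\ref{pr-ekad} belong to the subsequent Theorems~\ref{sus-gal} and~\ref{thcong3}, not to this proposition.
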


\begin{proof} On note que l'hypothèse faite sur $\kk$ implique que la suite spectrale $CU(F)$ s'arrête à la deuxième page.

Posons $m=\big[\frac{n}{e}\big]$ et ${\rm H}_q=\Hh_q(\Gamma_{\mathbf{H}(\Phi)};\kk)$. Par récurrence, on peut supposer que ${\rm H}_N$ est de degré au plus $2\big[\frac{N}{e}\big]$ pour $N<n$. On examine alors le morphisme de suites spectrales $E(\mathbf{H}(\Phi)^*F;\mathbf{H}(\Phi))\to CU(F;\mathbf{H}(\Phi))$ construit au §\,\ref{sssf}, en degré total $n$, d'abord dans la catégorie quotient $\St(\mathbf{H}(\B),\kk)/\pol_{2m+d-2}(\mathbf{H}(\B),\kk)$, puis dans $\St(\mathbf{H}(\B),\kk)/\pol_{2m+d-4}(\mathbf{H}(\B),\kk)$ lorsque $n$ est multiple de $e$.

L'hypothèse d'excisivité en degrés $<e$ et la suite spectrale $E'(\mathbf{H}(\Phi)^*F,{\rm H}_q)$ montrent qu'on dispose d'isomorphismes $E^2_{p,q}(\mathbf{H}(\Phi)^*F)\simeq {\rm Tor}^{[\mathbf{H}(\B),\overset{\perp}{\oplus}]}_p(F,{\rm H}_q)$ pour $p<e$.  Si l'on a également $q<n$, l'hypothèse de récurrence et le théorème~\ref{thp-ptens} montrent que le noyau et le conoyau du morphisme $E^2_{p,q}(\mathbf{H}(\Phi)^*F)\to CU^2_{p,q}(F)={\rm Tor}^\kk_p(F,{\rm H}_q)$ sont de degré au plus $2\big[\frac{q}{e}\big]+d-2$, qui est toujours inférieur à $2m+d-2$, et même à $2m+d-4$ si $n$ est multiple de $e$. Ainsi, $E^r_{p,q}\to CU^r_{p,q}$ est un isomorphisme dans $\St(\mathbf{H}(\B),\kk)/\pol_{2m+d-2}(\mathbf{H}(\B),\kk)$ (et dans $\St(\mathbf{H}(\B),\kk)/\pol_{2m+d-4}(\mathbf{H}(\B),\kk)$ si $n=me$) pour $r=2$ (on suppose toujours $p<e$ et $q<n$). Cette propriété est en fait valable pour tous les $r\geq 2$ --- et donc aussi pour $r=\infty$ ---  si $p<e$, $q<n$ et $p+q\leq n$, comme on le voit par récurrence (sur $r$) en considérant le diagramme commutatif
$$\xymatrix{E^r_{p+r,q-r+1}\ar[r]\ar[d]_{d^r} & CU^r_{p+r,q-r+1}\ar[d]^{d^r=0}\\
E^r_{p,q}\ar[r]^\simeq\ar[d]_{d^r} & CU^r_{p,q}\ar[d]^{d^r=0}\\
E^r_{p-r,q+r-1}\ar[r]^\simeq & CU^r_{p-r,q+r-1}
}$$
où les deux isomorphismes horizontaux (dans la catégorie quotient) proviennent de l'hypothèse de récurrence (qui s'applique en bas car $q+r-1\leq n-p+r-1<n$ si $p-r\geq 0$).

 Le morphisme $E^2_{0,n}(\mathbf{H}(\Phi)^*F)\simeq F\underset{[\mathbf{H}(\B),\overset{\perp}{\oplus}]}{\otimes}{\rm H}_n\to CU^2_{0,n}(F)=F\otimes {\rm H}_n$ a pour conoyau $F\underset{[\mathbf{H}(\B),\overset{\perp}{\oplus}]}{\overline{\otimes}}{\rm H}_n$ ; ce conoyau persiste à l'infini car $CU^2_{r,s}(F)=0$ pour $r\geq 2$ en raison de l'hypothèse de dimension homologique faite sur $\kk$.

Quant aux termes $CU^2_{p,q}(F)$ et $E^2_{p,q}(\mathbf{H}(\Phi)^*F)$ pour $p\geq e$ et $p+q=n$, l'hypothèse de récurrence (sur $n$) et les théorèmes~\ref{th-ekherm} et~\ref{thp-ptens} (ainsi que la suite spectrale $E'(\mathbf{H}(\Phi)^*F,{\rm H}_q)$) montrent qu'ils sont de degré au plus $2\big[\frac{q}{e}\big]+d\leq 2\big[\frac{n-e}{e}\big]+d=2m+d-2$. 

Par conséquent, dans la catégorie quotient $\St(\mathbf{H}(\B),\kk)/\pol_{2m+d-2}(\mathbf{H}(\B),\kk)$, ne subsistent à l'infini en degré total $n$ que les termes :
\begin{enumerate}
 \item $E^\infty_{p,n-p}\to CU^\infty_{p,n-p}$ pour $0<p<e$, qui sont des isomorphismes ; 
 \item $E^\infty_{0,n}\to CU^\infty_{0,n}$, dont le conoyau est isomorphe à $F\underset{[\mathbf{H}(\B),\overset{\perp}{\oplus}]}{\overline{\otimes}}{\rm H}_n$.
\end{enumerate}

Comme le morphisme de suites spectrales $E\to CU$ induit un isomorphisme entre les aboutissements, on en déduit que, pour tout $F$ dans $\pol_d(\mathbf{H}(\B),\kk)$, $F\underset{[\mathbf{H}(\B),\overset{\perp}{\oplus}]}{\overline{\otimes}}{\rm H}_n$ est nul dans la catégorie quotient $\St(\mathbf{H}(\B),\kk)/\pol_{2m+d-2}(\mathbf{H}(\B),\kk)$, c'est-à-dire qu'il appartient à $\pol_{2m+d-2}(\mathbf{H}(\B),\kk)$. La proposition~\ref{pr-bd} et les théorèmes~\ref{thp-ptens} et~\ref{thcong2} garantissent donc que ${\rm H}_n$ appartient à $\pol_{2m}(\mathbf{H}(\B),\kk)$.

Supposons maintenant $n=me$ et examinons notre morphisme de suites spectrales dans $\St(\mathbf{H}(\B),\kk)/\pol_{2m+d-4}(\mathbf{H}(\B),\kk)$. On reprend le raisonnement précédent, mais il faut accorder une attention particulière aux termes de bidegré $(e,(m-1)e)$. L'hypothèse d'excisivité jusqu'en degré $e-1$ montre que $E'^2_{i,j}(\mathbf{H}(\Phi)^*F,{\rm H}_{(m-1)e})$ est nul pour $0<i<e$, d'où une suite exacte
$$E'^2_{e+1,0}\to E'^2_{0,e}\to E^2_{e,(m-1)e}\to E'^2_{e,0}\to 0\;;$$
par ailleurs le morphisme naturel
$$E'^2_{i,j}={\rm Tor}^{[\mathbf{H}(\B),\overset{\perp}{\oplus}]}_i(\mathbf{L}_j(\mathbf{H}(\Phi)_*)(\mathbf{H}(\Phi)^*F),{\rm H}_{(m-1)e})\to {\rm Tor}^\kk_i(\mathbf{L}_j(\mathbf{H}(\Phi)_*)(\mathbf{H}(\Phi)^*F),{\rm H}_{(m-1)e})$$
est un isomorphisme dans $\St(\mathbf{H}(\B),\kk)/\pol_{2m+d-2}(\mathbf{H}(\B),\kk)$ (par le théorème~\ref{thp-ptens} et l'hypothèse de récurrence) ; comme son but est nul pour $i>1$ par hypothèse sur $\kk$, on en déduit que la suite exacte précédente se réduit, dans la catégorie quotient, à une suite exacte courte
$$0\to {\rm H}_{(m-1)e}\otimes\mathbf{L}_e(\mathbf{H}(\Phi)_*)\mathbf{H}(\Phi)^*(F)\to E^2_{e,(m-1)e}\to CU^2_{e,(m-1)e}\to 0.$$

En raisonnant comme précédemment, mais cette fois dans la catégorie quotient $\St(\mathbf{H}(\B),\kk)/\pol_{2m+d-4}(\mathbf{H}(\B),\kk)$, on voit que n'y subsistent à l'infini en degré total $n=me$ que les termes :
\begin{enumerate}
 \item $E^\infty_{p,me-p}\to CU^\infty_{p,me-p}$ pour $0<p<e$, qui sont des isomorphismes ; 
 \item $E^\infty_{e,(m-1)e}\to CU^\infty_{e,(m-1)}$, qui est un épimorphisme de noyau isomorphe à $\mathbf{L}_e(\mathbf{H}(\Phi)_*)\mathbf{H}(\Phi)^*(F)\otimes {\rm H}_{(m-1)e}$ (la propriété établie juste avant sur les deuxièmes pages s'étend par récurrence aux pages $r\geq 2$, car les termes de bidegré $(e+r,(m-1)e-r+1)$ susceptibles d'y entrer sont nuls dans la catégorie quotient puisque $\big[\frac{(m-1)e-r+1}{e}\big]\leq m-2$, et les termes de bidegré $(e-r,(m-1)e+r-1)$ qui en sortent sont isomorphes, avec une différentielle nulle pour $CU$) ;
 \item $E^\infty_{0,me}\to CU^\infty_{0,me}$, dont le conoyau est isomorphe à $F\underset{[\mathbf{H}(\B),\overset{\perp}{\oplus}]}{\overline{\otimes}}{\rm H}_{me}$.
\end{enumerate}

Le fait que le morphisme de suites spectrales $E\to CU$ induit un isomorphisme entre les aboutissements permet d'en déduire l'isomorphisme naturel $F\underset{[\mathbf{H}(\B),\overset{\perp}{\oplus}]}{\overline{\otimes}}{\rm H}_{me}\simeq\mathbf{L}_e(\mathbf{H}(\Phi)_*)\mathbf{H}(\Phi)^*(F)\otimes {\rm H}_{(m-1)e}$ recherché (dans la catégorie quotient).
\end{proof}

\begin{rem}
 \begin{enumerate}
 \item Lorsque $\kk$ est un corps ou que $e$ égale $1$, la démonstration se simplifie légèrement.
 \item Lorsque $n$ n'est pas multiple de $e$, l'auteur ignore si $\Hh_n(\Gamma_\Phi;\kk)$ peut être de degré strictement inférieur à  $2\big[\frac{n}{e}\big]$. En effet, l'examen des suites spectrales fait apparaître en degré homologique $n$ plusieurs termes de degré polynomial maximal qui pourraient parfois se compenser.
 \item Quand $\kk$ n'est pas de dimension homologique au plus $1$, il n'est pas clair que la proposition subsiste. Néanmoins, ce n'est pas tellement restrictif en pratique, l'examen de $\Hh_\bullet(\Gamma_\Phi;\kk)$ étant surtout intéressant lorsque $\kk$ est un corps, un localisé ou un complété de l'anneau des entiers.
 \end{enumerate}
\end{rem}

Sous les hypothèses de la proposition~\ref{hne}, on voit que, pour $n=me$, ${\rm Tor}^\kk_1(\Hh_i(\Gamma_{\mathbf{H}(\Phi)};\kk),\Hh_j(\Gamma_{\mathbf{H}(\Phi)};\kk))$ est de degré au plus $2n-2$ pour $i+j=n-1$, de même que $\Hh_i(\Gamma_{\mathbf{H}(\Phi)};\kk)\otimes\Hh_j(\Gamma_{\mathbf{H}(\Phi)};\kk)$ lorsque $i+j=n$ et que $i$ et $j$ ne sont pas multiples de $e$. Par conséquent, la formule de Künneth fournit un isomorphisme
$$\Hh_{me}(\Gamma_{\mathbf{H}(\Phi)}\times\Gamma_{\mathbf{H}(\Phi)};\kk)\simeq\bigoplus_{a+b=m}\Hh_{ae}(\Gamma_{\mathbf{H}(\Phi)};\kk)\otimes\Hh_{be}(\Gamma_{\mathbf{H}(\Phi)};\kk)$$
dans la catégorie 
 $\pol_{2m}(\mathbf{H}(\B),\kk)/\pol_{2m-2}(\mathbf{H}(\B),\kk)$, de sorte que l'on y dispose d'un coproduit
 $$\Hh_{me}(\Gamma_{\mathbf{H}(\Phi)};\kk)\to\bigoplus_{a+b=m}\Hh_{ae}(\Gamma_{\mathbf{H}(\Phi)};\kk)\otimes\Hh_{be}(\Gamma_{\mathbf{H}(\Phi)};\kk).$$
 La discusion de la fin du §\,\ref{sprt} montre que la restriction aux degrés multiples de $e$ de $\Hh_\bullet(\Gamma_{\mathbf{H}(\Phi)};\kk)$ possède une structure de type Hopf, où l'on regarde $\Hh_{me}(\Gamma_{\mathbf{H}(\Phi)};\kk)$ dans la catégorie 
 $\pol_{2m}(\mathbf{H}(\B),\kk)/\pol_{2m-2}(\mathbf{H}(\B),\kk)$ (noter que le passage à ces catégories quotients garantit que les produits tensoriels usuels et $\underset{[\mathbf{H}(\B),\overset{\perp}{\oplus}]}{\otimes}$ coïncident, d'après le théorème~\ref{thp-ptens}).
 
 \begin{thm}\label{thcong3}
  Plaçons-nous dans la situation du théorème~\ref{sus-gal} ; supposons $e$ fini et $\kk$ de dimension homologique au plus $1$. Alors $\Hh_n(\Gamma_{\mathbf{H}(\Phi)};\kk)$ est de degré au plus $2\big[\frac{n}{e}]$, avec égalité lorsque $n$ est multiple de $e$. De plus, la structure multiplicative sur $\Hh_\bullet(\Gamma_{\mathbf{H}(\Phi)};\kk)$ induit pour tout $m\in\mathbb{N}$ un isomorphisme
  $$\big(\Hh_e(\Gamma_{\mathbf{H}(\Phi)};\kk)^{\otimes m}\big)_{\mathfrak{S}_m}\simeq\Hh_{me}(\Gamma_{\mathbf{H}(\Phi)};\kk),$$
 l'action de $\mathfrak{S}_m$ par permutation des facteurs du produit tensoriel (qui est pris sur $\kk$) étant tordue par la signature si $e$ est impair,
  dans $\pol_{2m}(\mathbf{H}(\B),\kk)/\pol_{2m-1}(\mathbf{H}(\B),\kk)$ et même dans $\pol_{2m}(\mathbf{H}(\B),\kk)/\pol_{2m-2}(\mathbf{H}(\B),\kk)$ si $2$ est inversible dans $\kk$ ou que $\A$ et $\B$ vérifient l'hypothèse~\ref{h2inv}.
  
  Via cet isomorphisme, la structure comultiplicative spécifiée ci-dessus est la structure cocommutative (au sens gradué) libre, modulo l'isomorphisme canonique
  $$\big(\Hh_e(\Gamma_{\mathbf{H}(\Phi)};\kk)^{\otimes m}\big)_{\mathfrak{S}_m}\simeq\big(\Hh_e(\Gamma_{\mathbf{H}(\Phi)};\kk)^{\otimes m}\big)^{\mathfrak{S}_m}$$
  (dans la catégorie quotient).
 \end{thm}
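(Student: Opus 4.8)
The plan is to treat the three assertions in turn, with the degree estimate of Proposition~\ref{hne} as the backbone. The inequality $\deg\Hh_n(\Gamma_{\mathbf{H}(\Phi)};\kk)\leq 2\big[\frac{n}{e}\big]$ is exactly the first conclusion of that proposition, so the only thing to add for the first assertion is the \emph{non}-vanishing of the top layer when $n=me$. I would extract this from the recursive isomorphism of Proposition~\ref{hne}: taking for $F$ a representable functor (or the unit), the right-hand member $\mathbf{L}_e(\mathbf{H}(\Phi)_*)\mathbf{H}(\Phi)^*(F)\underset{\kk}{\otimes}\Hh_{(m-1)e}(\Gamma_{\mathbf{H}(\Phi)};\kk)$ is nonzero in $\pol_{d+2m-2}(\mathbf{H}(\B),\kk)/\pol_{d+2m-4}(\mathbf{H}(\B),\kk)$, because minimality of $e$ makes $\mathbf{L}_e^{add}(\Phi_*)\Phi^*$ nonzero (whence $\mathbf{L}_e(\mathbf{H}(\Phi)_*)\mathbf{H}(\Phi)^*\neq 0$ through Theorem~\ref{th-ekherm}) and because by induction $\Hh_{(m-1)e}$ has degree exactly $2(m-1)$. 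Since $F\underset{[\mathbf{H}(\B),\overset{\perp}{\oplus}]}{\overline{\otimes}}\Hh_{me}$ lowers the degree by exactly $2$, this forces $\Hh_{me}$ to have degree exactly $2m$, closing the induction.

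For the multiplicative isomorphism I would first produce the comparison map. The multiplicative structure of §\ref{sprt} gives a symmetric, associative product $\Hh_{ae}\underset{[\mathbf{H}(\B),\overset{\perp}{\oplus}]}{\otimes}\Hh_{be}\to\Hh_{(a+b)e}$; in the quotient categories $\pol_{2m}/\pol_{2m-2}$ the natural transformation $\underset{[\mathbf{H}(\B),\overset{\perp}{\oplus}]}{\otimes}\to\otimes$ is an isomorphism by Theorem~\ref{thp-ptens} (its reduced part lands two degrees lower), so iterating yields a map $\nu_m\colon\Hh_e^{\otimes m}\to\Hh_{me}$ taken over $\kk$. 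Graded commutativity of the product, with the Koszul sign governed by the homological degree $e$ of each factor, shows $\nu_m$ factors through the $\mathfrak{S}_m$-coinvariants, twisted by the signature precisely when $e$ is odd. It then remains to check that $\nu_m$ is an isomorphism in $\pol_{2m}(\mathbf{H}(\B),\kk)/\pol_{2m-1}$, and in the finer quotient modulo $\pol_{2m-2}$ when $2$ is invertible or Hypothesis~\ref{h2inv} holds.

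To prove $\nu_m$ is an isomorphism I would argue by induction on $m$ and detect the top layer through the derivation functors ${\rm d}_x$ of §\ref{spt}, working on the layers via the equivalences of Theorem~\ref{th-dd2}. On the symmetric-power side the Leibniz rule of Proposition~\ref{pr-bd}(\ref{ider}) computes ${\rm d}_x$ of an $m$-fold tensor product as the sum of the $m$ terms in which one factor is differentiated; after passing to coinvariants it becomes $\big(\Hh_e^{\otimes(m-1)}\otimes{\rm d}_x\Hh_e\big)_{\mathfrak{S}_{m-1}}$ up to sign. On the $\Hh_{me}$ side I would feed into Proposition~\ref{hne} the additive (representable) functors defining ${\rm d}_x$, and use Theorem~\ref{thp-ptens}(3) to identify, modulo $\pol_{2m-4}$, the hermitian reduced product with the functor $\overline{{\rm Tor}}^{(\B,\tau)}$; this turns the recursion of Proposition~\ref{hne} into ${\rm d}_x\Hh_{me}\simeq ({\rm d}_x\Hh_e)\otimes\Hh_{(m-1)e}$ in the appropriate quotient. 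The inductive identification $\Hh_{(m-1)e}\simeq\big(\Hh_e^{\otimes(m-1)}\big)_{\mathfrak{S}_{m-1}}$ then makes the two derivatives agree compatibly with ${\rm d}_x\nu_m$; since the base case $m=1$ is Theorem~\ref{sus-gal}, Proposition~\ref{pr-bd}(2) (derivatives detect the polynomial degree) upgrades the agreement of all derivatives to the statement that $\nu_m$ is an isomorphism in the quotient category. The passage to the finer quotient modulo $\pol_{2m-2}$, and the identification of coinvariants with invariants, uses pseudo-diagonalisability exactly as in the proof of Theorem~\ref{sus-gal}, through Proposition~\ref{df-fqpd}. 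For the comultiplication, the Künneth isomorphism recalled before the statement identifies $\Hh_{me}(\Gamma_{\mathbf{H}(\Phi)}\times\Gamma_{\mathbf{H}(\Phi)};\kk)$ with $\bigoplus_{a+b=m}\Hh_{ae}\otimes\Hh_{be}$ in $\pol_{2m}/\pol_{2m-2}$, and the monoidal structure on $\Gamma_{\mathbf{H}(\Phi)}$ then furnishes the coproduct; using the compatibility square between multiplication and comultiplication at the end of §\ref{sprt}, together with $\big(\Hh_e^{\otimes m}\big)_{\mathfrak{S}_m}\simeq\big(\Hh_e^{\otimes m}\big)^{\mathfrak{S}_m}$, one checks that under $\nu=\bigoplus_m\nu_m$ this coproduct is the deconcatenation coproduct of the free graded-commutative algebra, i.e. the free graded-cocommutative structure.

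The main obstacle I anticipate is the third paragraph: bridging the \emph{hermitian} reduced product $\underset{[\mathbf{H}(\B),\overset{\perp}{\oplus}]}{\overline{\otimes}}$ appearing in Proposition~\ref{hne} with the ordinary derivation ${\rm d}_x$ through Theorem~\ref{thp-ptens}(3), while keeping coherent across the induction identifications that hold only modulo lower polynomial degree, and, most delicately, verifying that the object whose leading layer is computed by Proposition~\ref{hne} really is ${\rm d}_x\nu_m$ and not merely an abstractly isomorphic functor.
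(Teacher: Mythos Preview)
Your plan is essentially the paper's own proof: the degree estimate comes from Proposition~\ref{hne}, the map $\nu_m$ is built from the $\underset{[\mathbf{H}(\B),\overset{\perp}{\oplus}]}{\otimes}$-product (which agrees with $\otimes$ in the quotient by Theorem~\ref{thp-ptens}), and the induction goes through the derivations ${\rm d}_x$ using the Leibniz rule of Proposition~\ref{pr-bd} together with the recursion ${\rm d}_x H_{me}\simeq H_{(m-1)e}\otimes L_x$ extracted from Proposition~\ref{hne}. The refinement modulo $\pol_{2m-2}$ via pseudo-diagonalisability and Proposition~\ref{prd1q} is also what the paper does.

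The obstacle you flag in your last paragraph is exactly the delicate point, and you should be aware of how the paper resolves it, because your sketch does not quite get there. Knowing that ${\rm d}_x H_{me}$ and ${\rm d}_x\big((H_e^{\otimes m})_{\mathfrak{S}_m}\big)$ are abstractly isomorphic is not enough; one needs that ${\rm d}_x(\nu_m)$ \emph{is} this isomorphism. The paper obtains this by proving that the recursion of Proposition~\ref{hne} is \emph{natural in the morphism} $\Phi$ of $\mathbf{Add}_D^{ps}$ (via the functoriality of the spectral sequences of Theorem~\ref{th-spf} discussed after that theorem), and then specialising this naturality to the two commutative squares
\[
\xymatrix{\A\times\A\ar[r]^{\Phi\times\Phi}\ar[d]_{\oplus} & \B\times\B\ar[d]^{\oplus}\\ \A\ar[r]^{\Phi} & \B}
\qquad\text{and}\qquad
\xymatrix{\A\ar[r]^{\Phi}\ar[d]_{\Delta} & \B\ar[d]^{\Delta}\\ \A\times\A\ar[r]^{\Phi\times\Phi} & \B\times\B}
\]
which encode the product and coproduct. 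This yields a commutative square intertwining ${\rm d}_x$ with the multiplication (and its comultiplicative analogue), and it is this square that makes the induction go through. The paper also runs the argument simultaneously for the product map $(H_e^{\otimes m})_{\mathfrak{S}_m}\to H_{me}$ and the coproduct map $H_{me}\to(H_e^{\otimes m})^{\mathfrak{S}_m}$, using that their composite is the norm (an isomorphism in the quotient); this is how the comultiplicative statement is obtained, rather than as a separate verification.
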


 \begin{proof}
 On commence par établir l'assertion relative à la catégorie quotient $\pol_{2m}(\mathbf{H}(\B),\kk)/\pol_{2m-1}(\mathbf{H}(\B),\kk)$ ; l'assertion plus forte, valable dans $\pol_{2m}(\mathbf{H}(\B),\kk)/\pol_{2m-2}(\mathbf{H}(\B),\kk)$, quand $2$ est inversible dans $\kk$ ou que $\A$ et $\B$ vérifient l'hypothèse~\ref{h2inv}, se montre essentiellement de la même façon, modulo quelques raffinements que nous expliciterons à la fin.
 
 On note tout d'abord que, d'après le premier point de la proposition~\ref{pr-bd}, pour tout objet $x$ de $\B$ et tout naturel $i$, le foncteur ${\rm d}_x$ induit un foncteur (encore noté de la même façon) de
 $$\pol_i(\B,\kk)/\pol_{i-1}(\B,\kk)\simeq\pol_i(\mathbf{H}(\B),\kk)/\pol_{i-1}(\mathbf{H}(\B),\kk)$$
 vers
 $$\pol_{i-1}(\B,\kk)/\pol_{i-2}(\B,\kk)\simeq\pol_{i-1}(\mathbf{H}(\B),\kk)/\pol_{i-2}(\mathbf{H}(\B),\kk)$$
 isomorphe au foncteur induit par le foncteur différence $\delta_x$, qui est exact.
 
 La discussion précédant l'énoncé explique pourquoi la structure multiplicative  sur $\Hh_\bullet(\Gamma_{\mathbf{H}(\Phi)};\kk)$ induit une flèche
  \begin{equation}\label{mpi}
\big(\Hh_e(\Gamma_{\mathbf{H}(\Phi)};\kk)^{\otimes m}\big)_{\mathfrak{S}_m}\to\Hh_{me}(\Gamma_{\mathbf{H}(\Phi)};\kk)  
  \end{equation}
 dans la catégorie quotient.
  
  On note également que la proposition~\ref{hne} et les théorèmes~\ref{th-ekherm} et~\ref{thp-ptens} fournissent un isomorphisme
  \begin{equation}\label{eqib}
  {\rm d}_x(\Hh_{me}(\Gamma_{\mathbf{H}(\Phi)};\kk))\simeq\Hh_{(m-1)e}(\Gamma_{\mathbf{H}(\Phi)};\kk)\otimes\mathbf{L}^{add}_e(\Phi_*)(\kk\otimes\Phi^*\B(Dx,-))
  \end{equation}
  (dans la catégorie quotient) naturel en l'objet $x$ de $\B$. Cet isomorphisme possède également une fonctorialité en la flèche $\Phi : \A\to\B$ de $\mathbf{Add}_D^{ps}$ en le sens suivant.
  
  Supposons que
  $$\xymatrix{\A\ar[r]^\Phi\ar[d]_\Psi & \B\ar[d]^\Xi \\
 \A'\ar[r]^{\Phi'} & \B' 
  }$$
  est un diagramme commutatif (à isomorphisme près) de $\mathbf{Add}_D$ dans lequel $\Phi$ et $\Phi'$ appartiennent à $\mathbf{Add}_D^{ps}$ ; supposons également que le plus petit entier $i>0$ tel que $\mathbf{L}^{add}_i(\Phi'_*)\circ\Phi'^*$ soit non nul sur $\B'\md_\kk$ soit $e$ (comme pour $\Phi$). Alors on dispose d'un diagramme commutatif
  $$\xymatrix{{\rm d}_x(\Hh_{me}(\Gamma_{\mathbf{H}(\Phi)};\kk))\ar[r]^-\simeq\ar[d] & \Hh_{(m-1)e}(\Gamma_{\mathbf{H}(\Phi)};\kk)\otimes\mathbf{L}^{add}_e(\Phi_*)(\kk\otimes\Phi^*\B(Dx,-))\ar[d] \\
  \Xi^*{\rm d}_{\Xi(x)}(\Hh_{me}(\Gamma_{\mathbf{H}(\Phi')};\kk))\ar[r]^-\simeq & \Xi^*\Hh_{(m-1)e}(\Gamma_{\mathbf{H}(\Phi')};\kk)\otimes\Xi^*\mathbf{L}^{add}_e(\Phi'_*)(\kk\otimes\Phi'^*\B'(D\Xi(x),-))
  }$$
  dans $\pol_{2m-1}(\mathbf{H}(\B),\kk)/\pol_{2m-2}(\mathbf{H}(\B),\kk)$, où la flèche verticale de gauche est induite par le morphisme canonique $\Hh_\bullet(\Gamma_{\mathbf{H}(\Phi)};\kk)\to\Xi^*\Hh_\bullet(\Gamma_{\mathbf{H}(\Phi')};\kk)$ et (\ref{fdiv}) et celle de droite est analogue à ce qui est explicité après le théorème~\ref{th-spf} (c'est d'ailleurs la fonctorialité des suites spectrales de ce théorème --- et des autres isomorphismes utilisés dans nos raisonnements précédents --- qui atteste de la commutativité du diagramme précédent).
  
  Comme la structure multiplicative sur $\Hh_\bullet(\Gamma_{\mathbf{H}(\Phi)};\kk)$ est induite par le diagramme commutatif (à isomorphisme près)
  $$\xymatrix{\A\times\A\ar[r]^{\Phi\times\Phi}\ar[d] & \B\times\B\ar[d] \\
 \A\ar[r]^{\Phi} & \B 
  }$$
  de $\mathbf{Add}_D$ dont les flèches verticales sont données par la somme directe, 
  et la structure comultiplicative par  le diagramme commutatif (à isomorphisme près)
  $$\xymatrix{\A\ar[r]^{\Phi}\ar[d] & \B\ar[d] \\
 \A\times\A\ar[r]^{\Phi\times\Phi} & \B\times\B 
  }$$
  dont les flèches verticales sont les diagonales, on en déduit que l'isomorphisme (\ref{eqib}) est compatible aux structures multiplicative et comultiplicative en le sens suivant : notant $H_m$ pour $\Hh_{me}(\Gamma_{\mathbf{H}(\Phi)};\kk)$ et $L_x$ pour $\mathbf{L}^{add}_e(\Phi_*)(\kk\otimes\Phi^*\B(Dx,-))$, le diagramme suivant commute
  \begin{equation}\label{edcm}
   \xymatrix{{\rm d}_x(H_m\otimes H_n)\ar[d]_-\simeq\ar[r] &  {\rm d}_x(H_{m+n})\ar[dd]^-\simeq \\ 
  ({\rm d}_x(H_m)\otimes H_n)\oplus (H_m\otimes {\rm d}_x(H_n))\ar[d]_-\simeq & \\
 (H_{m-1}\otimes H_n\otimes L_x)\oplus (H_m\otimes H_{n-1}\otimes L_x)\ar[r] & H_{m+n-1}\otimes L_x
  }
  \end{equation}
  où l'isomorphisme en haut à gauche est induit par la propriété de dérivation de ${\rm d}_x$ (proposition~\ref{pr-bd}.\ref{ider}), les deux autres isomorphismes par (\ref{eqib}), et les flèches horizontales par la structure multiplicative, de même qu'un diagramme analogue (avec les flèches horizontales renversées) pour la structure comultiplicative. La propriété de dérivation de ${\rm d}_x$ donne des isomorphismes ${\rm d}_x(T^{\otimes m})\simeq {\rm d}_x(T)\otimes T^{\otimes (m-1)}\uparrow_{\mathfrak{S}_{m-1}}^{\mathfrak{S}_m}$ ($\mathfrak{S}_m$-équivariant) puis ${\rm d}_x\big((T^{\otimes m})_{\mathfrak{S}_m}\big)\simeq {\rm d}_x(T)\otimes (T^{\otimes (m-1)})_{\mathfrak{S}_{m-1}}$ pour tout foncteur $T$ de $(\B,\kk)$, ce qui montre par récurrence sur $m$ que le morphisme (\ref{mpi}) induit après application de ${\rm d}_x\simeq\delta_x$ un isomorphisme dans $\pol_{2m-1}(\mathbf{H}(\B),\kk)/\pol_{2m-2}(\mathbf{H}(\B),\kk)$, pour tout objet $x$ de $\B$. Comme $(H_1^{\otimes m})^{\mathfrak{S}_m}$ est isomorphe à $(H_1^{\otimes m})_{\mathfrak{S}_m}$ dans $\pol_{2m}(\mathbf{H}(\B),\kk)/\pol_{2m-1}(\mathbf{H}(\B),\kk)$, on obtient finalement que les flèches
  \begin{equation}\label{eqcm}
  (H_1^{\otimes m})_{\mathfrak{S}_m}\to H_m\to (H_1^{\otimes m})^{\mathfrak{S}_m}
  \end{equation}
  données par le produit et le coproduit sont des isomorphismes dans la catégorie quotient, la composée étant l'isomorphisme induit par la norme (cette assertion sur la composée est évidente en degré $m=1$ et s'étend aux degrés supérieurs par récurrence en appliquant ${\rm d}_x$, grâce au diagramme (\ref{edcm}) et à son analogue comultiplicatif). Cela fournit l'assertion relative à la catégorie quotient $\pol_{2m}(\mathbf{H}(\B),\kk)/\pol_{2m-1}(\mathbf{H}(\B),\kk)$.

  Lorsque $2$ est inversible dans $\kk$ ou que $\A$ et $\B$ vérifient l'hypothèse~\ref{h2inv}, on peut reprendre le raisonnement précédent dans $\pol_{2m}(\mathbf{H}(\B),\kk)/\pol_{2m-2}(\mathbf{H}(\B),\kk)$. La seule difficulté supplémentaire est que le foncteur de
 $$\pol_i(\B,\kk)/\pol_{i-2}(\B,\kk)\simeq\pol_i(\mathbf{H}(\B),\kk)/\pol_{i-2}(\mathbf{H}(\B),\kk)$$
 vers
 $$\pol_{i-1}(\B,\kk)/\pol_{i-3}(\B,\kk)\simeq\pol_{i-1}(\mathbf{H}(\B),\kk)/\pol_{i-3}(\mathbf{H}(\B),\kk)$$
 induit par ${\rm d}_x$ n'est plus isomorphe à celui qu'induit $\delta_x$ ; ce foncteur induit par ${\rm d}_x$ n'est donc a priori exact qu'à droite. On la contourne grâce à l'observation suivante (qui se déduit de la proposition~\ref{pr-bd}) : si $F$ et $G$ sont des foncteurs de $\pol_i(\B,\kk)$ tels que $\mathbf{L}_1({\rm d}_x)(G)$ appartienne à $\pol_{i-3}(\B,\kk)$ pour tout objet $x$ de $\B$, alors tout morphisme $F\to G$ de $\pol_i(\mathbf{H}(\B),\kk)/\pol_{i-2}(\mathbf{H}(\B),\kk)$ qui induit un isomorphisme dans $\pol_{i-1}(\mathbf{H}(\B),\kk)/\pol_{i-3}(\mathbf{H}(\B),\kk)$ après application de chaque foncteur ${\rm d}_x$ est un isomorphisme. On l'applique aux morphismes de (\ref{eqcm}), dont la composée est un isomorphisme dans $\pol_{2m}(\mathbf{H}(\B),\kk)/\pol_{2m-2}(\mathbf{H}(\B),\kk)$, en utilisant la proposition~\ref{prd1q}, qui montre que $\mathbf{L}_1({\rm d}_x)(H_1^{\otimes m})$ est nul (on a vu à la fin de la démonstration du théorème~\ref{sus-gal} que $H_1$ est pseudo-diagonalisable), ce qui permet de conclure puisque $(H_1^{\otimes m})^{\mathfrak{S}_m}$ en est un facteur direct dans $\pol_{2m}(\mathbf{H}(\B),\kk)/\pol_{2m-2}(\mathbf{H}(\B),\kk)$ (puisque la norme $(H_1^{\otimes m})_{\mathfrak{S}_m}\to (H_1^{\otimes m})^{\mathfrak{S}_m}$ y devient un isomorphisme).
 \end{proof}

 En particulier, on obtient la généralisation suivante du corollaire~\ref{csus} :
 
 \begin{cor}\label{cs-ds}
  Soient $I$ un idéal bilatère d'un anneau $A$ et $d\in\mathbb{N}^*$. Alors  l'objet $\Hh_d({\rm Ker}\,(GL(A)\to GL(A/I));\mathbb{Z})$ de $\St(\mathbf{S}(A/I),\mathbb{Z})$ est polynomial de degré au plus $2d$ ; il est isomorphe à $\Lambda^d(\mathfrak{gl}(I/I^2))$ dans la catégorie $\pol_{2d}(\mathbf{S}(A/I),\mathbb{Z})/\pol_{2d-2}(\mathbf{S}(A/I),\mathbb{Z})$, où $\Lambda^d$ désigne la $d$-ème puissance extérieure. Ces isomorphismes, pour $d$ variable, sont compatibles aux structures multiplicatives et comultiplicatives.
 \end{cor}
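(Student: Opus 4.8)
Le plan est de spécialiser les théorèmes~\ref{sus-gal2} et~\ref{thcong3} au morphisme induit par la projection $A\to A/I$. Concrètement, je poserais $\Phi:\mathbf{P}(A)\to\mathbf{P}(A/I)$ le foncteur d'extension des scalaires associé à la surjection d'anneaux $A\to A/I$; celle-ci étant surjective, $\Phi$ est plein et essentiellement surjectif, donc appartient à $\mathbf{Add}^{ps}$, et l'on a $\Gamma_{\mathbf{S}(\Phi)}={\rm Ker}\,(GL(A)\to GL(A/I))$, de sorte que $\Hh_d(\Gamma_{\mathbf{S}(\Phi)};\mathbb{Z})$ est bien l'objet considéré. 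Je me placerais via le foncteur $\A\mapsto\A^{\rm e}$ dans le cadre hermitien: $\mathbf{S}(\mathbf{P}(A))\simeq\mathbf{H}(\mathbf{P}(A)^{\rm e})$ et $\Gamma_{\mathbf{S}(\Phi)}=\Gamma_{\mathbf{H}(\Phi^{\rm e})}$, le point crucial étant que $\mathbf{P}(A)^{\rm e}$ vérifie \emph{automatiquement} l'hypothèse~\ref{h2inv} (premier cas fondamental), ce qui autorise les versions fines (valables modulo $\pol_{2m-2}$) des théorèmes~\ref{sus-gal} et~\ref{thcong3}. Comme $\kk=\mathbb{Z}$ est de dimension homologique~$1$, les hypothèses de~\ref{thcong3} sont remplies dès que l'entier $e$ qui y figure est fini.

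Je déterminerais ensuite $e$. Par le corollaire~\ref{csus} (ou directement par~\ref{sus-gal2}), $e$ est le plus petit entier $i>0$ tel que ${\rm Tor}^A_i(A/I,A/I)\neq 0$. La suite exacte courte $0\to I\to A\to A/I\to 0$ et la projectivité de $A$ fournissent ${\rm Tor}^A_1(A/I,A/I)\simeq I\otimes_A A/I\simeq I/I^2$, le morphisme $I\otimes_A A/I\to A\otimes_A A/I$ induit par l'inclusion étant nul. Ainsi, lorsque $I\neq I^2$, on a $e=1$. Le cas dégénéré $I=I^2$, où $I/I^2=0$ et donc $\Lambda^d(\mathfrak{gl}(I/I^2))=0$, se traite à part: $e\geq 2$ (éventuellement infini) force, via le théorème~\ref{thcong2} et la borne $2\big[\tfrac{d}{e}\big]$ de la proposition~\ref{hne} (ou l'excisivité totale si $e=\infty$), que $\Hh_d$ est de degré au plus $2d-2$, donc nul dans $\pol_{2d}/\pol_{2d-2}$, en accord avec le membre de droite.

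Dans le cas principal $e=1$, j'invoquerais d'une part le corollaire~\ref{csus}, qui donne $\Hh_1(\Gamma;\mathbb{Z})\simeq\mathfrak{gl}(I/I^2)$ dans $\pol_2/\pol_0$, et d'autre part le théorème~\ref{thcong3}, qui fournit un isomorphisme $\big(\Hh_1^{\otimes d}\big)_{\mathfrak{S}_d}\simeq\Hh_d$ dans $\pol_{2d}/\pol_{2d-2}$, l'action de $\mathfrak{S}_d$ étant tordue par la signature puisque $e=1$ est impair, et compatible aux structures multiplicative et comultiplicative (cette dernière étant la structure cocommutative libre, identifiée à la première par la norme). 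La combinaison de ces deux énoncés donne $\Hh_d\simeq\big(\mathfrak{gl}(I/I^2)^{\otimes d}\big)_{\mathfrak{S}_d}^{\mathrm{sgn}}$ dans la catégorie quotient.

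La dernière étape, qui me paraît être le point le plus délicat, consiste à identifier ces coïnvariants tordus par la signature à la puissance extérieure $\Lambda^d(\mathfrak{gl}(I/I^2))$ \emph{sur $\mathbb{Z}$} à l'intérieur de $\pol_{2d}/\pol_{2d-2}$. La surjection canonique $\big(\mathfrak{gl}(M)^{\otimes d}\big)_{\mathfrak{S}_d}^{\mathrm{sgn}}\twoheadrightarrow\Lambda^d(\mathfrak{gl}(M))$ a pour noyau la contribution des termes diagonaux (où au moins deux facteurs tensoriels sont évalués sur une même copie), c'est-à-dire la différence entre les relations $2\,x\otimes x=0$ et $x\otimes x=0$. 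J'utiliserais ici que $\mathfrak{gl}(I/I^2)$ est quadratique \emph{sans quotient additif} (fait établi à la fin de la démonstration du théorème~\ref{sus-gal}) et pseudo-diagonalisable (via l'appendice~\ref{a2}): chaque facteur étant de degré~$2$ et réduit, collapser deux facteurs sur la diagonale abaisse le degré polynomial de~$2$, de sorte que ce noyau appartient à $\pol_{2d-2}$ et disparaît dans le quotient. L'isomorphisme $\Hh_d\simeq\Lambda^d(\mathfrak{gl}(I/I^2))$ s'ensuit, et la compatibilité aux structures (co)multiplicatives du théorème~\ref{thcong3} se traduit exactement en le fait que $\bigoplus_d\Lambda^d(\mathfrak{gl}(I/I^2))$ est l'algèbre de Hopf extérieure (graduée-commutative libre) sur $\mathfrak{gl}(I/I^2)=\Hh_1$, ce qui achèverait la démonstration.
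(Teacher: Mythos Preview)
Your proposal is correct and follows the same route as the paper: it specialises the general hermitian statements (théorèmes~\ref{sus-gal}/\ref{sus-gal2} and~\ref{thcong3}, avec $e=1$) to $\Phi:\mathbf{P}(A)\to\mathbf{P}(A/I)$ via the identification $\mathbf{S}\simeq\mathbf{H}(\,\cdot^{\rm e})$, puis identifie $(\mathfrak{gl}(I/I^2)^{\otimes d})_{\mathfrak{S}_d}^{\rm sgn}$ à $\Lambda^d(\mathfrak{gl}(I/I^2))$ dans la catégorie quotient --- point que l'article ne détaille que dans la remarque suivant le corollaire. Un détail mineur: pour cette dernière identification, la pseudo-diagonalisabilité n'est pas nécessaire --- le noyau de $(F^{\otimes d})_{\mathfrak{S}_d}^{\rm sgn}\twoheadrightarrow\Lambda^d(F)$ est l'image de $(F/2F)\otimes F^{\otimes(d-2)}$ via $x\otimes y\mapsto [x\otimes x\otimes y]$, qui est de degré $2(d-1)$ dès que $F$ est quadratique.
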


 \begin{rem}
  On peut utiliser les puissances extérieures plutôt que le quotient des puissances tensorielles sous l'action signée des groupes symétriques, dans l'énoncé précédent, parce que ces foncteurs coïncident dans la catégorie quotient considérée. On notera également que, sur les groupes abéliens, le $n$-groupe d'homologie est isomorphe à $\Lambda^n$ modulo les endofoncteurs polynomiaux de degré strictement inférieur à $n$ des groupes abéliens. Par conséquent, on peut reformuler le corollaire précédent en disant que la projection d'un groupe de congruence sur son abélianisation induit un isomorphisme en homologie en passant aux catégories quotients appropriées. 
 \end{rem}

 Signalons également un résultat à coefficients tordus :
 
 \begin{cor}\label{cor-ctf}
  Supposons que $\kk$ est de dimension homologique au plus $1$. Soient $\Phi : \A\to\B$ un morphisme de $\mathbf{Add}^{ps}$ et $d\geq 0$, $n\geq 0$, $e>0$ des entiers. On suppose que $\mathbf{L}_i^{add}(\Phi_*)\circ\Phi^*$ est nul sur $\B\md_\kk$ pour $0<i<e$. Pour tout $F$ dans $\pol_d(\mathbf{H}(\A),\kk)$, $\Hh_{ne}(\Gamma_{\mathbf{H}(\Phi)};F)$ appartient à $\pol_{2n+d}(\mathbf{H}(\B),\kk)$ et est naturellement isomorphe à
  $$\mathbf{H}(\Phi)_*(F)\underset{\kk}{\otimes}\Hh_{ne}(\Gamma_{\mathbf{H}(\Phi)};\kk)$$
  dans la catégorie $\pol_{2n+d}(\mathbf{H}(\B),\kk)/\pol_{2(n-1)+d}(\mathbf{H}(\B),\kk)$.
  
  De façon équivalente, le produit
 $$\Hh_0(\Gamma_{\mathbf{H}(\Phi)};F)\underset{[\mathbf{H}(\B),\overset{\perp}{\oplus}]}{\otimes}\Hh_{ne}(\Gamma_{\mathbf{H}(\Phi)};\kk)\to\Hh_{ne}(\Gamma_{\mathbf{H}(\Phi)};F)$$
 et le coproduit
 $$\Hh_{ne}(\Gamma_{\mathbf{H}(\Phi)};F)\to\Hh_0(\Gamma_{\mathbf{H}(\Phi)};F)\underset{\kk}{\otimes}\Hh_{ne}(\Gamma_{\mathbf{H}(\Phi)};\kk)$$
 induisent des isomorphismes dans la catégorie quotient.
 \end{cor}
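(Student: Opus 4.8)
The plan is to reduce the whole statement to the assertion that the product (edge) map is an isomorphism in the quotient category, and to read off both the polynomial degree bound and the coproduct statement afterwards. Write $\Gamma=\Gamma_{\mathbf{H}(\Phi)}$ and $H_q=\Hh_q(\Gamma;\kk)$. By Proposition~\ref{prdfh} one has $\Hh_0(\Gamma;F)=\mathbf{H}(\Phi)_*(F)$, and by Theorem~\ref{thp-ptens} the tensor products $\underset{[\mathbf{H}(\B),\overset{\perp}{\oplus}]}{\otimes}$ and $\underset{\kk}{\otimes}$ agree on $\pol_a\times\pol_b$ modulo $\pol_{a+b-2}$; so once the product map is an isomorphism in $\pol_{2n+d}/\pol_{2n+d-2}$, the natural isomorphism $\Hh_{ne}(\Gamma;F)\simeq\mathbf{H}(\Phi)_*(F)\underset{\kk}{\otimes}H_{ne}$ follows. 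I would run the fundamental spectral sequence $E(F;\mathbf{H}(\Phi))$ of Theorem~\ref{th-spf},
$$E^2_{p,q}(F)=\mathbf{L}_p\big((-\underset{[\mathbf{H}(\B),\overset{\perp}{\oplus}]}{\otimes}H_q)\circ\mathbf{H}(\Phi)_*\big)(F)\Rightarrow\Hh_{p+q}(\Gamma;F),$$
together with the secondary spectral sequence $E'$ resolving each $E^2_{p,q}$ through the subquotients ${\rm Tor}_i^{[\mathbf{H}(\B),\overset{\perp}{\oplus}]}(\mathbf{L}_j\mathbf{H}(\Phi)_*(F),H_q)$ with $i+j=p$. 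This mirrors exactly the proof of Proposition~\ref{hne}, the constant coefficient $\kk$ there being replaced by the twisting functor $F$.

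The heart of the argument is a degree count. By Theorem~\ref{th-ekherm}(1) each $\mathbf{L}_j\mathbf{H}(\Phi)_*(F)$ lies in $\pol_d(\mathbf{H}(\B),\kk)$, while Proposition~\ref{hne}---which is where the hypotheses on $\kk$ and on the vanishing of $\mathbf{L}_i^{add}(\Phi_*)\circ\Phi^*$ for $0<i<e$ enter---gives $H_q\in\pol_{2[q/e]}$. Theorem~\ref{thp-ptens}(1) then forces ${\rm Tor}_i^{[\mathbf{H}(\B),\overset{\perp}{\oplus}]}(\mathbf{L}_j\mathbf{H}(\Phi)_*(F),H_q)\in\pol_{d+2[q/e]}$. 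For $p+q=ne$ with $p>0$ one has $q\leq ne-1$, hence $[q/e]\leq n-1$, so $E^2_{p,q}\in\pol_{d+2n-2}$ and these terms vanish in $\St(\mathbf{H}(\B),\kk)/\pol_{2n+d-2}(\mathbf{H}(\B),\kk)$. For $p=0$, $q=ne$ one has $E^2_{0,ne}=\mathbf{H}(\Phi)_*(F)\underset{[\mathbf{H}(\B),\overset{\perp}{\oplus}]}{\otimes}H_{ne}$, whose reduced part lies in $\pol_{d+2n-2}$ by Theorem~\ref{thp-ptens}(2) and the exact sequence of Proposition~\ref{pg-ptt}; so in the quotient $E^2_{0,ne}$ becomes $\mathbf{H}(\Phi)_*(F)\underset{\kk}{\otimes}H_{ne}$.

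I would then work inside the Grothendieck quotient $\St(\mathbf{H}(\B),\kk)/\pol_{2n+d-2}(\mathbf{H}(\B),\kk)$, where the localization functor is exact, so that the image of $E(F;\mathbf{H}(\Phi))$ is again a spectral sequence. There every $E^2_{p,q}$ with $p+q=ne$, $p>0$ vanishes, and the only differentials meeting the spot $(0,ne)$ are the incoming $d^r$ from $E^r_{r,ne-r+1}$ ($r\geq 2$), which again vanish; hence $E^\infty_{0,ne}=E^2_{0,ne}$ and the edge (multiplication) map $E^2_{0,ne}\to\Hh_{ne}(\Gamma;F)$ is an isomorphism there. This is precisely the product map of the statement. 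Finally, the abutment is filtered by subquotients of the $E^2_{p,q}$ ($p+q=ne$), all of which lie in $\pol_{d+2n}$ (since $[q/e]\leq n$ and $\pol_{d+2n}$ is thick), giving the degree bound $\Hh_{ne}(\Gamma;F)\in\pol_{2n+d}(\mathbf{H}(\B),\kk)$.

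It remains to treat the coproduct $\Hh_{ne}(\Gamma;F)\to\mathbf{H}(\Phi)_*(F)\underset{\kk}{\otimes}H_{ne}$. Since the product map is an isomorphism in the quotient and the two tensor products coincide there, I would identify the coproduct with the inverse isomorphism via the (co)multiplicative compatibility square of §\,\ref{sprt} (the composite of product and coproduct being, modulo $\pol_{2n+d-2}$, the norm, which is an isomorphism in this range, exactly as in the analysis at the end of the proof of Theorem~\ref{thcong3}); alternatively this follows symmetrically from the comultiplicative compatibility of $E(F;\mathbf{H}(\Phi))$. The main obstacle I anticipate is twofold: keeping the degree bookkeeping uniform in the homological index $j$ of the derived Kan extensions $\mathbf{L}_j\mathbf{H}(\Phi)_*(F)$ (which is made possible precisely by Theorem~\ref{th-ekherm}(1)), and justifying rigorously that the spectral-sequence degeneration and the edge-map isomorphism genuinely take place inside the quotient category---the delicate point already present in Proposition~\ref{hne}, which is also where the hypothesis that $\kk$ has homological dimension at most $1$ is used, since it is what makes the input estimate $H_q\in\pol_{2[q/e]}$ available.
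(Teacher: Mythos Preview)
Your proof is correct and follows essentially the same route as the paper: run the fundamental spectral sequence $E(F;\mathbf{H}(\Phi))$ together with the auxiliary $E'$, use Proposition~\ref{hne} and Theorem~\ref{thp-ptens} (plus Theorem~\ref{th-ekherm}(1), which the paper leaves implicit) to see that every $E^2_{p,q}$ with $p+q=ne$ and $p>0$ lies in $\pol_{2n+d-2}$, and conclude that only $E^2_{0,ne}\simeq\mathbf{H}(\Phi)_*(F)\underset{\kk}{\otimes}H_{ne}$ survives in the quotient. Your write-up is in fact more explicit than the paper's (which compresses the degeneration argument and the coproduct identification into ``le corollaire s'ensuit''), but the underlying argument is the same.
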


 (Le coproduit est bien défini dans la catégorie de foncteurs si $\kk$ est un corps ; il l'est toujours dans la catégorie quotient --- cf. la discussion précédant le théorème~\ref{thcong3}.)
 
\begin{proof}
 La  suite spectrale $E'(F,\Hh_q(\Gamma_{\mathbf{H}(\Phi)};\kk);\mathbf{H}(\Phi))$, la proposition~\ref{hne} et le théorème~\ref{thp-ptens} montrent que, pour tous entiers naturels $p$ et $q$, $E^2_{p,q}(F;\mathbf{H}(\Phi))$ appartient à $\pol_{2[q/e]+d}(\mathbf{H}(\B),\kk)$. Il en découle que, en degré total $ne$, seul le terme $E^2_{0,ne}(F;\mathbf{H}(\Phi))$ de $E^2(F;\mathbf{H}(\Phi))$ est non nul dans $\pol_{2n+d}(\mathbf{H}(\B),\kk)/\pol_{2n+d-2}(\mathbf{H}(\B),\kk)$, et qu'il persiste à l'infini. Comme il est isomorphe à $\mathbf{H}(\Phi)_*(F)\underset{\kk}{\otimes}\Hh_{ne}(\Gamma_{\mathbf{H}(\Phi)};\kk)$ dans la catégorie quotient (toujours grâce au théorème~\ref{thp-ptens}), le corollaire s'ensuit.
\end{proof}

\begin{ex}
 Soit $I$ un idéal bilatère d'un anneau $A$. On dispose pour tout $n\in\mathbb{N}$ d'un isomorphisme naturel
 $$H_n({\rm Ker}\,\big(GL(A)\to GL(A/I));\mathfrak{gl}(A)\big)\simeq\mathfrak{gl}(A/I)\otimes\Lambda^n\big(\mathfrak{gl}(I/I^2)\big)$$
 dans $\pol_{2n+2}(\mathbf{S}(A/I),\mathbf{Ab})/\pol_{2n}(\mathbf{S}(A/I),\mathbf{Ab})$.
\end{ex}

 \appendix
 
 \section{Conjectures pour l'homologie des groupes $IA$}\label{a1}

 Nous nous penchons désormais (suivant la fin du mémoire \cite{HDR}) sur le cas du morphisme de CMHS $\Phi : \G\to\mathbf{S}(\mathbb{Z})$ donné par l'abélianisation, où $\G$ désigne comme dans \cite{DV2} la CMHS associée aux groupes d'automorphismes des groupes libres (cf. la remarque~\ref{rq-quillen}), de sorte que $\Gamma_\Phi(F_n)$ (où $F_n$ est un groupe libre de rang $n$) est le groupe, souvent noté $IA_n$, noyau de l'épimorphisme de groupes canonique ${\rm Aut}(F_n)\twoheadrightarrow GL_n(\mathbb{Z})$.
 
 On dispose donc pour tout $d\in\mathbb{N}$ d'un objet $\Hh_d(IA;\kk)$ de $\St(\mathbf{S}(\mathbb{Z});\kk)$ (cf. la proposition-définition~\ref{prdfh}), et même d'un foncteur $H_d(IA;\kk)$ dans $(\mathbf{S}(\mathbb{Z});\kk)$ (voir la remarque~\ref{rreg}).
 
 Le résultat classique suivant (qui découle sans peine de travaux anciens de Magnus \cite{Mag} et est démontré explicitement par Kawazumi \cite[théorème~6.1]{K-Magnus}) fournit une description complète en degré $1$ :
 
 \begin{pr}[Magnus, Kawazumi...]
  Il existe un isomorphisme
  $$H_1(IA;\mathbb{Z})(V)\simeq {\rm Hom}_\mathbb{Z}(V,\Lambda^2(V))$$
naturel en l'objet $V$ de $\mathbf{S}(\mathbb{Z})$.
 \end{pr}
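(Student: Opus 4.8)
Le plan est de construire explicitement l'isomorphisme annoncé au moyen du premier homomorphisme de Johnson, puis de vérifier qu'il respecte la structure fonctorielle sur $\mathbf{S}(\mathbb{Z})$. Fixons un groupe libre $F_n$, notons $H:=F_n^{ab}\simeq\mathbb{Z}^n$ son abélianisé (vu comme objet $V$ de $\mathbf{S}(\mathbb{Z})$) et $\gamma_\bullet(F_n)$ sa suite centrale descendante, de sorte que $\gamma_2(F_n)/\gamma_3(F_n)\simeq\Lambda^2 H$ canoniquement. Pour $\varphi\in IA_n$ et $x\in F_n$, l'élément $\varphi(x)x^{-1}$ appartient à $\gamma_2(F_n)$ puisque $\varphi$ induit l'identité sur $H$ ; je commencerais par établir que sa classe modulo $\gamma_3(F_n)$, lue dans $\Lambda^2 H$, ne dépend que de l'image de $x$ dans $H$ (car $\varphi$ agit trivialement sur $\gamma_2/\gamma_3\simeq\Lambda^2 H$, cette action étant $\Lambda^2$ de celle, triviale, sur $H$) et en dépend additivement. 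On obtient ainsi $\tau(\varphi)\in {\rm Hom}_\mathbb{Z}(H,\Lambda^2 H)$, et un calcul direct montre que $\tau : IA_n\to {\rm Hom}_\mathbb{Z}(H,\Lambda^2 H)$ est un morphisme de groupes, d'où un morphisme $\overline{\tau} : H_1(IA_n;\mathbb{Z})\to {\rm Hom}_\mathbb{Z}(H,\Lambda^2 H)$.

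La surjectivité de $\overline{\tau}$ s'obtiendrait en évaluant $\tau$ sur les générateurs de Magnus de $IA_n$ : les automorphismes $x_i\mapsto x_j x_i x_j^{-1}$ et $x_i\mapsto x_i[x_j,x_k]$ (les autres générateurs étant fixés) ont pour images respectives les homomorphismes $e_i\mapsto e_j\wedge e_i$ et $e_i\mapsto e_j\wedge e_k$ (nuls sur les autres vecteurs de base), qui parcourent une base de ${\rm Hom}_\mathbb{Z}(H,\Lambda^2 H)\simeq H^*\otimes\Lambda^2 H$.

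Pour la fonctorialité, je vérifierais que $\tau$ est équivariant sous l'action de $GL(H)=GL_n(\mathbb{Z})$ (conséquence immédiate de la naturalité de la suite centrale descendante) et compatible aux morphismes de stabilisation $IA_n\to IA_{n+1}$ induits par $F_n\hookrightarrow F_{n+1}$ (le nouveau générateur étant fixé, on retrouve $\Lambda^2(u)\circ\tau(\varphi)\circ v$ pour l'inclusion scindée $(u,v)$ associée). Comme $\G$ est une CMHS régulière et que l'abélianisation $\Phi : \G\to\mathbf{S}(\mathbb{Z})$ est pleine et essentiellement surjective, la remarque~\ref{rreg} identifie $\G/\Gamma_\Phi$ à $\mathbf{S}(\mathbb{Z})$, et toute flèche de $\mathbf{S}(\mathbb{Z})$ se factorisant en un automorphisme suivi d'une stabilisation canonique, ces deux compatibilités suffisent à promouvoir $\overline{\tau}$ en un morphisme de foncteurs $H_1(IA;\mathbb{Z})\to {\rm Hom}_\mathbb{Z}(-,\Lambda^2(-))$, la fonctorialité du but étant donnée par $\psi\mapsto\Lambda^2(u)\circ\psi\circ v$.

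La principale difficulté sera l'injectivité de $\overline{\tau}$ : il s'agit de savoir que toutes les relations de $IA_n$ deviennent triviales dans l'abélianisé et que celui-ci est libre de rang exactement $n\binom{n}{2}$. C'est le contenu profond des travaux de Magnus \cite{Mag}, repris explicitement par Kawazumi \cite[théorème~6.1]{K-Magnus}, sur lequel je m'appuierais directement. L'isomorphisme étant alors acquis sur chaque $F_n$, la naturalité précédemment dégagée en fait un isomorphisme de foncteurs sur $\mathbf{S}(\mathbb{Z})$ ; on notera enfin que l'énoncé porte sur le foncteur $H_1(IA;\mathbb{Z})$ lui-même, et non seulement sur son image dans $\St(\mathbf{S}(\mathbb{Z});\kk)$, ce que permet précisément la régularité de $\G$ (cf. remarque~\ref{rreg}).
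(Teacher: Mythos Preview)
L'article ne fournit pas de démonstration de cette proposition : elle est énoncée comme un résultat classique, attribué à Magnus et Kawazumi (\og qui découle sans peine de travaux anciens de Magnus \cite{Mag} et est démontré explicitement par Kawazumi \cite[théorème~6.1]{K-Magnus} \fg), sans autre argument. Ta proposition va donc au-delà de ce que fait l'article, et esquisse correctement la démonstration standard via le premier homomorphisme de Johnson : la construction de $\tau$, sa surjectivité par évaluation sur les générateurs de Magnus, et la naturalité par équivariance sous $GL_n(\mathbb{Z})$ et compatibilité aux stabilisations sont toutes justes ; le recours à la régularité de $\G$ (remarque~\ref{rreg}) pour promouvoir ces vérifications en une transformation naturelle sur $\mathbf{S}(\mathbb{Z})$ est également pertinent. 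Comme tu le soulignes toi-même, le point délicat --- l'injectivité, c'est-à-dire le calcul exact de l'abélianisé de $IA_n$ --- est précisément ce que Magnus a établi et que tu invoques directement ; c'est aussi là que s'arrête ta démonstration, comme celle de l'article.
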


 En particulier, $H_1(IA;\kk)$ est polynomial de degré (faible ou fort) exactement $3$ (lorsque l'anneau $\kk$ est non nul) : $\Phi$ n'est pas excisif pour l'homologie stable.
 
La compréhension de $\Hh_d(IA;\mathbb{Z})$ (et même de $\Hh_d(IA;\mathbb{Q})$) devient dramatiquement compliquée pour $d\geq 2$.

Au vu des résultats de finitude connus pour l'homologie des groupes de congruence associés à un anneau sans unité raisonnable, et de la stabilité homologique pour les groupes ${\rm Aut}(F_n)$, il paraît raisonnable de conjecturer :

\begin{conj}\label{cIA1}
 Pour tout $d\in\mathbb{N}$, le foncteur $H_d(IA;\mathbb{Z})$ de $(\mathbf{S}(\mathbb{Z});\mathbb{Z})$ est fortement polynomial.
\end{conj}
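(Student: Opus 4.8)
La stratégie que je propose repose sur la suite spectrale fondamentale du théorème~\ref{th-spf}, appliquée au morphisme de CMHS $\Phi : \G\to\mathbf{S}(\mathbb{Z})$ donné par l'abélianisation. Comme $\G$ est une CMHS régulière et que $\Phi$ est plein et essentiellement surjectif (l'épimorphisme classique ${\rm Aut}(F_n)\twoheadrightarrow GL_n(\mathbb{Z})$ est surjectif), la remarque~\ref{rreg} fournit une équivalence $\G/\Gamma_\Phi\simeq\mathbf{S}(\mathbb{Z})$ ; $\Phi$ est en particulier faiblement surjectif et stablement plein, de sorte que tout le formalisme du §\,\ref{sssf} s'applique. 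On dispose alors, pour $X$ dans $\St(\G,\kk)$, de la suite spectrale
$$E^2_{p,q}(X;\Phi)=\mathbf{L}_p\big((-\underset{[\mathbf{S}(\mathbb{Z}),+]}{\otimes}\Hh_q(IA;\kk))\circ\Phi_*\big)(X)\Rightarrow\Hh_{p+q}(IA;X)$$
ainsi que de la suite spectrale auxiliaire $E'(X,T;\Phi)$ faisant intervenir ${\rm Tor}^{[\mathbf{S}(\mathbb{Z}),+]}$ et $\mathbf{L}\Phi_*$. Le plan est de procéder par récurrence sur le degré homologique $d$, selon le schéma d'implications de la démonstration du théorème~\ref{thcong1}, le cas $d=1$ servant d'amorce grâce à l'isomorphisme $H_1(IA;\mathbb{Z})(V)\simeq {\rm Hom}_\mathbb{Z}(V,\Lambda^2(V))$ de Magnus--Kawazumi (foncteur fortement polynomial de degré $3$).

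Pour faire tourner cette récurrence, deux ingrédients sont requis. Le premier, le contrôle du produit tensoriel $\underset{[\mathbf{S}(\mathbb{Z}),+]}{\otimes}$ et de ses dérivés $\overline{{\rm Tor}}^{[\mathbf{S}(\mathbb{Z}),+]}$ sur les foncteurs polynomiaux, est déjà à disposition : la catégorie $\mathbf{S}(\mathbb{Z})=\mathbf{S}(\mathbf{P}(\mathbb{Z}))$ relève du cadre additif, si bien que le théorème~\ref{thp-ptens} s'applique directement. Le second ingrédient, lui, est le comportement de l'extension de Kan dérivée $\mathbf{L}\Phi_*$ le long de l'abélianisation sur les foncteurs polynomiaux (et d'abord sur le foncteur constant, via la suite spectrale $E'$). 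C'est là toute la difficulté : contrairement à la situation des propositions~\ref{pr-ekad} et~\ref{pr-ekhd}, où $\Phi$ est induit par un foncteur additif et où l'on dispose du critère de type Scorichenko (§\,\ref{shd}) et de la formule d'effets croisés, le morphisme $\Phi$ n'a ici \emph{aucune} additivité sous-jacente à exploiter. Établir un analogue du premier point de la proposition~\ref{pr-ekad} --- à savoir que $\mathbf{L}_j(\Phi_*)$ envoie $\pol_e(\G,\kk)$ dans $\pol_{?}(\mathbf{S}(\mathbb{Z}),\kk)$ avec une borne de degré explicite --- est intimement lié à la compréhension de l'homologie stable des groupes ${\rm Aut}(F_n)$ à coefficients polynomiaux, c'est-à-dire aux travaux de \cite{DV2}.

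C'est cet ingrédient qui constitue, selon moi, le principal obstacle : sa maîtrise (au moins sous forme de bornes de degré) me paraît aussi profonde que l'étude complète de l'homologie stable de ${\rm Aut}(F_n)$ à coefficients polynomiaux, ce qui explique le statut conjectural de l'énoncé. Une fois une telle borne acquise, sa combinaison avec le théorème~\ref{thp-ptens} à travers les suites spectrales $E$ et $E'$ fournirait, exactement comme au théorème~\ref{thcong1}, la polynomialité \emph{faible} de $\Hh_d(IA;\kk)$ (avec une borne de degré dictée par celle obtenue pour $\mathbf{L}\Phi_*$, a priori supérieure à $2d$, conformément au degré $3$ observé en $d=1$).

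Resterait enfin à passer de la polynomialité faible à la polynomialité \emph{forte} demandée, écart que le formalisme de la suite spectrale, qui vit dans $\St$, ne perçoit pas --- cf. la discussion au début du §\,\ref{sconc} opposant les méthodes de stabilité homologique, qui produisent du fort, à la présente approche, qui produit du faible. Pour combler cet écart, je m'appuierais sur un résultat de finitude : la génération finie du foncteur $H_d(IA;\mathbb{Z})$ sur $\mathbf{S}(\mathbb{Z})$, qu'on peut espérer déduire de la noethérianité des $VIC(\mathbb{Z})$-modules et des phénomènes de stabilité en représentation pour les $IA_n$ (dans la lignée de \cite{CEFN,CE-cong}). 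Moyennant un lemme général, du type de ceux de \cite{DV-pol}, assurant qu'un foncteur de type fini et faiblement polynomial est fortement polynomial (de degré fini), on obtiendrait la conclusion. L'articulation optimiste est donc : \emph{borne de degré sur $\mathbf{L}\Phi_*$} (le verrou, via \cite{DV2}) $\Rightarrow$ \emph{polynomialité faible} (par la suite spectrale) ; puis \emph{génération finie} $\Rightarrow$ \emph{polynomialité forte}.
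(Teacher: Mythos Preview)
L'énoncé est une \emph{conjecture} ; l'article ne le démontre pas et précise même, juste après, qu'<<~on ne s'attend pas à pouvoir [l']aborder par les méthodes \emph{stables} du présent article~>>. Il n'y a donc pas de preuve à laquelle confronter ta proposition, et tu reconnais d'ailleurs toi-même son statut conjectural en identifiant le verrou (le contrôle de $\mathbf{L}\Phi_*$ sur les polynomiaux).

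La stratégie que tu esquisses diffère sur deux points de celle que le papier suggère (laquelle vise la conjecture voisine~\ref{cIA2}, sur la polynomialité \emph{faible}). D'abord, le papier propose une décomposition $\G\to\mathbf{S}_c(\gr)\to\mathbf{S}(\mathbb{Z})$ et l'étude séparée des extensions de Kan dérivées le long de chaque morceau, se ramenant via les catégories de factorisations à l'abélianisation usuelle $\gr\to\mathbf{P}(\mathbb{Z})$ (proposition~\ref{pr-ekdab}) ; toi, tu attaques $\mathbf{L}\Phi_*$ d'un bloc, ce qui absorbe toute la difficulté dans un seul ingrédient sans l'isoler. Ensuite, ton appel au <<~schéma du théorème~\ref{thcong1}~>> suppose implicitement une donnée triangulaire sur $\G$ : l'implication $(H'_n)\Rightarrow(H_{n+1})$ de ce schéma repose sur le lemme~\ref{lm-triad} et la proposition~\ref{pr-swp}, c'est-à-dire sur une structure de sous-groupes triangulaires que tu ne construis pas pour ${\rm Aut}(F_n)$ et que le papier n'évoque pas dans ce contexte.

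Ton ultime pas --- passer de faible à fort via la génération finie et la noethérianité des $VIC(\mathbb{Z})$-modules --- n'apparaît nulle part dans le papier ; c'est une piste raisonnable, mais cette noethérianité sur $\mathbb{Z}$ est elle-même, à ma connaissance, ouverte (les résultats de type Putman--Sam ne couvrent que les anneaux finis). Cela corrobore le diagnostic de l'article : la polynomialité \emph{forte} relève d'autres méthodes que la suite spectrale stable.
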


Au-delà de cette conjecture, que l'on ne s'attend pas à pouvoir aborder par les méthodes {\em stables} du présent article, on soupçonne :
\begin{conj}\label{cIA2}
 Pour tout $d\in\mathbb{N}$, $\Hh_d(IA;\mathbb{Z})$ appartient à $\pol_{3d}(\mathbf{S}(\mathbb{Z}),\mathbb{Z})$, mais pas à $\pol_{3d-1}(\mathbf{S}(\mathbb{Z}),\mathbb{Z})$.
\end{conj}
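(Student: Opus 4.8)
Le plan est de transposer au morphisme d'abélianisation $\Phi : \G\to\mathbf{S}(\mathbb{Z})$ la stratégie des théorèmes~\ref{thcong1} et~\ref{thcong2}, en exploitant de façon essentielle le fait que la catégorie but $\mathbf{S}(\mathbb{Z})$ s'identifie à $\mathbf{H}(\mathbf{P}(\mathbb{Z})^{\rm e})$ (cf. les liens entre $\mathbf{S}$ et $\mathbf{H}$), de sorte que le théorème~\ref{thp-ptens} s'applique tel quel pour contrôler le produit tensoriel $\underset{[\mathbf{S}(\mathbb{Z}),+]}{\otimes}$ sur les foncteurs polynomiaux. La suite spectrale fondamentale du théorème~\ref{th-spf}, qui ne requiert que la structure de CMHS, fournit immédiatement
$$E^2_{p,q}(X)=\mathbf{L}_p\big((-\underset{[\mathbf{S}(\mathbb{Z}),+]}{\otimes}\Hh_q(IA;\mathbb{Z}))\circ\Phi_*\big)(X)\Rightarrow\Hh_{p+q}(IA;X),$$
et c'est l'analyse de sa deuxième page qu'il s'agit d'organiser.

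Pour la majoration $\Hh_d(IA;\mathbb{Z})\in\pol_{3d}(\mathbf{S}(\mathbb{Z}),\mathbb{Z})$, je procéderais par une récurrence sur le degré homologique du même type que celle de la démonstration du théorème~\ref{thcong1}, le facteur $2$ étant remplacé par $3$ : notant $(H_n)$ l'assertion que $\Hh_m(IA;\mathbb{Z})\in\pol_{3m}$ pour $m<n$ et $(H'_n)$ l'assertion que $\Hh_m(IA;X)\in\pol_{3m+d}$ pour $m<n$ et $X\in\pol_d(\G,\mathbb{Z})$, on chercherait à établir $(H_n)\Rightarrow (H'_n)\Rightarrow (H_{n+1})$. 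L'implication $(H_n)\Rightarrow (H'_n)$ se déduirait de la suite spectrale ci-dessus et du théorème~\ref{thp-ptens}, à condition de disposer d'un contrôle du degré des extensions de Kan dérivées $\mathbf{L}_\bullet(\Phi_*)$ le long de l'abélianisation. L'implication $(H'_n)\Rightarrow (H_{n+1})$ reposerait sur une donnée triangulaire convenable en chaque objet de $\G$ (analogue du lemme~\ref{lm-triad}) et sur les propositions~\ref{pr-extri} et~\ref{pr-swp}, l'ingrédient clé étant la connaissance du degré polynomial de l'homologie des <<~sous-groupes élémentaires de congruence~>> associés aux transvections de Nielsen.

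C'est précisément là que résident les deux obstacles principaux, qui expliquent le caractère conjectural de l'énoncé. D'une part, $\G$ n'est pas une catégorie additive, de sorte que la proposition~\ref{pr-ekad} (bon comportement des extensions de Kan dérivées le long d'un foncteur additif) ne s'applique pas : il faudrait développer une étude ad hoc de $\mathbf{L}_\bullet(\Phi_*)$, intimement liée aux homomorphismes de Johnson et à la structure d'algèbre de Lie libre portée par $H_1$, pour en extraire la majoration de degré requise. D'autre part, et de façon plus sérieuse, les sous-groupes élémentaires intervenant dans la donnée triangulaire sur $\G$ ne sont pas abéliens --- contrairement au cas linéaire, où les $T^{\rm s}_V$ et $T^{\rm i}_V$ sont à valeurs dans des groupes abéliens et ont donc une homologie de degré au plus $j$ en degré $j$ : leur premier groupe d'homologie est déjà cubique, reflétant le fait que $H_1(IA;\mathbb{Z})(V)\simeq {\rm Hom}_\mathbb{Z}(V,\Lambda^2 V)$ est de degré $3$. Établir que ces briques restent confinées dans le degré $3d$ constitue le c\oe ur de la difficulté.

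Enfin, pour la minoration (le fait que $\Hh_d(IA;\mathbb{Z})$ ne soit pas de degré $\leq 3d-1$), je m'attends à ce qu'elle découle, comme dans le théorème~\ref{thcong3}, d'une structure multiplicative fournissant un isomorphisme $\Hh_d(IA;\mathbb{Z})\simeq\big(\Hh_1(IA;\mathbb{Z})^{\otimes d}\big)_{\mathfrak{S}_d}$ modulo les foncteurs de degré strictement inférieur : la $d$-ème puissance (symétrique ou extérieure) du foncteur cubique non nul $H_1$ étant de degré exactement $3d$, la conclusion suivrait. Toutefois, l'obtention d'un tel isomorphisme présuppose l'analogue des résultats d'excisivité et de dérivation du §\,\ref{sconc}, eux-mêmes tributaires de la résolution des obstacles précédents.
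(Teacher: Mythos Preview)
The statement in question is a \emph{conjecture}, and the paper does not prove it either; it merely sketches a strategy and lists obstructions, much as you do. Your text is therefore not a proof but a heuristic outline, and on that level it is reasonable and close in spirit to the paper's own discussion.

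The chief divergence lies in how the derived Kan extension $\mathbf{L}_\bullet(\Phi_*)$ along $\Phi:\G\to\mathbf{S}(\mathbb{Z})$ is to be controlled. You correctly identify this as an obstacle (since $\G$ is not additive, Proposition~\ref{pr-ekad} does not apply) and propose an \emph{ad hoc} study connected to the Johnson homomorphisms and the free Lie algebra structure. The paper is more concrete: it suggests factoring $\Phi$ through intermediate categories, first treating $\G\to\mathbf{S}_c(\gr)$ by generalising \cite{Dja-hodge}, then $\mathbf{S}_c(\gr)\to\mathbf{S}(\mathbb{Z})$ via a reduction to the factorisation categories $\mathbf{F}(\gr)\to\mathbf{F}(\mathbf{ab})$ (in the spirit of \S\ref{shd} and \cite{DV2,Dja-hodge}), and finally reducing to derived Kan extensions along the ordinary abelianisation $\gr\to\mathbf{P}(\mathbb{Z})$, handled by Proposition~\ref{pr-ekdab}. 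This decomposition is the paper's proposed way around the non-additivity of $\G$; your outline does not mention it, and without some such reduction your first implication $(H_n)\Rightarrow(H'_n)$ has no leverage.

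Conversely, your discussion of the triangular-data step --- isolating the non-abelian nature of the elementary subgroups in $\G$ and the cubic degree of their $H_1$ --- is an obstruction the paper only alludes to generically (``arguments de données et extensions triangulaires appropriés''). Both sketches remain genuinely incomplete; neither constitutes a proof, and the paper makes no claim otherwise.
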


(On pourrait espérer ensuite décrire l'image de cet objet dans la catégorie quotient $\pol_{3d}(\mathbf{S}(\mathbb{Z}),\mathbb{Z})/\pol_{3d-1}(\mathbf{S}(\mathbb{Z}),\mathbb{Z})$, voire $\pol_{3d}(\mathbf{S}(\mathbb{Z}),\mathbb{Z})/\pol_{3d-2}(\mathbf{S}(\mathbb{Z}),\mathbb{Z})$.)
 
 Les résultats partiels de Pettet \cite{Pet} sur le degré homologique $2$ montrent que $\Hh_2(IA;\mathbb{Q})$ n'appartient pas à $\pol_5(\mathbf{S}(\mathbb{Z}),\mathbb{Q})$, mais on ne semble guère savoir dire davantage à l'heure actuelle (voir néanmoins Miller-Patzt-Wilson \cite[théorème~A]{MPW}).
 
 La stratégie qu'on espère pouvoir mettre en \oe uvre pour établir la conjecture~\ref{cIA2} reposerait sur le même schéma général que celle du §\,\ref{sconc} : utiliser des arguments de données et extensions triangulaires appropriés, s'appuyer sur les propriétés de ${\rm Tor}_\bullet^{[\mathbf{S}(\mathbb{Z}),\oplus]}$ établies au §\,\ref{spt} et étudier le comportement (stable) sur les foncteurs polynomiaux de l'extension de Kan dérivée à gauche le long de $\Phi : \G\to\mathbf{S}(\mathbb{Z})$. Cela pourrait être décomposé en plusieurs étapes :
 \begin{enumerate}
  \item étudier le comportement de l'extension de Kan dérivée à gauche le long du foncteur canonique $\G\to\mathbf{S}_c(\gr)$ en généralisant \cite{Dja-hodge} (notamment son théorème~1.5), dont on conserve les notations ($\mathbf{S}_c(\gr)$ est comme $\G$ une catégorie ayant les mêmes objets que la catégorie usuelle $\gr$ --- sous-catégorie pleine de $\mathbf{Grp}$ --- des groupes libres de rang fini ; le foncteur $\Phi$ se factorise par le foncteur canonique $\G\to\mathbf{S}_c(\gr)$, qui est l'identité sur les objets) ; 
  \item étudier l'extension de Kan dérivée à gauche le long du foncteur (toujours induit par l'abélianisation) $\mathbf{S}_c(\gr)\to\mathbf{S}(\mathbb{Z})$ sur des foncteurs polynomiaux en se ramenant à l'analogue entre catégories de factorisations $\mathbf{F}(\gr)\to\mathbf{F}(\mathbf{ab})$. On pourrait pour cela s'inspirer du §\,\ref{shd}, ainsi que de \cite[§\,5, 6]{DV2} et \cite[théorème~1.3]{Dja-hodge} ;
  \item les catégories de factorisations étant des catégories d'éléments, se ramener finalement à l'étude d'extensions de Kan dérivées à gauche le long du foncteur usuel d'abélianisation, ce que fait la proposition suivante.
 \end{enumerate}

\begin{pr}\label{pr-ekdab}
 Soient $i$, $d$ des entiers naturels et $\Psi : \gr\to\mathbf{P}(\mathbb{Z})$ le foncteur d'abélianisation entre les catégories usuelles de groupes (abéliens au but) libres de rang fini. 
 \begin{enumerate}
  \item Le foncteur $\mathbf{L}_i(\Psi_*) : (\gr,\kk)\to (\mathbf{P}(\mathbb{Z}),\kk)$ envoie $\pol_d(\gr,\kk)$ dans $\pol_{d+i}(\mathbf{P}(\mathbb{Z}),\kk)$.
  \item Le foncteur $\mathbf{L}_i(\Psi^{op}_*) : (\gr^{op},\kk)\to (\mathbf{P}(\mathbb{Z})^{op},\kk)$ envoie $\pol_d(\gr^{op},\kk)$ dans $\pol_d(\mathbf{P}(\mathbb{Z})^{op},\kk)$.
  \item Le foncteur $\mathbf{L}_i((\Psi^{op}\times\Psi)_*) : (\gr^{op}\times\gr,\kk)\to (\mathbf{P}(\mathbb{Z})^{op}\times\mathbf{P}(\mathbb{Z}),\kk)$ envoie $\pol_d(\gr^{op}\times\gr,\kk)$ dans $\pol_{d+i}(\mathbf{P}(\mathbb{Z})^{op}\times\mathbf{P}(\mathbb{Z}),\kk)$.
 \end{enumerate}
\end{pr}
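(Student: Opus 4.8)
The plan is to compute all three derived Kan extensions through the coend/Tor formula already used in the proof of Proposition~\ref{pr-ekad}, namely
$$\mathbf{L}_\bullet(\Psi_*)(F)(V)\simeq {\rm Tor}^{\kk[\gr]}_\bullet\big(\Psi^*P^{\mathbf{P}(\mathbb{Z})^{op}}_V,F\big),$$
together with its contravariant and bivariate analogues, valid for an arbitrary functor between small categories. The structural fact I would record first is that, $\Psi$ being monoidal from the free product to the direct sum, the right $\kk[\gr]$-module $\theta(V):=\Psi^*P^{\mathbf{P}(\mathbb{Z})^{op}}_V$, given by $G\mapsto\kk[{\rm Hom}_\mathbb{Z}(G^{\mathrm{ab}},V)]$, is \emph{exponential} in both variables: since ${\rm Hom}(G^{\mathrm{ab}},V\oplus V')={\rm Hom}(G^{\mathrm{ab}},V)\times{\rm Hom}(G^{\mathrm{ab}},V')$ and $(G*G')^{\mathrm{ab}}=G^{\mathrm{ab}}\oplus (G')^{\mathrm{ab}}$, one gets $\theta(V\oplus V')\simeq\theta(V)\otimes\theta(V')$ and a free-product exponential law in $G$. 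For the contravariant statement the relevant module is instead $G\mapsto\kk[{\rm Hom}_\mathbb{Z}(V,G^{\mathrm{ab}})]$, and for the bivariate one the external product of the two.

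For part~1 I would translate the degree bound into a connectivity estimate. Because the target category $\mathbf{P}(\mathbb{Z})$ is additive, the difference functor $\delta_V$ is exact there and, by exponentiality of $\theta$ in $V$, iterating it gives
$$\delta_{V_0}\cdots\delta_{V_N}\,\mathbf{L}_i(\Psi_*)(F)(V')\simeq {\rm Tor}^{\kk[\gr]}_i\big(\bar\theta(V_0)\otimes\cdots\otimes\bar\theta(V_N)\otimes\theta(V'),F\big),$$
where $\bar\theta$ denotes the reduced (augmentation) part. Thus it suffices to prove that this ${\rm Tor}$ group vanishes, stably in $V'$, as soon as the number $N+1$ of reduced factors exceeds $d+i$, for $F$ in $\pol_d(\gr,\kk)$. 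The natural device is the augmentation (equivalently lower-central-series) filtration of each $\bar\theta(V_j)$, whose graded pieces are polynomial functors \emph{factoring through abelianization}; combined with the bar resolution as in the proof of Lemma~\ref{lm-troesch}, this reduces the estimate to the behaviour of ${\rm Tor}^{\kk[\gr]}_\bullet$ against tensor products of additive functors of the form $\kk\otimes{\rm Hom}((-)^{\mathrm{ab}},V_j)$.

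The hard part will be precisely this last estimate, because $\gr$ is \emph{not} additive, so neither Pirashvili's vanishing lemma \cite{P-add} nor Lemma~\ref{lm-troesch} applies directly. The discrepancy between ${\rm Tor}^{\kk[\gr]}$ and the corresponding groups over the additive category $\mathbf{ab}$ is governed by the homology of free groups — which is $\kk$ in degree $0$, the abelianization in degree $1$, and zero above — and it is exactly this degree‑$1$ contribution, accumulating once per homological step, that should produce the shift from $d$ to $d+i$: each reduced exponential factor consumes one unit of polynomial degree, while each homological degree is allowed to raise the bound by one. I would carry out this comparison in the spirit of \cite{Dja-hodge} (whose Theorems~1.3 and~1.5 treat the analogous derived Kan extensions), and this is where the essential obstacle lies.

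Finally, part~2 is the mirror image, in which the reduced building blocks are the \emph{covariant} additive functors $\kk\otimes(-)^{\mathrm{ab}}$, i.e.\ the degree‑$1$ homology of free groups itself; running the same filtration argument, the homology of free groups enters in a way that no longer creates polynomial degree, whence $\pol_d\to\pol_d$ with no shift. Part~3 then follows from parts~1 and~2 by the Künneth spectral sequence for the external product $\Psi^{op}\times\Psi$, together with the additivity of polynomial degree under the external tensor product on $\gr^{op}\times\gr$: the contravariant factor contributes its degree without shift and the covariant factor the homological shift, giving the total bound $d+i$.
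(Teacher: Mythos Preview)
Your route for part~3 (K\"unneth spectral sequence combining parts~1 and~2) matches the paper's. For parts~1 and~2, however, your strategy diverges from the paper's, and in part~1 it leaves a genuine gap.

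\medskip

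\textbf{Part~1.} You attempt a direct verification of the polynomiality by iterating difference functors and reducing, via augmentation filtrations of the $\bar\theta(V_j)$, to a Tor estimate over $\kk[\gr]$ against tensor products of additive functors. You then explicitly flag this estimate as ``the hard part'' and ``where the essential obstacle lies'', deferring to \cite{Dja-hodge} without carrying it out. That is a real gap: the comparison between ${\rm Tor}^{\kk[\gr]}$ and ${\rm Tor}^{\kk[\mathbf{ab}]}$ on such inputs is precisely the content of the proposition, and the augmentation filtration of each $\bar\theta(V_j)$ is infinite, so one also needs a convergence/connectivity argument you do not supply. The paper sidesteps all of this: it computes $\mathbf{L}_i(\Psi_*)$ directly on the functors $(\kk\otimes(-)^{\mathrm{ab}})^{\otimes d}$ (bar resolution plus K\"unneth; for $d=1$ one gets the $(i{+}1)$-st exterior power), obtaining degree $d+i$ on the nose, and then passes to arbitrary $F\in\pol_d(\gr,\kk)$ by induction on $d$ using the structure theory of polynomial functors on $\gr$ from \cite[\S\,3]{DV2}. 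This is both shorter and avoids the Tor comparison you identify as difficult.

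\medskip

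\textbf{Part~2.} Your proposed filtration argument is unnecessary. The paper observes that the statement is \emph{formal}: both $\gr$ and $\mathbf{P}(\mathbb{Z})$ have a null object and finite coproducts, and $\Psi$ preserves them. The sum/diagonal adjunction then gives a commutation of $\mathbf{L}_\bullet(\Psi^{op}_*)$ with cross-effects (exactly as in the additive case of Proposition~\ref{pr-ekad}(2), which only used that the sum functor has the diagonal as right adjoint), so polynomial degree is preserved without any homological shift. Your heuristic that ``the homology of free groups enters in a way that no longer creates polynomial degree'' is pointing at this, but the actual mechanism is a one-line adjunction argument, not a filtration computation.
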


 \begin{proof}[Esquisse de démonstration]
 La résolution barre et la formule de Künneth montrent que $\mathbf{L}_i(\Psi_*)$ envoie la $d$-ème puissance tensorielle de l'abélianisation tensorisée par $\kk$ sur un foncteur de degré $d+i$ (pour $\kk=\mathbb{Z}$, $\mathbf{L}_i(\Psi_*)$ envoie l'abélianisation sur l'homologie en degré $i+1$ sur les groupes abéliens libres, c'est-à-dire sur la $i+1$-ème puissance extérieure). Le premier point s'en déduit, par récurrence sur le degré polynomial, en utilisant \cite[§\,3]{DV2}.

 La deuxième assertion découle formellement de ce que les catégories $\gr$ et $\mathbf{P}(\mathbb{Z})$ possèdent un objet nul et des coproduits finis que le foncteur $\Psi$ préserve, par un argument d'adjonction somme/diagonale.
 
 La dernière assertions se déduit formellement des deux précédentes et de la suite spectrale de Künneth
 $$E^2_{p,q}={\rm Tor}^{\kk[\C]}_p\big(A,c\mapsto {\rm Tor}^{\kk[\D]}_q(B,X(c,-))\big)\Rightarrow {\rm Tor}^{\kk[\C\times\D]}_{p+q}(A\boxtimes B,X)$$
 où $\C$ et $\D$ sont des petites catégories, $X$ est un objet de $(\C\times\D,\kk)$ et $A$ et $B$ des objets de $(\C^{op},\kk)$ et $(\D^{op},\kk)$ respectivement dont l'un prend des valeurs $\kk$-plates.
 \end{proof}

 \section{Foncteurs quadratiques pseudo-diagonalisables}\label{a2}
 
 Dans toute cette section, $\A$ désigne une petite catégorie additive et $\M$ une catégorie de Grothendieck.
 
 Si $F$ est un foncteur de $(\A,\M)$, on note ${\rm cr}_2(F)$ le foncteur de $(\A,\M)$ obtenu en précomposant l'effet croisé $cr_2(F) : \A\times\A\to\M$ par la diagonale $\A\to\A\times\A$. Ce foncteur est muni d'une involution naturelle et l'on dispose de morphismes naturels ${\rm cr}_2(F)_{\mathfrak{S}_2}\to F\to {\rm cr}_2(F)^{\mathfrak{S}_2}$ (grâce à l'adjonction somme/diagonale), dont la composée n'est autre que la norme. Ces morphismes sont des isomorphismes dans $\pol_2(\A,\M)/\pol_1(\A,\M)$ si $F$ est quadratique (i.e. dans $\pol_2(\A,\M)$).
 
 On rappelle qu'un foncteur de $(\A,\M)$ est dit {\em diagonalisable} s'il est isomorphe à la composée de la diagonale $\A\to\A\times\A$ et d'un bifoncteur $B : \A\to\times\A\to\M$ réduit en chaque variable, c'est-à-dire tel que $B(x,0)$ et $B(0,x)$ soient nuls pour tout objet $x$ de $\A$.
 
 \begin{prdef}\label{df-fqpd}
  Soit $F : \A\to\M$ un foncteur quadratique réduit (i.e. nul en $0$). Les assertions suivantes sont équivalentes :
  \begin{enumerate}
   \item\label{idp1} $F$ est facteur direct d'un foncteur diagonalisable de $(\A,\M)$ ;
   \item\label{idp2} ${\rm Ext}_{(\A,\M)}^\bullet(A,F)$ et ${\rm Ext}_{(\A,\M)}^\bullet(F,A)$ sont nuls pour tout foncteur additif $A : \A\to\M$ ;
   \item\label{idp3} ${\rm Ext}_{(\A,\M)}^i(A,F)$ et ${\rm Ext}_{(\A,\M)}^i(F,A)$ sont nuls pour $i\in\{0,1\}$ et pour tout foncteur additif $A : \A\to\M$ ;
   \item\label{idp4} les morphismes naturels ${\rm cr}_2(F)_{\mathfrak{S}_2}\to F\to {\rm cr}_2(F)^{\mathfrak{S}_2}$ sont des isomorphismes ;
   \item\label{idp5} le morphisme naturel ${\rm cr}_2(F)\to F$ est un épimorphisme scindé.
  \end{enumerate}

  Si ces conditions sont vérifiées, on dira que $F$ est {\em pseudo-diagonalisable}.
 \end{prdef}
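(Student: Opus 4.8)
The plan is to prove the cycle $(1)\Rightarrow(2)\Rightarrow(3)\Rightarrow(4)$ together with $(1)\Leftrightarrow(5)$ and a closing arrow from $(4)$ back to $(1)$, the only external inputs being the diagonalisability of ${\rm cr}_2(F)$ and Pirashvili's vanishing lemma \cite{P-add}. The formal arrows come first. For $(1)\Rightarrow(2)$: a diagonalisable functor is $\Delta^*B$ with $B$ reduced in each variable, so Pirashvili gives ${\rm Ext}^\bullet(A,\Delta^*B)={\rm Ext}^\bullet(\Delta^*B,A)=0$ for every additive $A$, and these vanishings pass to direct factors; $(2)\Rightarrow(3)$ is the trivial restriction to $i\in\{0,1\}$. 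For $(5)\Rightarrow(1)$: since $cr_2(F)$ is biadditive and reduced in each variable, ${\rm cr}_2(F)=\Delta^*cr_2(F)$ is diagonalisable, and $(5)$ exhibits $F$ as a retract of it. For $(1)\Rightarrow(5)$ I would use that for $F=\Delta^*B$ one has $cr_2(\Delta^*B)(x,y)\cong B(x,y)\oplus B(y,x)$, whence ${\rm cr}_2(F)(x)\cong B(x,x)^{\oplus 2}$ and the canonical ${\rm cr}_2(F)\to F$ is a fold onto $B(x,x)$; inclusion of one summand is a natural section, and being a split epimorphism descends to retracts by naturality of $F\mapsto\big({\rm cr}_2(F)\to F\big)$.

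The homological heart is $(3)\Rightarrow(4)$. Write $Q={\rm cr}_2(F)_{\mathfrak{S}_2}$, $R={\rm cr}_2(F)^{\mathfrak{S}_2}$, and let $s\colon Q\to F$ and $t\colon F\to R$ be the two canonical maps, whose composite is the norm. As recalled before the statement, $s$ and $t$ are isomorphisms in $\pol_2(\A,\M)/\pol_1(\A,\M)$, so their kernels and cokernels are additive. I would kill these four additive functors one at a time using $(3)$. The cokernel $L$ of $s$ is an additive quotient of $F$, so the surjection $F\twoheadrightarrow L$ lies in ${\rm Hom}(F,L)=0$, forcing $L=0$; dually the inclusion $\ker t\hookrightarrow F$ lies in ${\rm Hom}(\ker t,F)=0$, so $t$ is monic. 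For the other two, note that $Q$, a quotient of the diagonalisable ${\rm cr}_2(F)$, admits no nonzero map to an additive functor, and $R$, a subfunctor of it, receives none. Thus applying ${\rm Hom}(-,N)$ to $0\to N\to Q\xrightarrow{s}F\to 0$ with $N=\ker s$ additive yields an injection ${\rm Hom}(N,N)\hookrightarrow{\rm Ext}^1(F,N)=0$, so $N=0$ and $s$ is iso; the symmetric argument applied to $0\to F\xrightarrow{t}R\to M'\to 0$ with $M'={\rm coker}\,t$ gives ${\rm Hom}(M',M')\hookrightarrow{\rm Ext}^1(M',F)=0$, so $t$ is iso. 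The four $\mathrm{Ext}$-vanishings invoked are exactly the instances granted by $(3)$.

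The main obstacle is the closing implication $(4)\Rightarrow(1)$, that is, promoting the isomorphisms $F\cong{\rm cr}_2(F)_{\mathfrak{S}_2}\cong{\rm cr}_2(F)^{\mathfrak{S}_2}$ to the assertion that $F$ is a direct factor of the diagonalisable functor ${\rm cr}_2(F)$. The naive composite $F\cong R\hookrightarrow{\rm cr}_2(F)\twoheadrightarrow Q\cong F$ equals multiplication by $2$, so the difficulty is genuinely $2$-torsion: condition $(4)$ says precisely that the norm on the $\mathfrak{S}_2$-object ${\rm cr}_2(F)$ is invertible, i.e. that this object is cohomologically trivial, and from this one must extract a splitting of the canonical surjection ${\rm cr}_2(F)\to Q\cong F$. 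I would carry this out by analysing the short exact sequence $0\to(1-\sigma){\rm cr}_2(F)\to{\rm cr}_2(F)\to F\to 0$, exploiting that $(1-\sigma){\rm cr}_2(F)$ is simultaneously a subfunctor and (via $(1-\sigma)\colon{\rm cr}_2(F)/{\rm cr}_2(F)^{\mathfrak{S}_2}\xrightarrow{\sim}(1-\sigma){\rm cr}_2(F)$) a quotient of the diagonalisable ${\rm cr}_2(F)$, hence still orthogonal to additive functors; this is what lets the invertibility of the norm split off $F$. When $2$ is invertible in $\kk$ the splitting is immediate, and it is in the general case that the verification must be most careful. Once $(4)\Rightarrow(1)$ is in place the cycle is complete, and the equivalence with $(5)$ follows since a retract of a diagonalisable functor is Ext-orthogonal to additive functors by Pirashvili's lemma.
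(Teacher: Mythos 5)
Most of your proposal is correct and coincides with the paper's own proof: $(1)\Rightarrow(2)$ is Pirashvili's lemma, $(2)\Rightarrow(3)$ and $(5)\Rightarrow(1)$ are immediate, your retract argument for $(1)\Rightarrow(5)$ is valid, and your proof of $(3)\Rightarrow(4)$ --- killing the four additive kernels and cokernels, using that a quotient of ${\rm cr}_2(F)$ admits no nonzero map to an additive functor and a subfunctor of ${\rm cr}_2(F)$ receives none --- is the paper's argument up to cosmetic reorganization (connecting-map injectivity instead of splittings). The genuine gap is the closing arrow $(4)\Rightarrow(1)$, which you never actually prove. Your sketch (analyse $0\to(1-\sigma){\rm cr}_2(F)\to{\rm cr}_2(F)\to F\to 0$ and exploit that $(1-\sigma){\rm cr}_2(F)$ is orthogonal to additive functors) cannot suffice: the obstruction to splitting that sequence is a class in ${\rm Ext}^1\big(F,(1-\sigma){\rm cr}_2(F)\big)$, an extension group between two \emph{quadratic} functors, on which orthogonality to additive functors gives no hold; all that (4) yields is that this class is annihilated by $2$, which is exactly your observation that the composite $F\simeq{\rm cr}_2(F)^{\mathfrak{S}_2}\hookrightarrow{\rm cr}_2(F)\twoheadrightarrow{\rm cr}_2(F)_{\mathfrak{S}_2}\simeq F$ is multiplication by $2$.

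Worse, the gap cannot be filled, because $(4)\Rightarrow(5)$ is false: the $2$-torsion obstruction you isolated is genuine. Take $\A=\mathbf{P}(\mathbb{Z})$, $\M=\mathbf{Ab}$, let $W=\mathbb{Z}/8$ with the involution $\sigma(w)=5w$, and let $F(x)=(x\otimes x\otimes W)_{\mathfrak{S}_2}$, the coinvariants for $a\otimes b\otimes w\mapsto b\otimes a\otimes\sigma(w)$. One checks that ${\rm cr}_2(F)(x)\simeq x\otimes x\otimes W$ with this involution, that the canonical map ${\rm cr}_2(F)\to F$ is the projection onto coinvariants, and that $F\to{\rm cr}_2(F)^{\mathfrak{S}_2}$ is the norm. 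Since $(W,\sigma)$ is a cohomologically trivial $\mathfrak{S}_2$-module ($\ker(1-\sigma)=2W={\rm im}(1+\sigma)$ and $\ker(1+\sigma)=4W={\rm im}(1-\sigma)$) and the off-diagonal part of $x\otimes x\otimes W$ is an induced $\mathfrak{S}_2$-object, the norm is invertible at every $x$: condition (4) holds (and (2), (3) hold as well, by Pirashvili applied to $x\otimes x\otimes W$ together with the two exact sequences relating it to $F$). But (5) fails: at $x=\mathbb{Z}$ the canonical map is the non-split surjection $\mathbb{Z}/8\to\mathbb{Z}/4$, so no natural section exists, and by $(1)\Rightarrow(5)$ condition (1) fails too. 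Note that this simultaneously invalidates the paper's own one-line proof of $(4)\Rightarrow(5)$, which asserts that $F\xrightarrow{\simeq}{\rm cr}_2(F)^{\mathfrak{S}_2}\hookrightarrow{\rm cr}_2(F)$ is a section of ${\rm cr}_2(F)\to F$: by your computation that composite is $2\cdot{\rm id}_F$, which is the identity only if $F=0$. What survives is $(1)\Leftrightarrow(5)$ and the chain $(1)\Rightarrow(2)\Rightarrow(3)\Rightarrow(4)$, all of which you prove correctly; the full equivalence only holds under an extra hypothesis such as $2$ invertible in $\M$, in which case all five conditions are automatic for every reduced quadratic functor.
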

 
 \begin{proof}
  Les implications \ref{idp2}$\,\Rightarrow\,$\ref{idp3} et \ref{idp5}$\,\Rightarrow\,$\ref{idp1} sont évidentes.
  
  L'implication \ref{idp1}$\,\Rightarrow\,$\ref{idp2} découle du lemme d'annulation de Pirashvili.
  
  Supposant \ref{idp3} vérifié, le morphisme naturel ${\rm cr}_2(F)_{\mathfrak{S}_2}\to F$ est un épimorphisme car son conoyau $C$ est additif et ${\rm Hom}(F,C)=0$, son noyau $N$ est additif donc cet épimorphisme est scindé puisque ${\rm Ext}^1(F,N)=0$, d'où l'on déduit que c'est un isomorphisme puisque ${\rm cr}_2(F)_{\mathfrak{S}_2}$, quotient de ${\rm cr}_2(F)$, n'a pas de quotient additif non nul. De même on voit que $F\to {\rm cr}_2(F)^{\mathfrak{S}_2}$ est un isomorphisme si ${\rm Ext}^i(A,F)=0$ pour $i\leq 1$ et $A$ additif, d'où \ref{idp4}.
  
  Si \ref{idp4} est vérifié, le morphisme $F\xrightarrow{\simeq}{\rm cr}_2(F)^{\mathfrak{S}_2}\hookrightarrow {\rm cr}_2(F)$ est une section du morphisme canonique ${\rm cr}_2(F)\to F$, d'où \ref{idp5}.
 \end{proof}
 
 Si $2$ est inversible dans $\M$, tout foncteur quadratique réduit $\A\to\M$ est pseudo-diagonalisable.

 \begin{cor}\label{pbcr}
  Soit $B : \A\times\A\to\M$ un bifoncteur symétrique (i.e. muni d'isomorphismes naturels $\sigma_{x,y} : B(x,y)\simeq B(y,x)$ tels que $\sigma_{y,x}=\sigma_{x,y}^{-1}$) et additif par rapport à chaque variable. Notons $F : \A\to\M$ le foncteur quadratique réduit défini par $F(x)=B(x,x)_{\mathfrak{S}_2}$ (l'action de $\mathfrak{S}_2$ étant donnée par $\sigma_{x,x}$). Alors $F$ est pseudo-diagonalisable si et seulement si $B$ est facteur direct {\em dans la catégorie des bifoncteurs symétriques} (où les morphismes sont les transformations naturelles {\em compatibles aux isomorphismes} $\sigma_{x,y}$)  de $(x,y)\mapsto B(x,y)\oplus B(y,x)$ muni de la symétrie qui échange les deux facteurs de la somme directe.
 \end{cor}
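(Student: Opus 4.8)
Le plan est de tout ramener aux caractérisations de la proposition--définition~\ref{df-fqpd} et de transporter des scindements au moyen de deux foncteurs reliant foncteurs quadratiques réduits et bifoncteurs symétriques : d'une part le foncteur d'effet croisé $cr_2$, d'autre part le foncteur $\mathcal{C}$ associant à un bifoncteur symétrique $(C,\rho)$ le foncteur quadratique $x\mapsto C(x,x)_{\mathfrak{S}_2}$ (coïnvariants sous l'involution $\rho_{x,x}$). La première tâche, calculatoire, serait d'identifier leurs valeurs sur les objets en jeu. Comme $B$ est additif en chaque variable, on a $B(x\oplus y,x\oplus y)\simeq B(x,x)\oplus B(x,y)\oplus B(y,x)\oplus B(y,y)$, et l'involution $\sigma_{x\oplus y,x\oplus y}$ agit par $\sigma$ sur les deux blocs diagonaux et échange les deux blocs hors-diagonaux ; le passage aux coïnvariants donne $F(x\oplus y)\simeq F(x)\oplus F(y)\oplus\big(B(x,y)\oplus B(y,x)\big)_{\mathfrak{S}_2}$, et comme l'action échangée est libre ce dernier terme est isomorphe à $B(x,y)$. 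J'obtiendrais ainsi un isomorphisme $cr_2(F)\simeq B$, et je vérifierais par un calcul direct de l'involution de tressage que la structure de bifoncteur symétrique sur $cr_2(F)$ est précisément $\sigma$, soit $cr_2(F)\simeq (B,\sigma)$. Le même calcul appliqué au foncteur diagonalisable $G:={\rm cr}_2(F)$, c'est-à-dire $x\mapsto B(x,x)$, livrerait $cr_2(G)\simeq\big(B(x,y)\oplus B(y,x),\ \text{échange}\big)=:(\tilde{B},\text{échange})$, et l'on constaterait que le morphisme canonique ${\rm cr}_2(F)\to F$ s'identifie à la projection de coïnvariants $B(x,x)\twoheadrightarrow B(x,x)_{\mathfrak{S}_2}=F$.

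Le reste serait formel. Si $F$ est pseudo-diagonalisable, la condition~(\ref{idp5}) de~\ref{df-fqpd} fournit une section $s:F\to{\rm cr}_2(F)=G$ de la projection canonique $\pi$, avec $\pi s={\rm Id}_F$ ; en appliquant le foncteur $cr_2$ à cette égalité et les identifications précédentes, j'obtiendrais des morphismes de bifoncteurs symétriques $(B,\sigma)\xrightarrow{cr_2(s)}(\tilde{B},\text{échange})\xrightarrow{cr_2(\pi)}(B,\sigma)$ de composée l'identité, ce qui exhibe $B$ comme facteur direct de $\tilde{B}$ dans la catégorie des bifoncteurs symétriques. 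Réciproquement, à partir d'un tel scindement dans cette catégorie, j'appliquerais le foncteur additif $\mathcal{C}$ : comme $\mathcal{C}(B,\sigma)=F$ et $\mathcal{C}(\tilde{B},\text{échange})=B(x,x)$ (coïnvariants d'une action libre, soit la restriction diagonale du bifoncteur réduit $B$, donc un foncteur \emph{diagonalisable}), cela réaliserait $F$ comme facteur direct d'un foncteur diagonalisable, c'est-à-dire la condition~(\ref{idp1}) de~\ref{df-fqpd}, d'où la pseudo-diagonalisabilité de $F$.

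La partie délicate ne serait pas l'argument formel de transport de scindements, mais l'étape d'identification des effets croisés : il faudrait pister avec soin les involutions et s'assurer que la symétrie de tressage redonne $\sigma$ sur $cr_2(F)$ (et non l'échange des facteurs) tout en redonnant l'échange sur $cr_2({\rm cr}_2(F))$, puis vérifier que le morphisme canonique ${\rm cr}_2(F)\to F$ est bien la projection de coïnvariants. Une fois ces compatibilités établies, la seule fonctorialité de $cr_2$ et de $\mathcal{C}$, jointe aux équivalences de la proposition--définition~\ref{df-fqpd}, permettrait de conclure sans calcul supplémentaire.
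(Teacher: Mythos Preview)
Your proof is correct and is precisely the argument the paper has in mind: the corollary is stated without proof because it follows directly from Proposition--Définition~\ref{df-fqpd} via the identifications $cr_2(F)\simeq (B,\sigma)$ and $cr_2({\rm cr}_2(F))\simeq (\tilde{B},\text{échange})$ that you carry out. The back-and-forth between conditions~(\ref{idp1}) and~(\ref{idp5}) using the functors $cr_2$ and $\mathcal{C}$ is exactly the intended route, and your care in tracking the braiding involution (to confirm that $cr_2(F)$ carries the symmetry $\sigma$ rather than the exchange) is the only non-formal point.
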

 
 \begin{cor}\label{pdcr}
  Supposons que $\A$ est munie d'un foncteur de dualité. Alors cette catégorie vérifie l'hypothèse~\ref{h2inv} si et seulement si le foncteur $\A\times\A\to\mathbf{Ab}\quad (x,y)\mapsto\A(Dx,y)$ est facteur direct de $(x,y)\mapsto\A(Dx,y)\oplus\A(Dy,x)$ {\em dans la catégorie des bifoncteurs symétriques}, où le premier est muni de la symétrie induite par l'auto-dualité du foncteur $D$ tordue par le signe $\epsilon$ et le deuxième de celle qui échange les deux facteurs de la somme directe.
 \end{cor}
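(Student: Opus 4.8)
The plan is to deduce this corollary directly from Corollaire~\ref{pbcr} by feeding it the right bifunctor and then transporting the resulting criterion across the duality. Recall that Hypothèse~\ref{h2inv} asks precisely that the reduced quadratic functor $T_\A : \A^{op}\to\mathbf{Ab}$, $T_\A(x)=\A(x,Dx)_{\mathfrak{S}_2}$, be pseudo-diagonalisable in the sense of \ref{df-fqpd}. First I would observe that $T_\A=S_{\mathfrak{S}_2}$, where $S(x)=\A(x,Dx)=B(x,x)$ and $B$ is the biadditive bifunctor $B(x,y)=\A(x,Dy)$ on $\A^{op}\times\A^{op}$. The $\varepsilon$-twisted auto-duality of $D$, together with the canonical identification $DD\simeq\mathrm{Id}$ (notation~\ref{nh}), supplies a symmetry $\sigma_{x,y}:\A(x,Dy)\xrightarrow{\simeq}\A(y,Dx)$ with $\sigma_{y,x}=\sigma_{x,y}^{-1}$, and $\sigma_{x,x}$ is exactly the involution whose coinvariants define $T_\A(x)$. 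Thus $B$ is a symmetric bifunctor in the sense of Corollaire~\ref{pbcr}, with $F=T_\A$.

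Applying Corollaire~\ref{pbcr} with the small additive category $\A^{op}$ and $\M=\mathbf{Ab}$ then gives: $\A$ satisfies Hypothèse~\ref{h2inv} if and only if $B$ is a direct factor, in the category of symmetric bifunctors on $\A^{op}\times\A^{op}$, of $(x,y)\mapsto B(x,y)\oplus B(y,x)=\A(x,Dy)\oplus\A(y,Dx)$ equipped with the factor-exchange symmetry.

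It remains to translate this statement, which lives on $\A^{op}\times\A^{op}$, into the one of the corollary, which lives on $\A\times\A$. Since $DD\simeq\mathrm{Id}$, the duality functor is a contravariant self-equivalence of $\A$, hence $D\times D$ is a covariant equivalence $\A\times\A\to\A^{op}\times\A^{op}$, and precomposition $(D\times D)^*$ is an equivalence between the corresponding categories of symmetric bifunctors. Under it one computes $B\circ(D\times D)(x,y)=\A(Dx,DDy)\simeq\A(Dx,y)$ and $B^{\mathrm{sw}}\circ(D\times D)(x,y)\simeq\A(Dy,x)$, and the $\varepsilon$-twisted symmetry on $B$ is carried to the $\varepsilon$-twisted symmetry on $(x,y)\mapsto\A(Dx,y)$ announced in the statement, the factor-exchange symmetry being preserved. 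As an equivalence of categories preserves the relation ``is a direct factor of'' respecting the extra symmetry datum, the criterion on $\A^{op}\times\A^{op}$ is equivalent to the asserted criterion on $\A\times\A$, which proves the corollary.

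Everything here is formal once Corollaire~\ref{pbcr} is granted; the only genuinely delicate point, and the one I expect to be the main (if modest) obstacle, is the bookkeeping of the symmetry in the last paragraph: one must check that precomposition by $D\times D$ matches the $\varepsilon$-twisted duality symmetry on $\A(x,Dy)$ with the one on $\A(Dx,y)$ up to the \emph{same} sign, using the compatibility of the chosen isomorphism $DD\simeq\mathrm{Id}$ with $D$.
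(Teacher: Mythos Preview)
Your proof is correct and follows essentially the same route as the paper. The paper's proof is a single sentence: it observes that the symmetric bifunctor $(x,y)\mapsto\A(Dx,y)$ is $cr_2(T)$ up to the change of variance given by $D:\A^{op}\xrightarrow{\simeq}\A$, and concludes by Corollaire~\ref{pbcr}. Your argument unpacks exactly this---you write $T$ as $B(x,x)_{\mathfrak{S}_2}$ for $B(x,y)=\A(x,Dy)$ on $\A^{op}\times\A^{op}$, apply \ref{pbcr}, then transport along $(D\times D)^*$---so the content is the same, only made explicit.
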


 \begin{proof}
  Le premier bifoncteur symétrique est l'effet croisé $cr_2(T)$ (au changement de variance près jouant sur le foncteur $D : \A^{op}\xrightarrow{\simeq}\A$), d'où la conclusion par le corollaire précédent.
 \end{proof}

 Le résultat suivant est utilisé dans la démonstration du théorème~\ref{thcong3}.

 \begin{pr}\label{prd1q}
  Supposons que $\A$ est munie d'un foncteur de dualité. Soient $F$ un foncteur quadratique réduit pseudo-diagonalisable de $(\A,\kk)$, $n>0$ un entier et $x$ un objet de $\A$. Alors $\mathbf{L}_1({\rm d}_x)(F^{\otimes n})=0$.
 \end{pr}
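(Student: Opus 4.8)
The plan is to compute $\mathbf{L}_\bullet({\rm d}_x)$ explicitly and to use pseudo-diagonalisability only through the vanishing it forces via Pirashvili's cancellation lemma \cite{P-add}. First I would note that, since ${\rm d}_x=-\underset{(\A,\tau)}{\bar{\otimes}}(\kk\otimes\A(Dx,-))$ is derivable in its first variable, one has $\mathbf{L}_\bullet({\rm d}_x)=\overline{{\rm Tor}}_\bullet^{(\A,\tau)}(-,\kk\otimes\A(Dx,-))$, and that the computation~(\ref{ediv}) extends to a natural identification
$$\mathbf{L}_i({\rm d}_x)(H)(t)\simeq {\rm Tor}^{\kk[\A]}_i\big(\kk\otimes\A(-,x),\tau_t(H)\big),$$
the constant summand $\underline{H(t)}$ of $\tau_t(H)$ being annihilated (as in~(\ref{ediv})) because $\kk\otimes\A(-,x)$ is reduced. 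Since $F$ is pseudo-diagonalisable it is a direct factor of a diagonalisable functor $\bar F$ (Definition~\ref{df-fqpd}); hence $F^{\otimes n}$ is a direct factor of $\bar F^{\otimes n}$, which is again diagonalisable, being the diagonal of a functor of $2n$ variables reduced in each of them. As $\mathbf{L}_1({\rm d}_x)$ is additive and so commutes with direct factors, it suffices to prove $\mathbf{L}_1({\rm d}_x)(H)=0$ for every diagonalisable functor $H$.

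Fix a diagonalisable $H=\tilde B\circ{\rm diag}$ with $\tilde B$ reduced in each of its $m$ variables. I would then decompose $\tau_t(H)(s)=H(t\oplus s)$ according to the homogeneity degree in $s$, using the multi-reducedness of $\tilde B$: the degree-zero part is the constant functor $\underline{H(t)}$, the degree-one part $L_t(H)$ is additive, and each degree-$k$ part with $k\geq 2$ is a diagonal of a functor reduced in $k\geq 2$ variables, hence, after grouping the $k$ slots into two nonempty blocks, is itself diagonalisable. Three remarks then dispose of the bookkeeping: the constant part contributes nothing, since $\kk\otimes\A(-,x)$ is reduced and $\underline{H(t)}\simeq H(t)\otimes P^\A_0$; each diagonalisable degree-$k$ part ($k\geq2$) contributes nothing in any Tor degree by Pirashvili's lemma, which annihilates ${\rm Tor}^{\kk[\A]}_\bullet$ between an additive functor and a diagonalisable one; and the whole question is thereby reduced to showing ${\rm Tor}^{\kk[\A]}_1(\kk\otimes\A(-,x),D)=0$ for the additive functor $D=L_t(H)$. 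The compatibility of this polynomial-degree splitting of $\tau_t(H)$ with the Tor computation is routine, being guaranteed by the orthogonality furnished by \cite{P-add}.

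This last vanishing is the heart of the matter, and I expect it to be the main obstacle; I would extract it from the presentation~(\ref{ppa}), which exhibits $\kk\otimes\A(-,x)$ as the cokernel of $\bar{\kk}[\A(-,x)]^{\otimes2}\to\bar{\kk}[\A(-,x)]$. Writing $K$ for the kernel of the surjection from the projective $\bar{\kk}[\A(-,x)]$ onto $\kk\otimes\A(-,x)$, the long exact sequence of ${\rm Tor}^{\kk[\A]}_\bullet(-,D)$ gives ${\rm Tor}^{\kk[\A]}_1(\kk\otimes\A(-,x),D)\simeq\ker\big(K\underset{\kk[\A]}{\otimes}D\to D(x)\big)$. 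Now $K$ is the kernel of the universal additivisation of $\bar{\kk}[\A(-,x)]$, so it has no nonzero additive quotient, and Pirashvili's lemma forces $K\underset{\kk[\A]}{\otimes}D=0$, whence the desired vanishing. What makes this work only in homological degree one (and two) is precisely that for additive $D$ the groups ${\rm Tor}^{\kk[\A]}_i(\kk\otimes\A(-,x),D)$ are of Mac Lane type and do not vanish for $i\geq3$; this is why the same argument controls $\mathbf{L}_1({\rm d}_x)$ but not the higher $\mathbf{L}_i({\rm d}_x)$, in accordance with the statement.
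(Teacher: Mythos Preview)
Your argument is correct and follows the same strategy as the paper's: identify $\mathbf{L}_1({\rm d}_x)(F^{\otimes n})(t)$ with ${\rm Tor}^{\kk[\A]}_1(\kk\otimes\A(-,x),\tau_t(F^{\otimes n}))$, separate into constant, additive, and diagonalisable contributions, and dispose of these via the presentation~(\ref{ppa}) and Pirashvili respectively. The paper organises this a little differently: rather than first replacing $F^{\otimes n}$ by a diagonalisable $\bar F^{\otimes n}$, it applies the sum/diagonal adjunction directly to $\tau_t(F^{\otimes n})=(\tau_t F)^{\otimes n}$ (exploiting that $\kk\otimes\A(-,x)$ is additive) to reduce to $n$ copies of $F(t)^{\otimes(n-1)}\otimes{\rm Tor}^{\kk[\A]}_1(\kk\otimes\A(-,x),\tau_t(F))$, and then uses the splitting $\tau_t(F)=\text{const}\oplus\text{additive}\oplus F$ coming from $F$ being quadratic. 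Both routes land on the same two vanishing lemmas.

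One imprecision in your write-up: the decomposition of $\tau_t(H)(s)=\tilde B(t\oplus s,\dots,t\oplus s)$ ``by homogeneity degree in $s$'' only works as stated when $\tilde B$ is multi-\emph{additive}, not merely multi-reduced; for a general multi-reduced $\tilde B$ the summands with a single $s$-dependence are reduced but not additive. The fix is immediate: since $F$ is quadratic and pseudo-diagonalisable, take $\bar F={\rm cr}_2(F)$ (condition~\ref{idp5} of Proposition~\ref{df-fqpd}), so that the underlying bifunctor $cr_2(F)$ is bi-additive and $\tilde B=cr_2(F)^{\boxtimes n}$ is genuinely multi-additive; then your degree decomposition goes through verbatim. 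Your treatment of the additive piece via~(\ref{ppa}) is fine --- the vanishing $K\underset{\kk[\A]}{\otimes}D=0$ follows most transparently from the fact that $K$ is a \emph{quotient} of the diagonalisable functor $\bar{\kk}[\A(-,x)]^{\otimes 2}$, combined with right-exactness of $-\otimes D$ and Pirashvili's lemma.
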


 \begin{proof}
 On a
 $$\mathbf{L}_1({\rm d}_x)(F^{\otimes n})(t)\simeq {\rm Tor}^{\kk[\A]}_1\big(\kk\otimes\A(-,x),\tau_t(F^{\otimes m})\big)$$
 (cf. (\ref{ediv})), qui est isomorphe par adjonction somme/diagonale à la somme directe de $m$ copies de
 $$\mathbf{L}_1({\rm d}_x)(F^{\otimes n})(t)\simeq {\rm Tor}^{\kk[\A]}_1\big(\kk\otimes\A(-,x),F(t)^{\otimes (m-1)}\otimes\tau_t(F)\big).$$
 
 Le foncteur $\tau_t(F)$ est isomorphe à la somme directe d'un foncteur constant, d'un foncteur additif et de $F$. Or on a
 $$\mathbf{L}_1({\rm d}_x)(F^{\otimes n})(t)\simeq {\rm Tor}^{\kk[\A]}_1\big(\kk\otimes\A(-,x),A)=0$$
 pour tout foncteur $A$ de $\pol_1(\A,\kk)$ (grâce à la suite exacte (\ref{ppa}) par exemple), et
 $$\mathbf{L}_1({\rm d}_x)(F^{\otimes n})(t)\simeq {\rm Tor}^{\kk[\A]}_1\big(\kk\otimes\A(-,x),F(t)^{\otimes (m-1)}\otimes F\big)=0$$
 par le lemme d'annulation de Pirashvili, puisque $F$ est facteur direct d'un foncteur diagonalisable, d'où la conclusion.
 \end{proof}

\bibliographystyle{plain}
\bibliography{bibli-hosgc.bib}
\end{document}